\crefname{hypothesis}{Hypothesis}{Hypotheses}
\title{Finite elements for \revision{div and divdiv conforming} symmetric tensors in arbitrary dimension\thanks{Submitted to the editors \today.
\funding{The first author was supported by NSF DMS-1913080 and DMS-2012465. The second author was supported by the National Natural Science Foundation of China Projects 12171300 and 11771338, the Natural Science Foundation of Shanghai 21ZR1480500, and the Fundamental Research Funds for the Central
Universities 2019110066.}}}
\author{Long Chen\thanks{Department of Mathematics, University of California at Irvine, Irvine, CA 92697, USA 
  (\email{chenlong@math.uci.edu}).}
\and 
Xuehai Huang\thanks{Corresponding author. School of Mathematics, Shanghai University of Finance and Economics, Shanghai 200433, China 
  (\email{huang.xuehai@sufe.edu.cn}).}
}
\newcommand{\revision}[1]{\textcolor{black}{#1}}
  \newcounter{mnote}
  \let\oldmarginpar\marginpar
    \renewcommand\marginpar[1]{\-\oldmarginpar[\raggedleft\footnotesize #1]%
    {\raggedright\footnotesize #1}}
\newcommand{\dx}{\,{\rm d}x}
\newcommand{\dd}{\,{\rm d}}
\newcommand{\bs}{\boldsymbol}
\DeclareMathOperator*{\img}{img}
\newcommand{\curl}{\operatorname{curl}}
\renewcommand{\div}{\operatorname{div}}
\newcommand{\grad}{\operatorname{grad}}
\DeclareMathOperator*{\tr}{tr}
\newcommand{\sym}{\operatorname{sym}}
\newcommand{\skw}{\operatorname{skw}}
\newcommand{\defm}{\operatorname{def}}
\numberwithin{equation}{section}
\begin{document}

\maketitle

\begin{abstract}
Several div-conforming and divdiv-conforming finite elements for symmetric tensors on simplexes in arbitrary dimension are constructed in this work. The shape function space is first split as the trace space and the bubble space. The later is further decomposed into the null space of the differential operator and its orthogonal complement. Instead of characterization of these subspaces of the shape function space, characterization of the dual spaces are provided. Vector div-conforming finite elements are firstly constructed as an introductory example. Then new symmetric div-conforming finite elements are constructed. The dual subspaces are then used as build blocks to construct divdiv conforming finite elements.
\end{abstract}

\begin{keywords}
symmetric tensor, div-conforming finite elements, divdiv-conforming finite elements, space decomposition, dual approach
\end{keywords}

\begin{AMS}
  65N30, 74S05
\end{AMS}

\section{Introduction}
In this paper we construct div-conforming finite elements and divdiv-conforming finite elements for symmetric tensors on simplexes in arbitrary dimension. A finite element on a geometric domain $K$ is defined as a triple $(K, V, {\rm DoF})$ by Ciarlet in~\cite{Ciarlet1978}, where $V$ is the finite-dimensional space of shape functions and the set of degrees of freedom (DoFs) is a basis of the dual space $V'$ of $V$. The shape functions are usually polynomials. The key is to identify an appropriate basis of $V'$ to enforce the continuity of the functions across the boundary of the elements so that the global finite element space is a subspace of some Sobolev space $H(\dd,\Omega)$, where $\Omega\subset \mathbb R^d$ is a domain and $\dd$ is a generic differential operator. 
%
%

\begin{figure}[htbp]
\begin{center}
\includegraphics[width=5.4cm]{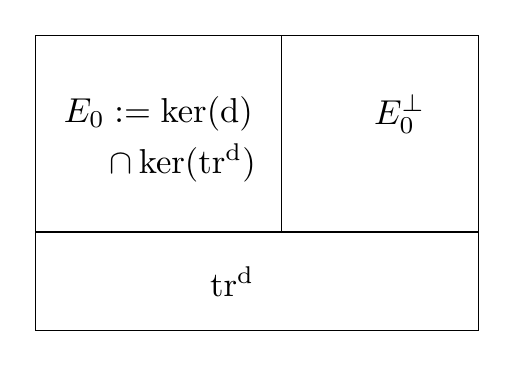}
\caption{Decomposition of a generic finite element space\revision{.}}
\label{fig:femdec}
\end{center}
\end{figure}

Denote by ${\rm tr}^{\rm d}$ the trace operator associated to ${\rm d}$ and the bubble function space $\mathbb B({\rm d}):=\ker({\rm tr}^{\rm d})\cap V$. We shall decompose \revision{$V =  \mathbb B({\rm d}) \oplus \mathcal E({\rm img}({\rm tr}^{\rm d}))$, where $\mathcal E$ is an injective extension operator ${\rm img}({\rm tr}^{\rm d}) \to V$,} and find degrees of freedom of each subspace by:
\begin{enumerate}
 \item characterization of $({\rm img}({\rm tr}^{\rm d}))'$ using the Green's formula; 
 
 \item characterization of $\mathbb B'({\rm d})$ through the polynomial complexes.
\end{enumerate}
In the characterization of $\mathbb B'({\rm d})$, we will use the differential operator $\dd$ to further split $\mathbb B({\rm d})$ into two subspaces $E_0:=\mathbb B({\rm d})\cap\ker({\rm d})$ and $E_0^{\bot}:=\mathbb B({\rm d})/ E_0$ (see Figure~\ref{fig:femdec}). 
\begin{itemize}
\item  A basis of $(E_0^{\bot})'$ is given by $\{(\dd\cdot, p), \; p\in\dd\mathbb B({\rm d}) = \dd E_0^{\bot}\}$; 
\item On the other part $E_0'$, we have two approaches:
\begin{itemize}
 \item the primary approach: $E_0$ is the image of the previous bubble space,%
 \item the dual approach: $E_0'$ is the null space of a Koszul operator.
\end{itemize}
\end{itemize}
The dual approach is simpler and more general. For example, for the elasticity complex, the previous symmetric tensor space is related to the second order differential operator ${\rm inc}$~\cite{ArnoldAwanouWinther2008}. While in the dual approach, we prove that a basis of $E_0'$ is given by $\mathcal N(\ker(\cdot\boldsymbol x)\cap \mathbb P_{k-2}(K;\mathbb S))$. Here to simplify notation, we introduce operator $\mathcal N: U\to V'$ as $\mathcal N(p): = (\cdot, p)$ with $U\subseteq V$ \revision{and $(\cdot,\cdot)$ is the inner product of space $V$ which is usually $L^2$-inner product}. \revision{Generalization of ${\rm inc}$ and its bubble function space to $\mathbb R^d$ is unclear while $E_0' = \mathcal N(\ker(\cdot\boldsymbol x)\cap \mathbb P_{k-2}(K;\mathbb S))$ holds in arbitrary dimension.} 
\revision{Such decomposition $V' \cong  E_0' \oplus (E_0^{\bot})' \oplus ({\rm img}({\rm tr}^{\rm d}))'$ also allows us to construct a new family of $H(\div; \mathbb S)$-conforming finite elements.}



\revision{To show the main idea with easy examples,} we first review the construction of the Brezzi-Douglas-Marini (BDM) element~\cite{BrezziDouglasMarini1986,BrezziDouglasDuranFortin1987} and Raviart-Thomas (RT) element~\cite{RaviartThomas1977,Nedelec1980} for $H(\div)$-conforming elements. The trace space ${\rm tr}^{\div}(\mathbb P_k(K; \mathbb R^d))= \prod_{F\in \partial K} \mathbb P_{k}(F)$. 
By the aid of the space decomposition
$\mathbb P_{k-1}(K; \mathbb R^d) = \grad \mathbb P_{k}(K) \oplus\ker(\cdot\boldsymbol x)\cap \mathbb P_{k-1}(K;\mathbb R^d) $ derived from the dual complex,
we can show $E_0' = \mathcal N( \ker(\cdot\boldsymbol x)\cap \mathbb P_{k-1}(K;\mathbb R^d))$.
For BDM element, the shape function space is $\mathbb P_k(K;\mathbb R^d)$. 
The shape function space for RT element is $\mathbb P_{k+1}^-(K;\mathbb R^d):= \mathbb P_{k}(K;\mathbb R^d)\oplus\mathbb H_{k}(K)\boldsymbol x.$
BDM and RT elements will share the same trace space and $E_0$, while
$$(E_0^{\bot})' = 
\begin{cases}
\mathcal N(\grad \mathbb P_{k-1}(K)) & \text{for BDM element},\\
\mathcal N(\grad \mathbb P_{k}(K))  & \text{for RT element}.
\end{cases}
$$
The dual space $\mathbb B_k'(\div, K) \cong (E_0^{\bot})' \oplus E_0'$ for BDM element can be further merged as 
$$
\mathbb B_k'(\div, K) = \mathcal N\big({\rm ND}_{k-2}(K)\big):=\mathcal N\big(\mathbb P_{k-2}(K;\mathbb R^d) \oplus \mathbb H_{k-2}(K; \mathbb K)\boldsymbol x\big).
$$
We summarize DoFs for BDM element as follows
\begin{align}
(\boldsymbol  v\cdot\boldsymbol n, q)_F& \quad\forall~q\in\mathbb P_{k}(F) \textrm{ for each } F\in \partial K, \label{intro:BDMdof1}\\
(\boldsymbol v, \boldsymbol q)_K &\quad\forall~\boldsymbol q\in{\rm ND}_{k-2}(K), \label{intro:BDMdof2}
\end{align}
and the interior moments \eqref{intro:BDMdof2} can be further split as
\begin{align}
(\boldsymbol v, \boldsymbol q)_K &\quad\forall~\boldsymbol q\in\grad \mathbb P_{k-1}(K), \label{intro:BDMdof3} \\
(\boldsymbol v, \boldsymbol q)_K &\quad\forall~\boldsymbol q\in \ker(\cdot\boldsymbol x)\cap \mathbb P_{k-1}(K;\mathbb R^d). \label{intro:BDMdof4}
\end{align}
Enriching \eqref{intro:BDMdof3} to $\mathcal N(\grad \mathbb P_{k}(K))$, we then get RT elements.
 
We then apply our approach for a more challenging problem: div-conforming finite elements for symmetric tensors, which is used in the mixed finite element methods for the stress-displacement formulation of the elasticity system. 
Several div-conforming finite elements for symmetric tensors were designed in \cite{ArnoldWinther2002,Adams;Cockburn:2005Finite,ArnoldAwanouWinther2008,HuZhang2016,Hu2015a,HuZhang2015} on simplices, 
but our elements are new and construction is more systematical. \revision{Let $\Pi_F\bs \tau$ be the projection of column vectors of $\bs \tau$ to the plane $F$.}
The space of shape functions is $\mathbb P_k(K;\mathbb S)$, and DoFs are
\begin{align}
\boldsymbol \tau (\delta) & \quad\forall~\delta\in \mathcal V(K), \label{intro:HdivSBDMfemdof1}\\
(\boldsymbol  n_i^{\intercal}\boldsymbol \tau\boldsymbol n_j, q)_F & \quad\forall~q\in\mathbb P_{k+r-d-1}(F),  F\in\mathcal F^r(K),\;  \label{intro:HdivSBDMfemdof2}\\
&\quad\quad i,j=1,\cdots, r, \textrm{ and } r=1,\cdots, d-1, \notag\\
(\Pi_F\boldsymbol \tau\boldsymbol n, \boldsymbol q)_F & \quad\forall~\boldsymbol q\in {\rm ND}_{k-2}(F),  F\in\mathcal F^1(K),\label{intro:HdivSBDMfemdof3} \\
\label{intro:HdivSBDMfemdof4}(\boldsymbol \tau, \boldsymbol q)_K &\quad \forall~\boldsymbol q\in \mathbb P_{k-2}(K;\mathbb S).
\end{align}
The symmetry of the shape function and the trace $\boldsymbol\tau\boldsymbol n$ on $(d-1)$-dimensional faces leads to the degrees of freedom \eqref{intro:HdivSBDMfemdof1}-\eqref{intro:HdivSBDMfemdof2}, which will determine the normal-normal component $\boldsymbol n^{\intercal}\boldsymbol \tau \boldsymbol n$.
The set of degrees of freedom \eqref{intro:HdivSBDMfemdof3} is for the face bubble part of the tangential-normal component $\Pi_F\boldsymbol \tau\boldsymbol n$, cf. \eqref{intro:BDMdof2}, which 
differs from that of Hu's element in~\cite{Hu2015a} for $d\geq3$. 
The bubble function space $\mathbb B_k(\div, K;\mathbb S)$ can be decomposed into two parts $E_{0,k}(\mathbb S) := \mathbb B_k(\div, K; \mathbb S)\cap \ker(\div)$ and $E_{0,k}^{\bot}(\mathbb S) := \mathbb B_k(\div, K; \mathbb S)/ E_{0,k}(\mathbb S)$. 
We show that
\begin{equation}\label{intro:E0S}
(E_{0,k}^{\bot}(\mathbb S))' = \mathcal N(\defm \mathbb P_{k-1}(K,\mathbb R^d)),\quad
E_{0,k}'(\mathbb S) = \mathcal N(\ker (\cdot\boldsymbol x )\cap \mathbb P_{k-2}(K;\mathbb S)).
\end{equation}
\revision{A new family of $H(\div; \mathbb S)$-conforming elements  is devised with the shape function space $\mathbb P_{k+1}^{-}(K;\mathbb S) :=\mathbb P_k(K;\mathbb S)+ E_{0,k+1}^{\bot}(\mathbb S)$, and enrich DoF $(E_{0,k}^{\bot}(\mathbb S))'$ to $(E_{0,k+1}^{\bot}(\mathbb S))'$ so that $\div\mathbb P_{k+1}^{-}(K;\mathbb S) = \mathbb P_{k}(K;\mathbb R^d)$.}
%

Motivated by the recent construction \cite{Hu;Ma;Zhang:2020family} in two and three dimensions, the previous div-conforming finite elements for symmetric tensors are then revised to acquire $H(\div\div)\cap H(\div)$-conforming finite elements for symmetric tensors in arbitrary dimension. Using the building blocks in the BDM element and div-conforming $\mathbb P_k(K;\mathbb S)$ element, we construct the following DoFs
\begin{align}
\boldsymbol \tau (\delta) & \quad\forall~\delta\in \mathcal V(K), \label{intro:HdivdivSfemdof1}\\
(\boldsymbol  n_i^{\intercal}\boldsymbol \tau\boldsymbol n_j, q)_F & \quad\forall~q\in\mathbb P_{k+r-d-1}(F),  F\in\mathcal F^r(K),\;  \label{intro:HdivdivSfemdof2}\\
&\quad\quad i,j=1,\cdots, r, \textrm{ and } r=1,\cdots, d-1, \notag\\
(\Pi_F\boldsymbol \tau\boldsymbol n, \boldsymbol q)_F & \quad\forall~\boldsymbol q\in {\rm ND}_{k-2}(F),  F\in\mathcal F^{1}(K),\label{intro:HdivdivSfemdof3}\\
(\boldsymbol  n^{\intercal}\div \boldsymbol \tau, p)_F &\quad\forall~p\in\mathbb P_{k-1}(F), F\in \mathcal F^{1}(K),\label{intro:HdivdivSfemdof4}\\
(\boldsymbol \tau, \defm\boldsymbol q)_K &\quad \forall~\boldsymbol q\in {\rm ND}_{k-3}(K),\label{intro:HdivdivSfemdof5}\\
(\boldsymbol \tau, \boldsymbol q)_K &\quad \forall~\boldsymbol q\in  \ker (\cdot\boldsymbol x )\cap \mathbb P_{k-2}(K;\mathbb S) \label{intro:HdivdivSfemdof6}.
\end{align}
The degree of freedom \eqref{intro:HdivdivSfemdof4} is to enforce $\div\boldsymbol\tau$ is $H(\div)$-conforming and thus $\bs \tau \in H(\div\div)\cap H(\div)$. \revision{DoF \eqref{intro:HdivdivSfemdof6} is $E_{0,k}'(\mathbb S)$ shown in \eqref{intro:E0S} and \eqref{intro:HdivdivSfemdof4}-\eqref{intro:HdivdivSfemdof5} is a further decomposition of $\div E_{0,k}^{\bot}(\mathbb S)$ by the trace-bubble decomposition cf. \eqref{intro:BDMdof1}-\eqref{intro:BDMdof2}. }

We then modify this element slightly to get $H(\div\div)$-conforming symmetric finite elements generalizing the divdiv-conforming element in two and three dimensions~\cite{Chen;Huang:2020Finite,ChenHuang2020}. The degrees of freedom are given by
\begin{align}
\boldsymbol \tau (\delta) & \quad\forall~\delta\in \mathcal V(K), \label{intro:newHdivdivSfemdof1}\\
(\boldsymbol  n_i^{\intercal}\boldsymbol \tau\boldsymbol n_j, q)_F & \quad\forall~q\in\mathbb P_{k+r-d-1}(F),  F\in\mathcal F^r(K),\;  \label{intro:newHdivdivSfemdof2}\\
&\quad\quad i,j=1,\cdots, r, \textrm{ and } r=1,\cdots, d-1, \notag\\
(\Pi_F\boldsymbol \tau\boldsymbol n, \boldsymbol q)_F & \quad\forall~\boldsymbol q\in {\rm ND}_{k-2}(F),  F\in\mathcal F^{1}(K),\label{intro:newHdivdivSfemdof4}\\
(\boldsymbol n^{\intercal}\div \boldsymbol \tau +  \div_F(\boldsymbol\tau \boldsymbol n), p)_F &\quad\forall~p\in\mathbb P_{k-1}(F), F\in \mathcal F^{1}(K),\label{intro:newHdivdivSfemdof3}\\
(\boldsymbol \tau, \defm\boldsymbol q)_K &\quad \forall~\boldsymbol q\in {\rm ND}_{k-3}(K),\label{intro:newHdivdivSfemdof5}\\
(\boldsymbol \tau, \boldsymbol q)_K &\quad \forall~\boldsymbol q\in  \ker (\cdot\boldsymbol x)\cap \mathbb P_{k-2}(K;\mathbb S) \label{intro:newHdivdivSfemdof6}.
\end{align}
As we mentioned before, \eqref{intro:newHdivdivSfemdof1}-\eqref{intro:newHdivdivSfemdof4} will determine the trace $\boldsymbol \tau\boldsymbol n$, and consequently $\div_F(\boldsymbol\tau \boldsymbol n)$. 
The only difference is that \eqref{intro:HdivdivSfemdof4} is replaced by \eqref{intro:newHdivdivSfemdof3}, 
which agrees with a trace operator of $\div\div$-operator. Such modification is from the requirement of $H(\div\div)$-conformity: $\boldsymbol n^{\intercal}\boldsymbol\tau\boldsymbol n$ and $\boldsymbol n^{\intercal}\div \boldsymbol \tau +  \div_F(\boldsymbol\tau \boldsymbol n)$ are continuous. 
Therefore \eqref{intro:newHdivdivSfemdof4} for $\Pi_F\boldsymbol \tau\boldsymbol n$ is considered as interior to $K$, i.e., it is not single-valued across simplices. 

\revision{In our recent work \cite{ChenHuang2020,Chen;Huang:2020Finite}, we have constructed $H(\div\div)$-conforming symmetric finite elements for $d=2,3$. The dual space $(\tr^{\div\div} (\mathbb P_k(K;\mathbb S)))'$ is given by DoFs \eqref{intro:newHdivdivSfemdof1}-\eqref{intro:newHdivdivSfemdof3} but without \eqref{intro:newHdivdivSfemdof4} as $\Pi_F\boldsymbol \tau\boldsymbol n$ is not part of the trace of $\div\div$ operator. Let $E_0(\div\div,\mathbb S) := \mathbb B_k(\div\div, K; \mathbb S)\cap \ker(\div\div)$ and $E_0^{\bot}(\div\div,\mathbb S) := \mathbb B_k(\div\div, K; \mathbb S)/ E_0(\div\div,\mathbb S)$. Then $(E_0^{\bot}(\div\div,\mathbb S))' = \mathcal N(\nabla^2\mathbb P_{k-2}(K))$ but the identification of $E_0'(\div\div,\mathbb S)$ is very tricky in three dimensions, We use the primary approach to get $E_0'(\div\div,\mathbb S) = \mathcal N(\sym \curl \mathbb B_{k+1}(\sym\curl, K;\mathbb T))$. Characterization of $\mathbb B_{k+1}(\sym\curl, K;\mathbb T)$ is hard to generalize to arbitrary dimension. When using the dual approach, it turns out $\mathcal N(\ker(\boldsymbol x^{\intercal}\cdot\boldsymbol x)\cap \mathbb P_{k-1}(K;\mathbb S))$ is a strict subspace of $ E_0'(\div\div,\mathbb S)$ as the dimension cannot match. An extra DoF on one face $(\boldsymbol \tau\boldsymbol n, \boldsymbol n\times\boldsymbol xq)_{F_1}, ~q\in\mathbb P_{k-2}(F_1)$ is introduced to fill the gap. Such fix in three dimensions seems not easy to be generalized to arbitrary dimension. } 

\revision{In \eqref{intro:newHdivdivSfemdof5}, if we further decompose ${\rm ND}_{k-3}(K)=\grad\mathbb P_{k-2}(K) \oplus \mathbb P_{k-3}(K;\mathbb K)\bs x$, based on our new element, we can obtain another characterization of  $ E_0'(\div\div,\mathbb S)$ as the sum of DoFs \eqref{intro:newHdivdivSfemdof4}, \eqref{intro:newHdivdivSfemdof6}, and $\mathcal N(\defm \mathbb P_{k-3}(K;\mathbb K)\bs x)$. 
}

Furthermore, a new family of $\mathbb P_{k+1}^{-}(K;\mathbb S)$ type $H(\div\div)\cap H(\div)$-conforming  and $H(\div\div)$-conforming finite elements are developed. The shape function space is enriched to $\mathbb P_{k+1}^{-}(K;\mathbb S):=\mathbb P_k(K;\mathbb S) \oplus \boldsymbol x\boldsymbol x^{\intercal}\mathbb H_{k-1}(K)$. The range $\div\div \mathbb P_{k+1}^{-}(K;\mathbb S)$ is enriched to $\mathbb P_{k-1}(K)$ and so is \revision{$(E_0^{\bot}(\div\div,\mathbb S))' = \mathcal N(\nabla^2\mathbb P_{k-1}(K))$}. But the trace DoFs and $E_0'(\div\div,\mathbb S)$ are unchanged. Such $\mathbb P_{k+1}^{-}(K;\mathbb S)$ type $\div\div$-conforming elements for symmetric tensors are new and not easy to construct without exploring the decomposition of the dual spaces.

The rest of this paper is organized as follows. Preliminaries are given in Section~\ref{sec:pre}. We review the construction of div-conforming elements in Section~\ref{sec:divvec}. In Section~\ref{sec:divS}, new div-conforming elements for symmetric tensors are designed. And we consider the construction of divdiv-conforming elements in Section~\ref{sec:divdivS}.

\section{Preliminary}\label{sec:pre}

\subsection{Notation}
Let $K\subset\mathbb R^d$ be a simplex. For $r=1, 2, \cdots, d$, denote by $\mathcal{F}^r(K)$ the set of all $(d-r)$-dimensional faces
of $K$.
The \revision{superscript} $r$ in $\mathcal{F}^r(K)$ represents the co-dimension of a $(d-r)$-dimensional face $F$. 
Set $\mathcal V(K):=\mathcal{F}^d(K)$ as the set of vertices.
Similarly, for $F\in\mathcal{F}^r(K)$, define
\[
\mathcal F^1(F):=\{e\in\mathcal{F}^{r+1}(K): e\subset\partial F\}.
\]
For any $F\in\mathcal{F}^r(K)$ with $1\leq r\leq d-1$, let $\boldsymbol n_{F,1}, \cdots, \boldsymbol n_{F,r}$ be its
mutually perpendicular unit normal vectors, and $\boldsymbol t_{F,1}, \cdots, \boldsymbol t_{F,d-r}$ be its
mutually perpendicular unit tangential vectors.
We abbreviate $\boldsymbol n_{F,1}$ as $\boldsymbol n_{F}$ or $\bs n$ when $r=1$.
We also abbreviate $\boldsymbol n_{F,i}$ and $\boldsymbol t_{F,i}$ as $\boldsymbol n_{i}$ and $\boldsymbol t_{i}$ respectively if not causing any confusion. 
For any $F\in\mathcal{F}^1(K)$ and $e\in\mathcal{F}^1(F)$, denote by $\boldsymbol n_{F,e}$ the unit outward normal to $\partial F$ being parallel to $F$.

Given a face $F\in \mathcal F^1(K)$, and a vector $\boldsymbol v\in \mathbb R^d$, define 
$$
\Pi_F\boldsymbol v= (\boldsymbol n_F\times \boldsymbol v)\times \boldsymbol n_F = (\boldsymbol I - \boldsymbol n_F\boldsymbol n_F^{\intercal})\boldsymbol v
$$ 
as the projection of $\boldsymbol v$ onto the face $F$. \revision{For a matrix $\bs \tau\in \mathbb R^{d\times d}$, $\Pi_F\bs \tau$ is applied to each column vector of $\bs \tau$.}
Given a scalar function $v$, define the surface gradient on $F\in \mathcal F^r(K)$ as
\begin{equation*}
\nabla_{F}v:=\Pi_F\nabla v = \nabla v-\sum_{i=1}^r\frac{\partial v}{\partial n_{F,i}}\boldsymbol n_{F,i}=\sum_{i=1}^{d-r}\frac{\partial v}{\partial t_{F,i}}\boldsymbol t_{F,i},
\end{equation*}
namely the projection of $\nabla v$ to the face $F$, which is independent of the choice of the normal vectors. Denote by $\div_{F}$ the corresponding surface divergence.

\subsection{Polynomial spaces}
We recall some results about polynomial spaces on a bounded and topologically trivial domain $D\subset \mathbb R^d$.
Without loss of generality, we assume $\boldsymbol 0\in D$.
Given 
a non-negative integer $k$, 
let $\mathbb P_k(D)$ stand for the set of all polynomials in $D$ with the total degree no more than $k$, and $\mathbb P_k(D; \mathbb{X})$ denote the tensor or vector version.
Let $\mathbb H_k(D):=\mathbb P_k(D)\backslash\mathbb P_{k-1}(D)$ be the space of homogeneous polynomials of degree $k$. 
Recall that 
$$
\dim \mathbb P_{k}(D) = { k + d \choose d} = { k + d \choose k},\quad \dim \mathbb H_{k}(D) = { k + d -1  \choose d-1} = { k + d - 1 \choose k} 
$$ for a $d$-dimensional domain $D$.

By Euler's formula, we have
\begin{align}
\label{eq:homogeneouspolyprop}
\boldsymbol x\cdot\nabla q &= kq\quad\forall~q\in\mathbb H_k(D),\\
\label{eq:Hkdiv}
\div(\boldsymbol x q) &= (k+d)q\quad\forall~q\in\mathbb H_k(D)
\end{align}
for integer $k\geq 0$. 
\subsection{Dual spaces}
Consider a Hilbert space $V$ with the inner product $(\cdot,\cdot)$. 
Let $U\subseteq V$, then define $\mathcal N: U\to V'$ as follows: for any $p\in U$, $\mathcal N(p)\in V'$ is given by
$$
\langle \mathcal N(p), \cdot \rangle = (\cdot, p).
$$

When $V$ is a subspace of an ambient Hilbert space $W$, we use the inclusion $\mathcal I: V\hookrightarrow W$ to denote the embedding of $V$ into $W$. Then the dual operator $\mathcal I': W'\to V'$ is onto. That is for any $N\in W'$, $\mathcal I'N\in V'$ is defined as
$\langle \mathcal I'N, v\rangle = \langle N, \mathcal I v\rangle. $

%

Consider the case \revision{the finite-dimensional sub-space} $V\subseteq W$ and a subspace $P'\subseteq W'$, then  to prove $V' = \mathcal I'P'$, it suffices to show
\begin{equation}\label{eq:dualsubspace}
\text{ for any } v\in V, \; \text{ if } N(v) = 0, \,\text{ for all }~N\in P', \text{ then } v = 0.
\end{equation}
Note that it means $\mathcal I'$ is onto but may not be into. That is $\dim P'$ might be larger than $\dim V'$. \revision{A {\rm DoF} is in general a functional with a domain larger than $V$. It is less rigorous to write $V'\subseteq P'$ as those two subspaces consists of functionals with different domains. The mapping $\mathcal I'$ is introduced as a bridge for comparison.}
When $\mathcal I': P' \to V'$ is a bijection, we shall skip $\mathcal I'$ and simply write as $V' =  P'$. To prove $V' =  P'$, besides \eqref{eq:dualsubspace}, dimension count is applied to verify $\dim V' = \dim P'$. 

The art of designing conforming finite element spaces is indeed identifying  appropriate {\rm DoFs} to enforce the continuity of the function across the boundary of the elements. Take $V = \mathbb P_k(K)$ as an example. A naive choice is $\mathcal N(\mathbb P_k(K)) = V'$ but such basis enforces no continuity on $\partial K$. To be $H^1$-conforming we need a basis for $(\tr(\mathbb P_k(K)))'$ to ensure the continuity of the trace on lower-dimensional faces of an element $K$. Note that as the shape function is a polynomial inside the element, the trace is usually smoother than its Sobolev version, which is known as super-smoothness~\cite{Chui;Lai:1990Multivariate,Lai;Schumaker:2007Spline,Sorokina:2010Intrinsic}. Choice of dual bases is not unique. For example, for $H^1$-conforming finite elements, $V = \mathbb P_k(K)$, Lagrange element and Hermite element will have different bases for $V'$. 

When counting the dimensions, we often use the following simple fact: for a linear operator $T$ defined on a finite dimensional linear space $V$, it holds
\begin{equation*}
\dim V = \dim \ker(T) + \dim \img(T). 
\end{equation*}

\subsection{Simplex and barycentric coordinates}
For $i=1,\cdots, d$, denote by $\boldsymbol e_i\in\mathbb R^d$ the $d$-dimensional vector whose $j$th component is $\delta_{ij}$ for $j=1,\cdots, d$.
Let $K\subset\mathbb R^{d}$ be a non-degenerated simplex with vertices $\boldsymbol x_0, \boldsymbol x_1, \cdots, \boldsymbol x_{d}$. 
Let $F_i\in\mathcal F^1(K)$ be the $(d-1)$-dimensional face opposite to vertex $\boldsymbol x_i$, and $\lambda_i$ be the barycentric coordinate of $\boldsymbol x$ corresponding to vertex $\boldsymbol x_i$, for $i=0, 1, \cdots, d$. Then $\lambda_i(\boldsymbol x)$ is a linear polynomial and $\lambda_i|_{F_i} = 0$. For any sub-simplex $S$ not containing $\boldsymbol x_i$ (and thus $S\subseteq F_i$), $\lambda_i|_S = 0$. On the other hand, for a polynomial $p\in \mathbb P_k(K)$, if $p|_{F_i} = 0$, then $p = \lambda_i q$ for some $q\in \mathbb P_{k-1}(K)$. As $F_i$ is contained in the zero level set of $\lambda_i$, $\nabla \lambda_i$ is orthogonal to $F_i$ and a simple scaling calculation shows the relation $\nabla\lambda_i=-|\nabla\lambda_i|\boldsymbol n_i$, 
where $\boldsymbol n_i$ is the unit outward normal to the face $F_i$ of the simplex $K$ for $i=0, 1, \cdots, d$. 
Clearly $\{ \boldsymbol n_1, \boldsymbol n_2, \cdots, \boldsymbol n_d \}$ spans $\mathbb R^d$. We will identify its dual basis $\{ \boldsymbol l_1, \boldsymbol l_2, \cdots, \boldsymbol l_d \}$, i.e. $(\boldsymbol l_i, \boldsymbol n_j) = \delta_{ij}$ for $i,j=1,2,\cdots, d$. Here the index $0$ is single out for the ease of notation. \revision{We can set an arbitrary vertex as the origin.}

Set $\boldsymbol t_{i,j}:=\boldsymbol x_j-\boldsymbol x_i$ for $0\leq i\neq j\leq d$. By computing the constant directional derivative $\boldsymbol t_{i,j}\cdot\nabla \lambda_{\ell}$ by values on the two vertices, we have
\begin{equation}\label{eq:tijlambdal}
\boldsymbol t_{i,j}\cdot\nabla \lambda_{\ell}=\delta_{j\ell}-\delta_{i\ell}=\begin{cases}
1, & \textrm{ if } \ell=j,\\
-1, & \textrm{ if } \ell=i,\\
0, & \textrm{ if } \ell\neq i,j.
\end{cases}  
\end{equation}
Then it is straightforward to verify $\{\boldsymbol l_i := |\nabla\lambda_{i}| \boldsymbol t_{i,0}\}$ is dual to $\{\boldsymbol n_i\}$. Note that in general neither $\{\boldsymbol n_i\}$ nor $\{\boldsymbol l_i\}$ is an orthonormal basis unless $K$ is a scaling of the reference simplex $\hat K$ with vertices $\boldsymbol 0, \boldsymbol e_1, \cdots, \boldsymbol e_d$. By using the basis $\{\boldsymbol n_i, i=1,2,\cdots, d\}$, we avoid the pull back from the reference simplex. 


Following notation in~\cite{arnold2009geometric},
denote by $\mathbb N^d$ the set of all multi-indices $\alpha=(\alpha_1,\cdots, \alpha_d)$ with integer $\alpha_i\geq0$, and $\mathbb N_0^d$ the set of all multi-indices $\alpha=(\alpha_0,\alpha_1,\cdots, \alpha_d)$ with integer $\alpha_i\geq0$.
For $\boldsymbol x=(x_{1},\cdots, x_{d})$ and $\alpha\in\mathbb N^d$,
define $\boldsymbol x^{\alpha}:=x_{1}^{\alpha_1}\cdots x_{d}^{\alpha_d}$ and $|\alpha|:=\sum\limits_{i=1}^d\alpha_i$. Similarly, for $\boldsymbol \lambda=(\lambda_{0}, \lambda_{1},\cdots, \lambda_{d})$ and $\alpha\in\mathbb N_0^d$,
define $\boldsymbol \lambda^{\alpha}:=\lambda_{0}^{\alpha_0}\lambda_{1}^{\alpha_1}\cdots \lambda_{d}^{\alpha_d}$ and $|\alpha|:=\sum\limits_{i=0}^d\alpha_i$.
The \revision{Bernstein} basis for the space $\mathbb P_k(K)$ consists of all monomials of degree $k$ in the variables $\lambda_i$, i.e., the basis functions are given by
$$
\{\bs \lambda^{\alpha}:=\lambda_{0}^{\alpha_0}\lambda_{1}^{\alpha_1}\cdots\lambda_{d}^{\alpha_d}:\, \alpha\in\mathbb N_0^d,\, |\alpha|=k\}.
$$
Then $\mathbb P_k(K)=\bigg\{\sum\limits_{\alpha\in\mathbb N_0^d, |\alpha|=k}c_{\alpha}\bs \lambda^{\alpha}:\, c_{\alpha}\in\mathbb R \bigg\}$.

\subsection{Tensors}\label{sec:tensor}
Denote by $\mathbb S$ and $\mathbb K$ the subspace of symmetric matrices and \revision{skew}-symmetric matrices of $\mathbb R^{d\times d}$, respectively. 
The set of symmetric tensors $\{\boldsymbol{T}_{i,j}:=\boldsymbol{t}_{i,j}\boldsymbol{t}_{i,j}^{\intercal}\}_{0\leq i<j\leq d}
$ is dual  to $\{\boldsymbol{N}_{i,j}\}_{0\leq i<j\leq d}$, where
$$
\boldsymbol{N}_{i,j}:=\frac{1}{2(\boldsymbol{n}_i^{\intercal}\boldsymbol t_{i,j})(\boldsymbol{n}_j^{\intercal}\boldsymbol t_{i,j})}(\boldsymbol{n}_i\boldsymbol{n}_j^{\intercal} + \boldsymbol{n}_j\boldsymbol{n}_i^{\intercal}).
$$
That is, by direct calculation~\cite[(3.2)]{ChenHuHuang2018},
$$
\boldsymbol{T}_{i,j}:\boldsymbol{N}_{k,\ell}=\delta_{ik}\delta_{j\ell},\quad 0\leq i<j\leq d,\; 0\leq k<\ell\leq d,
$$
where $:$ is the Frobenius inner product of matrices. Assuming $\sum\limits_{0\leq i<j\leq d}c_{ij}\boldsymbol T_{ij} = \boldsymbol0$, then apply the Frobenius inner product with $\boldsymbol N_{k,\ell}$ to conclude $c_{k\ell} = 0$ for all $0\leq k<\ell\leq d$. Therefore both $\{\boldsymbol{T}_{i,j}\}_{0\leq i<j\leq d}$ and $\{\boldsymbol{N}_{i,j}\}_{0\leq i<j\leq d}$ are bases of $\mathbb S$.
The basis $\{\boldsymbol{T}_{i,j}\}_{0\leq i<j\leq d}$ is introduced in \cite{Christiansen2011,Hu2015a} and $\{\boldsymbol{N}_{i,j}\}_{0\leq i<j\leq d}$ is in \cite{Christiansen2011,ChenHuHuang2018}.

\subsection{\revision{Characterization of DoFs for bubble spaces}}
\revision{We give a characterization of DoFs for bubble spaces by the dual approach and the decomposition of the bubble spaces through the bubble complex.}
\revision{
\begin{lemma}\label{lem:bubbleDoF}
Assume finite-dimensional Hilbert spaces $\mathbb B_1, \mathbb B_2, \ldots, \mathbb B_n$ with the inner product $(\cdot,\cdot)$ form an exact Hilbert complex
$$
0\xrightarrow{\subset} \mathbb B_1\xrightarrow{\dd_1} \ldots \mathbb B_i \xrightarrow{\dd_i} \ldots \mathbb B_n \rightarrow0,
$$
where $\mathbb B_i\subseteq \ker(\tr^{\dd_i})$ for $i=1,2, \cdots, n-1$.
Then the bubble space $\mathbb B_i$, for $i=1,\ldots, n-1$, is uniquely determined by the DoFs
\begin{align}
( v, \dd_i^* q) & \quad\forall~q\in \dd_i \mathbb B_i, 
\label{bubbledof1}\\
(v, q) & \quad\forall~q\in \mathbb Q \cong (\dd_{i-1}\mathbb B_{i-1})', \label{bubbledof2}
\end{align}
where $\dd_i^*$ is the adjoint of $\dd_i: \mathbb B_i\to \mathbb B_{i+1}$ with respect to the inner product $(\cdot,\cdot)$ and the isomorphism $ \mathbb Q \to (\dd_{i-1}\mathbb B_{i-1})'$ is given by $p \to (p, \cdot)$ for $p\in \mathbb Q$.
\end{lemma}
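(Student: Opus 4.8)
The statement that $\mathbb B_i$ is \emph{uniquely determined by the DoFs} means precisely that the functionals \eqref{bubbledof1}--\eqref{bubbledof2} form a basis of the dual space $\mathbb B_i'$. The plan is to establish this in the standard two-step fashion used throughout the paper: first match the number of DoFs with $\dim \mathbb B_i$, and then prove unisolvence, i.e.\ that the only $v\in \mathbb B_i$ annihilated by every functional in \eqref{bubbledof1}--\eqref{bubbledof2} is $v=0$. Together these show the functionals are a linearly independent spanning set of $\mathbb B_i'$.

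For the dimension count I would observe that the DoFs in \eqref{bubbledof1} are indexed by $\dd_i \mathbb B_i = \img(\dd_i)$, while those in \eqref{bubbledof2} are indexed by $\mathbb Q$, and $\dim \mathbb Q = \dim (\dd_{i-1}\mathbb B_{i-1})' = \dim \img(\dd_{i-1})$. Hence the total number of functionals is
\[
\dim \img(\dd_i) + \dim \img(\dd_{i-1}).
\]
By the rank--nullity identity $\dim \mathbb B_i = \dim\ker(\dd_i) + \dim\img(\dd_i)$ together with exactness of the complex at $\mathbb B_i$, which gives $\ker(\dd_i) = \img(\dd_{i-1})$, this sum equals $\dim \mathbb B_i$. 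For $i=1$ the second group is empty, since exactness forces $\ker(\dd_1)=\{0\}$, and the count degenerates correctly to $\dim\mathbb B_1 = \dim\img(\dd_1)$.

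For unisolvence, suppose $v\in \mathbb B_i$ makes all DoFs vanish. Using the defining adjoint relation $(v, \dd_i^* q) = (\dd_i v, q)$, the DoFs \eqref{bubbledof1} say $(\dd_i v, q)=0$ for every $q\in \dd_i\mathbb B_i$; choosing $q=\dd_i v$ forces $\dd_i v = 0$, that is, $v\in \ker(\dd_i)$. Exactness then upgrades this to $v\in \img(\dd_{i-1}) = \dd_{i-1}\mathbb B_{i-1}$. Finally, the DoFs \eqref{bubbledof2} give $(v,q)=0$ for all $q\in\mathbb Q$; since $v$ now lies in $\dd_{i-1}\mathbb B_{i-1}$ and $q\mapsto (q,\cdot)$ is an isomorphism $\mathbb Q \to (\dd_{i-1}\mathbb B_{i-1})'$, the functionals $\{(q,\cdot): q\in\mathbb Q\}$ exhaust $(\dd_{i-1}\mathbb B_{i-1})'$ and therefore separate points of $\dd_{i-1}\mathbb B_{i-1}$, whence $v=0$.

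The argument is short once the bookkeeping is arranged; the one point that requires care is the interpretation of the isomorphism $\mathbb Q\cong (\dd_{i-1}\mathbb B_{i-1})'$. It only guarantees that the functionals in \eqref{bubbledof2} separate points of the image subspace $\dd_{i-1}\mathbb B_{i-1}$, and \emph{not} of all of $\mathbb B_i$. This is exactly why the $\dd_i$-step must be carried out first, so that $v$ is already known to sit inside $\dd_{i-1}\mathbb B_{i-1}$ before \eqref{bubbledof2} is invoked. I expect this coordination between the two groups of DoFs, driven by exactness, rather than any individual computation, to be the crux of the proof; the hypothesis $\mathbb B_i\subseteq\ker(\tr^{\dd_i})$ is what lets $\dd_i^*$ coincide with the formal adjoint in the concrete applications, but it is not needed for the abstract unisolvence argument itself.
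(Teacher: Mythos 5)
Your proof is correct, but it takes a different route from the paper's. The paper's proof rests entirely on the splitting lemma for exact finite-dimensional Hilbert complexes (quoted from Hatcher, see also Theorem~2.2 of Chen--Huang), which supplies the decomposition $\mathbb B_i = \dd_i^*\dd_i\mathbb B_{i}\oplus \dd_{i-1}\mathbb B_{i-1}$; this one decomposition does both jobs there: the dimension count (with the remark that $\dd_i^*$ is injective on $\dd_i\mathbb B_i$, so the first summand matches the number of DoFs \eqref{bubbledof1}), and the unisolvence step, where $v$ is written as $\dd_i^*\dd_i v_1+\dd_{i-1}v_2$, the vanishing of \eqref{bubbledof1} gives $\dd_i\dd_i^*(\dd_i v_1)=0$, the isomorphism $\dd_i\dd_i^*:\dd_i\mathbb B_i\to\dd_i\mathbb B_i$ forces $\dd_i v_1=0$, and hence $v=\dd_{i-1}v_2$ is annihilated by \eqref{bubbledof2}. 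You bypass the splitting lemma altogether: your count uses rank--nullity together with exactness $\ker(\dd_i)=\img(\dd_{i-1})$, and your unisolvence step tests \eqref{bubbledof1} with $q=\dd_i v$ to force $\dd_i v=0$, after which exactness---rather than the decomposition---places $v$ in $\dd_{i-1}\mathbb B_{i-1}$, where \eqref{bubbledof2} separates points. Both arguments land on $v\in\dd_{i-1}\mathbb B_{i-1}$ and finish identically; yours is self-contained and slightly more elementary (no external lemma, and the injectivity of $\dd_i^*$ on $\dd_i\mathbb B_i$ comes out as a consequence rather than an input), while the paper's explicit splitting is the conceptual template that recurs throughout (the $E_0$/$E_0^{\bot}$ decompositions of Lemma~\ref{lem:abstract} and the constructions built on them). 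Your closing observations---that the case $i=1$ degenerates correctly since $\ker(\dd_1)=\{0\}$, and that the hypothesis $\mathbb B_i\subseteq\ker(\tr^{\dd_i})$ is not needed for the abstract argument but only for realizing $\dd_i^*$ as a formal adjoint in applications---are both accurate.
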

\begin{proof}
By the splitting lemma in \cite{Hatcher2002} (see also Theorem~2.2 in \cite{ChenHuang2018}),
\begin{equation}\label{bubbledecomp}
\mathbb B_i=\dd_i^*\dd_i\mathbb B_{i}\oplus \dd_{i-1}\mathbb B_{i-1}.
\end{equation}
Since $\dd_i^*$ restricted to $\dd_i\mathbb B_{i}$ is injective, the number of DoFs \eqref{bubbledof1}-\eqref{bubbledof2} is same as $\dim \mathbb B_i$.
Assume $v\in \mathbb B_i$ and all the DoFs \eqref{bubbledof1}-\eqref{bubbledof2} vanish. By the decomposition \eqref{bubbledecomp}, there exist $v_1\in\mathbb B_{i}$ and $v_2\in\mathbb B_{i-1}$ such that $v=\dd_i^*\dd_i v_1+\dd_{i-1}v_2$. The vanishing \eqref{bubbledof1} yields $\dd_i v=0$, that is $\dd_i\dd_i^*(\dd_i v_1)=0$. Noting that $\dd_i\dd_i^*: \dd_i\mathbb B_i\to \dd_i\mathbb B_i$ is isomorphic, we get $\dd_i v_1=0$ and thus $v=\dd_{i-1}v_2$. Now apply the vanishing \eqref{bubbledof2} to get $v=0$.
\end{proof}
}

\revision{When the bubble function space $\mathbb B$ can be characterized precisely, we can simply use $\mathcal N(\mathbb B)$, i.e., $(v, q), q\in \mathbb B$ as DoFs. Lemma \ref{lem:bubbleDoF} tells us it suffices to identify the dual space without knowing the explicit form of the bubble functions. In the following, we present a way to identify $\mathbb B'$ by a decomposition of the dual space.
}

\revision{
\begin{lemma} \label{lem:abstract}
Consider linear map $\dd: V\to P$ between two finite dimensional Hilbert spaces sharing the same inner product $(\cdot,\cdot)$. Let $\mathbb B = \ker(\tr^{\dd})\cap V$, $E_0 =\ker(\dd)\cap \mathbb B$, and $E_0^{\bot} = \mathbb B/E_0$.   Assume 
\begin{enumerate}[(B1)]
\smallskip
\item $\mathbb B' = \mathcal I'\mathcal N(U)$ for some subspace $U\subseteq V$; 
\smallskip
\item there exists an operator $\kappa$ leading to the inclusion: 
 \begin{equation}\label{eq:Vkappa}
U \subseteq \dd^*(H(\dd^*))\oplus ( \ker (\kappa) \cap U),
\end{equation}
where $\dd^*$ is the adjoint of $\dd: \mathbb B \to \dd \mathbb B$ with respect to the inner product $(\cdot,\cdot)$ and can be continuously extended to the space $H(\dd^*)$.
\end{enumerate}
Then 
\begin{align}
\label{eq:E0botdof} (E_0^{\bot})' &= \mathcal N(\dd^*(\dd \mathbb B)), \\
\label{eq:E0dof} E_0' &= \mathcal I'  \mathcal N( \ker (\kappa) \cap U).
\end{align}
\end{lemma}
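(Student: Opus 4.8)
The plan is to establish \eqref{eq:E0botdof} and \eqref{eq:E0dof} separately, each time combining the unisolvence criterion \eqref{eq:dualsubspace} with a dimension count. The structural input throughout is the orthogonal splitting of the bubble space: applying the splitting lemma \eqref{bubbledecomp} to the restriction $\dd:\mathbb B\to\dd\mathbb B$, whose kernel is exactly $E_0$, identifies the complement $E_0^{\bot}$ with $\dd^*(\dd\mathbb B)$, and shows that $\dd^*$ maps $\dd\mathbb B$ isomorphically onto $E_0^{\bot}$.

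First I would prove \eqref{eq:E0botdof}. For any $q=\dd^*p\in\dd^*(\dd\mathbb B)$ and any $v\in E_0$ one has $(v,\dd^*p)=(\dd v,p)=0$ because $\dd v=0$, so every functional $\mathcal N(q)$ annihilates $E_0$ and hence descends to $E_0^{\bot}=\mathbb B/E_0$. Since $\dd^*$ is injective on $\dd\mathbb B$, a rank--nullity count gives $\dim\dd^*(\dd\mathbb B)=\dim\dd\mathbb B=\dim\mathbb B-\dim E_0=\dim E_0^{\bot}$. It then remains to verify unisolvence on $E_0^{\bot}$: if $v\in E_0^{\bot}$ satisfies $(v,q)=0$ for all $q\in\dd^*(\dd\mathbb B)$, the admissible choice $q=\dd^*\dd v$ gives $\|\dd v\|^2=(\dd v,\dd v)=(v,\dd^*\dd v)=0$, whence $\dd v=0$ and $v\in E_0\cap E_0^{\bot}=\{0\}$. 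Together with the matching dimensions this makes $q\mapsto\mathcal N(q)|_{E_0^{\bot}}$ a bijection, so no $\mathcal I'$ is needed.

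Next I would prove \eqref{eq:E0dof} using \eqref{eq:dualsubspace} restricted to $E_0$. Let $v\in E_0$ satisfy $(v,s)=0$ for all $s\in\ker(\kappa)\cap U$; the goal is $v=0$. By hypothesis (B1) it suffices to show $(v,q)=0$ for all $q\in U$, since then the bubble function $v$ is annihilated by all of $\mathcal N(U)$ and must vanish. Given $q\in U$, hypothesis (B2) furnishes a decomposition $q=\dd^*r+s$ with $r\in H(\dd^*)$ and $s\in\ker(\kappa)\cap U$, so that $(v,q)=(\dd v,r)+(v,s)$. The second term vanishes by the standing assumption on $v$, and the first vanishes because $v\in E_0$ has $\dd v=0$. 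Hence $\mathcal I'\mathcal N(\ker(\kappa)\cap U)$ separates points of $E_0$ and $\mathcal I'$ is onto $E_0'$; unlike \eqref{eq:E0botdof}, no dimension equality is asserted, which is why $\mathcal I'$ is retained and $\ker(\kappa)\cap U$ may be strictly larger than $E_0'$.

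The step demanding the most care is the adjoint identity $(v,\dd^*r)=(\dd v,r)$ invoked in the third paragraph for the extended argument $r\in H(\dd^*)$, rather than for $r\in\dd\mathbb B$ where it is merely the definition of the adjoint. The continuous extension of $\dd^*$ to $H(\dd^*)$ is an integration-by-parts operator that generically carries boundary contributions; the identity holds here precisely because $v\in E_0\subseteq\mathbb B=\ker(\tr^{\dd})\cap V$ is a bubble function, so those boundary terms drop out. I would therefore isolate this as the one analytic ingredient, confirming that the extension posited in (B2) is compatible with the integration-by-parts pairing against bubbles, and double-check that the identification $E_0^{\bot}\cong\dd^*(\dd\mathbb B)$ used throughout is exactly the one delivered by \eqref{bubbledecomp}.
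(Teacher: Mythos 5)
Your proposal is correct and follows essentially the same route as the paper: the proof of \eqref{eq:E0dof} — decomposing $q\in U$ via (B2), killing the $\dd^*(H(\dd^*))$ part by the bubble/adjoint identity $(v,\dd^*r)=(\dd v,r)=0$, and concluding via (B1) — is exactly the paper's argument, and your treatment of \eqref{eq:E0botdof} merely spells out the bijection $\dd: E_0^{\bot}\to\dd\mathbb B$ that the paper calls straightforward. Your added care about the extension of the adjoint to $H(\dd^*)$ being compatible with integration by parts against bubble functions is precisely the step the paper uses implicitly, so no gap remains.
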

\begin{proof}
The characterization \eqref{eq:E0botdof} is straightforward as $\dd: E_0^{\bot} \to \dd \mathbb B$ is a bijection. To prove \eqref{eq:E0dof}, it suffices to show that: for any $u\in E_0$, if $(u, p) = 0$ for all $p\in  \ker (\kappa) \cap U$, then $u = 0$. 
First of all, as $u\in E_0$, $u\perp \dd^*(H(\dd^*))$, i.e.,
$$
(u, \dd^* p) = (\dd u, p) = 0 \quad \forall~p\in H(\dd^*).
$$
Combined with the assumption \eqref{eq:Vkappa}, we have $(u,p) = 0$ for all $p\in U$ and conclude $u = 0$ by assumption $\mathbb B' = \mathcal I'\mathcal N(U)$. 
\end{proof}
}

\revision{As we mentioned before, in \eqref{eq:E0dof}, $\mathcal I'$ could be onto. For example, one can choose $U = V$. We want to choose the smallest subspace $U$ to get $E_0' = \mathcal N( \ker (\kappa) \cap U)$. One guideline is the dimension count. On one hand, we have the following identity
$$
\dim E_0 = \dim \mathbb B - \dim E_0^{\bot} =  \dim V - \dim (\img({\rm tr}^d)) - \dim E_0^{\bot}.
$$
On the other hand, we have
$$
\dim (\ker (\kappa) \cap U) = \dim U - \dim (\kappa U).
$$
For specific examples, we only need to figure out the dimension not exact identification of subspaces.
}

\revision{Next we enrich space $V$ to derive another finite element.
\begin{lemma}\label{lm:VPfem}
Consider linear map $\dd: V\to P$ between two finite dimensional Hilbert spaces sharing the same inner product $(\cdot,\cdot)$. Let $\mathbb B = \ker(\tr^{\dd})\cap V$, $E_0 =\ker(\dd)\cap \mathbb B$ and $E_0^{\bot} = \mathbb B/E_0$. Now we enrich the space to $V+\mathbb H$ and let $\mathbb B^+ = \ker(\tr^{\dd})\cap (V+ \mathbb H)$. Assume
\begin{enumerate}[(H1)]
\smallskip
\item $V\cap \mathbb H=\{0\}$ and $\dd V\cap\dd \mathbb H=\{0\}$;
\smallskip
\item $\tr^{\dd}(\mathbb H)\subseteq \tr^{\dd}(V)$;
\smallskip
\item $\dd: \mathbb H\to \dd\mathbb H$ is bijective;
\smallskip
\item $E_0 = \mathcal N(\mathbb Q), (E_0^{\bot})' = \mathcal N(\dd^*\mathbb P)$;
\item $\big(\dd \mathbb B^+ \big)'=\mathcal I'\mathcal N(\mathbb P\oplus\dd\mathbb H)$.
\end{enumerate}
Then 
\begin{equation}\label{eq:B+DoF}
(\mathbb B^+)' =  \mathcal N(\mathbb Q) + \mathcal N \big ( \dd^*(\mathbb P + \dd \mathbb H) \big ),
\end{equation}
i.e., a function $v\in \mathbb B^+$ is uniquely determined by DoFs 
\begin{align}
(v, \dd^* q) & \quad\forall~q\in \mathbb P, \label{VPbubbledof1} \\
( v, \dd^* q) & \quad\forall~q\in \dd\mathbb H, \label{VPbubbledof2}\\
(v, q) & \quad\forall~q\in \mathbb Q. \label{VPbubbledof3}
\end{align}
\end{lemma}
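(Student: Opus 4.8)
The plan is to establish unisolvence in the sense of \eqref{eq:dualsubspace}, namely that a function $v\in\mathbb B^+$ annihilated by all of \eqref{VPbubbledof1}--\eqref{VPbubbledof3} must vanish, and then to match the number of DoFs with $\dim\mathbb B^+$ so that the identification \eqref{eq:B+DoF} is an equality rather than merely a surjection. The organizing idea is that enriching $V$ by $\mathbb H$ leaves the $\dd$-free part of the bubble space untouched, so that the kernel DoFs \eqref{VPbubbledof3} inherited from $E_0$ still suffice on that part, while the enrichment is felt only through the image $\dd\mathbb B^+$.

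First I would prove the structural identity $\ker(\dd)\cap\mathbb B^+=E_0$. Given $v\in\ker(\dd)\cap\mathbb B^+$, write $v=v_V+v_H$ with $v_V\in V$ and $v_H\in\mathbb H$, a decomposition that is unique because $V\cap\mathbb H=\{0\}$ in (H1). Then $\dd v_V+\dd v_H=0$, and the second half of (H1), $\dd V\cap\dd\mathbb H=\{0\}$, forces $\dd v_H=0$; the injectivity of $\dd$ on $\mathbb H$ in (H3) then gives $v_H=0$. Hence $v=v_V\in\ker(\dd)\cap\mathbb B=E_0$, and the reverse inclusion is immediate from $\mathbb B\subseteq\mathbb B^+$.

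Next, assuming all DoFs vanish, I would run the two-stage argument of Lemma~\ref{lem:abstract}. Since $\dd^*$ is the adjoint of $\dd$, the relation $(v,\dd^*q)=(\dd v,q)$ holds for $v\in\mathbb B^+$, so \eqref{VPbubbledof1}--\eqref{VPbubbledof2} together read $(\dd v,q)=0$ for every $q\in\mathbb P\oplus\dd\mathbb H$. By (H5) these functionals exhaust $(\dd\mathbb B^+)'$ through $\mathcal I'$, and since $\dd v\in\dd\mathbb B^+$ this yields $\dd v=0$; by the first step $v\in E_0$. Now \eqref{VPbubbledof3} gives $(v,q)=0$ for all $q\in\mathbb Q$, and $E_0'=\mathcal N(\mathbb Q)$ from (H4) forces $v=0$.

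Finally I would settle the dimension count. Using $V\cap\mathbb H=\{0\}$ and $\tr^{\dd}(\mathbb H)\subseteq\tr^{\dd}(V)$ from (H1)--(H2) together with the rank--nullity identity, $\dim\mathbb B^+=\dim V+\dim\mathbb H-\dim\tr^{\dd}(V)=\dim\mathbb B+\dim\mathbb H$. Against this, the number of DoFs is $\dim\mathbb Q+\dim\mathbb P+\dim\dd\mathbb H=\dim E_0+\dim E_0^{\bot}+\dim\mathbb H$ by (H3) and (H4), which again equals $\dim\mathbb B+\dim\mathbb H$; combined with the injectivity just proved, this makes the DoFs a basis of $(\mathbb B^+)'$. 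I expect the delicate point to be the bookkeeping around the adjoint in the third paragraph: one must ensure that the single operator $\dd^*$ appearing in \eqref{VPbubbledof1}--\eqref{VPbubbledof2} is the adjoint of $\dd$ on the \emph{enriched} image $\dd\mathbb B^+$, and that (H5) genuinely realizes $\mathbb P\oplus\dd\mathbb H$ as a direct complement for $(\dd\mathbb B^+)'$, so that the split of the image DoFs into the $\mathbb P$-part and the $\dd\mathbb H$-part is simultaneously independent and exhausts the image.
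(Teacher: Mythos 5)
Your proposal is correct and takes essentially the same route as the paper's proof: both derive $\dd v=0$ from \eqref{VPbubbledof1}--\eqref{VPbubbledof2} via (H5), reduce to $v\in E_0$ using (H1) and (H3), conclude $v=0$ from (H4) and \eqref{VPbubbledof3}, and verify that the number of DoFs matches $\dim\mathbb B^+=\dim\mathbb B+\dim\mathbb H$. Your standalone structural identity $\ker(\dd)\cap\mathbb B^+=E_0$ and your direct rank--nullity count are only slightly more explicit packagings of the same steps the paper performs.
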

}
\revision{
\begin{proof}
As $\tr^{\dd}(\mathbb H)\subseteq \tr^{\dd}(V)$, $\dim \mathbb B^+ - \dim \mathbb B = \dim \mathbb H$. On the other hand, since $\dd^*\dd: \mathbb H\to \dd^*\dd\mathbb H$ is bijective, the number of DoFs increased is also $\dim \mathbb H$. Thus the dimensions in \eqref{eq:B+DoF} are equal. 
%
Take a $v\in\mathbb B^+$ and assume all the DoFs \eqref{VPbubbledof1}-\eqref{VPbubbledof3} vanish. Thanks to the vanishing DoFs \eqref{VPbubbledof1} and \eqref{VPbubbledof2}, we get from (H5) that $\dd v=0$, which together with $\dd V\cap\dd \mathbb H=\{0\}$ implies $v\in V$. Finally $v=0$ follows from the vanishing DoFs \eqref{VPbubbledof1} and \eqref{VPbubbledof3}.
\end{proof}
}

\revision{
Assumptions $(H1)-(H3)$ are built into the construction of $\mathbb H$. Assumption $(H4)$ can be verified from the characterization of $\mathbb B'$ in Lemma \ref{lem:abstract}. Only $(H5)$ requires some work. One can show the kernel of $\dd$ in the bubble space remains unchanged as $E_0$ but its imagine is enriched. The dual space is enriched from $(\dd\mathbb B)' = \mathcal N(\mathbb P)$ to $(\dd\mathbb B^+)' = \mathcal N(\mathbb P\oplus\dd\mathbb H)$. 
Note that the precise characterization of $\mathbb B^+$ is not easy and $\mathbb H$ may not be in $\mathbb B^+$. 
}
\section{Div-Conforming Finite Elements}\label{sec:divvec}
In this section we shall construct the well-known div-conforming finite elements: Brezzi-Douglas-Marini (BDM)~\cite{BrezziDouglasMarini1986,BrezziDouglasDuranFortin1987,Nedelec:1986family} and Raviart-Thomas (RT) elements~\cite{RaviartThomas1977,Nedelec1980}. We start with this simple example to illustrate our approach and build some elementary blocks. 

\subsection{Div operator}
We begin with the following result on the div operator. 
\begin{lemma}
Let integer $k\geq 0$. The mapping $\div: \boldsymbol x \mathbb H_k(D) \to \mathbb H_k(D)$ is bijective. Consequently $\div: \mathbb P_{k+1}(D;\mathbb R^d) \to \mathbb P_k(D)$ is surjective.
\end{lemma}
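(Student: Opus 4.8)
The plan is to read everything off Euler's identity \eqref{eq:Hkdiv}, which already records $\div(\boldsymbol x q) = (k+d)q$ for every $q\in\mathbb H_k(D)$. Since $k+d\geq d\geq 1$, the scalar $k+d$ is nonzero, so the composite $q\mapsto \boldsymbol x q\mapsto \div(\boldsymbol x q) = (k+d)q$ is nothing more than multiplication by a nonzero constant on $\mathbb H_k(D)$. The entire statement is then a matter of unwinding this.

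First I would check that $q\mapsto \boldsymbol x q$ is an isomorphism from $\mathbb H_k(D)$ onto $\boldsymbol x\mathbb H_k(D)$: it is surjective by the definition of the target, and injective because $\boldsymbol x q = \boldsymbol 0$ forces $x_1 q\equiv 0$ and hence $q\equiv 0$ in the polynomial ring (an integral domain). Consequently $\div$ restricted to $\boldsymbol x\mathbb H_k(D)$ factors as the inverse of this isomorphism followed by multiplication by $k+d$, both bijective, so $\div:\boldsymbol x\mathbb H_k(D)\to\mathbb H_k(D)$ is a bijection. Equivalently, one verifies injectivity directly ($\div(\boldsymbol x q)=0\Rightarrow (k+d)q=0\Rightarrow q=0\Rightarrow \boldsymbol x q=0$) and surjectivity by exhibiting the preimage $q = p/(k+d)$ of any $p\in\mathbb H_k(D)$.

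For the stated consequence I would decompose by homogeneous degree. Writing $\mathbb P_k(D)=\bigoplus_{j=0}^{k}\mathbb H_j(D)$, the first part applied with $k$ replaced by each $j\leq k$ gives $\mathbb H_j(D)=\div(\boldsymbol x\mathbb H_j(D))$, and $\boldsymbol x\mathbb H_j(D)\subseteq \mathbb H_{j+1}(D;\mathbb R^d)\subseteq \mathbb P_{k+1}(D;\mathbb R^d)$ since $j+1\leq k+1$. Summing the preimages over $j=0,\ldots,k$ produces a vector field in $\mathbb P_{k+1}(D;\mathbb R^d)$ whose divergence is any prescribed $p\in\mathbb P_k(D)$, which is exactly surjectivity.

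There is no real obstacle here; the only point requiring a moment's care is that $k+d$ never vanishes, guaranteed by $d\geq 1$, which is precisely what turns Euler's formula from a mere differentiation identity into an invertible relation. A dimension count offers an alternative finish to the first part: the injectivity of $q\mapsto\boldsymbol x q$ gives $\dim\boldsymbol x\mathbb H_k(D)=\dim\mathbb H_k(D)$, so for this map between equal-dimensional spaces surjectivity follows from injectivity alone.
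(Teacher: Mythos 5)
Your proposal is correct and follows exactly the route the paper intends: its proof is the one-line remark that the lemma is ``a simple consequence of the Euler's formulae \eqref{eq:homogeneouspolyprop} and \eqref{eq:Hkdiv},'' and your argument is precisely the careful unwinding of that remark --- using $\div(\boldsymbol x q)=(k+d)q$ with $k+d\neq 0$ for bijectivity on $\boldsymbol x\mathbb H_k(D)$, then decomposing $\mathbb P_k(D)$ into homogeneous components for the surjectivity of $\div:\mathbb P_{k+1}(D;\mathbb R^d)\to\mathbb P_k(D)$. No gaps; your injectivity check for $q\mapsto\boldsymbol x q$ and the degree-by-degree summation are exactly the details the paper leaves implicit.
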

\begin{proof}
It is a simple consequence of the Euler's formulae \eqref{eq:homogeneouspolyprop} and \eqref{eq:Hkdiv}.
\end{proof}

\subsection{Trace space}
The trace operator for $H(\div, K)$ space 
$$
{\rm tr}^{\div}: H(\div, K) \to H^{-1/2}(\partial K)
$$
is a continuous extension of ${\rm tr}^{\div} \boldsymbol v = \boldsymbol n\cdot \boldsymbol v|_{\partial K}$ defined on smooth functions. We then focus on the restriction of the trace operator to the polynomial space. Denote by $\mathbb P_{k}(\mathcal F^1(K)):=\{q\in L^2(\partial K): q|_F\in\mathbb P_k(F) \textrm{ for each }F\in\mathcal F^1(K)\}$, which is a Hilbert space with inner product $\sum_{F\in \mathcal F^1(K)}(\cdot,\cdot)_F$. Obviously 
${\rm tr}^{\div} (\mathbb P_k(K; \mathbb R^d)) \subseteq \mathbb P_{k}(\mathcal F^1(K))$. We prove it is indeed surjective. 

\begin{lemma}\label{lem:trdivonto}
For integer $k\geq 1$, the mapping ${\rm tr}^{\div}: \mathbb P_k(K; \mathbb R^d) \to \mathbb P_{k}(\mathcal F^1(K))$ is onto. Consequently
$$
\dim {\rm tr}^{\div}(\mathbb P_k(K; \mathbb R^d)) = \dim\mathbb P_{k}(\mathcal F^1(K)) = (d+1){k + d - 1\choose k}.
$$
\end{lemma}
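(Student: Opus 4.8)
The plan is to establish surjectivity by explicitly constructing a right inverse of $\tr^{\div}$, after which the dimension formula follows at once from $\dim \mathbb P_k(F) = \binom{k+d-1}{k}$ for the $(d-1)$-simplex $F$ and the fact that $\mathbb P_k(\mathcal F^1(K)) = \prod_{F\in\mathcal F^1(K)}\mathbb P_k(F)$ has $d+1$ \emph{independent} factors, no continuity being imposed across $(d-2)$-dimensional faces. Because the factors are independent, it suffices to solve the single-face problem: for each vertex index $i$ and each $g_i\in\mathbb P_k(F_i)$, I would produce a field $\boldsymbol v^{(i)}\in\mathbb P_k(K;\mathbb R^d)$ whose normal trace equals $g_i$ on $F_i$ and vanishes on every other face. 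Summing $\boldsymbol v=\sum_{i=0}^d \boldsymbol v^{(i)}$ then realizes arbitrary boundary data.

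The mechanism I would use is the vertex-based dual frame from the preliminaries. Taking $\boldsymbol x_0$ as origin, $\{\boldsymbol l_m\}_{m=1}^d$ is dual to $\{\boldsymbol n_m\}_{m=1}^d$, i.e. $\boldsymbol n_j^{\intercal}\boldsymbol l_m=\delta_{jm}$. Writing a candidate as $\boldsymbol v^{(0)}=\sum_{m=1}^d \lambda_m w_m\,\boldsymbol l_m$ with $w_m\in\mathbb P_{k-1}(K)$ gives $\boldsymbol n_j^{\intercal}\boldsymbol v^{(0)}|_{F_j}=(\lambda_m w_m)|_{F_j}\big|_{m=j}=0$ for every $j=1,\dots,d$ automatically, since $\lambda_j|_{F_j}=0$. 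Thus such a field is tangential on $F_1,\dots,F_d$ and its only active normal trace sits on the opposite face $F_0$; this is exactly the single-face localization I want, and the analogous construction with $\boldsymbol x_i$ as origin handles $F_i$.

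It remains to choose the $w_m$ so that the trace on $F_0$ matches a prescribed $g_0\in\mathbb P_k(F_0)$. Using $\boldsymbol l_m=|\nabla\lambda_m|\,\boldsymbol t_{m,0}$, the constants $c_m:=\boldsymbol n_0^{\intercal}\boldsymbol l_m=|\nabla\lambda_m|\,\boldsymbol n_0^{\intercal}(\boldsymbol x_0-\boldsymbol x_m)$ are all nonzero, because $\boldsymbol n_0^{\intercal}(\boldsymbol x_0-\boldsymbol x_m)$ is the signed distance from $\boldsymbol x_0$ to the hyperplane of $F_0$ (the same for every $m\neq0$ since each $\boldsymbol x_m\in F_0$) and is nonzero as $\boldsymbol x_0\notin F_0$. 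Hence on $F_0$ the normal trace reads $\boldsymbol n_0^{\intercal}\boldsymbol v^{(0)}|_{F_0}=\sum_{m=1}^d c_m\,\mu_m\,(w_m|_{F_0})$, where $\mu_m:=\lambda_m|_{F_0}$ are the barycentric coordinates of $F_0$ and, since the scalar restriction $\mathbb P_{k-1}(K)\to\mathbb P_{k-1}(F_0)$ is onto, $w_m|_{F_0}$ ranges over all of $\mathbb P_{k-1}(F_0)$. I must therefore show $\sum_{m=1}^d \mu_m\,\mathbb P_{k-1}(F_0)=\mathbb P_k(F_0)$, after which dividing by the nonzero $c_m$ absorbs the constants. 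This follows from the Bernstein partition-of-unity identity: as $|\alpha|=k\geq1$, every Bernstein monomial $\boldsymbol\mu^{\alpha}$ has some $\alpha_m\geq1$ and factors as $\mu_m\,\boldsymbol\mu^{\alpha-\boldsymbol e_m}$ with $\boldsymbol\mu^{\alpha-\boldsymbol e_m}\in\mathbb P_{k-1}(F_0)$, so the span is all of $\mathbb P_k(F_0)$.

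The \emph{main obstacle} is precisely this reproduction identity $\sum_m \mu_m\,\mathbb P_{k-1}(F_0)=\mathbb P_k(F_0)$, and it is exactly here that the hypothesis $k\geq1$ enters, since for $k=0$ the factorization $\boldsymbol\mu^{\alpha}=\mu_m\,\boldsymbol\mu^{\alpha-\boldsymbol e_m}$ is unavailable. Everything else is bookkeeping: the duality $\boldsymbol n_j^{\intercal}\boldsymbol l_m=\delta_{jm}$ decouples the faces, the $\lambda_j$-factor enforces the vanishing on $F_j$, and summing the $d+1$ single-face solutions yields a preimage of arbitrary data, proving $\tr^{\div}$ onto; the asserted dimension is then $(d+1)\binom{k+d-1}{k}$.
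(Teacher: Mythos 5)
Your proposal is correct and follows essentially the same route as the paper: localize to one face by linearity, use the dual frame $\{\boldsymbol l_m\}$ together with the factor $\lambda_m$ to kill the normal trace on all other faces, and invoke the Bernstein factorization $\boldsymbol\lambda^{\alpha}=\lambda_m\boldsymbol\lambda^{\alpha-\boldsymbol e_m}$ (where $k\geq1$ enters) to match arbitrary data on the remaining face. The paper simply does this one Bernstein monomial at a time with the single field $\lambda_i q\,\boldsymbol l_i/(\boldsymbol l_i,\boldsymbol n_0)$, which is exactly what your ansatz $\sum_m\lambda_m w_m\boldsymbol l_m$ reduces to term by term.
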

\begin{proof}
By the linearity of the trace operator, it suffices to prove that: for any $F_i\in \mathcal F^1(K)$ and any $p\in \mathbb P_k(F_i)$, we can find a $\boldsymbol v\in \mathbb P_k(K; \mathbb R^d)$ s.t. $\boldsymbol v\cdot \boldsymbol n|_{F_i} = p$ and $\boldsymbol v\cdot \boldsymbol n|_{F_j} = 0$ for other $F_j\in \mathcal F^1(K)$ with $j\neq i$. Without loss of generality, we can assume $i=0$. 

For any $p\in \mathbb P_k(F_0)$, it can be expanded in Bernstein basis $p = \sum\limits_{\alpha\in\mathbb N^d, |\alpha|=k}c_{\alpha}\bs \lambda^{\alpha}$, which can be naturally extended to the whole simplex by the definition of barycentric coordinates. Again by the linearity, we only need to consider one generic term still denoted by $p = c_{\alpha}\bs \lambda^{\alpha}$ for a multi-index $\alpha\in \mathbb N^d, |\alpha|=k$. 
 As $\sum\limits_{i=1}^d \alpha_i = k >0$, there exists an index $1\leq i\leq d$ s.t. $\alpha_i\neq 0$. Then we can write $p = \lambda_i q$ with $q\in \mathbb P_{k-1}(K)$. 

Now we let $\boldsymbol v = \lambda_i q \, \boldsymbol l_i /(\boldsymbol l_i, \boldsymbol n_{0})$. By construction, 
\begin{align*}
\boldsymbol v\cdot\boldsymbol n_0 &= \lambda_i q = p,\\
(\boldsymbol v\cdot\boldsymbol n_j)|_{F_{j}}&= \lambda_i q|_{F_j} \, (\boldsymbol l_i, \boldsymbol n_j) /(\boldsymbol l_i, \boldsymbol n_{0}) = 0, \quad j = 1,2,\cdots, d. 
\end{align*}
That is we find $\boldsymbol v\in\mathbb P_k(K; \mathbb R^d)$ s.t. $({\rm tr}^{\div} \boldsymbol v)|_{F_0}= p$ and $({\rm tr}^{\div} \boldsymbol v)|_{F_j}= 0$ for $j=1,\ldots, d$. 
\end{proof}

With this identification of the trace space, we clearly have $\mathcal N( \mathbb P_{k}(\mathcal F^1(K)))  = ( {\rm tr}^{\div}(\mathbb P_k(K; \mathbb R^d)))'$, and through $({\rm tr}^{\div})'$, we embed $\mathcal N( \mathbb P_{k}(\mathcal F^1(K))$ into $\mathbb P_k'(K; \mathbb R^d)$.


 
\begin{lemma}\label{lem:divtracedof}
Let integer $k\geq 1$. 
For any $\boldsymbol v\in \mathbb P_k(K; \mathbb R^d)$, if the following degrees of freedom vanish, i.e.,
\begin{equation*}
(\boldsymbol v\cdot\boldsymbol n, p)_F= 0 \quad\forall~p\in\mathbb P_{k}(F), F\in \mathcal F^1(K),
\end{equation*}
then ${\rm tr}^{\div}\boldsymbol v=0$.
\end{lemma}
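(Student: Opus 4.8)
The plan is to reduce the statement to a face-by-face argument and then to exploit the non-degeneracy of the $L^2$ inner product on the finite-dimensional space $\mathbb P_k(F)$. Since ${\rm tr}^{\div}\boldsymbol v = \boldsymbol v\cdot\boldsymbol n|_{\partial K}$ is determined face by face, it suffices to show that $(\boldsymbol v\cdot\boldsymbol n)|_F = 0$ for every $F\in\mathcal F^1(K)$ separately.

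First I would fix a face $F\in\mathcal F^1(K)$ and verify that the scalar function $(\boldsymbol v\cdot\boldsymbol n)|_F$ is itself an element of $\mathbb P_k(F)$. This is the only point that requires a moment's thought: the outward unit normal $\boldsymbol n=\boldsymbol n_F$ is a \emph{constant} vector on the flat face $F$, so $\boldsymbol v\cdot\boldsymbol n\in\mathbb P_k(K)$, and the restriction to $F$ of a degree-$k$ polynomial is again of degree at most $k$ on the $(d-1)$-dimensional affine hull of $F$. Hence $(\boldsymbol v\cdot\boldsymbol n)|_F\in\mathbb P_k(F)$ is an admissible test function in the vanishing degrees of freedom.

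The remaining step is immediate: substituting the admissible choice $p=(\boldsymbol v\cdot\boldsymbol n)|_F\in\mathbb P_k(F)$ into the hypothesis $(\boldsymbol v\cdot\boldsymbol n, p)_F=0$ gives
$$
\|(\boldsymbol v\cdot\boldsymbol n)|_F\|_{L^2(F)}^2 = (\boldsymbol v\cdot\boldsymbol n, \boldsymbol v\cdot\boldsymbol n)_F = 0,
$$
so $(\boldsymbol v\cdot\boldsymbol n)|_F=0$. Since $F$ was arbitrary, $\boldsymbol v\cdot\boldsymbol n$ vanishes on every $(d-1)$-dimensional face of $K$, that is, ${\rm tr}^{\div}\boldsymbol v=0$ on $\partial K$.

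There is no genuine obstacle here: the result is essentially the observation that a polynomial which is $L^2(F)$-orthogonal to the whole of $\mathbb P_k(F)$ must be the zero polynomial, combined with the closure property that the normal trace of a polynomial vector field is, face by face, a polynomial of the same degree. The only care needed is to confirm that the trace lands in $\mathbb P_k(F)$, which is exactly what makes it an admissible test function and turns the orthogonality into a norm identity.
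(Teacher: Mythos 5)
Your proof is correct, but it takes a more elementary route than the paper. You substitute the admissible test function $p=(\boldsymbol v\cdot\boldsymbol n)|_F\in\mathbb P_k(F)$ directly into the hypothesis and conclude from the non-degeneracy of the $L^2(F)$ inner product; the only ingredient you need is the inclusion ${\rm tr}^{\div}(\mathbb P_k(K;\mathbb R^d))\subseteq \mathbb P_k(\mathcal F^1(K))$, which you justify correctly via the constancy of $\boldsymbol n$ on each flat face. The paper instead argues through duality: it invokes the surjectivity of ${\rm tr}^{\div}$ from Lemma~\ref{lem:trdivonto} to get injectivity of the dual operator $({\rm tr}^{\div})'$, observes that the vanishing DoFs mean $\boldsymbol v\perp\img(({\rm tr}^{\div})')$, and concludes $\boldsymbol v\in\ker({\rm tr}^{\div})$. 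For the lemma as stated your argument is leaner --- it never uses surjectivity of the trace map at all. What the paper's heavier formulation buys is consistency with its general framework: combined with Lemma~\ref{lem:trdivonto} and a dimension count, the duality viewpoint shows these DoFs form exactly a basis of $({\rm tr}^{\div}(\mathbb P_k(K;\mathbb R^d)))'$ (no redundancy), which is the structural fact the paper reuses when assembling the full element, whereas your substitution argument by itself only establishes the unisolvence-type implication.
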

\begin{proof}
Due to Lemma~\ref{lem:trdivonto}, 
 the dual operator $({\rm tr}^{\div})': \mathbb P_{k}'(\mathcal F^1(K)) \to \mathbb P_k'(K; \mathbb R^d)$ is injective. 
Taking $N=(p,\cdot)_F\in \mathbb P_{k}'(\mathcal F^1(K))$ for any $F\in \mathcal F^1(K)$ and $p\in \mathbb P_{k}(F)$, we have
$$
(({\rm tr}^{\div})'N)(\boldsymbol v)=N({\rm tr}^{\div}\boldsymbol v)=(p,{\rm tr}^{\div}\boldsymbol v)_F=(p,\boldsymbol  n\cdot\boldsymbol v)_F.
$$
By the assumption,
we have $\boldsymbol v\perp  \img(({\rm tr}^{\div})')$, which indicates $\boldsymbol v\in\ker({\rm tr}^{\div})$.
\end{proof}

\revision{Another basis of $({\rm tr}^{\div}(\mathbb P_k(K; \mathbb R^d)))'$ can be obtained by a geometric decomposition of vector Lagrange elements; see \cite{Chen;Huang:2021Geometric} for details.}

\subsection{Bubble space}
After we characterize the range of the trace operator, we focus on its null space.
Define the polynomial bubble space 
$$\mathbb B_k(\div, K) = \ker({\rm tr}^{\div})\cap \mathbb P_k(K; \mathbb R^d).$$
As $\{\boldsymbol n_i, i=1,2,\cdots, d\}$ is a basis of $\mathbb R^d$, it is obvious that for $k=0$, $\mathbb B_0(\div, K) = \{\boldsymbol0\}$. As a  direct consequence of dimension count, see Lemma \ref{lm:dimBdiv} below, $\mathbb B_1(\div, K)$ is also the zero space.

\begin{lemma}\label{lm:dimBdiv}
Let integer $k\geq 1$. It holds 
\begin{equation*}
\dim \mathbb B_k(\div, K)
= d {k + d \choose k} - (d+1){k + d - 1\choose k} = (k-1){k+d-1\choose k}.
\end{equation*}
\end{lemma}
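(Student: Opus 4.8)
The plan is to apply the rank-nullity identity $\dim V = \dim \ker(T) + \dim \img(T)$ to the trace operator $T = {\rm tr}^{\div}$ acting on $V = \mathbb P_k(K; \mathbb R^d)$, whose kernel is by definition exactly $\mathbb B_k(\div, K)$. The only two ingredients I need are the dimension of the domain and the dimension of the image, both already at hand.

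For the domain, the componentwise structure gives $\dim \mathbb P_k(K; \mathbb R^d) = d \dim \mathbb P_k(K) = d {k+d \choose k}$ from the recalled formula for $\dim \mathbb P_k(D)$. For the image, Lemma~\ref{lem:trdivonto} asserts that ${\rm tr}^{\div}$ maps onto $\mathbb P_{k}(\mathcal F^1(K))$ and computes $\dim \img({\rm tr}^{\div}) = (d+1){k+d-1 \choose k}$. Subtracting yields the first equality
$$
\dim \mathbb B_k(\div, K) = d {k+d \choose k} - (d+1){k+d-1 \choose k}.
$$

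The second equality is a purely algebraic simplification that I would carry out by rewriting ${k+d \choose k}$ in terms of ${k+d-1 \choose k}$ via the relation ${k+d \choose k} = \frac{k+d}{d}{k+d-1 \choose k}$, so that $d{k+d \choose k} = (k+d){k+d-1 \choose k}$. Then
$$
(k+d){k+d-1 \choose k} - (d+1){k+d-1 \choose k} = (k-1){k+d-1 \choose k},
$$
which is the claimed closed form.

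Since the whole argument rests on the surjectivity and dimension count in Lemma~\ref{lem:trdivonto}, the genuine content of the proof lies there; the present lemma reduces to an elementary application of rank-nullity followed by a binomial identity, so I do not anticipate any real obstacle. If anything, the only point requiring a moment of care is the reindexing in the binomial simplification, which is routine.
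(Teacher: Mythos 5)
Your proposal is correct and is essentially the paper's own argument: the paper likewise invokes Lemma~\ref{lem:trdivonto} to identify $\dim {\rm tr}^{\div}(\mathbb P_k(K;\mathbb R^d)) = (d+1)\binom{k+d-1}{k}$ and subtracts it from $\dim \mathbb P_k(K;\mathbb R^d) = d\binom{k+d}{k}$ via rank-nullity, leaving the binomial simplification implicit. Your explicit verification of the identity $d\binom{k+d}{k} = (k+d)\binom{k+d-1}{k}$ is a fine, if routine, addition.
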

\begin{proof}
By the characterization of the trace space, we can count the dimension
$$
\dim \mathbb B_k(\div, K)
= \dim \mathbb P_k(K;\mathbb R^d) - \dim {\rm tr}^{\div}(\mathbb P_k(K; \mathbb R^d)),
$$
\revision{as required.}
\end{proof}

Next we find different bases of $\mathbb B_k'(\div, K)$. The primary approach is to find a basis for $\mathbb B_k(\div, K)$, which induces a basis of $\mathcal N( \mathbb B_k(\div, K))$. For example, one can show
\begin{equation*}
\mathbb B_k(\div, K)=\sum_{0 \leq i<j\leq d}\lambda_i\lambda_j\mathbb P_{k-2}(K)\boldsymbol t_{i,j}\quad\textrm{ for } k\geq 2. 
\end{equation*}
Verification $\lambda_i\lambda_j\mathbb P_{k-2}(K)\boldsymbol t_{i,j}\subseteq \mathbb B_k(\div, K)$ is from the fact $$\lambda_i\lambda_j \boldsymbol t_{i,j}\cdot \boldsymbol n_{\ell}|_{F_{\ell}} = 0, \quad \ell = 0, 1, \cdots, d.$$
Indeed if $\ell = i$ or $\ell = j$, then $\lambda_i\lambda_j|_{F_{\ell}} =0$. Otherwise  $\boldsymbol t_{i,j}\cdot \boldsymbol n_{\ell} = 0$  by \eqref{eq:tijlambdal}. To show every function in $\mathbb B_k(\div, K)$ can be written as a linear combination of $\lambda_i\lambda_j \boldsymbol t_{i,j}$ is tedious and will be skipped. 
Obviously $\dim \mathbb B_k(\div, K) \neq d(d+1)/2\dim \mathbb P_{k-2}(K)$ as $\{\lambda_i\lambda_j\mathbb P_{k-2}(K)\boldsymbol t_{i,j}, 0 \leq i<j\leq d\}$ is linearly dependent. One can further expand the polynomials in $\mathbb P_{k-2}(K)$ in the Bernstein basis and $\boldsymbol t_{ij}$ in terms of $\dd \lambda$ and add constraint on the multi-index  to find a basis from this generating set; see~\cite{arnold2009geometric}. \revision{Another systematical way to identify $ \mathbb B_k(\div, K)$ is through a geometric decomposition of vector Lagrange elements and a $t-n$ basis decomposition at each sub-simplex; see \cite{Chen;Huang:2021Geometric} for details.}

Fortunately we are interested in the dual space, which can let us find a basis of $\mathbb B_k'(\div, K) $ without knowing one for $\mathbb B_k(\div, K)$. \revision{Following Lemma \ref{lem:abstract}}, we first find a larger space containing $\mathbb B_k'(\div, K)$. 
 
\begin{lemma}\label{lm:Bkdual1}
Let integer $k\geq 1$. We have
\begin{equation*}
\mathbb B_k'(\div, K) = \mathcal I'\mathcal N (\mathbb P_{k-1}(K;\mathbb R^d)),
\end{equation*}
where $\mathcal I: \mathbb B_k(\div, K)\to\mathbb P_{k}(K;\mathbb R^d)$ is the inclusion map.
\end{lemma}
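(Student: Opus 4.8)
The plan is to apply the surjectivity criterion \eqref{eq:dualsubspace} with $V = \mathbb B_k(\div, K)$, the ambient space $W = \mathbb P_k(K;\mathbb R^d)$, and $P' = \mathcal N(\mathbb P_{k-1}(K;\mathbb R^d))$. As the preceding discussion indicates, this identifies a space $\mathcal N(\mathbb P_{k-1}(K;\mathbb R^d))$ that is \emph{larger} than $\mathbb B_k'(\div, K)$, so $\mathcal I'$ will be onto but in general not injective; hence no dimension count is needed and it suffices to verify the vanishing condition. Concretely, I would show: if $\boldsymbol v \in \mathbb B_k(\div, K)$ satisfies $(\boldsymbol v, \boldsymbol q)_K = 0$ for all $\boldsymbol q \in \mathbb P_{k-1}(K;\mathbb R^d)$, then $\boldsymbol v = 0$.

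First I would expand $\boldsymbol v$ in the basis $\{\boldsymbol l_i\}_{i=1}^d$ dual to $\{\boldsymbol n_i\}_{i=1}^d$, namely $\boldsymbol v = \sum_{i=1}^d (\boldsymbol v \cdot \boldsymbol n_i)\,\boldsymbol l_i$, which holds because $(\boldsymbol l_i, \boldsymbol n_j) = \delta_{ij}$. Since $\boldsymbol v \in \mathbb B_k(\div, K)$, its normal trace vanishes on $\partial K$; in particular $\boldsymbol v \cdot \boldsymbol n_i|_{F_i} = 0$ for $i = 1, \ldots, d$. Because $F_i$ is the zero level set of $\lambda_i$, each scalar component factors as $\boldsymbol v \cdot \boldsymbol n_i = \lambda_i g_i$ with $g_i \in \mathbb P_{k-1}(K)$. (Interestingly, only the traces on $F_1, \ldots, F_d$ are needed; the face $F_0$ plays no role here.)

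The crucial choice is then the test function $\boldsymbol q = \sum_{i=1}^d g_i \boldsymbol n_i \in \mathbb P_{k-1}(K;\mathbb R^d)$, for which the orthogonality hypothesis yields
\begin{equation*}
0 = (\boldsymbol v, \boldsymbol q)_K = \sum_{i=1}^d (\boldsymbol v \cdot \boldsymbol n_i, g_i)_K = \sum_{i=1}^d (\lambda_i g_i, g_i)_K = \sum_{i=1}^d (\lambda_i, g_i^2)_K.
\end{equation*}
Since each barycentric coordinate $\lambda_i$ is nonnegative on $K$ and strictly positive in the interior, every summand $(\lambda_i, g_i^2)_K$ is nonnegative, so all of them vanish; this forces $g_i = 0$ for each $i$, and therefore $\boldsymbol v = 0$. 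This establishes \eqref{eq:dualsubspace}, and hence $\mathbb B_k'(\div, K) = \mathcal I'\mathcal N(\mathbb P_{k-1}(K;\mathbb R^d))$.

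I do not expect a genuine obstacle: once the framework of Section~\ref{sec:pre} is in place the argument is short. The single step requiring insight rather than routine computation is the selection of $\boldsymbol q = \sum_i g_i \boldsymbol n_i$, which collapses the orthogonality pairing into a sum of manifestly sign-definite terms $(\lambda_i, g_i^2)_K$. This is exactly what the dual bases $\{\boldsymbol n_i\}$ and $\{\boldsymbol l_i\}$ together with the divisibility property of the barycentric coordinates are set up to enable, so the natural route is to lean on those structures rather than on integration by parts.
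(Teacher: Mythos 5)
Your proof is correct and follows essentially the same route as the paper's: both expand $\boldsymbol v$ in the dual basis $\{\boldsymbol l_i\}$, use $\boldsymbol v\cdot\boldsymbol n_i|_{F_i}=0$ to factor out $\lambda_i$, and test against $\boldsymbol q=\sum_{i=1}^d g_i\boldsymbol n_i$ to reduce the pairing to the sign-definite sum $\sum_{i=1}^d(\lambda_i,g_i^2)_K$. Your added remarks — that no dimension count is required since $\mathcal I'$ need only be onto, and that the face $F_0$ plays no role — are accurate refinements of the same argument.
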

\begin{proof}
It suffices to show that: for any $\boldsymbol v\in \mathbb B_k(\div, K)$ satisfying
\begin{equation}\label{eq:20210525}
(\boldsymbol v, \boldsymbol q)_K=0\quad\forall~\boldsymbol q\in\mathbb P_{k-1}(K;\mathbb R^d),
\end{equation}
then $\boldsymbol v = \boldsymbol 0$. 

Expand $\boldsymbol v$ in terms of $\{ \boldsymbol l_i \}$ as $\boldsymbol v = \sum\limits_{i=1}^d v_i \boldsymbol l_i$. Then $\boldsymbol v\cdot \boldsymbol n_i|_{F_i} = 0$ implies $v_i|_{F_i} = 0$ for $i=1,\cdots,d$, i.e., $v_i = \lambda_i p_i$ for some $p_i\in \mathbb P_{k-1}(K)$. Choose $\boldsymbol q= \sum\limits_{i=1}^d p_i \boldsymbol n_i \in \mathbb P_{k-1}(K;\mathbb R^d)$ in \eqref{eq:20210525} to get $\int_{K} \boldsymbol v \cdot \bigg(\sum\limits_{i=1}^d p_i \boldsymbol n_i\bigg) \dx = \int_{K}\sum\limits_{i=1}^d \lambda_i p_i^2 \dx = 0$, which implies $p_i = 0$, i.e. $v_i= 0$ for all $i=1,2,\cdots, d$. Thus $\boldsymbol v = \boldsymbol 0$. 
\end{proof}

Again the dimension count $\dim \mathbb B_k(\div, K) \neq \dim \mathbb P_{k-1}(K; \mathbb R^d)$ implies $\mathcal I'$ is not injective. We need to further refine our characterization.
%
%
We use $\div$ operator to decompose $\mathbb B_k(\div, K)$ into
\begin{equation*}
E_0 := \mathbb B_k(\div, K)\cap \ker(\div), \quad E_0^{\bot} := \mathbb B_k(\div, K)/ E_0.
\end{equation*}
We can characterize the dual space of $E_0^{\bot}$ through $\div^*$, which is $-\grad$ restricting to the bubble function space and \revision{can be continuously extended to $H^1$}.
\begin{lemma}
Let integer $k\geq 1$. The mapping
 $$
 \div: E_0^{\bot} \to \mathbb P_{k-1}(K)/\mathbb R
 $$
 is a bijection and consequently
 $$
 \dim E_0^{\bot} = \dim \mathbb P_{k-1}(K) - 1 = {k -1 + d \choose d} - 1.
 $$
\end{lemma}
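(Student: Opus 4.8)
The plan is to split the assertion into three parts: the bijectivity between $E_0^{\bot}$ and the image $\div \mathbb B_k(\div, K)$, the identification of that image with $\mathbb P_{k-1}(K)/\mathbb R$, and the dimension count. The first part is essentially formal. By definition $E_0 = \mathbb B_k(\div, K)\cap\ker(\div)$ is exactly the kernel of $\div$ restricted to the bubble space, so the map induced on the quotient $E_0^{\bot} = \mathbb B_k(\div, K)/E_0$ is injective onto $\div \mathbb B_k(\div, K)$. To land in $\mathbb P_{k-1}(K)/\mathbb R$, I would compose with the canonical projection $\pi:\mathbb P_{k-1}(K)\to\mathbb P_{k-1}(K)/\mathbb R$; Green's formula gives $\int_K \div\boldsymbol v = \int_{\partial K}\boldsymbol v\cdot\boldsymbol n = 0$ for every bubble $\boldsymbol v$, so each $\div\boldsymbol v$ has zero mean and a constant in the image forces $\div\boldsymbol v=0$. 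Hence $\pi\circ\div$ still has kernel exactly $E_0$ and $E_0^{\bot}\to\mathbb P_{k-1}(K)/\mathbb R$ is injective. It therefore remains to prove surjectivity, which is equivalent to showing $\div\mathbb B_k(\div,K) = \{q\in\mathbb P_{k-1}(K):\int_K q = 0\}$; the inclusion $\subseteq$ was just observed.

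The reverse inclusion, surjectivity onto the mean-zero polynomials, is the main obstacle. I would argue by orthogonality: since $\div\mathbb B_k(\div,K)$ is a subspace of the mean-zero polynomials, whose $L^2(K)$-orthogonal complement within $\mathbb P_{k-1}(K)$ is the constants, it suffices to show that any $p\in\mathbb P_{k-1}(K)$ with $(\div\boldsymbol v, p)_K = 0$ for all $\boldsymbol v\in\mathbb B_k(\div,K)$ is constant. Integrating by parts and using the vanishing normal trace of bubbles turns this into $(\boldsymbol v,\grad p)_K = 0$ for all $\boldsymbol v\in\mathbb B_k(\div,K)$. The key step is to feed in the explicit bubbles $\boldsymbol v = r\,\lambda_i\lambda_j\boldsymbol t_{i,j}$, which lie in $\mathbb B_k(\div,K)$ by the inclusion $\lambda_i\lambda_j\mathbb P_{k-2}(K)\boldsymbol t_{i,j}\subseteq\mathbb B_k(\div,K)$ verified above. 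This yields $(\boldsymbol t_{i,j}\cdot\grad p,\; r\,\lambda_i\lambda_j)_K = 0$ for every $r\in\mathbb P_{k-2}(K)$. Since $\boldsymbol t_{i,j}\cdot\grad p\in\mathbb P_{k-2}(K)$, choosing $r = \boldsymbol t_{i,j}\cdot\grad p$ gives $\int_K \lambda_i\lambda_j(\boldsymbol t_{i,j}\cdot\grad p)^2\dx = 0$; as $\lambda_i\lambda_j>0$ in the interior of $K$, this forces $\boldsymbol t_{i,j}\cdot\grad p = 0$. Because the edge vectors $\{\boldsymbol t_{0,j}\}_{j=1}^d$ are linearly independent (cf.\ \eqref{eq:tijlambdal}) and span $\mathbb R^d$, I conclude $\grad p = \boldsymbol 0$, i.e.\ $p$ is constant, which is exactly the surjectivity claim.

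Finally, bijectivity gives $\dim E_0^{\bot} = \dim(\mathbb P_{k-1}(K)/\mathbb R) = \dim\mathbb P_{k-1}(K) - 1 = \binom{k-1+d}{d} - 1$. The case $k=1$ is degenerate and should be noted separately: there $\mathbb B_1(\div,K) = \{\boldsymbol 0\}$ by Lemma~\ref{lm:dimBdiv}, so $E_0^{\bot} = \{\boldsymbol 0\}$ and $\mathbb P_0(K)/\mathbb R = \{0\}$, and the asserted dimension is $0$, consistent with the formula. The only genuinely nontrivial ingredient is the surjectivity of $\div$ from the bubble space onto the mean-zero polynomials, and the weighted $L^2$ testing against the $\lambda_i\lambda_j$-scaled edge bubbles is what makes it elementary here; everything else is bookkeeping.
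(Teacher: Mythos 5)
Your proof is correct and takes essentially the same route as the paper's: both prove the inclusion $\div(\mathbb B_k(\div,K))\subseteq \mathbb P_{k-1}(K)/\mathbb R$ by integration by parts against constants, and both prove surjectivity by assuming $p$ is orthogonal to the image, converting this to $(\boldsymbol v,\grad p)_K=0$ for all bubbles $\boldsymbol v$, and then testing with edge bubbles of the form $\lambda_i\lambda_j q\,\boldsymbol t_{i,j}$ to force $\grad p=\boldsymbol 0$ via positivity of $\lambda_i\lambda_j$ in the interior of $K$. The only cosmetic difference is bookkeeping: the paper expands $\grad p$ in the normal basis $\{\boldsymbol n_i\}$ and assembles one test bubble through the dual basis $\{\boldsymbol l_i\}$, whereas you test each edge direction separately with the scalar choice $r=\boldsymbol t_{i,j}\cdot\grad p$.
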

\begin{proof}
The inclusion $ \div( \mathbb B_k(\div, K)) \subseteq \mathbb P_{k-1}(K)/\mathbb R$ is proved through integration by parts
$$
(\div \boldsymbol v, p)_K = -( \boldsymbol v, \grad p)_K = 0 \quad\forall~p\in  \mathbb R = \ker(\grad).
$$


If $ \div( \mathbb B_k(\div, K)) \neq \mathbb P_{k-1}(K)/\mathbb R$, then there exists a $p\in \mathbb P_{k-1}(K)/\mathbb R$ and $p\perp \div( \mathbb B_k(\div, K))$, which is equivalent to $\nabla p \perp \mathbb B_k(\div, K)$. Expand the vector $\nabla p$ in the basis $\{\boldsymbol n_i, i=1,\cdots, d\}$ as $\nabla p = \sum\limits_{i=1}^d q_i {\boldsymbol n}_{i}$ with $q_i\in\mathbb P_{k-2}(K)$. Then set $\boldsymbol v_p = \sum\limits_{i=1}^d q_i \lambda_{0}\lambda_i  \boldsymbol l_i  = \sum\limits_{i=1}^d|\nabla\lambda_i| q_i \lambda_{0}\lambda_i \boldsymbol t_{i,0}\in \mathbb B_k(\div, K)$. We have
$$
(\grad p, \boldsymbol v_p)_K = \sum_{i=1}^d \int_Kq_i^2 \lambda_{0}\lambda_i \dx = 0,
$$
which implies $q_i = 0$ for all $i=1,2,\cdots, d$, i.e., $\grad p = 0$ and $p = 0$ as $p\in   \mathbb P_{k-1}(K)/\mathbb R$. 

We have proved $\div( \mathbb B_k(\div, K)) = \mathbb P_{k-1}(K)/\mathbb R$, and thus $
 \div: E_0^{\bot} \to \mathbb P_{k-1}(K)/\mathbb R
 $  is a bijection as $E_0^{\bot} = \mathbb B_k(\div, K)/\ker(\div)$.
\end{proof}

\revision{As an example of \eqref{eq:E0botdof},  we have the following characterization of $(E_0^{\bot})'$.}
\begin{corollary}\label{cor:E0botstar}
Let integer $k\geq 1$. We have
\begin{equation*}
(E_0^{\bot})' = \mathcal N(\grad \mathbb P_{k-1}(K)).  
\end{equation*}
That is a function $\boldsymbol v\in E_0^{\bot}$ is uniquely determined by DoFs
\begin{equation*}
(\boldsymbol v, \boldsymbol q)_K\quad\forall~\boldsymbol q\in\grad \mathbb P_{k-1}(K).
\end{equation*}  
\end{corollary}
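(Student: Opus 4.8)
The plan is to read this off directly from the abstract formula \eqref{eq:E0botdof} in Lemma~\ref{lem:abstract}, specialized to $\dd = \div$. There the characterization $(E_0^{\bot})' = \mathcal N(\dd^*(\dd\mathbb B))$ holds precisely because $\dd : E_0^{\bot}\to \dd\mathbb B$ is a bijection, and the preceding lemma has just supplied exactly this: $\div : E_0^{\bot}\to \mathbb P_{k-1}(K)/\mathbb R$ is a bijection, so that $\div\mathbb B_k(\div,K) = \mathbb P_{k-1}(K)/\mathbb R$. It therefore remains only to compute the image $\div^*(\div\mathbb B)$.

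First I would identify the adjoint. On the bubble space $\div^*$ is $-\grad$ (its $H^1$-extension), as already noted before the lemma, so that $\div^*(\div\mathbb B) = -\grad\big(\mathbb P_{k-1}(K)/\mathbb R\big)$. Since the constants form the kernel of $\grad$, we have $\grad(\mathbb P_{k-1}(K)/\mathbb R) = \grad\mathbb P_{k-1}(K)$, and the overall sign is irrelevant because $\mathcal N$ depends only on the span of the test functions, i.e. $\mathcal N(-U) = \mathcal N(U)$. Combining these gives $(E_0^{\bot})' = \mathcal N(\grad\mathbb P_{k-1}(K))$, which is the asserted identity.

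As a self-contained check not relying on the abstract machinery, one can argue directly: for $\boldsymbol v\in E_0^{\bot}\subseteq \mathbb B_k(\div,K)$ the normal trace vanishes, so integration by parts gives $(\boldsymbol v,\grad p)_K = -(\div\boldsymbol v, p)_K$ for every $p\in\mathbb P_{k-1}(K)$. If all the proposed DoFs vanish, then $\div\boldsymbol v\perp \mathbb P_{k-1}(K)$, hence $\div\boldsymbol v = 0$ in $\mathbb P_{k-1}(K)/\mathbb R$; injectivity of $\div$ on $E_0^{\bot}$ then forces $\boldsymbol v = \boldsymbol 0$, which verifies the unisolvence condition \eqref{eq:dualsubspace}. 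For the exact equality one also checks the dimension match: $\dim\grad\mathbb P_{k-1}(K) = \dim\mathbb P_{k-1}(K) - 1 = \dim(\mathbb P_{k-1}(K)/\mathbb R) = \dim E_0^{\bot}$ by the bijection.

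There is essentially no genuine obstacle here, as the statement is a corollary of the bijection lemma. The only points demanding a little care are the harmless identifications $\grad(\mathbb P_{k-1}(K)/\mathbb R) = \grad\mathbb P_{k-1}(K)$ and $\mathcal N(-U) = \mathcal N(U)$, together with confirming that the dimension bookkeeping makes $\mathcal I'$ a genuine bijection rather than merely onto, so that the clean notation $(E_0^{\bot})' = \mathcal N(\grad\mathbb P_{k-1}(K))$, written without $\mathcal I'$, is justified.
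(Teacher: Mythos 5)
Your proposal is correct and follows essentially the same route as the paper: the paper states this corollary without proof, as a direct instance of \eqref{eq:E0botdof} in Lemma~\ref{lem:abstract} combined with the preceding bijection lemma for $\div: E_0^{\bot}\to\mathbb P_{k-1}(K)/\mathbb R$, which is exactly your first argument. Your self-contained check (integration by parts, injectivity of $\div$ on $E_0^{\bot}$, and the dimension count making $\mathcal I'$ a bijection) merely writes out the "straightforward" step inside the abstract lemma for this concrete case, so it is a faithful elaboration rather than a different approach.
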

After we know the dimensions of $\mathbb B_k(\div, K)$ and $E_0^{\bot}$, we can calculate the dimension of $E_0$.
\begin{lemma}
Let integer $k\geq 1$. It holds
\begin{equation}\label{eq:dimE0}
\dim E_0 = \dim \mathbb B_k(\div, K) - \dim E_0^{\bot}=d {k+d-1\choose d}-{k+d\choose d} +1.
\end{equation}
\end{lemma}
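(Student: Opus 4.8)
The plan is to split the verification into the two equalities asserted in \eqref{eq:dimE0}. The first equality is essentially definitional. Since $E_0^{\bot} := \mathbb B_k(\div, K)/E_0$ is the quotient of $\mathbb B_k(\div, K)$ by the subspace $E_0$, we have $\dim E_0^{\bot} = \dim \mathbb B_k(\div, K) - \dim E_0$, which rearranges to $\dim E_0 = \dim \mathbb B_k(\div, K) - \dim E_0^{\bot}$. Equivalently, I would invoke the rank--nullity identity $\dim V = \dim \ker(T) + \dim \img(T)$ for the operator $\div: \mathbb B_k(\div, K) \to \mathbb P_{k-1}(K)/\mathbb R$; the preceding lemma shows this map is a surjection with kernel $E_0$, so $\dim E_0 = \dim \mathbb B_k(\div, K) - (\dim \mathbb P_{k-1}(K) - 1)$, which is exactly the first equality.

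For the second equality I would substitute the closed forms already established: $\dim \mathbb B_k(\div, K) = d\binom{k+d}{k} - (d+1)\binom{k+d-1}{k}$ from Lemma~\ref{lm:dimBdiv}, and $\dim E_0^{\bot} = \binom{k-1+d}{d} - 1$ from the preceding lemma. It then remains to verify the purely algebraic identity
\[
d\binom{k+d}{k} - (d+1)\binom{k+d-1}{k} - \binom{k+d-1}{d} + 1 = d\binom{k+d-1}{d} - \binom{k+d}{d} + 1.
\]

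To carry out this reduction I would first rewrite the top-index binomials using the symmetry $\binom{n}{r} = \binom{n}{n-r}$, converting $\binom{k+d}{k} = \binom{k+d}{d}$ and $\binom{k+d-1}{k} = \binom{k+d-1}{d-1}$. After cancelling the $+1$ on both sides and collecting all terms on one side, the identity collapses to $(d+1)$ times Pascal's rule $\binom{k+d}{d} = \binom{k+d-1}{d-1} + \binom{k+d-1}{d}$, and is therefore immediate. I expect no genuine obstacle here, as there is no conceptual content beyond the quotient-dimension relation; the only care needed is the bookkeeping of binomial symmetry and the single application of Pascal's rule.
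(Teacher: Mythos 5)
Your proposal is correct and follows exactly the route the paper intends: the paper states this lemma without proof, as an immediate consequence of the quotient relation $\dim E_0^{\bot} = \dim \mathbb B_k(\div, K) - \dim E_0$ and the two dimension formulas just established (Lemma~\ref{lm:dimBdiv} and the bijectivity of $\div: E_0^{\bot} \to \mathbb P_{k-1}(K)/\mathbb R$). Your binomial bookkeeping is also right — the difference of the two sides indeed collapses to $(d+1)\bigl[\binom{k+d}{d} - \binom{k+d-1}{d-1} - \binom{k+d-1}{d}\bigr] = 0$ by Pascal's rule — so you have simply filled in the routine algebra the paper omits.
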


The most difficult part is to characterize $E_0$. Using the de Rham complex, we can identify the null space of $\div: \mathbb B_{k}(\div, K) \to  \mathbb P_{k-1}(K)/\mathbb R$ as the image of another polynomial bubble spaces. For example,
$$
E_0=\begin{cases}
\curl\big(\mathbb P_{k+1}(K)\cap H_0^1(K)\big) & \textrm{ for } d=2,\\
\curl\big(\mathbb P_{k+1}(K;\mathbb R^3)\cap H_0(\curl,K)\big) & \textrm{ for } d=3.
\end{cases}
$$
Generalization of the $\curl$ operator can be done for de Rham complex, but it will be hard for elasticity complex and divdiv complex. 

%


Instead, we take the dual approach. 
To identify the dual space of $E_0$, we resort to a polynomial decomposition of $\mathbb P_{k-1}(K; \mathbb R^d)$. 

\begin{lemma}\label{lem:Pkdec}
Let integer $k\geq 1$. We have the polynomial space decomposition
\begin{align}
\mathbb P_{k-1}(K; \mathbb R^d) 
&= \grad \mathbb P_{k}(K)\oplus (\ker (\cdot \boldsymbol x)\cap \mathbb P_{k-1}(K;\mathbb R^d)). \label{eq:dec2}
\end{align}
\end{lemma}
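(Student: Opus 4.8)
The plan is to establish \eqref{eq:dec2} as a genuine direct sum by verifying the two defining properties separately: that the intersection of the two summands is trivial, and that they together span $\mathbb P_{k-1}(K;\mathbb R^d)$. Both follow from Euler's identity \eqref{eq:homogeneouspolyprop}, whose content here is that the Koszul-type operator $\boldsymbol x\cdot\nabla$ acts as multiplication by the degree on each homogeneous component, and is therefore invertible on the space of polynomials vanishing at the origin.

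For directness, I would take $\boldsymbol v=\grad p$ with $p\in\mathbb P_k(K)$ and also assume $\boldsymbol v\in\ker(\cdot\boldsymbol x)$, so that $\boldsymbol x\cdot\nabla p=0$. Decomposing $p=\sum_{j=0}^k p_j$ into homogeneous parts $p_j\in\mathbb H_j(K)$ and applying \eqref{eq:homogeneouspolyprop} gives $\boldsymbol x\cdot\nabla p=\sum_{j=1}^k j\,p_j$. Since homogeneous polynomials of distinct degrees are linearly independent, $j\,p_j=0$ forces $p_j=\boldsymbol 0$ for every $j\ge 1$, whence $\grad p=\boldsymbol 0$. This shows $\grad\mathbb P_k(K)\cap(\ker(\cdot\boldsymbol x)\cap\mathbb P_{k-1}(K;\mathbb R^d))=\{\boldsymbol 0\}$.

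For spanning, I would argue constructively: given $\boldsymbol v\in\mathbb P_{k-1}(K;\mathbb R^d)$, I seek $p\in\mathbb P_k(K)$ so that $\boldsymbol w:=\boldsymbol v-\grad p$ satisfies $\boldsymbol w\cdot\boldsymbol x=0$, i.e. $\boldsymbol x\cdot\nabla p=\boldsymbol x\cdot\boldsymbol v=:g$. The right-hand side $g$ has degree at most $k$ and vanishes at the chosen origin, so $g=\sum_{j=1}^k g_j$ with $g_j\in\mathbb H_j(K)$. By \eqref{eq:homogeneouspolyprop}, the choice $p=\sum_{j=1}^k g_j/j\in\mathbb P_k(K)$ solves $\boldsymbol x\cdot\nabla p=g$; then $\boldsymbol w=\boldsymbol v-\grad p$ lies in $\mathbb P_{k-1}(K;\mathbb R^d)$ and satisfies $\boldsymbol w\cdot\boldsymbol x=g-g=0$. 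This writes $\boldsymbol v$ as the required sum, and together with directness it yields \eqref{eq:dec2}.

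The computation is short, so the only real subtlety I would flag is that $\ker(\cdot\boldsymbol x)$ and the homogeneous grading are defined relative to a fixed origin; the identity \eqref{eq:homogeneouspolyprop} must be invoked at that origin (placed at a vertex, as in the preliminaries), and the structural point is precisely that $\boldsymbol x\cdot\nabla$ is invertible on $\bigoplus_{j\ge 1}\mathbb H_j(K)$. As an alternative to the constructive step, one could combine directness with a dimension count: identifying the image of $\cdot\boldsymbol x:\mathbb P_{k-1}(K;\mathbb R^d)\to\mathbb P_k(K)$ with the polynomials vanishing at the origin gives $\dim(\ker(\cdot\boldsymbol x)\cap\mathbb P_{k-1}(K;\mathbb R^d))=d\binom{k-1+d}{d}-\binom{k+d}{d}+1$, which added to $\dim\grad\mathbb P_k(K)=\binom{k+d}{d}-1$ recovers $\dim\mathbb P_{k-1}(K;\mathbb R^d)=d\binom{k-1+d}{d}$.
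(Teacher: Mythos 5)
Your proof is correct, and it differs from the paper's own argument in one meaningful way: how the spanning property is established. The paper proves directness exactly as you do, via Euler's identity \eqref{eq:homogeneouspolyprop}, but then finishes by a dimension count: it asserts that $\cdot\boldsymbol x:\mathbb P_{k-1}(K;\mathbb R^d)\to\mathbb P_{k}(K)\backslash\mathbb R$ is surjective (the paper's text has a typo writing $\mathbb P_{k-1}(K)\backslash\mathbb R$ for the codomain), so by rank--nullity $\dim\big(\ker(\cdot\boldsymbol x)\cap\mathbb P_{k-1}(K;\mathbb R^d)\big)=d\binom{k-1+d}{d}-\binom{k+d}{d}+1$, and since $\dim\grad\mathbb P_k(K)=\binom{k+d}{d}-1$ the two summands' dimensions add up to $\dim\mathbb P_{k-1}(K;\mathbb R^d)$; together with directness this gives \eqref{eq:dec2}. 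You instead construct the decomposition explicitly: given $\boldsymbol v$, you solve $\boldsymbol x\cdot\nabla p=\boldsymbol x\cdot\boldsymbol v$ by inverting the degree scaling on each homogeneous component of $\boldsymbol x\cdot\boldsymbol v$ (which vanishes at the origin), and verify that $\boldsymbol v-\grad p\in\ker(\cdot\boldsymbol x)$. Both arguments hinge on the same structural fact---$\boldsymbol x\cdot\nabla$ is invertible on $\bigoplus_{j\geq1}\mathbb H_j(K)$, which is also what proves the paper's surjectivity claim---but your assembly is constructive and self-contained: it needs no dimension formula and exhibits the projection onto $\grad\mathbb P_k(K)$ explicitly, whereas the paper's version is shorter on the page because it delegates the work to rank--nullity. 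Your closing remarks are also sound: the dependence on a fixed origin is exactly why the paper assumes $\boldsymbol 0\in D$ (an arbitrary vertex may be taken as origin), and your fallback dimension count is essentially the paper's own proof.
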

\begin{proof}
Clearly it holds
$$
\grad \mathbb P_{k}(K)\oplus (\ker (\cdot \boldsymbol x)\cap \mathbb P_{k-1}(K;\mathbb R^d))\subseteq \mathbb P_{k-1}(K; \mathbb R^d).
$$
And the sum is direct as by Euler's formulae \eqref{eq:homogeneouspolyprop}, 
$$
\grad \mathbb P_{k}(K)\cap (\ker (\cdot \boldsymbol x)\cap \mathbb P_{k-1}(K;\mathbb R^d)) =\{\boldsymbol 0\}.
$$

The mapping $\cdot\bs x: \mathbb P_{k-1}(K;\mathbb R^d) \to \mathbb P_{k-1}(K)\backslash \mathbb R$ is surjective, thus
$$
\dim\mathbb P_{k-1}(K; \mathbb R^d) 
= \dim(\mathbb P_{k}(K)\backslash \mathbb R) + \dim(\ker (\cdot \boldsymbol x)\cap \mathbb P_{k-1}(K;\mathbb R^d)).
$$
As $\dim (\mathbb P_{k}(K)\backslash \mathbb R) = \dim \grad \mathbb P_{k}(K)$, we obtain the decomposition \eqref{eq:dec2} by dimension count.
\end{proof}

\begin{corollary}\label{cor:E0star}
Let integer $k\geq 1$. We have
\begin{equation*}
E_0' = \mathcal N(\ker (\cdot \boldsymbol x)\cap \mathbb P_{k-1}(K;\mathbb R^d)).
\end{equation*}
That is a function $\boldsymbol v\in E_0$ is uniquely determined by 
\begin{equation*}
(\boldsymbol v, \boldsymbol q)_K\quad\forall~\boldsymbol q\in \ker (\cdot \boldsymbol x)\cap \mathbb P_{k-1}(K;\mathbb R^d).
\end{equation*}  
\end{corollary}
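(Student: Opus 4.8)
The plan is to follow the two-step pattern already set up in this section: first verify the separation condition \eqref{eq:dualsubspace}, i.e. that the proposed moments annihilate only $\boldsymbol 0$ in $E_0$, and then match dimensions so that the embedding $\mathcal I'$ becomes a bijection and can be dropped from the notation. In fact this is precisely the instance of the abstract Lemma~\ref{lem:abstract} with $\dd=\div$, $U=\mathbb P_{k-1}(K;\mathbb R^d)$, adjoint contribution $\dd^*(H(\dd^*))=\grad\mathbb P_k(K)$, and Koszul operator $\kappa = \cdot\,\boldsymbol x$. I would nonetheless spell out the short self-contained argument, since every ingredient is already available.

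First I would take $\boldsymbol v\in E_0$ satisfying $(\boldsymbol v,\boldsymbol q)_K=0$ for all $\boldsymbol q\in\ker(\cdot\boldsymbol x)\cap\mathbb P_{k-1}(K;\mathbb R^d)$ and aim to show $\boldsymbol v=\boldsymbol 0$. The key observation is that orthogonality to the \emph{other} summand $\grad\mathbb P_k(K)$ is free from the definition of $E_0$: since $\div\boldsymbol v=0$ and $\boldsymbol v$ is a bubble with $\boldsymbol v\cdot\boldsymbol n|_{\partial K}=0$, integration by parts gives $(\boldsymbol v,\grad p)_K=-(\div\boldsymbol v,p)_K+(\boldsymbol v\cdot\boldsymbol n,p)_{\partial K}=0$ for every $p\in\mathbb P_k(K)$, so $\boldsymbol v\perp\grad\mathbb P_k(K)$. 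I would then invoke the decomposition \eqref{eq:dec2} of Lemma~\ref{lem:Pkdec}, which writes every element of $\mathbb P_{k-1}(K;\mathbb R^d)$ as a sum of a piece in $\grad\mathbb P_k(K)$ and a piece in $\ker(\cdot\boldsymbol x)\cap\mathbb P_{k-1}(K;\mathbb R^d)$. Combining the free orthogonality to the first summand with the assumed orthogonality to the second yields $(\boldsymbol v,\boldsymbol q)_K=0$ for all $\boldsymbol q\in\mathbb P_{k-1}(K;\mathbb R^d)$, whence $\boldsymbol v=\boldsymbol 0$ by Lemma~\ref{lm:Bkdual1}. This establishes \eqref{eq:dualsubspace}.

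Finally, to remove $\mathcal I'$ I would check that the dimensions agree. From \eqref{eq:dec2}, $\dim\big(\ker(\cdot\boldsymbol x)\cap\mathbb P_{k-1}(K;\mathbb R^d)\big)=\dim\mathbb P_{k-1}(K;\mathbb R^d)-\dim\grad\mathbb P_k(K)=d\binom{k+d-1}{d}-\binom{k+d}{d}+1$, which is exactly $\dim E_0$ as computed in \eqref{eq:dimE0}. Since the moments separate points of $E_0$ and their count matches $\dim E_0=\dim E_0'$, the restriction $\mathcal I'$ is a bijection and the claimed identity $E_0'=\mathcal N(\ker(\cdot\boldsymbol x)\cap\mathbb P_{k-1}(K;\mathbb R^d))$ follows. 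I do not anticipate a genuine obstacle here; the one mild subtlety worth flagging is that the divergence-free condition buys orthogonality to $\grad\mathbb P_k(K)$ rather than only to $\grad\mathbb P_{k-1}(K)$ as in Corollary~\ref{cor:E0botstar}, and it is precisely this larger gradient space that is the exact complement needed to fill out all of $\mathbb P_{k-1}(K;\mathbb R^d)$.
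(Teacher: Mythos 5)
Your proof is correct and is essentially the paper's own argument: the paper simply invokes Lemma~\ref{lem:abstract} with $V=\mathbb P_k(K;\mathbb R^d)$, $U=\mathbb P_{k-1}(K;\mathbb R^d)$, $\kappa=\cdot\,\boldsymbol x$ (assumptions (B1)--(B2) being Lemmas~\ref{lm:Bkdual1} and~\ref{lem:Pkdec}) and then performs the identical dimension count against \eqref{eq:dimE0}, whereas you have unfolded the proof of that abstract lemma inline --- the integration by parts giving $\boldsymbol v\perp\grad\mathbb P_k(K)$, the decomposition \eqref{eq:dec2}, and the conclusion via Lemma~\ref{lm:Bkdual1} are exactly its content. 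No gap; the two proofs coincide step for step.
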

\begin{proof}
%

\revision{We apply Lemma \ref{lem:abstract} with $V = \mathbb P_{k}(K;\mathbb R^d), U= \mathbb P_{k-1}(K;\mathbb R^d)$, and $\kappa = \cdot \bs x$. Lemmas \ref{lm:Bkdual1} and \ref{lem:Pkdec} verify the assumptions (B1)-(B2), and we only need to count the dimension}
\begin{align*}
\dim  (\ker (\cdot \boldsymbol x)\cap \mathbb P_{k-1}(K;\mathbb R^d))& = \dim \mathbb P_{k-1}(K;\mathbb R^d)  - \dim \mathbb P_{k}(K)+1 \\
&=d {k+d-1\choose d}-{k+d\choose d} +1 \\
&= \dim E_0,
\end{align*}
where the last step is based on \eqref{eq:dimE0}.
The desired result then follows. 
\end{proof}


\begin{remark}\rm 
 A computational approach to find an explicit basis of $\ker (\cdot \boldsymbol x)\cap \mathbb P_{k-1}(K;\mathbb R^d)$ is as follows. First find a basis for $\mathbb P_{k-1}(K;\mathbb R^d)$ and one for $\mathbb P_{k}(K)$. Then form the matrix representation $X$ of the operator $\cdot\boldsymbol x$. Afterwards the null space $\ker(X)$ can be found algebraically. $\Box$
\end{remark}

An explicit characterization of $\ker (\cdot \boldsymbol x)\cap \mathbb P_{k-1}(K;\mathbb R^d)$ is shown in~\cite[Proposition~1]{Nedelec1980}, that is $\boldsymbol v\in \ker (\cdot \boldsymbol x)\cap \mathbb H_{k-1}(K;\mathbb R^d)$ is equivalent to $\boldsymbol v\in \mathbb H_{k-1}(K;\mathbb R^d)$ such that the symmetric part of $\nabla^{k-1}\boldsymbol v$ vanishes.  
\revision{We shall give another characterization of $\ker (\cdot \boldsymbol x)\cap \mathbb P_{k-1}(K;\mathbb R^d)$.} 
\begin{lemma}\label{lem:kerx}
It holds
\begin{equation}  \label{eq:dec1}
\ker (\cdot \boldsymbol x)\cap \mathbb P_{k-1}(K;\mathbb R^d) = \mathbb P_{k-2}(K;\mathbb K)\boldsymbol x,
\end{equation}
where recall $\mathbb K$ is the subspace of skew-symmetric matrices of $\mathbb R^{d\times d}$, and
\begin{equation}\label{eq:Pkm1vecdecomp}
\mathbb P_{k-1}(K; \mathbb R^d)= \grad \mathbb P_{k}(K)\oplus\mathbb P_{k-2}(K;\mathbb K)\boldsymbol x.
\end{equation}
\end{lemma}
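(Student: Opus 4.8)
The plan is to establish the set identity \eqref{eq:dec1} first and then read off \eqref{eq:Pkm1vecdecomp} as an immediate consequence of Lemma~\ref{lem:Pkdec}. The inclusion $\mathbb P_{k-2}(K;\mathbb K)\boldsymbol x\subseteq\ker(\cdot\boldsymbol x)\cap\mathbb P_{k-1}(K;\mathbb R^d)$ is the easy half: for $\boldsymbol A\in\mathbb P_{k-2}(K;\mathbb K)$ we have $\boldsymbol A\boldsymbol x\in\mathbb P_{k-1}(K;\mathbb R^d)$, and $(\boldsymbol A\boldsymbol x)\cdot\boldsymbol x=\boldsymbol x^{\intercal}\boldsymbol A\boldsymbol x=0$ precisely because $\boldsymbol A$ is skew-symmetric.

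The substance is the reverse inclusion, which I would prove constructively rather than through a dimension count. Since the constraint $\boldsymbol v\cdot\boldsymbol x=0$ raises the degree of each homogeneous component by one, it decouples across homogeneous pieces, so it suffices to treat $\boldsymbol v\in\mathbb H_j(K;\mathbb R^d)$ with $\boldsymbol v\cdot\boldsymbol x=0$ and $0\le j\le k-1$. To such a $\boldsymbol v$ I associate the skew-symmetric matrix field $\boldsymbol C=\boldsymbol C(\boldsymbol v)$ with entries $C_{i\ell}=\partial_\ell v_i-\partial_i v_\ell$, which lies in $\mathbb H_{j-1}(K;\mathbb K)$. Computing $(\boldsymbol C\boldsymbol x)_i=\sum_\ell(\partial_\ell v_i-\partial_i v_\ell)x_\ell$, rewriting $\sum_\ell x_\ell\partial_i v_\ell=\partial_i(\boldsymbol v\cdot\boldsymbol x)-v_i$, and invoking Euler's identity $\sum_\ell x_\ell\partial_\ell v_i=j\,v_i$ from \eqref{eq:homogeneouspolyprop} yields $\boldsymbol C\boldsymbol x=(j+1)\boldsymbol v-\nabla(\boldsymbol v\cdot\boldsymbol x)$. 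Under the hypothesis $\boldsymbol v\cdot\boldsymbol x=0$ this collapses to $\boldsymbol v=\tfrac{1}{j+1}\boldsymbol C\boldsymbol x$, exhibiting $\boldsymbol v$ as $\boldsymbol A\boldsymbol x$ with $\boldsymbol A=\tfrac{1}{j+1}\boldsymbol C\in\mathbb H_{j-1}(K;\mathbb K)\subseteq\mathbb P_{k-2}(K;\mathbb K)$, the degenerate case $j=0$ forcing $\boldsymbol v=\boldsymbol0$. This proves \eqref{eq:dec1}, after which \eqref{eq:Pkm1vecdecomp} follows by substituting \eqref{eq:dec1} into the decomposition \eqref{eq:dec2} of Lemma~\ref{lem:Pkdec}.

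The only genuinely delicate point is the component identity for $\boldsymbol C\boldsymbol x$: this is exactly the Koszul homotopy relation (contraction with $\boldsymbol x$ composed with the exterior derivative, plus its reverse, equal to a multiple of the identity) written out in matrix notation, with $\boldsymbol A\mapsto\boldsymbol A\boldsymbol x$ in the role of the Koszul contraction on $2$-forms and $\boldsymbol C$ in the role of $\operatorname{curl}$. An alternative would be to quote exactness of the homogeneous Koszul complex to identify the image of $\boldsymbol A\mapsto\boldsymbol A\boldsymbol x$ with $\ker(\cdot\boldsymbol x)$ directly, but the explicit construction above keeps the argument self-contained and elementary. I expect the bookkeeping of polynomial degrees, namely checking that $\boldsymbol C$ has degree exactly $j-1\le k-2$ so that it belongs to $\mathbb P_{k-2}(K;\mathbb K)$, to be the main place where care is needed, though it is entirely routine.
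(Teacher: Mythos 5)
Your proof is correct, and it is the paper's own construction in different packaging, with the two halves proven in the opposite order. The paper first proves the decomposition \eqref{eq:Pkm1vecdecomp}: it expands a general element of $\mathbb P_{k-1}(K;\mathbb R^d)$ in monomials $\boldsymbol q=\boldsymbol x^{\alpha}\boldsymbol e_{\ell}$ and exhibits explicitly $p=\frac{1}{k}\boldsymbol x\cdot\boldsymbol q$ and a skew-symmetric $\boldsymbol\tau$ with $\boldsymbol q=\grad p+\boldsymbol\tau\boldsymbol x$, then deduces \eqref{eq:dec1} from Lemma~\ref{lem:Pkdec}; you instead prove \eqref{eq:dec1} directly for homogeneous kernel elements and then substitute into \eqref{eq:dec2} to get \eqref{eq:Pkm1vecdecomp}. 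The underlying algebra is literally the same: for $\boldsymbol q=\boldsymbol x^{\alpha}\boldsymbol e_{\ell}$ of degree $k-1$, the paper's $\boldsymbol\tau$ has entries $\frac{1}{k}(\partial_n q_m-\partial_m q_n)$, i.e. it equals $\frac{1}{k}\boldsymbol C(\boldsymbol q)$ in your notation, and the paper's verification of $\grad p+\boldsymbol\tau\boldsymbol x=\boldsymbol q$ is your homotopy identity $\boldsymbol C\boldsymbol x=(j+1)\boldsymbol v-\nabla(\boldsymbol v\cdot\boldsymbol x)$ specialized to a monomial with $j=k-1$. What your version buys: Euler's identity \eqref{eq:homogeneouspolyprop} replaces the multi-index bookkeeping, the identity is stated once for arbitrary homogeneous fields (so the full decomposition \eqref{eq:Pkm1vecdecomp} drops out as a byproduct), and the kernel characterization is obtained without any dimension count. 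What the paper's version buys: it needs only elementary monomial manipulation and never has to discuss the decoupling of the constraint over homogeneous components or the degenerate degree-zero case, both of which you correctly handle but must mention. Your degree bookkeeping ($\boldsymbol C\in\mathbb H_{j-1}(K;\mathbb K)\subseteq\mathbb P_{k-2}(K;\mathbb K)$ for $j\le k-1$, with $\boldsymbol C=\boldsymbol 0$ forcing $\boldsymbol v=\boldsymbol 0$ when $j=0$) is accurate, so there is no gap.
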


\begin{proof}
Clearly we have $\mathbb P_{k-2}(K;\mathbb K)\boldsymbol x\subseteq\ker (\cdot \boldsymbol x)\cap \mathbb P_{k-1}(K;\mathbb R^d)$. By \eqref{eq:dec2}, it suffices to prove \eqref{eq:Pkm1vecdecomp}.
Take $\boldsymbol q\in\mathbb P_{k-1}(K; \mathbb R^d)$. Without loss of generality, by linearity, it is sufficient to assume $\boldsymbol q=\boldsymbol x^{\alpha}\boldsymbol e_{\ell}$ with $|\alpha|=k-1$ and $1\leq\ell\leq d$. Let 
$$
p=\frac{1}{k}\boldsymbol x\cdot\boldsymbol q=\frac{1}{k}\boldsymbol x^{\alpha+\boldsymbol e_{\ell}}\in\mathbb P_{k}(K), \;\; \boldsymbol\tau=\frac{1}{k}\sum_{i=1}^d\alpha_i\boldsymbol x^{\alpha-\boldsymbol e_{i}}(\boldsymbol e_{\ell}\boldsymbol e_{i}^{\intercal}-\boldsymbol e_{i}\boldsymbol e_{\ell}^{\intercal})\in\mathbb P_{k-2}(K;\mathbb K).
$$
Then
\begin{align*}
\grad p+\boldsymbol\tau\boldsymbol x& = \frac{1}{k}\sum_{i=1}^d(\alpha_i+\delta_{i\ell})\boldsymbol x^{\alpha+\boldsymbol e_{\ell}-\boldsymbol e_{i}}\boldsymbol e_{i} + \frac{1}{k}\sum_{i=1}^d\alpha_i\boldsymbol x^{\alpha-\boldsymbol e_{i}}(\boldsymbol e_{\ell}x_{i}-\boldsymbol e_{i}x_{\ell}) \\
& = \frac{1}{k}\sum_{i=1}^d(\alpha_i+\delta_{i\ell})\boldsymbol x^{\alpha+\boldsymbol e_{\ell}-\boldsymbol e_{i}}\boldsymbol e_{i} + \frac{1}{k}\sum_{i=1}^d\alpha_i\boldsymbol x^{\alpha}\boldsymbol e_{\ell} - \frac{1}{k}\sum_{i=1}^d\alpha_i\boldsymbol x^{\alpha+\boldsymbol e_{\ell}-\boldsymbol e_{i}}\boldsymbol e_{i} \\
&=\frac{1}{k}\boldsymbol x^{\alpha}\boldsymbol e_{\ell}+\frac{|\alpha|}{k}\boldsymbol x^{\alpha}\boldsymbol e_{\ell}=\boldsymbol x^{\alpha}\boldsymbol e_{\ell}.
\end{align*}
Therefore it follows $\boldsymbol q=\grad p+\boldsymbol\tau\boldsymbol x$, i.e. 
$$
\mathbb P_{k-1}(K; \mathbb R^d)\subseteq \grad \mathbb P_{k}(K)\oplus\mathbb P_{k-2}(K;\mathbb K)\boldsymbol x.
$$ 
This combined with the fact $\grad \mathbb P_{k}(K)\oplus\mathbb P_{k-2}(K;\mathbb K)\boldsymbol x\subseteq\mathbb P_{k-1}(K; \mathbb R^d)$ gives \eqref{eq:Pkm1vecdecomp}.
\end{proof}
The decomposition \eqref{eq:dec2} and characterization \eqref{eq:dec1} can be summarized as the following double-directional complex 
\begin{equation*}
\xymatrix{
\mathbb R\ar@<0.4ex>[r]^-{\subset} & \mathbb P_{k}(K)\ar@<0.4ex>[r]^-{\nabla} \ar@<0.4ex>[l]^-{\pi_0}  & \mathbb P_{k-1}(K;\mathbb R^d)\ar@<0.4ex>[r]^-{\skw\nabla}\ar@<0.4ex>[l]^-{\boldsymbol v\cdot\boldsymbol x}  & \mathbb P_{k-2}(K;\mathbb K) \ar@<0.4ex>[l]^-{\boldsymbol\tau\boldsymbol x}.
}
\end{equation*}

Define, for an integer $k\geq 0$,
\begin{equation*}
{\rm ND}_{k}(K):=\mathbb P_{k}(K;\mathbb R^d) \oplus \mathbb H_{k}(K; \mathbb K)\boldsymbol x,
\end{equation*}
which is the shape function space of the first kind Ned\'el\'ec edge element in arbitrary dimension~\cite{Nedelec1980,ArnoldFalkWinther2006}.

\begin{corollary}\label{cor:bdmbubbledual}
Let integer $k\geq 2$. We have
\begin{equation}\label{eq:Bstar}
\mathbb B_k'(\div, K)  = \mathcal N (\grad \mathbb P_{k-1}(K) \oplus \mathbb P_{k-2}(K;\mathbb K)\boldsymbol x)= \mathcal N({\rm ND}_{k-2}(K)).
\end{equation}
\end{corollary}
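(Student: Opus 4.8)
The plan is to establish the two equalities separately: first the purely algebraic identity $\grad \mathbb P_{k-1}(K) \oplus \mathbb P_{k-2}(K;\mathbb K)\boldsymbol x = {\rm ND}_{k-2}(K)$, and then the dual-space identification $\mathbb B_k'(\div, K) = \mathcal N(\grad \mathbb P_{k-1}(K) \oplus \mathbb P_{k-2}(K;\mathbb K)\boldsymbol x)$ by assembling Corollaries~\ref{cor:E0botstar} and~\ref{cor:E0star} along the bubble decomposition $\mathbb B_k(\div, K) = E_0 \oplus E_0^{\bot}$. Since both corollaries are already available, the work is essentially to verify that the two DoF families together are unisolvent on the whole bubble space and that the counts match.

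For the algebraic identity I would split off the homogeneous top-degree part, $\mathbb P_{k-2}(K;\mathbb K) = \mathbb P_{k-3}(K;\mathbb K) \oplus \mathbb H_{k-2}(K;\mathbb K)$, so that $\mathbb P_{k-2}(K;\mathbb K)\boldsymbol x = \mathbb P_{k-3}(K;\mathbb K)\boldsymbol x \oplus \mathbb H_{k-2}(K;\mathbb K)\boldsymbol x$. Applying the decomposition \eqref{eq:Pkm1vecdecomp} with $k$ replaced by $k-1$ (legitimate since $k\geq 2$), namely $\mathbb P_{k-2}(K;\mathbb R^d) = \grad \mathbb P_{k-1}(K) \oplus \mathbb P_{k-3}(K;\mathbb K)\boldsymbol x$, the first two summands recombine into $\mathbb P_{k-2}(K;\mathbb R^d)$, leaving $\grad \mathbb P_{k-1}(K) \oplus \mathbb P_{k-2}(K;\mathbb K)\boldsymbol x = \mathbb P_{k-2}(K;\mathbb R^d) \oplus \mathbb H_{k-2}(K;\mathbb K)\boldsymbol x = {\rm ND}_{k-2}(K)$. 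Directness of the outer sum is confirmed by pairing any relation $\grad p = \boldsymbol\tau\boldsymbol x$ with $\boldsymbol x$: skew-symmetry gives $\boldsymbol x^{\intercal}\boldsymbol\tau\boldsymbol x = 0$, so $\boldsymbol x\cdot\grad p = 0$, and Euler's formula \eqref{eq:homogeneouspolyprop} forces $p$ constant, whence both terms vanish.

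For the dual-space identification I would argue unisolvence directly. Take $\boldsymbol v \in \mathbb B_k(\div, K)$ annihilated by both DoF families. Since $\boldsymbol v\cdot\boldsymbol n = 0$ on $\partial K$, integration by parts gives $(\boldsymbol v, \grad p)_K = -(\div \boldsymbol v, p)_K$ for every $p\in\mathbb P_{k-1}(K)$; the vanishing of the first family $\mathcal N(\grad \mathbb P_{k-1}(K))$ then yields $(\div\boldsymbol v, p)_K = 0$ for all $p\in\mathbb P_{k-1}(K)$, and as $\div\boldsymbol v\in\mathbb P_{k-1}(K)$ we conclude $\div\boldsymbol v = 0$, i.e. $\boldsymbol v\in E_0$. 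On $E_0$ the second family $\mathcal N(\mathbb P_{k-2}(K;\mathbb K)\boldsymbol x) = \mathcal N(\ker(\cdot\boldsymbol x)\cap\mathbb P_{k-1}(K;\mathbb R^d))$ is unisolvent by Corollary~\ref{cor:E0star} together with Lemma~\ref{lem:kerx}, so $\boldsymbol v = \boldsymbol 0$. A dimension count then closes the argument: $\dim\mathbb B_k(\div, K) = \dim E_0 + \dim E_0^{\bot} = \dim(\mathbb P_{k-2}(K;\mathbb K)\boldsymbol x) + \dim(\grad\mathbb P_{k-1}(K))$, which equals the dimension of the direct sum $\grad\mathbb P_{k-1}(K)\oplus\mathbb P_{k-2}(K;\mathbb K)\boldsymbol x$ established above, so the embedding $\mathcal I'$ is in fact a bijection and may be dropped.

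The argument is an assembly of already-proven facts, so no single step is genuinely hard; the point requiring the most care is the interaction of the two DoF families. Specifically, one must see that the moments against $\grad \mathbb P_{k-1}(K)$ precisely encode the constraint $\div\boldsymbol v = 0$, so that this family ``captures'' exactly $E_0^{\bot}$ and defers $E_0$ to the second family, and that the two ranges $\grad\mathbb P_{k-1}(K)$ and $\mathbb P_{k-2}(K;\mathbb K)\boldsymbol x$ form a genuine direct sum, so the total DoF count matches $\dim\mathbb B_k(\div,K)$ with no redundancy. Both are handled by the integration-by-parts reduction and the directness check above, respectively.
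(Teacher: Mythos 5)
Your proposal is correct and follows essentially the same route as the paper: the algebraic identity $\grad \mathbb P_{k-1}(K) \oplus \mathbb P_{k-2}(K;\mathbb K)\boldsymbol x = {\rm ND}_{k-2}(K)$ via the split $\mathbb P_{k-2}(K;\mathbb K) = \mathbb P_{k-3}(K;\mathbb K)\oplus \mathbb H_{k-2}(K;\mathbb K)$ and the decomposition \eqref{eq:dec2}, and the dual identification by combining Corollaries~\ref{cor:E0botstar} and~\ref{cor:E0star} with Lemma~\ref{lem:kerx}. The only difference is that you spell out what the paper leaves as a ``direct consequence''---the integration-by-parts reduction $\div\boldsymbol v=0$, the appeal to Corollary~\ref{cor:E0star} on $E_0$, and the dimension count---which is exactly the implicit argument.
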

\begin{proof}
The first identity is a direct consequence of Corollaries \ref{cor:E0botstar} and \ref{cor:E0star} and Lemma \ref{lem:kerx}. We then write $\mathbb P_{k-2}(K;\mathbb K) = \mathbb P_{k-3}(K;\mathbb K)\oplus \mathbb H_{k-2}(K;\mathbb K)$ and use the decomposition \eqref{eq:dec2} to conclude the second identity. 
\end{proof}
%
%

\begin{remark} \rm 
The space 
${\rm ND}_{k-2}(K)$ can be abbreviate as $\mathbb P_{k-1}^{-}\Lambda^1$ in the terminology of FEEC~\cite{ArnoldFalkWinther2006,Arnold2018}. The characterization of $\mathbb B_k'(\div, K)$ in \eqref{eq:Bstar} can be written as 
$$
(\stackrel{\circ}{\mathbb P_k}\Lambda^{n-1}(K))^* = \mathbb P_{k-1}^{-}\Lambda^1(K),
$$
which is well documented for de Rham complex~\cite{arnold2009geometric} but not easy for general complexes. Therefore we still stick to the vector/matrix calculus notation. $\Box$
\end{remark}

Hence we acquire the uni-solvence of BDM element from Lemma~\ref{lem:divtracedof} and Corollary~\ref{cor:bdmbubbledual}.
\begin{theorem}[BDM element]\label{thm:BDMunisolvence}
Let integer $k\geq 1$. Choose the shape function space $V = \mathbb P_k(K;\mathbb R^d)$. We have the following set of degrees of freedom for $V$ 
\begin{align}
(\boldsymbol  v\cdot\boldsymbol n, q)_F& \quad\forall~q\in\mathbb P_{k}(F), F\in \mathcal F^1(K), \label{eq:BDMdof1}\\
(\boldsymbol v, \boldsymbol q)_K &\quad\forall~\boldsymbol q\in{\rm ND}_{k-2}(K)= \grad \mathbb P_{k-1}(K)\oplus\mathbb P_{k-2}(K;\mathbb K)\boldsymbol x.\label{eq:BDMdof2}
\end{align}

\end{theorem}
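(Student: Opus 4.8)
The plan is to prove unisolvence by the usual two moves: match the cardinality of the DoF set \eqref{eq:BDMdof1}--\eqref{eq:BDMdof2} against $\dim V = d\binom{k+d}{d}$, and then show that a $\boldsymbol v\in V$ annihilating every DoF must vanish. Since the shape function space is finite dimensional, once the two counts coincide it suffices to verify injectivity of the DoF map, so the whole content of the argument sits in the vanishing step.

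First I would count. By Lemma~\ref{lem:trdivonto} the trace moments \eqref{eq:BDMdof1} number $\dim\mathbb P_k(\mathcal F^1(K)) = (d+1)\binom{k+d-1}{k}$. By Corollary~\ref{cor:bdmbubbledual} the interior moments \eqref{eq:BDMdof2} number $\dim {\rm ND}_{k-2}(K) = \dim\mathbb B_k(\div, K)$, which Lemma~\ref{lm:dimBdiv} evaluates as $d\binom{k+d}{k} - (d+1)\binom{k+d-1}{k}$. Adding the two tallies telescopes to $d\binom{k+d}{k} = \dim\mathbb P_k(K;\mathbb R^d)$, so the number of DoFs is exactly $\dim V$.

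Next I would run the vanishing argument. Suppose $\boldsymbol v\in\mathbb P_k(K;\mathbb R^d)$ kills all DoFs. The vanishing of \eqref{eq:BDMdof1} feeds directly into Lemma~\ref{lem:divtracedof} to give ${\rm tr}^{\div}\boldsymbol v = 0$, i.e.\ $\boldsymbol v\in\mathbb B_k(\div, K)$. With $\boldsymbol v$ now in the bubble space, the vanishing of \eqref{eq:BDMdof2} says $\boldsymbol v$ is orthogonal to all of ${\rm ND}_{k-2}(K)$; since Corollary~\ref{cor:bdmbubbledual} identifies $\mathbb B_k'(\div, K) = \mathcal N({\rm ND}_{k-2}(K))$, this forces $\boldsymbol v = \boldsymbol 0$. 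The boundary case $k=1$ needs no separate handling, because there $\mathbb B_1(\div, K) = \{\boldsymbol 0\}$ and ${\rm ND}_{-1}(K)$ is empty, so the first step alone already closes the argument.

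I do not expect a genuine obstacle here, as all the machinery has been front-loaded: the trace surjectivity of Lemma~\ref{lem:trdivonto} and the dual-space identity of Corollary~\ref{cor:bdmbubbledual} are the substantive ingredients, and the theorem merely assembles them. The only point deserving care is the consistency of the second form of \eqref{eq:BDMdof2}, namely ${\rm ND}_{k-2}(K) = \grad\mathbb P_{k-1}(K)\oplus\mathbb P_{k-2}(K;\mathbb K)\boldsymbol x$, with the $E_0^{\bot}$/$E_0$ split from Corollaries~\ref{cor:E0botstar} and~\ref{cor:E0star} combined with Lemma~\ref{lem:kerx}; this is precisely what allows the interior DoFs to be read either as a single block or as the trace-complement/kernel pair.
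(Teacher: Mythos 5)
Your proposal is correct and follows exactly the paper's route: the paper obtains unisolvence by combining Lemma~\ref{lem:divtracedof} (vanishing trace DoFs give $\boldsymbol v\in\mathbb B_k(\div,K)$) with Corollary~\ref{cor:bdmbubbledual} ($\mathbb B_k'(\div,K)=\mathcal N({\rm ND}_{k-2}(K))$), which is precisely your vanishing step, and your dimension count and $k=1$ remark simply make explicit what the paper leaves implicit.
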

Define the global BDM element space
\begin{align*}
\boldsymbol V_h:=\{\boldsymbol v\in \boldsymbol L^2(\Omega;\mathbb R^d): &\,\boldsymbol v|_K\in\mathbb P_k(K;\mathbb R^d) \textrm{ for each } K\in\mathcal T_h, \\
&\textrm{ the degree of freedom \eqref{eq:BDMdof1} is single-valued} \}.    
\end{align*}
Since $\boldsymbol  v\cdot\boldsymbol n|_F\in\mathbb P_k(F)$, the single-valued degree of freedom \eqref{eq:BDMdof1} implies $\boldsymbol  v\cdot\boldsymbol n$ is continuous across the boundary of elements, hence $\boldsymbol V_h\subset\boldsymbol H(\div,\Omega)$. 



\subsection{RT space}
In this subsection, we assume integer $k\geq 0$.
The space of shape functions for the Raviart-Thomas (RT) element is enriched to \begin{equation*}
V^{\rm RT}:= \mathbb P_{k}(K;\mathbb R^d)\oplus\mathbb H_{k}(K)\boldsymbol x.
\end{equation*}
The degrees of freedom are
\begin{align}
(\boldsymbol v\cdot\boldsymbol n, q)_F& \quad\forall~q\in\mathbb P_{k}(F), F\in \mathcal F^1(K), \label{eq:RTdof1}\\
(\boldsymbol v, \boldsymbol q)_K &\quad\forall~\boldsymbol q\in\mathbb P_{k-1}(K,\mathbb R^d)= \grad \mathbb P_{k}(K)\oplus\mathbb P_{k-2}(K;\mathbb K)\boldsymbol x.\label{eq:RTdof2}
\end{align}

Note that the DoFs to determine the trace remain the same and only the interior moments are increased from ${\rm ND}_{k-2}(K)$ to $\mathbb P_{k-1}(K;\mathbb R^d)$. The range space is also increased, i.e., $\div V^{\rm RT} = \mathbb P_{k}(K)$ and therefore the approximation of $\div \boldsymbol u$ will be one order higher. 

We follow our construction procedure to identify the dual spaces of each block. 
\begin{lemma}
Let integer $k\geq 0$. It holds
 \begin{equation}\label{eq:RTtrace}
 {\rm tr}^{\div}(V^{\rm RT}) = \mathbb P_{k}(\mathcal F^1(K)). 
\end{equation}
\end{lemma}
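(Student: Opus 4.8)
The plan is to establish \eqref{eq:RTtrace} as two inclusions, peeling off the new contribution $\mathbb H_k(K)\boldsymbol x$ from the already-understood polynomial part $\mathbb P_k(K;\mathbb R^d)$, and isolating the genuinely different lowest-order case $k=0$.

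First I would record the easy containment ${\rm tr}^{\div}(V^{\rm RT}) \subseteq \mathbb P_k(\mathcal F^1(K))$. The inclusion ${\rm tr}^{\div}(\mathbb P_k(K;\mathbb R^d)) \subseteq \mathbb P_k(\mathcal F^1(K))$ was noted before Lemma~\ref{lem:trdivonto}, so only the enriching fields need attention. For $\boldsymbol v = q\boldsymbol x$ with $q\in\mathbb H_k(K)$ and $F_i$ the face opposite $\boldsymbol x_i$, the normal trace is $\boldsymbol v\cdot\boldsymbol n_i = q\,(\boldsymbol x\cdot\boldsymbol n_i)$; since $F_i$ lies in an affine hyperplane orthogonal to $\boldsymbol n_i$, the factor $\boldsymbol x\cdot\boldsymbol n_i$ is constant along $F_i$, whence $\boldsymbol v\cdot\boldsymbol n_i|_{F_i}$ is a scalar multiple of $q|_{F_i}\in\mathbb P_k(F_i)$. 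This gives the first inclusion for all $k\ge 0$.

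For surjectivity I would split on $k$. When $k\ge 1$ the argument is immediate and does not even use the enrichment: $\mathbb P_k(K;\mathbb R^d)\subseteq V^{\rm RT}$, so Lemma~\ref{lem:trdivonto} already forces ${\rm tr}^{\div}(V^{\rm RT}) \supseteq {\rm tr}^{\div}(\mathbb P_k(K;\mathbb R^d)) = \mathbb P_k(\mathcal F^1(K))$. Combined with the containment above this settles $k\ge 1$, and shows that the enrichment only raises $\div V^{\rm RT}$, not the trace.

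The main obstacle is the lowest-order case $k=0$, where the construction in Lemma~\ref{lem:trdivonto} breaks down (it represents the prescribed normal trace as $\lambda_i q$, which is impossible for a nonzero constant) and the enrichment by $\boldsymbol x$ is essential. Here $\dim\mathbb P_0(\mathcal F^1(K)) = d+1$, while the constant fields $\boldsymbol c\in\mathbb R^d$ map through $\boldsymbol c\mapsto(\boldsymbol c\cdot\boldsymbol n_i)_{i}$ into a subspace of $\mathbb R^{d+1}$ that is only $d$-dimensional (the map is injective because $\{\boldsymbol n_i\}$ spans $\mathbb R^d$). To capture the missing direction I use the single extra field $\boldsymbol x$, whose normal trace on $F_i$ is the constant $c_i:=(\boldsymbol x\cdot\boldsymbol n_i)|_{F_i}$. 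The crux is a divergence-theorem computation showing that ${\rm tr}^{\div}\boldsymbol x$ escapes the image of the constant fields: the image is annihilated by the functional $\boldsymbol a\mapsto\sum_i|F_i|a_i$, since $\sum_i|F_i|\boldsymbol n_i=\int_{\partial K}\boldsymbol n\,{\rm d}s=\boldsymbol 0$, whereas $\sum_i|F_i|c_i=\int_{\partial K}\boldsymbol x\cdot\boldsymbol n\,{\rm d}s=\int_K\div\boldsymbol x\,\dx=d|K|\neq 0$. Hence ${\rm tr}^{\div}\boldsymbol x$ is independent of the constant-field traces, and together they span the full $(d+1)$-dimensional space $\mathbb P_0(\mathcal F^1(K))$, completing the proof.
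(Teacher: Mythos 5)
Your proposal is correct, and for $k\geq 1$ it coincides with the paper's argument (both invoke Lemma~\ref{lem:trdivonto} together with the observation that $\boldsymbol x\cdot\boldsymbol n_i$ is constant on $F_i$). Where you genuinely diverge is the lowest-order case $k=0$, which is the only nontrivial part. The paper's proof is constructive: after reducing by linearity to trace data supported on a single face $F_0$ with value $c$, it exhibits the explicit field $\boldsymbol v=\frac{c}{(\boldsymbol x_1-\boldsymbol x_0)\cdot\boldsymbol n_0}(\boldsymbol x-\boldsymbol x_0)\in V^{\rm RT}$, whose normal trace vanishes on $F_i$ for $i\neq 0$ (because $\boldsymbol x-\boldsymbol x_0$ is tangent to $F_i$) and equals $c$ on $F_0$. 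You instead argue by dimension count plus a separating functional: the constant fields inject into $\mathbb P_0(\mathcal F^1(K))\cong\mathbb R^{d+1}$ with a $d$-dimensional image annihilated by $\boldsymbol a\mapsto\sum_i|F_i|a_i$ (since $\sum_i|F_i|\boldsymbol n_i=\boldsymbol 0$), while the divergence theorem gives $\sum_i|F_i|\,(\boldsymbol x\cdot\boldsymbol n_i)|_{F_i}=d|K|\neq 0$, so ${\rm tr}^{\div}\boldsymbol x$ escapes that image and the trace space is full. Both arguments are elementary and correct; the paper's buys an explicit lifting (useful if one later needs a concrete right inverse of the trace, e.g.\ in interpolation or inf-sup arguments), whereas yours requires no guess of a preimage and isolates cleanly \emph{why} the enrichment by $\boldsymbol x$ is exactly what is missing: it is the unique direction detected by the area-weighted sum functional, which is dual to the constraint $\int_{\partial K}\boldsymbol v\cdot\boldsymbol n\,{\rm d}s=0$ satisfied by divergence-free (here: constant) fields.
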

\begin{proof}
When $k\geq1$, by Lemma~\ref{lem:trdivonto} and the fact that $\boldsymbol n \cdot \boldsymbol x|_{F_i}$ is a constant, it follows 
$$
 {\rm tr}^{\div}(V^{\rm RT}) = {\rm tr}^{\div}(\mathbb P_k(K; \mathbb R^d)) = \mathbb P_{k}(\mathcal F^1(K)). 
$$

Consider the case $k=0$. It is clear that ${\rm tr}^{\div}(V^{\rm RT}) \subseteq\mathbb P_{0}(\mathcal F^1(K))$.
To prove the other side, by the linearity, assume $q\in\mathbb P_{0}(\mathcal F^1(K))$ such that $q|_{F_0}=c\in\mathbb R$ and $q|_{F_i}=0$ for $i=1,\cdots, d$. Set $\boldsymbol v=\frac{c}{(\boldsymbol x_1-\boldsymbol x_0)\cdot\boldsymbol n_0}(\boldsymbol x-\boldsymbol x_0)\in V^{\rm RT}$, then ${\rm tr}^{\div}\boldsymbol v=q$. 
\end{proof}

Define the bubble space
$$
\mathbb B_{k+1}^{-}(\div, K) := \ker ( {\rm tr}^{\div} ) \cap V^{\rm RT}.
$$
By \eqref{eq:RTtrace}, $\dim\mathbb B_{k+1}^{-}(\div, K)=\dim V^{\rm RT}-\dim {\rm tr}^{\div}(V^{\rm RT})=d {k+d-1\choose d}$ for $k\geq1$ and $\dim\mathbb B_{1}^{-}(\div, K)=0$.
We show the intersection of the null space of $\div$ operator and $\mathbb B_{k+1}^{-}(\div, K)$ remains unchanged.
\begin{lemma}
Let integer $k\geq 0$. It holds
\begin{equation}\label{eq:E0RT}
\mathbb B_{k+1}^{-}(\div, K) \cap \ker (\div) = E_0,
\end{equation}
where $E_0 := \{\boldsymbol0\}$ for $k=0$.
\end{lemma}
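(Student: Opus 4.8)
The plan is to prove the two inclusions separately, with the substantive direction being $\mathbb B_{k+1}^{-}(\div, K) \cap \ker (\div) \subseteq E_0$. The reverse inclusion $E_0 \subseteq \mathbb B_{k+1}^{-}(\div, K) \cap \ker(\div)$ is immediate: any $\boldsymbol v \in E_0 = \mathbb B_k(\div, K)\cap\ker(\div)$ lies in $\mathbb P_k(K;\mathbb R^d) \subseteq V^{\rm RT}$, already satisfies ${\rm tr}^{\div}\boldsymbol v = 0$ and $\div \boldsymbol v = 0$, hence belongs to $\mathbb B_{k+1}^{-}(\div, K)\cap\ker(\div)$. The case $k=0$ is trivial since both sides are $\{\boldsymbol 0\}$, so we may assume $k\geq 1$.

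For the forward inclusion, I would start by writing a generic element $\boldsymbol v \in \mathbb B_{k+1}^{-}(\div, K)\cap\ker(\div)$ according to the defining direct sum of $V^{\rm RT}$, namely $\boldsymbol v = \boldsymbol v_0 + \boldsymbol x\, q$ with $\boldsymbol v_0 \in \mathbb P_k(K;\mathbb R^d)$ and $q\in\mathbb H_k(K)$. The key step is to exploit the divergence-free constraint together with Euler's formula \eqref{eq:Hkdiv}: since $\div(\boldsymbol x q) = (k+d)q$, the condition $\div\boldsymbol v = 0$ reads
\begin{equation*}
\div\boldsymbol v_0 + (k+d)\,q = 0.
\end{equation*}
Now I would argue by homogeneity. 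Because $\boldsymbol v_0 \in \mathbb P_k(K;\mathbb R^d)$, its divergence $\div\boldsymbol v_0$ lies in $\mathbb P_{k-1}(K)$ and thus carries no homogeneous component of degree $k$, whereas $(k+d)q \in \mathbb H_k(K)$ is homogeneous of degree exactly $k$. Matching the degree-$k$ parts of the identity above forces $(k+d)q = 0$, hence $q = 0$ (as $k+d>0$), and consequently $\div\boldsymbol v_0 = 0$.

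With $q=0$ we conclude $\boldsymbol v = \boldsymbol v_0 \in \mathbb P_k(K;\mathbb R^d)$. It remains to observe that $\boldsymbol v$ still satisfies the trace condition: since $\boldsymbol v \in \ker({\rm tr}^{\div})$ by definition of the RT bubble space and $\boldsymbol v = \boldsymbol v_0$, we have ${\rm tr}^{\div}\boldsymbol v_0 = 0$, so $\boldsymbol v_0 \in \mathbb B_k(\div, K)$. Combined with $\div\boldsymbol v_0 = 0$ this gives $\boldsymbol v = \boldsymbol v_0 \in E_0$, completing the inclusion. The only delicate point — and the single idea the whole argument rests on — is the separation by homogeneous degree in the displayed identity, which cleanly kills the enrichment term $\boldsymbol x\, q$; everything else is bookkeeping with the definitions of the bubble and trace spaces.
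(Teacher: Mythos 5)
Your proof is correct and takes essentially the same route as the paper: the paper's one-line argument---that $\div\boldsymbol v=0$ forces $\boldsymbol v\in\mathbb P_{k}(K;\mathbb R^d)$ because $\div:\mathbb H_k(K)\boldsymbol x\to\mathbb H_k(K)$ is bijective---is exactly your degree-separation step, since that bijectivity rests on Euler's formula \eqref{eq:Hkdiv} and the fact that $\div\boldsymbol v_0\in\mathbb P_{k-1}(K)$ has no degree-$k$ component. Your explicit treatment of the reverse inclusion and the $k=0$ case just spells out what the paper's ``then the desired result follows'' leaves to the reader.
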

\begin{proof}
For $\boldsymbol v\in V^{\rm RT}$, if $\div\boldsymbol v = 0$, then $\boldsymbol v\in \mathbb P_{k}(K;\mathbb R^d)$ as $\div: \mathbb H_k(K) \boldsymbol x \to \mathbb H_k(K)$ is bijective. Then the desired result follows.  
\end{proof}

Define $E_{0}^{\bot,-} := \mathbb B_{k+1}^{-}(\div, K) /E_0$. We give a characterization of $(E_{0}^{\bot,-})'$.

\begin{lemma}
Let integer $k\geq 0$. It holds
\begin{equation}\label{eq:E0perpRT}
(E_{0}^{\bot,-})' = \mathcal N(\grad \mathbb P_k(K)).
\end{equation}
\end{lemma}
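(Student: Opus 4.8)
The plan is to mirror the BDM argument behind Corollary~\ref{cor:E0botstar}, with the target range enlarged from $\mathbb P_{k-1}(K)/\mathbb R$ to $\mathbb P_k(K)/\mathbb R$ because $\div V^{\rm RT}=\mathbb P_k(K)$. The key intermediate claim is that $\div: E_{0}^{\bot,-}\to \mathbb P_k(K)/\mathbb R$ is a bijection; once this is in hand, the desired identity \eqref{eq:E0perpRT} follows immediately from the abstract characterization \eqref{eq:E0botdof} of Lemma~\ref{lem:abstract}, whose $E_0^{\bot}$ part requires only this bijection.

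First I would record the inclusion $\div \mathbb B_{k+1}^-(\div,K)\subseteq \mathbb P_k(K)/\mathbb R$: for $\boldsymbol v\in \mathbb B_{k+1}^-(\div,K)$ and any constant $p$, integration by parts together with the vanishing trace $\boldsymbol v\cdot\boldsymbol n|_{\partial K}=0$ gives $(\div\boldsymbol v, p)_K=-(\boldsymbol v,\grad p)_K=0$. Injectivity of $\div$ on the quotient $E_{0}^{\bot,-}=\mathbb B_{k+1}^-(\div,K)/E_0$ is immediate from \eqref{eq:E0RT}, since $E_0=\mathbb B_{k+1}^-(\div,K)\cap\ker(\div)$ is precisely the subspace that has been quotiented out.

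For surjectivity I would use a dimension count rather than an explicit construction. The direct route (mimicking the BDM proof by building bubble functions $\sum_i q_i\lambda_0\lambda_i\boldsymbol l_i$ from $\grad p=\sum_i q_i\boldsymbol n_i$) is awkward here, because with $p\in\mathbb P_k(K)/\mathbb R$ these functions have degree $k+1$ and need not lie in $V^{\rm RT}$; this is the one place that requires care, and why I prefer to avoid it. Instead, combining $\dim\mathbb B_{k+1}^-(\div,K)=d\binom{k+d-1}{d}$ with $\dim E_0=d\binom{k+d-1}{d}-\binom{k+d}{d}+1$ from \eqref{eq:dimE0} (valid since $E_0$ is unchanged by \eqref{eq:E0RT}) yields $\dim E_{0}^{\bot,-}=\binom{k+d}{d}-1=\dim(\mathbb P_k(K)/\mathbb R)$. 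Together with injectivity, this forces $\div: E_{0}^{\bot,-}\to\mathbb P_k(K)/\mathbb R$ to be onto, hence a bijection. The case $k=0$ is trivial, as both sides are $\{\boldsymbol 0\}$.

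Finally I would invoke \eqref{eq:E0botdof} with $\dd=\div$ and $\dd^*=-\grad$ restricted to the bubble space (the adjoint used throughout this section). Since $\div\mathbb B_{k+1}^-(\div,K)=\mathbb P_k(K)/\mathbb R$ and $\grad$ annihilates constants, $\dd^*\big(\div\mathbb B_{k+1}^-(\div,K)\big)=\grad\mathbb P_k(K)$, giving $(E_{0}^{\bot,-})'=\mathcal N(\grad\mathbb P_k(K))$. Equivalently, one can argue directly as in Corollary~\ref{cor:E0botstar}: a function $\boldsymbol v\in E_{0}^{\bot,-}$ is determined by the moments $(\div\boldsymbol v,p)_K=-(\boldsymbol v,\grad p)_K$ for $p$ ranging over $\mathbb P_k(K)/\mathbb R$, that is, by $\mathcal N(\grad\mathbb P_k(K))$.
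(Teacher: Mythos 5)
Your proposal is correct and is essentially the paper's own proof: both arguments rest on the same two ingredients, namely integration by parts combined with the fact $\div V^{\rm RT}\subseteq \mathbb P_k(K)$, and the identical dimension count $\dim E_{0}^{\bot,-}=\dim\mathbb B_{k+1}^{-}(\div,K)-\dim E_0=\binom{k+d}{d}-1=\dim\grad\mathbb P_k(K)$. The only difference is packaging: you first establish the bijection $\div\colon E_{0}^{\bot,-}\to\mathbb P_k(K)/\mathbb R$ and then invoke \eqref{eq:E0botdof} of Lemma~\ref{lem:abstract}, whereas the paper verifies directly that vanishing of the moments $(\boldsymbol v,\grad p)_K$ for $p\in\mathbb P_k(K)$ forces $\div\boldsymbol v=0$ and hence $\boldsymbol v=\boldsymbol 0$, which is precisely the ``equivalent'' direct argument you record in your final sentence.
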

\begin{proof}
We first prove: given a $\boldsymbol v\in E_{0}^{\bot,-}$, i.e. $\tr^{\div} \boldsymbol v = 0$ and $\boldsymbol v \perp E_0$, if 
\begin{equation}\label{eq:vgradp}
(\boldsymbol v, \grad p) = 0 \quad \forall~p\in \mathbb P_k(K),
\end{equation}
then $\boldsymbol v = \boldsymbol0$. 
Indeed integration by parts of \eqref{eq:vgradp} and the fact $\div \boldsymbol v\in \mathbb P_k(K)$ imply $\div\boldsymbol v = 0$, i.e. $\boldsymbol v\in E_0$. Then the only possibility to have $\boldsymbol v \perp E_0$ is $\boldsymbol v = \boldsymbol0$.

Then the dimension count gives
$$
\dim E_{0}^{\bot,-}=\dim\mathbb B_{k+1}^{-}(\div, K)-\dim E_0={k+d\choose d} -1=\dim\grad \mathbb P_k(K),
$$
which indicates \eqref{eq:E0perpRT}.
\end{proof}

Hence we acquire the uni-solvence of RT element from 
\eqref{eq:RTtrace}, \eqref{eq:E0RT}, \eqref{eq:E0perpRT} and Corollary~\ref{cor:E0star}. Global version of finite element space can be defined similarly. 
\begin{theorem}[Uni-solvence of RT element]\label{thm:RTunisolvence}
Let integer $k\geq 0$. The degrees of freedom \eqref{eq:RTdof1}-\eqref{eq:RTdof2} are uni-solvent for $V^{\rm RT}$.
\end{theorem}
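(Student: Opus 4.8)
The plan is to establish uni-solvence by the standard two-step argument: first verify that the number of degrees of freedom equals $\dim V^{\rm RT}$, and then show that the vanishing of all DoFs \eqref{eq:RTdof1}--\eqref{eq:RTdof2} forces $\boldsymbol v = \boldsymbol 0$. Since the shape function space $V^{\rm RT}$ decomposes according to the trace-bubble splitting $V^{\rm RT} = \mathbb B_{k+1}^{-}(\div, K) \oplus \mathcal E(\tr^{\div}(V^{\rm RT}))$, and the bubble space further splits as $\mathbb B_{k+1}^{-}(\div, K) = E_0 \oplus E_{0}^{\bot,-}$, the strategy is to match each group of DoFs to the dual space of each block already identified in the preceding lemmas.

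First I would perform the dimension count. The trace DoFs \eqref{eq:RTdof1} number $\dim \mathbb P_{k}(\mathcal F^1(K))$, which by \eqref{eq:RTtrace} equals $\dim \tr^{\div}(V^{\rm RT})$. The interior DoFs \eqref{eq:RTdof2} number $\dim \mathbb P_{k-1}(K; \mathbb R^d)$. Splitting the interior moments via the decomposition $\mathbb P_{k-1}(K; \mathbb R^d) = \grad \mathbb P_{k}(K) \oplus \mathbb P_{k-2}(K;\mathbb K)\boldsymbol x$ from \eqref{eq:Pkm1vecdecomp}, these correspond to $\dim E_{0}^{\bot,-}$ (via $\grad \mathbb P_k(K)$, by \eqref{eq:E0perpRT}) plus $\dim E_0$ (via $\mathbb P_{k-2}(K;\mathbb K)\boldsymbol x = \ker(\cdot\boldsymbol x) \cap \mathbb P_{k-1}(K;\mathbb R^d)$, by Lemma~\ref{lem:kerx} and Corollary~\ref{cor:E0star}). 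Adding these gives the total, which matches $\dim V^{\rm RT} = \dim \tr^{\div}(V^{\rm RT}) + \dim E_{0}^{\bot,-} + \dim E_0$.

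For the injectivity step, suppose $\boldsymbol v \in V^{\rm RT}$ annihilates all DoFs. The vanishing of \eqref{eq:RTdof1} together with \eqref{eq:RTtrace} and Lemma~\ref{lem:divtracedof}-type reasoning gives $\tr^{\div}\boldsymbol v = 0$, so $\boldsymbol v \in \mathbb B_{k+1}^{-}(\div, K)$. Next, testing \eqref{eq:RTdof2} against $\grad \mathbb P_k(K) \subseteq \mathbb P_{k-1}(K;\mathbb R^d)$ and invoking \eqref{eq:E0perpRT} together with the splitting \eqref{eq:E0RT} forces $\boldsymbol v \in E_0$, i.e. $\div \boldsymbol v = 0$. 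Finally, testing \eqref{eq:RTdof2} against the complementary piece $\ker(\cdot\boldsymbol x) \cap \mathbb P_{k-1}(K;\mathbb R^d)$ and applying Corollary~\ref{cor:E0star} yields $\boldsymbol v = \boldsymbol 0$.

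The main subtlety, rather than a genuine obstacle, is bookkeeping the boundary case $k=0$ separately: there $E_0 = \{\boldsymbol 0\}$, the interior DoFs against $\mathbb P_{k-2}(K;\mathbb K)\boldsymbol x$ and the $E_0$ part are empty, and one relies only on the trace DoFs and \eqref{eq:E0perpRT} with $\grad \mathbb P_0(K) = \{\boldsymbol 0\}$; one checks directly that $\dim \mathbb B_{1}^{-}(\div, K) = 0$ so that the trace DoFs alone are uni-solvent. For $k \geq 1$ everything reduces cleanly to citing \eqref{eq:RTtrace}, \eqref{eq:E0RT}, \eqref{eq:E0perpRT}, and Corollary~\ref{cor:E0star} in sequence, so the proof is essentially an assembly of the block-wise results already proved rather than new analysis.
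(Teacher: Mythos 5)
Your proof is correct and takes essentially the same approach as the paper: the paper's proof of Theorem~\ref{thm:RTunisolvence} is exactly the assembly of \eqref{eq:RTtrace}, \eqref{eq:E0RT}, \eqref{eq:E0perpRT}\revision{,} and Corollary~\ref{cor:E0star} that you carry out, block by block. Your explicit dimension count, the sequential injectivity argument (trace $\to$ bubble $\to$ $E_0$ $\to$ zero), and the separate treatment of $k=0$ merely spell out details the paper leaves implicit.
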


\revision{When $k\geq 1$, RT element can be enriched from BDM element by applying Lemma~\ref{lm:VPfem} with $\dd=\div$, $V=\mathbb P_{k}(K;\mathbb R^d)$, $\mathbb H=\mathbb H_{k}(K)\boldsymbol x$, $\mathbb P=\mathbb P_{k-1}(K)/\mathbb R$ and $\mathbb Q=\mathbb P_{k-2}(K;\mathbb K)\boldsymbol x$.}

\section{Symmetric Div-Conforming Finite Elements}\label{sec:divS}
In this section we shall construct div-conforming finite elements for symmetric matrices. For space $V = \mathbb P_k(K,\mathbb S)$, our element is slightly different from Hu's element constructed in~\cite{Hu2015a}. A new family of $\mathbb P_{k+1}^{-}(K,\mathbb S)$ type finite elements is also constructed. The trace space for symmetric div-conforming element seems hard to characterize, instead we identify the bubble function space and then only need to work on the dual of the trace space.

\subsection{Div operator}
\begin{lemma}\label{lem:divsymtensoronto}
Let $k\geq 0$. The operator $\div: \sym (\mathbb H_{k}(D;\mathbb R^d)\boldsymbol x^{\intercal}) \to \mathbb H_{k}(D;\mathbb R^d)$ is bijective and consequently $\div: \mathbb P_{k+1}(D;\mathbb S) \to \mathbb P_{k}(D;\mathbb R^d)$ is surjective.
\end{lemma}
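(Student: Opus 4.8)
The plan is to reduce the statement to the bijectivity of a single linear endomorphism of $\mathbb H_k(D;\mathbb R^d)$. First I would compute $\div(\sym(\boldsymbol v\boldsymbol x^{\intercal}))$ for a homogeneous $\boldsymbol v\in\mathbb H_k(D;\mathbb R^d)$ by the product rule. Writing $\boldsymbol M=\sym(\boldsymbol v\boldsymbol x^{\intercal})=\tfrac12(\boldsymbol v\boldsymbol x^{\intercal}+\boldsymbol x\boldsymbol v^{\intercal})$ and differentiating entrywise, the Euler relation \eqref{eq:homogeneouspolyprop} (so that $\boldsymbol x\cdot\nabla v_i=kv_i$) together with $\sum_j\partial_j x_j=d$ yields
\[
\div(\sym(\boldsymbol v\boldsymbol x^{\intercal}))=\tfrac12\big((k+d+1)\boldsymbol v+(\div\boldsymbol v)\boldsymbol x\big)=:T\boldsymbol v,
\]
which indeed lands in $\mathbb H_k(D;\mathbb R^d)$ since $\div\boldsymbol v\in\mathbb H_{k-1}(D)$. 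Because $\div\circ\sym(\,\cdot\,\boldsymbol x^{\intercal})$ equals $T$, bijectivity of $\div$ on $\sym(\mathbb H_k(D;\mathbb R^d)\boldsymbol x^{\intercal})$ will follow once $T\colon\mathbb H_k(D;\mathbb R^d)\to\mathbb H_k(D;\mathbb R^d)$ is shown to be bijective: injectivity of $T$ forces both injectivity of the symmetrization $\boldsymbol v\mapsto\sym(\boldsymbol v\boldsymbol x^{\intercal})$ and injectivity of $\div$ on its image, while surjectivity of $T$ gives surjectivity of $\div$ onto $\mathbb H_k(D;\mathbb R^d)$.

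Since $\mathbb H_k(D;\mathbb R^d)$ is finite dimensional, it suffices to prove $T$ is injective. The obstacle is the extra term $(\div\boldsymbol v)\boldsymbol x$, which prevents reading off $\boldsymbol v=\boldsymbol0$ directly; I would dispose of it by applying $\div$ a second time. If $T\boldsymbol v=\boldsymbol0$, then $(k+d+1)\boldsymbol v=-(\div\boldsymbol v)\boldsymbol x$, and taking the divergence of both sides and using \eqref{eq:Hkdiv} applied to $q:=\div\boldsymbol v\in\mathbb H_{k-1}(D)$, namely $\div(q\boldsymbol x)=(k+d-1)q$, gives
\[
(k+d+1)\div\boldsymbol v=-(k+d-1)\div\boldsymbol v,
\]
hence $2(k+d)\div\boldsymbol v=0$ and so $\div\boldsymbol v=0$. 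Substituting back leaves $(k+d+1)\boldsymbol v=\boldsymbol0$, i.e. $\boldsymbol v=\boldsymbol0$. Thus $T$ is injective, hence bijective, and the first assertion follows.

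For the consequence I would argue degree by degree. Every $\boldsymbol p\in\mathbb P_k(D;\mathbb R^d)$ decomposes as $\boldsymbol p=\sum_{j=0}^k\boldsymbol p_j$ with $\boldsymbol p_j\in\mathbb H_j(D;\mathbb R^d)$. By the bijectivity just proved at each level $j\leq k$, there is $\boldsymbol\sigma_j\in\sym(\mathbb H_j(D;\mathbb R^d)\boldsymbol x^{\intercal})\subseteq\mathbb H_{j+1}(D;\mathbb S)\subseteq\mathbb P_{k+1}(D;\mathbb S)$ with $\div\boldsymbol\sigma_j=\boldsymbol p_j$; summing over $j$ produces a preimage of $\boldsymbol p$ in $\mathbb P_{k+1}(D;\mathbb S)$, so $\div\colon\mathbb P_{k+1}(D;\mathbb S)\to\mathbb P_k(D;\mathbb R^d)$ is surjective. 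The main work is the first divergence computation and the second-divergence trick used to eliminate the $(\div\boldsymbol v)\boldsymbol x$ term; everything else is dimension bookkeeping and the two Euler identities.
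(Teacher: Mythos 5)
Your proposal is correct and follows essentially the same route as the paper: the same product-rule computation via the Euler identities yielding $2\div\sym(\boldsymbol v\boldsymbol x^{\intercal})=(k+d+1)\boldsymbol v+(\div\boldsymbol v)\boldsymbol x$, the same trick of applying $\div$ a second time to conclude $\div\boldsymbol v=0$ and hence $\boldsymbol v=\boldsymbol 0$, and the same finite-dimensional bookkeeping (your ``injective endomorphism $T$ is bijective'' is just a repackaging of the paper's kernel-plus-dimension-count argument). The degree-by-degree argument for the surjectivity of $\div:\mathbb P_{k+1}(D;\mathbb S)\to\mathbb P_k(D;\mathbb R^d)$ is the standard reasoning the paper leaves implicit in ``consequently.''
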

\begin{proof}
Noting that
\begin{align*}
\div(\sym(\mathbb H_{k}(D;\mathbb R^d)\boldsymbol x^{\intercal})) \subseteq \mathbb H_{k}(K;\mathbb R^d), \\
 \dim(\sym(\mathbb H_{k}(D;\mathbb R^d)\boldsymbol x^{\intercal}))=\dim\mathbb H_{k}(K;\mathbb R^d),
\end{align*}
it is sufficient to prove $\sym(\mathbb H_{k}(D;\mathbb R^d)\boldsymbol x^{\intercal})\cap\ker(\div)=\{\boldsymbol0\}$. That is: for any $\boldsymbol q\in\mathbb H_{k}(D;\mathbb R^d)$ satisfying $\div\sym(\boldsymbol q\boldsymbol x^{\intercal})=\boldsymbol0$, we are going to prove $\boldsymbol q =\boldsymbol 0$. 

By \eqref{eq:Hkdiv}, we have
\begin{align*}
2\div \sym(\boldsymbol q\boldsymbol x^{\intercal}) & = \div (\boldsymbol q\boldsymbol x^{\intercal}) +\div (\boldsymbol x\boldsymbol q^{\intercal}) = (k+d)\boldsymbol q + (\grad \boldsymbol x) \boldsymbol q + (\div \boldsymbol q) \boldsymbol x \\
&= (k+d+1)\boldsymbol q+ (\div \boldsymbol q) \boldsymbol x.
\end{align*}
It follows from $\div \sym(\boldsymbol q\boldsymbol x^{\intercal})=\boldsymbol 0$ that 
\begin{equation}\label{eq:divsymxq}
(k+d+1)\boldsymbol q+ (\div \boldsymbol q) \boldsymbol x=\boldsymbol 0.
\end{equation}
Applying the divergence operator $\div$ on both side of \eqref{eq:divsymxq}, we get from \eqref{eq:Hkdiv} that
$$
2(k+d)\div\boldsymbol q=0.
$$
Hence $\div\boldsymbol q=0$, which together with \eqref{eq:divsymxq} gives $\boldsymbol q=\boldsymbol 0$.
\end{proof}

\subsection{Bubble space}
Define an $\boldsymbol{H}(\div, K; \mathbb{S})$ bubble function space of polynomials of degree $k$ as
$$
\mathbb{B}_{k}(\div, K; \mathbb{S}):=\left\{\boldsymbol{\tau}\in \mathbb{P}_{k}(K; \mathbb{S}): \boldsymbol{\tau}\boldsymbol{n}|_{\partial K}=\boldsymbol{0}\right\}.
$$
It is easy to check that $\mathbb{B}_{1}(\div, K; \mathbb{S})$ is merely the zero space. The following characterization of $\mathbb{B}_{k}(\div, K; \mathbb{S})$ is given in~\cite[Lemma 2.2]{Hu2015a}. 
\begin{lemma}\label{lm:bubbledof}
For $k\geq 2$, it holds
\begin{equation}\label{eq:symtensorbubble}
    \mathbb{B}_{k}(\div,K; \mathbb{S})=\sum_{0\leq i<j\leq d}\lambda_i\lambda_j \mathbb P_{k-2}(K)\boldsymbol{T}_{i,j}.
\end{equation}
Consequently $$\dim \mathbb{B}_{k}(\div,K; \mathbb{S}) = \dim \mathbb P_{k-2}(K;\mathbb S) = \frac{d(d+1)}{2} {d+k-2 \choose d}.$$
\end{lemma}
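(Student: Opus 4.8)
The plan is to establish the set equality \eqref{eq:symtensorbubble} by proving the two inclusions separately, exploiting that $\{\boldsymbol T_{i,j}\}_{0\le i<j\le d}$ is a basis of $\mathbb S$; the dimension formula will then drop out as a byproduct. This parallels the vector bubble identity quoted earlier in the section, but with the crucial difference that here the generating set turns out to be linearly independent, so no degeneracy needs to be resolved. Note that this route never requires characterizing the symmetric trace space, which is precisely the difficulty the paper flags.

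First I would dispatch the easy inclusion $\sum_{0\le i<j\le d}\lambda_i\lambda_j\mathbb P_{k-2}(K)\boldsymbol T_{i,j}\subseteq\mathbb B_k(\div,K;\mathbb S)$. Since $\boldsymbol T_{i,j}\boldsymbol n_\ell=(\boldsymbol t_{i,j}\cdot\boldsymbol n_\ell)\boldsymbol t_{i,j}$, a generator $\lambda_i\lambda_j p\,\boldsymbol T_{i,j}$ has trace $\lambda_i\lambda_j p\,(\boldsymbol t_{i,j}\cdot\boldsymbol n_\ell)\boldsymbol t_{i,j}$ on $F_\ell$. Using $\nabla\lambda_\ell=-|\nabla\lambda_\ell|\boldsymbol n_\ell$ together with \eqref{eq:tijlambdal}, the scalar $\boldsymbol t_{i,j}\cdot\boldsymbol n_\ell$ is proportional to $\delta_{j\ell}-\delta_{i\ell}$, hence vanishes whenever $\ell\notin\{i,j\}$; and when $\ell\in\{i,j\}$ the factor $\lambda_i\lambda_j$ vanishes on $F_\ell$. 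Either way the trace is zero on every face, so each generator is a bubble.

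The substantive direction is $\mathbb B_k(\div,K;\mathbb S)\subseteq\sum_{0\le i<j\le d}\lambda_i\lambda_j\mathbb P_{k-2}(K)\boldsymbol T_{i,j}$. Given $\boldsymbol\tau\in\mathbb B_k(\div,K;\mathbb S)$, I would expand it in the basis as $\boldsymbol\tau=\sum_{i<j}c_{i,j}\boldsymbol T_{i,j}$ with $c_{i,j}=\boldsymbol\tau:\boldsymbol N_{i,j}\in\mathbb P_k(K)$, the expansion being unique. Fixing a face $F_\ell$, only the pairs containing $\ell$ survive in $\boldsymbol\tau\boldsymbol n_\ell$ by the same vanishing of $\boldsymbol t_{i,j}\cdot\boldsymbol n_\ell$, leaving (up to orientation) $\boldsymbol\tau\boldsymbol n_\ell=\sum_{m\ne\ell}c_{\{m,\ell\}}(\boldsymbol t_{m,\ell}\cdot\boldsymbol n_\ell)\boldsymbol t_{m,\ell}$, where $c_{\{m,\ell\}}$ is the coefficient of the pair $\{m,\ell\}$. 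The key observation is that the $d$ edge vectors $\{\boldsymbol t_{m,\ell}\}_{m\ne\ell}$ emanating from vertex $\boldsymbol x_\ell$ are linearly independent, so the condition $\boldsymbol\tau\boldsymbol n_\ell|_{F_\ell}=0$ decouples into $c_{\{m,\ell\}}|_{F_\ell}=0$ for each $m\ne\ell$, i.e. $\lambda_\ell\mid c_{\{m,\ell\}}$. Applying this at $\ell=i$ and $\ell=j$ to a fixed pair shows $\lambda_i\mid c_{i,j}$ and $\lambda_j\mid c_{i,j}$; coprimality of the distinct linear factors gives $\lambda_i\lambda_j\mid c_{i,j}$, so $c_{i,j}=\lambda_i\lambda_j p_{i,j}$ with $p_{i,j}\in\mathbb P_{k-2}(K)$, proving the inclusion. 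Uniqueness of the expansion makes the sum direct, whence $\dim\mathbb B_k(\div,K;\mathbb S)=\binom{d+1}{2}\dim\mathbb P_{k-2}(K)=\frac{d(d+1)}{2}\binom{d+k-2}{d}$, matching $\dim\mathbb P_{k-2}(K;\mathbb S)$.

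I expect the main obstacle to be the decoupling step: carefully tracking which pairs survive in $\boldsymbol\tau\boldsymbol n_\ell$ and justifying that the single vector constraint $\boldsymbol\tau\boldsymbol n_\ell|_{F_\ell}=0$ separates into independent scalar divisibility conditions on the $c_{i,j}$. This rests on the linear independence of the vertex-emanating edges and on the precise sign pattern from \eqref{eq:tijlambdal}; once that is in hand, the coprimality argument and the dimension count are routine.
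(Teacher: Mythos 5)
Your proof is correct, and it supplies something the paper itself does not: the paper states Lemma~\ref{lm:bubbledof} without proof, deferring entirely to \cite[Lemma 2.2]{Hu2015a} (just as, for the vector analogue $\mathbb B_k(\div,K)$, it verifies only the easy inclusion and dismisses the converse as tedious). Your two-inclusion argument is self-contained and exploits exactly the structural feature that makes the symmetric case cleaner than the vector case: since $\{\boldsymbol T_{i,j}\}_{0\le i<j\le d}$ is a basis of $\mathbb S$ (established in Section~\ref{sec:tensor} via the dual basis $\{\boldsymbol N_{i,j}\}$), the expansion $\boldsymbol\tau=\sum_{i<j}c_{i,j}\boldsymbol T_{i,j}$ with $c_{i,j}=\boldsymbol\tau:\boldsymbol N_{i,j}$ is unique; the face condition $\boldsymbol\tau\boldsymbol n_\ell|_{F_\ell}=\boldsymbol 0$ then decouples into the scalar conditions $c_{\{m,\ell\}}|_{F_\ell}=0$, because the surviving vectors $\{\boldsymbol t_{m,\ell}\}_{m\ne\ell}$ are linearly independent and $\boldsymbol t_{m,\ell}\cdot\boldsymbol n_\ell\neq 0$ by \eqref{eq:tijlambdal}; and coprimality of the non-proportional linear factors $\lambda_i,\lambda_j$ upgrades the two divisibilities to $\lambda_i\lambda_j\mid c_{i,j}$, using the paper's stated fact that $p|_{F_i}=0$ forces $p=\lambda_i q$. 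The same uniqueness of the expansion makes the sum in \eqref{eq:symtensorbubble} direct, which is precisely what delivers $\dim\mathbb B_k(\div,K;\mathbb S)=\dim\mathbb P_{k-2}(K;\mathbb S)$ with no separate dimension computation — a point you correctly contrast with the vector case, where the generators $\lambda_i\lambda_j\boldsymbol t_{i,j}$ are linearly dependent and no such count is available. What the citation buys the paper is brevity; what your argument buys is a complete proof in the paper's own notation together with a transparent explanation of why the symmetric bubble space, unlike its vector counterpart, decomposes as a direct sum over edges.
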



\begin{lemma}\label{lem:HuZhanginteriordof}
For $k\geq2$,  it holds
$$
\mathbb{B}_{k}'(\div, K; \mathbb{S}) = \mathcal N (\mathbb{P}_{k-2}(K; \mathbb{S})).
$$
That is $\boldsymbol{\tau}\in\mathbb{B}_{k}(\div, K; \mathbb{S})$ is uniquely determined by 
$$
(\boldsymbol{\tau}, \boldsymbol{\varsigma})_K \quad\forall~\boldsymbol{\varsigma}\in\mathbb{P}_{k-2}(K; \mathbb{S}).
$$ 
\end{lemma}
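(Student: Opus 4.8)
The plan is to establish the equality of dual spaces by combining a dimension count with a non-degeneracy argument, exactly in the style of the earlier sections. Since the DoFs $\boldsymbol{\varsigma}\mapsto(\cdot,\boldsymbol{\varsigma})_K$ range over a space $\mathbb{P}_{k-2}(K;\mathbb{S})$ that is \emph{not} contained in $\mathbb{B}_k(\div,K;\mathbb{S})$, we are in the dual-approach setting, so a priori $\mathcal I'$ need only be onto. First I would record that Lemma~\ref{lm:bubbledof} already supplies the matching dimension $\dim \mathbb{B}_k(\div,K;\mathbb{S})=\dim\mathbb{P}_{k-2}(K;\mathbb{S})$; hence, once non-degeneracy is shown, the induced map is a bijection and we may drop $\mathcal I'$ and write $\mathbb{B}_k'(\div,K;\mathbb{S})=\mathcal N(\mathbb{P}_{k-2}(K;\mathbb{S}))$.

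The core is the non-degeneracy claim: if $\boldsymbol{\tau}\in\mathbb{B}_k(\div,K;\mathbb{S})$ satisfies $(\boldsymbol{\tau},\boldsymbol{\varsigma})_K=0$ for all $\boldsymbol{\varsigma}\in\mathbb{P}_{k-2}(K;\mathbb{S})$, then $\boldsymbol{\tau}=\boldsymbol{0}$. To prove it I would invoke the explicit form from Lemma~\ref{lm:bubbledof} to write $\boldsymbol{\tau}=\sum_{0\leq i<j\leq d}\lambda_i\lambda_j p_{ij}\boldsymbol{T}_{i,j}$ with $p_{ij}\in\mathbb{P}_{k-2}(K)$. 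The decisive tool is the $\boldsymbol{T}$--$\boldsymbol{N}$ duality recorded in Section~\ref{sec:tensor}, namely $\boldsymbol{T}_{i,j}:\boldsymbol{N}_{m,n}=\delta_{im}\delta_{jn}$, which lets each test function single out one coefficient.

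Then for each fixed pair $(m,n)$ with $0\leq m<n\leq d$, I would test against $\boldsymbol{\varsigma}=p_{mn}\boldsymbol{N}_{m,n}$, which indeed lies in $\mathbb{P}_{k-2}(K;\mathbb{S})$ because $\boldsymbol{N}_{m,n}$ is a constant symmetric matrix and $p_{mn}\in\mathbb{P}_{k-2}(K)$. The duality collapses the sum to a single term, yielding
$$
0=(\boldsymbol{\tau},p_{mn}\boldsymbol{N}_{m,n})_K=\int_K \lambda_m\lambda_n\, p_{mn}^2\dx.
$$
Since $\lambda_m\lambda_n>0$ on the interior of $K$ and the integrand is continuous and nonnegative, this forces $p_{mn}=0$. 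Ranging over all pairs $(m,n)$ gives $\boldsymbol{\tau}=\boldsymbol{0}$, which completes the non-degeneracy argument and, together with the dimension equality above, the proof.

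I do not anticipate a genuine obstacle: the heavy lifting, namely the explicit description of the bubble space and the matching of dimensions, is already carried out in Lemma~\ref{lm:bubbledof}, and the remaining step reduces to the clean selection of the dual basis $\{\boldsymbol{N}_{m,n}\}$ as test functions together with the elementary positivity of $\lambda_m\lambda_n$ on the interior. The only points requiring a little care are to confirm that the chosen $\boldsymbol{\varsigma}$ genuinely lies in the admissible test space $\mathbb{P}_{k-2}(K;\mathbb{S})$ and that the pairing isolates exactly one coefficient; both follow immediately from the symmetry of $\boldsymbol{N}_{m,n}$ and the duality identity.
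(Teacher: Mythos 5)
Your proposal is correct and follows essentially the same route as the paper's own proof: both expand $\boldsymbol{\tau}$ via Lemma~\ref{lm:bubbledof}, exploit the duality $\boldsymbol{T}_{i,j}:\boldsymbol{N}_{m,n}=\delta_{im}\delta_{jn}$ with test functions built from the $\boldsymbol{N}_{m,n}$, use positivity of $\lambda_m\lambda_n$ in the interior of $K$, and close with the dimension count. The only cosmetic difference is that you test one pair $(m,n)$ at a time while the paper tests against the single combined function $\boldsymbol{\varsigma}=\sum_{i<j}q_{ij}\boldsymbol{N}_{i,j}$, which is immaterial.
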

\begin{proof}
Given $\boldsymbol \tau \in \mathbb{B}_{k}(\div, K; \mathbb{S})$, by \eqref{eq:symtensorbubble}, there exist $q_{ij}\in \mathbb P_{k-2}(K)$ with $0\leq i<j\leq d$ such that 
$$
\boldsymbol{\tau}=\sum_{0\leq i<j\leq d}\lambda_i\lambda_jq_{ij}\boldsymbol{T}_{i,j}.
$$
Note that symmetric tensors $\{\boldsymbol{N}_{i,j}\}_{0\leq i<j\leq d}$ are dual to $\{\boldsymbol{T}_{i,j}\}_{0\leq i<j\leq d}$ with respect to the Frobenius inner product (cf.~\cite[Section~3.1]{ChenHuHuang2018} and also \S \ref{sec:tensor}).
Choosing $\boldsymbol{\varsigma}=\sum\limits_{0\leq i<j\leq d} q_{ij}\boldsymbol{N}_{i,j}\in\mathbb{P}_{k-2}(K; \mathbb{S})$, we get
$$
(\boldsymbol \tau, \boldsymbol{\varsigma})_K = \sum_{0\leq i<j\leq d}(\lambda_i\lambda_j, q^2_{ij})_K=0.
$$
Hence $q_{ij}=0$ for all $i,j$, and then $\boldsymbol{\tau}=\boldsymbol{0}$. As the dimensions match, we conclude the result.
\end{proof}

\revision{Another characterization of $\mathbb B_k(\div, K;\mathbb S)$ and $\mathbb B_k'(\div, K;\mathbb S)$ is given in \cite{Chen;Huang:2021Geometric}.}

\subsection{Trace spaces}
The mapping ${\rm tr}^{\div}: \mathbb P_k(K; \mathbb S) \to \mathbb P_{k}(\mathcal F^1(K;\mathbb R^{d-1}))$ 
is not onto due to the symmetry. Some compatible conditions should be imposed on lower dimensional simplexes. Fortunately, we only need its dimension.

\begin{lemma} Let integer $k\geq 1$. It holds 
\begin{align*}
\dim {\rm tr}^{\div}( \mathbb P_k(K; \mathbb S)) &= \dim \mathbb P_k(K; \mathbb S) - \dim \mathbb{B}_{k}(\div,K; \mathbb{S}) \\
&= \dim \mathbb H_k(K; \mathbb S) + \dim \mathbb H_{k-1}(K; \mathbb S)\\
&= \frac{1}{2}d(d+1) \left [{d+k-1 \choose d-1} + {d+k-2 \choose d-1}\right ].
\end{align*}
\end{lemma}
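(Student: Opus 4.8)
The plan is to read both equalities off a single rank--nullity count together with elementary binomial arithmetic; there is no genuinely hard step.

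First I would apply the identity $\dim V = \dim \ker(T) + \dim \img(T)$ recalled in the preliminaries to the operator $T = {\rm tr}^{\div}$ on $V = \mathbb P_k(K;\mathbb S)$. By the definition of the bubble space, $\ker({\rm tr}^{\div})\cap \mathbb P_k(K;\mathbb S) = \mathbb{B}_{k}(\div,K; \mathbb{S})$, while $\img(T) = {\rm tr}^{\div}(\mathbb P_k(K;\mathbb S))$, which yields
$$
\dim {\rm tr}^{\div}( \mathbb P_k(K; \mathbb S)) = \dim \mathbb P_k(K; \mathbb S) - \dim \mathbb{B}_{k}(\div,K; \mathbb{S}).
$$
The point of counting from the domain rather than the codomain is precisely that ${\rm tr}^{\div}$ fails to be onto its codomain $\mathbb P_{k}(\mathcal F^1(K;\mathbb R^{d-1}))$, so the codomain dimension is of no use here, whereas both the ambient dimension and the kernel dimension are already known.

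For the remaining equalities I would substitute $\dim \mathbb P_k(K;\mathbb S) = \tfrac{d(d+1)}{2}\binom{k+d}{d}$ and, by Lemma~\ref{lm:bubbledof}, $\dim \mathbb{B}_{k}(\div,K; \mathbb{S}) = \tfrac{d(d+1)}{2}\binom{k+d-2}{d}$. Factoring out $\dim \mathbb S = \tfrac{d(d+1)}{2}$ reduces matters to the scalar identity
$$
\binom{k+d}{d} - \binom{k+d-2}{d} = \binom{k+d-1}{d-1} + \binom{k+d-2}{d-1},
$$
which follows by applying Pascal's rule twice, namely $\binom{k+d}{d} = \binom{k+d-1}{d} + \binom{k+d-1}{d-1}$ and $\binom{k+d-1}{d} = \binom{k+d-2}{d} + \binom{k+d-2}{d-1}$, and then cancelling the common term $\binom{k+d-2}{d}$. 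Since $\binom{k+d-1}{d-1} = \dim \mathbb H_k(K)$ and $\binom{k+d-2}{d-1} = \dim \mathbb H_{k-1}(K)$, multiplying through by $\dim \mathbb S$ gives the second line $\dim \mathbb H_k(K;\mathbb S) + \dim \mathbb H_{k-1}(K;\mathbb S)$, and writing the two binomials out explicitly produces the closed form in the third line.

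The only thing to watch is the bookkeeping: confirming that the bubble dimension supplied by Lemma~\ref{lm:bubbledof} indeed equals $\dim \mathbb P_{k-2}(K;\mathbb S) = \tfrac{d(d+1)}{2}\binom{k+d-2}{d}$, and that Pascal's rule is indexed correctly so the cancellation leaves exactly the two homogeneous-space dimensions. There is no structural obstacle beyond this arithmetic.
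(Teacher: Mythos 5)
Your proof is correct and follows exactly the route the paper intends: the paper states this lemma without proof, treating the first line as rank--nullity applied to ${\rm tr}^{\div}$ on $\mathbb P_k(K;\mathbb S)$ (whose kernel is by definition $\mathbb{B}_{k}(\div,K;\mathbb{S})$, with dimension $\dim\mathbb P_{k-2}(K;\mathbb S)$ from Lemma~\ref{lm:bubbledof}) and the rest as the binomial arithmetic you carried out. The only cosmetic point is that Lemma~\ref{lm:bubbledof} is stated for $k\geq 2$, so for $k=1$ you should invoke instead the paper's separate observation that $\mathbb{B}_{1}(\div,K;\mathbb{S})=\{\boldsymbol 0\}$, which is consistent with your formula since $\binom{d-1}{d}=0$.
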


We show the super-smoothness induced by the symmetry for $H(\div;\mathbb S)$ element. 
For a $(d-r)$-dimensional face $e\in \mathcal F^{r}(K)$ with $r=2,\cdots,d$ shared by two $(d-1)$-dimensional faces $F, F'\in \mathcal F^{1}(K)$, by the symmetry of $\boldsymbol \tau$, $(\boldsymbol n_{F}^{\intercal}\boldsymbol \tau\boldsymbol n_{F'})|_e$ is concurrently determined by $(\boldsymbol \tau\boldsymbol n_{F})|_{F}$ and $(\boldsymbol \tau\boldsymbol n_{F'})|_{F'}$. This implies the degrees of freedom $\boldsymbol n_{i}^{\intercal}\boldsymbol \tau\boldsymbol n_{j}$ on $e$ for all $i,j=1,\cdots, r$.
In particular, for a $0$-dimensional vertex $\delta$, $(\boldsymbol \tau_{ij}(\delta))_{d\times d}$ is taken as a degree of freedom. 

The trace $\boldsymbol \tau \boldsymbol n$ restricted to a face $F\in \mathcal F^1(K)$ can be further split into two components: 1) the normal-normal component $\boldsymbol n^{\intercal}\boldsymbol \tau\boldsymbol n$ will be determined by $\boldsymbol n_{i}^{\intercal}\boldsymbol \tau\boldsymbol n_{j}$; 2) the tangential-normal component $\Pi_F\boldsymbol \tau\boldsymbol n$ will be determined by the interior moments relative to $F$ after the trace $\tr^{\div_F}(\Pi_F\boldsymbol \tau\boldsymbol n) = \boldsymbol n_{F,e}^{\intercal}\boldsymbol \tau\boldsymbol n$ has been determined.


\begin{lemma}\label{lem:divSboundarydofs}
A basis of
 $$
 ( {\rm tr}^{\div}( \mathbb P_k(K; \mathbb S)) )'
 $$
 is given by the degrees of freedom 
\begin{align}
\boldsymbol \tau (\delta) & \quad\forall~\delta\in \mathcal V(K), \label{HdivSfemdof1}\\
(\boldsymbol  n_i^{\intercal}\boldsymbol \tau\boldsymbol n_j, q)_F & \quad\forall~q\in\mathbb P_{k+r-d-1}(F),  F\in\mathcal F^r(K),\;  \label{HdivSfemdof2}\\
&\quad\quad i,j=1,\cdots, r, \textrm{ and } r=1,\cdots, d-1, \notag\\
(\Pi_F\boldsymbol \tau\boldsymbol n, \boldsymbol q)_F & \quad\forall~\boldsymbol q\in {\rm ND}_{k-2}(F),  F\in\mathcal F^1(K).\label{HdivSfemdof3}
\end{align}
\end{lemma}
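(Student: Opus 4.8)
The plan is to show that the functionals \eqref{HdivSfemdof1}--\eqref{HdivSfemdof3}, regarded as elements of $({\rm tr}^{\div}(\mathbb P_k(K;\mathbb S)))'$, are well-defined, separate points, and are equal in number to $\dim {\rm tr}^{\div}(\mathbb P_k(K;\mathbb S))$ computed in the preceding lemma. First I would check that every functional vanishes on the bubble space $\mathbb B_k(\div,K;\mathbb S)=\ker({\rm tr}^{\div})$: the face moments \eqref{HdivSfemdof2}--\eqref{HdivSfemdof3} only see components of $\boldsymbol\tau\boldsymbol n$, while for the vertex value \eqref{HdivSfemdof1} one uses that the $d$ facets meeting at a vertex $\delta$ have normals spanning $\mathbb R^d$, so $\boldsymbol\tau\boldsymbol n|_{\partial K}=\boldsymbol0$ forces $\boldsymbol\tau(\delta)=\boldsymbol0$. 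Hence all DoFs descend to functionals on the trace space, and it remains to prove: if all of \eqref{HdivSfemdof1}--\eqref{HdivSfemdof3} vanish on some $\boldsymbol\tau\in\mathbb P_k(K;\mathbb S)$, then ${\rm tr}^{\div}\boldsymbol\tau=\boldsymbol\tau\boldsymbol n|_{\partial K}=\boldsymbol0$. Together with a dimension count this yields the basis claim.

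For the vanishing argument I would split the trace on each facet $F\in\mathcal F^1(K)$ as $\boldsymbol\tau\boldsymbol n=\boldsymbol n(\boldsymbol n^{\intercal}\boldsymbol\tau\boldsymbol n)+\Pi_F\boldsymbol\tau\boldsymbol n$ and treat the two pieces separately. The first step handles all normal--normal components by a downward induction on the codimension $r$. The base case $r=d$ is the vanishing vertex matrices. For the induction step on a face $F\in\mathcal F^r(K)$, the scalar $\boldsymbol n_i^{\intercal}\boldsymbol\tau\boldsymbol n_j|_F\in\mathbb P_k(F)$ has vanishing restriction to $\partial F$: indeed each sub-face $e\in\mathcal F^1(F)$ has codimension $r+1$ in $K$, and since the normal space of $F$ is contained in that of $e$, both $\boldsymbol n_{F,i}$ and $\boldsymbol n_{F,j}$ are combinations of the normals of $e$, so $\boldsymbol n_i^{\intercal}\boldsymbol\tau\boldsymbol n_j|_e$ is a linear combination of the normal--normal components on $e$, which vanish by induction. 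Since $F$ is $(d-r)$-dimensional, the moments \eqref{HdivSfemdof2} are taken against $\mathbb P_{k+r-d-1}(F)=\mathbb P_{k-\dim F-1}(F)$, exactly the interior moments needed for Lagrange unisolvence of $\mathbb P_k(F)$; boundary vanishing plus vanishing interior moments then give $\boldsymbol n_i^{\intercal}\boldsymbol\tau\boldsymbol n_j|_F=0$. At $r=1$ this yields $\boldsymbol n^{\intercal}\boldsymbol\tau\boldsymbol n=0$ on every facet.

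The second step recovers the tangential--normal part. Fixing $F\in\mathcal F^1(K)$, the field $\boldsymbol v:=\Pi_F\boldsymbol\tau\boldsymbol n$ is tangent to $F$, hence $\boldsymbol v\in\mathbb P_k(F;\mathbb R^{d-1})$ after identifying the tangent plane with $\mathbb R^{d-1}$. Its $\div_F$-trace on $\partial F$ is $\boldsymbol n_{F,e}^{\intercal}\boldsymbol v=\boldsymbol n_{F,e}^{\intercal}\boldsymbol\tau\boldsymbol n$, which is the normal--normal component associated with the codimension-two face $e$ and therefore vanishes by the first step. Thus $\boldsymbol v$ lies in the vector BDM bubble space $\mathbb B_k(\div,F)$, and Corollary~\ref{cor:bdmbubbledual} applied on the simplex $F$ says $\boldsymbol v$ is determined by the moments against ${\rm ND}_{k-2}(F)$. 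These are precisely \eqref{HdivSfemdof3} and vanish, so $\boldsymbol v=\boldsymbol0$. Combining the two steps gives $\boldsymbol\tau\boldsymbol n|_F=\boldsymbol0$ on every facet, i.e. ${\rm tr}^{\div}\boldsymbol\tau=\boldsymbol0$.

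Finally I would close the argument by a dimension count, matching $\sum_{r=1}^{d-1}|\mathcal F^r(K)|\cdot\tfrac{r(r+1)}{2}\dim\mathbb P_{k-\dim F-1}(F)$ from \eqref{HdivSfemdof2}, together with the vertex and face-bubble contributions, against the formula for $\dim{\rm tr}^{\div}(\mathbb P_k(K;\mathbb S))$ in the preceding lemma; injectivity on the trace space plus equal dimension then forces a basis. I expect the main obstacle to be the bookkeeping of the downward induction in the first step --- in particular verifying carefully that on each sub-face the relevant facet normals reduce to normal--normal components already known to vanish (the ``concurrent determination'' enforced by symmetry) --- together with the combinatorial identity underlying the dimension count; the second step is comparatively clean thanks to the reuse of the BDM bubble characterization on $F$.
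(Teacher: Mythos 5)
Your unisolvence argument is correct and is essentially the paper's own proof: the downward induction on codimension, with the normals of a face $F$ expanded in the normals of the sub-faces of $F$, is exactly what the paper compresses into the phrase ``by the uni-solvence of the Lagrange element,'' and your treatment of the tangential--normal part via the bubble space $\mathbb B_k(\div,F)$ and Corollary~\ref{cor:bdmbubbledual} is equivalent to the paper's appeal to BDM unisolvence (Theorem~\ref{thm:BDMunisolvence}) on $F$, since vanishing normal trace on $\partial F$ plus vanishing moments against ${\rm ND}_{k-2}(F)$ is precisely that theorem's hypothesis; your preliminary check that the DoFs descend to the trace space matches the super-smoothness discussion preceding the lemma. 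The only genuine divergence is the dimension count, and that is also the one place your proposal is incomplete: you reduce the basis claim to the combinatorial identity
$$
(d+1)\tfrac{d(d+1)}{2}+\sum_{r=1}^{d-1}\binom{d+1}{r}\tfrac{r(r+1)}{2}\binom{k-1}{d-r}+(d+1)\dim{\rm ND}_{k-2}(F)=\tfrac{d(d+1)}{2}\left[\binom{d+k-1}{d-1}+\binom{d+k-2}{d-1}\right],
$$
but leave it unverified; it is true (and easy to confirm for $d=2,3$), yet proving it for general $d$ is a nontrivial piece of bookkeeping, and without it injectivity only yields a spanning set of the dual, not a basis. The paper deliberately sidesteps this identity: it notes that \eqref{HdivSfemdof1}--\eqref{HdivSfemdof2} coincide with Hu's boundary DoFs (Remark~\ref{hudofs}), and that facet by facet the moments \eqref{HdivSfemdof3} against ${\rm ND}_{k-2}(F)$ have the same cardinality as Hu's Lagrange-type tangential--normal DoFs, by the DoF count of the first-kind N\'ed\'elec element; since Hu's boundary DoF count is known from \cite{Hu2015a} to equal $\dim{\rm tr}^{\div}(\mathbb P_k(K;\mathbb S))$, the total transfers. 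So to finish, either carry out the combinatorial identity directly or borrow the paper's comparison argument; everything else in your proposal stands.
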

\begin{proof}
We first prove that if all the degrees of freedom \eqref{HdivSfemdof1}-\eqref{HdivSfemdof3} vanish, then $\boldsymbol \tau = \boldsymbol0$.  
As $\boldsymbol  n_i^{\intercal}\boldsymbol \tau\boldsymbol n_j|_F\in\mathbb P_k(F) $, 
by the vanishing degrees of freedom \eqref{HdivSfemdof1}-\eqref{HdivSfemdof2} and the uni-solvence of the Lagrange element, we get
$$
\boldsymbol n_i^{\intercal}\boldsymbol\tau\boldsymbol n_j|_F=0 \quad\forall~F\in\mathcal F^r(K), \; i,j=1,\cdots, d-r, \textrm{ and } r=1,\cdots, d-1.
$$
This implies
\begin{equation}\label{eq:20210603}
\boldsymbol n^{\intercal}\boldsymbol\tau\boldsymbol n|_F=0, \; \boldsymbol n_{F,e}^{\intercal}\boldsymbol\tau\boldsymbol n|_e=0 \quad\forall~F\in\mathcal F^1(K), e\in\mathcal F^1(F).  
\end{equation}
Notice that $\Pi_F\boldsymbol\tau\boldsymbol n|_F\in \mathbb P_k(F; \mathbb{R}^{d-1})$. Due to the uni-solvence of the BDM element on $F$, cf. Theorem~\ref{thm:BDMunisolvence}, we acquire from the second \revision{identity} in \eqref{eq:20210603} and the vanishing degrees of freedom \eqref{HdivSfemdof3} that $\Pi_F\boldsymbol\tau\boldsymbol n|_F=\boldsymbol 0$, which together with the first \revision{identity} in~\eqref{eq:20210603} yields $\boldsymbol\tau\boldsymbol n|_F=\boldsymbol0$.

We then count the dimension to finish the proof. \revision{By comparing DoFs of Hu element (cf. Remark~\ref{hudofs}) and DoFs \eqref{HdivSfemdof1}-\eqref{HdivSfemdof3}, it follows from the DoFs of the first kind N\'ed\'elec element, cf. \cite{Nedelec1980,ArnoldFalkWinther2006}, that the number of DoFs \eqref{HdivSfemdof1}-\eqref{HdivSfemdof3} is equal to the number of DoFs of Hu element, thus equals to $\dim {\rm tr}^{\div}( \mathbb P_k(K; \mathbb S))$}.
\end{proof}

\begin{remark}\rm
\label{hudofs}
As a comparison, the degrees of freedom of Hu element on boundary in~\cite{Hu2015a}
are
\begin{align*}
\boldsymbol \tau (\delta) & \quad\forall~\delta\in \mathcal V(K), \\
(\boldsymbol  n_i^{\intercal}\boldsymbol \tau\boldsymbol n_j, q)_F & \quad\forall~q\in\mathbb P_{k+r-d-1}(F),  F\in\mathcal F^r(K),\;  \\
&\quad\quad i,j=1,\cdots, r, \textrm{ and } r=1,\cdots, d-1, \notag\\
(\boldsymbol t_i^{\intercal}\boldsymbol \tau\boldsymbol n_j, q)_F & \quad\forall~q\in\mathbb P_{k+r-d-1}(F),  F\in\mathcal F^r(K),\;  \\
&\quad\quad i=1,\cdots, d-r,j=1,\cdots, r, \textrm{ and } r=1,\cdots, d-1.
\end{align*}
The difference is the way to impose the tangential-normal component. $\Box$
\end{remark}

\subsection{Split of the bubble space}\label{subsec:newdivSfem}
To construct $\boldsymbol H(\div, K;\mathbb S)$ elements, the interior degrees of freedom  given by $\mathcal N (\mathbb{P}_{k-2}(K; \mathbb{S}))$ are enough. For the construction of $\boldsymbol H(\div\div, K;\mathbb S)$ element, we use $\div$ operator to decompose $\mathbb B_k(\div, K;\mathbb S)$ into
\begin{equation*}
E_{0,k}(\mathbb S) := \mathbb B_k(\div, K; \mathbb S)\cap \ker(\div), \quad E_{0,k}^{\bot}(\mathbb S) := \mathbb B_k(\div, K; \mathbb S)/ E_{0,k}(\mathbb S).
\end{equation*}
We will abbreviate $E_{0,k}(\mathbb S)$ and $E_{0,k}^{\bot}(\mathbb S)$ as $E_{0}(\mathbb S)$ and $E_{0}^{\bot}(\mathbb S)$ respectively if not causing any confusion.
As before we can characterize the dual space of $E_{0,k}^{\bot}(\mathbb S)$ through $\div^*$, which is $-\defm:=-\sym \grad$ restricting to the bubble space and can be extended to $H^1(K; \mathbb R^d)$.
\begin{lemma}\label{lem:E0Sbot}
Let integer $k\geq2$. The mapping
 $$
 \div: E_{0,k}^{\bot}(\mathbb S) \to \mathbb P_{k-1,{\rm RM}}^{\bot}:=\mathbb P_{k-1}(K,\mathbb R^d)/\ker(\defm)
 $$
 is a bijection and consequently
\begin{align*}
(E_{0,k}^{\bot}(\mathbb S))' &= \mathcal N(\defm \mathbb P_{k-1}(K,\mathbb R^d)),\\
\dim E_{0,k}^{\bot}(\mathbb S)  &=  d{k+d-1\choose k-1} -\frac{1}{2}(d^2+d).
\end{align*}
\end{lemma}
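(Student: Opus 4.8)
The plan is to recognize this as a direct instance of the abstract framework in Lemma~\ref{lem:abstract} with $\dd=\div$, so that the entire statement reduces to identifying the image $\div\,\mathbb B_k(\div, K;\mathbb S)$ with the quotient $\mathbb P_{k-1,\mathrm{RM}}^{\bot}$. Since $E_{0,k}^{\bot}(\mathbb S)$ is by definition the quotient of $\mathbb B_k(\div, K;\mathbb S)$ by $\ker(\div)$, the restriction of $\div$ to $E_{0,k}^{\bot}(\mathbb S)$ is automatically injective onto its image; thus proving bijectivity amounts to computing that image precisely.

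For the inclusion $\div\,\mathbb B_k(\div, K;\mathbb S)\subseteq \mathbb P_{k-1,\mathrm{RM}}^{\bot}$, I would integrate by parts: for $\boldsymbol\tau\in\mathbb B_k(\div, K;\mathbb S)$ the vanishing trace $\boldsymbol\tau\boldsymbol n|_{\partial K}=\boldsymbol 0$ together with the symmetry of $\boldsymbol\tau$ gives
$$
(\div\boldsymbol\tau, \boldsymbol v)_K = -(\boldsymbol\tau, \defm\boldsymbol v)_K \qquad \forall~\boldsymbol v\in\mathbb P_{k-1}(K;\mathbb R^d),
$$
so choosing $\boldsymbol v\in\ker(\defm)$ shows $\div\boldsymbol\tau$ is $L^2$-orthogonal to $\ker(\defm)$, i.e. it represents an element of the quotient. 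This computation also exhibits $\div^*=-\defm$ on the bubble space, which is exactly what \eqref{eq:E0botdof} requires.

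The crux is surjectivity, which I would settle by duality. Suppose $\boldsymbol p\in\mathbb P_{k-1}(K;\mathbb R^d)$ satisfies $(\div\boldsymbol\tau, \boldsymbol p)_K=0$ for every $\boldsymbol\tau\in\mathbb B_k(\div, K;\mathbb S)$. The same integration by parts yields $(\boldsymbol\tau, \defm\boldsymbol p)_K=0$ for all such $\boldsymbol\tau$. Now $\defm\boldsymbol p\in\mathbb P_{k-2}(K;\mathbb S)$, and Lemma~\ref{lem:HuZhanginteriordof} together with the dimension equality $\dim\mathbb B_k(\div, K;\mathbb S)=\dim\mathbb P_{k-2}(K;\mathbb S)$ from Lemma~\ref{lm:bubbledof} shows that the $L^2$ pairing between $\mathbb B_k(\div, K;\mathbb S)$ and $\mathbb P_{k-2}(K;\mathbb S)$ is perfect, hence non-degenerate in the second slot. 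Therefore $\defm\boldsymbol p=\boldsymbol 0$, i.e. $\boldsymbol p\in\ker(\defm)$, which is the zero element of $\mathbb P_{k-1,\mathrm{RM}}^{\bot}$. Thus no nonzero functional annihilates the image, proving surjectivity and hence the bijection.

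Given the bijection, \eqref{eq:E0botdof} of Lemma~\ref{lem:abstract} immediately gives $(E_{0,k}^{\bot}(\mathbb S))'=\mathcal N(\defm(\mathbb P_{k-1,\mathrm{RM}}^{\bot}))=\mathcal N(\defm\mathbb P_{k-1}(K;\mathbb R^d))$, the last equality because $\defm$ annihilates $\ker(\defm)$; alternatively one checks uni-solvence directly, since the vanishing of $(\boldsymbol\tau, \defm\boldsymbol q)_K=-(\div\boldsymbol\tau, \boldsymbol q)_K$ for all $\boldsymbol q$ forces $\div\boldsymbol\tau=\boldsymbol 0$ and thus $\boldsymbol\tau=\boldsymbol 0$ on $E_{0,k}^{\bot}(\mathbb S)$. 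Finally the dimension follows from the bijection,
$$
\dim E_{0,k}^{\bot}(\mathbb S)=\dim\mathbb P_{k-1}(K;\mathbb R^d)-\dim\ker(\defm)=d{k+d-1\choose k-1}-\tfrac12(d^2+d),
$$
using that $\ker(\defm)$ is the space of rigid motions $\boldsymbol a+\boldsymbol B\boldsymbol x$ with $\boldsymbol a\in\mathbb R^d$ and $\boldsymbol B\in\mathbb K$, of dimension $d+\tfrac12 d(d-1)=\tfrac12(d^2+d)$, all of which lie in $\mathbb P_{k-1}(K;\mathbb R^d)$ for $k\geq 2$. The main obstacle is the surjectivity step, but it becomes routine once the perfect pairing of Lemma~\ref{lem:HuZhanginteriordof} is invoked.
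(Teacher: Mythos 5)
Your proof is correct, and its overall skeleton matches the paper's: the inclusion $\div\,\mathbb B_k(\div,K;\mathbb S)\subseteq\mathbb P_{k-1,{\rm RM}}^{\bot}$ by integration by parts, surjectivity by showing no nonzero element of the quotient is orthogonal to the image, and then the dual characterization and dimension count. The one genuine difference is how you close the surjectivity step. The paper argues directly: given $\boldsymbol v\perp\div\,\mathbb B_k(\div,K;\mathbb S)$, it expands $\defm\boldsymbol v=\sum_{i<j}q_{ij}\boldsymbol N_{i,j}$ in the dual tensor basis, builds the explicit test bubble $\boldsymbol\tau_v=\sum_{i<j}q_{ij}\lambda_i\lambda_j\boldsymbol T_{i,j}\in\mathbb B_k(\div,K;\mathbb S)$ from the characterization \eqref{eq:symtensorbubble}, and tests to force $q_{ij}=0$. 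You instead treat Lemma~\ref{lem:HuZhanginteriordof} as establishing a perfect pairing between $\mathbb B_k(\div,K;\mathbb S)$ and $\mathbb P_{k-2}(K;\mathbb S)$: non-degeneracy in the $\boldsymbol\tau$-slot plus the dimension equality of Lemma~\ref{lm:bubbledof} gives non-degeneracy in the $\boldsymbol\varsigma$-slot (rank of a pairing equals rank of its transpose), so $(\boldsymbol\tau,\defm\boldsymbol p)_K=0$ for all bubbles forces $\defm\boldsymbol p=\boldsymbol 0$ directly. This buys you economy: the explicit $\boldsymbol T_{i,j}$/$\boldsymbol N_{i,j}$ construction then appears only once in the development (inside the proof of Lemma~\ref{lem:HuZhanginteriordof}) rather than being repeated, and the same black-box argument would survive any other proof of that lemma. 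What the paper's version buys is self-containedness and constructiveness at this spot: it exhibits the annihilating test function explicitly and needs no transpose-rank argument. Your remaining steps --- framing $(E_{0,k}^{\bot}(\mathbb S))'=\mathcal N(\defm\mathbb P_{k-1}(K;\mathbb R^d))$ via \eqref{eq:E0botdof} of Lemma~\ref{lem:abstract} (with the direct uni-solvence check as backup, which is what the paper actually writes), and the count $\dim E_{0,k}^{\bot}(\mathbb S)=\dim\mathbb P_{k-1}(K;\mathbb R^d)-\dim\boldsymbol{RM}$ using $\boldsymbol{RM}\subseteq\mathbb P_{k-1}(K;\mathbb R^d)$ for $k\geq2$ --- coincide with the paper's.
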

\begin{proof}
The fact $\div\mathbb B_k(\div, K; \mathbb S)=\mathbb P_{k-1,{\rm RM}}^{\bot}$ was proved in~\cite[Theorem~2.2]{Hu2015a}. Here we recall it for the completeness. 

The inclusion $ \div( \mathbb B_k(\div, K;\mathbb S)) \subseteq \mathbb P_{k-1,{\rm RM}}^{\bot}$ can be proved through integration by parts
$$
(\div \boldsymbol \tau, \boldsymbol v)_K = -( \boldsymbol \tau, \defm \boldsymbol v)_K = 0 \quad \forall~\boldsymbol v\in \ker(\defm).
$$
If $ \div( \mathbb B_k(\div, K;\mathbb S)) \neq \mathbb P_{k-1,{\rm RM}}^{\bot}$, then there exists a function $\boldsymbol v\in  \mathbb P_{k-1,{\rm RM}}^{\bot}$ satisfying $\boldsymbol v\perp \div( \mathbb B_k(\div, K; \mathbb S))$, which is equivalent to $\defm \boldsymbol v \perp \mathbb B_k(\div, K; \mathbb S)$. Expand the symmetric matrix $\defm \boldsymbol v $ in the basis $\{\boldsymbol N_{ij}, 0\leq i<j\leq d\}$ as $\defm \boldsymbol v = \sum\limits_{0\leq i<j\leq d} q_{ij} {\boldsymbol N}_{ij}$ with $q_{ij}\in\mathbb P_{k-2}(K)$. Then set $\boldsymbol \tau_v = \sum\limits_{0\leq i<j\leq d} q_{ij} \lambda_i \lambda_j \boldsymbol T_{ij} \in \mathbb B_k(\div, K;\mathbb S)$. We have
$$
(\defm \boldsymbol v, \boldsymbol \tau_v)_K = \sum_{0\leq i<j\leq d} \int_Kq_{ij}^2 \lambda_{i}\lambda_j \dx = 0,
$$
which implies $q_{ij} = 0$ for all $0\leq i< j \leq d$, i.e., $\defm \boldsymbol v = 0$ and $\boldsymbol v = 0$ as $\boldsymbol v\in  \mathbb P_{k-1,{\rm RM}}^{\bot}$. 
%
%
Since $\div E_{0,k}^{\bot}(\mathbb S)=\div\mathbb B_k(\div, K; \mathbb S)$, the mapping
 $\div: E_{0,k}^{\bot}(\mathbb S) \to \mathbb P_{k-1,{\rm RM}}^{\bot}$
 is a bijection.

 For $\boldsymbol v\in E_{0,k}^{\bot}(\mathbb S)$, $(\boldsymbol v, \defm\boldsymbol q)_K = 0$ for all $\boldsymbol q\in \mathbb P_{k-1}(K,\mathbb R^d)$ implies $\div \boldsymbol v = \boldsymbol 0$, i.e., $\boldsymbol v\in E_{0,k}(\mathbb S)$. Then $\boldsymbol v\in E_{0,k}(\mathbb S)\cap E_{0,k}^{\bot}(\mathbb S) = \{\boldsymbol 0\}$. Hence $( E_{0,k}^{\bot}(\mathbb S))' = \mathcal I' \mathcal N(\defm \mathbb P_{k-1}(K,\mathbb R^d))$.  As the dimensions match, $\mathcal I'$ is a bijection.
\end{proof}


We then move to the space $E_{0,k}(\mathbb S) $. Using the primary approach, we need the bubble space in the previous space and the differential operator. For example, we have
$
E_{0,k}(\mathbb S)=\curl\curl(\mathbb P_{k+2}(K)\cap H_0^2(K))
$
in two dimensions~\cite{ArnoldWinther2002}, and in three dimensions~\cite{ArnoldAwanouWinther2008,Chen;Huang:2021Finite}
$$
E_{0,k}(\mathbb S)=\textrm{inc}\, \mathbb B_{k+2} ({\rm inc},K;\mathbb S)
$$
with
\begin{align*}
\mathbb B_{k+2} ({\rm inc},K;\mathbb S):=\{&\boldsymbol\tau\in\mathbb P_{k+2}(K;\mathbb S): \boldsymbol n\times\boldsymbol\tau\times\boldsymbol n=\boldsymbol 0, \\
&2\defm_F(\boldsymbol n\cdot\boldsymbol\tau\Pi_F)-\Pi_F\partial_n\boldsymbol\tau\Pi_F=\boldsymbol 0\;\;\forall~F\in\mathcal F^1(K) \}.
\end{align*} 
Such characterization is hard to be generalized to arbitrary dimension.

Instead we use the dual approach to identify $E_{0,k}'(\mathbb S)$. 
To this end, denote the space of rigid motions by
\[
\boldsymbol{RM}:={\rm ND}_{0}(K)=\{\boldsymbol c+\boldsymbol N\boldsymbol x: \;\boldsymbol c\in\mathbb R^d,\; \boldsymbol N\in \mathbb K\}.
\]
Define operator $\boldsymbol \pi_{RM}: \mathcal C^1(D; \mathbb R^d)\to \boldsymbol{RM}$ as
\[
\boldsymbol \pi_{RM}\boldsymbol  v:=\boldsymbol  v(\boldsymbol 0)+(\skw(\nabla\boldsymbol v))(\boldsymbol 0)\boldsymbol x.
\]
Clearly it holds
\begin{equation*}
\boldsymbol \pi_{RM}\boldsymbol  v=\boldsymbol  v\quad \forall~\boldsymbol  v\in\boldsymbol{RM}.
\end{equation*}

We shall establish the following short exact sequence
\begin{equation*}
\xymatrix{
\boldsymbol{RM}\ar@<0.4ex>[r]^-{\subset} & \mathbb P_{k+1}(D;\mathbb R^d)\ar@<0.4ex>[r]^-{\defm} \ar@<0.4ex>[l]^-{\boldsymbol \pi_{RM}}  & \defm\mathbb P_{k+1}(D; \mathbb S)\ar@<0.4ex>[l]^-{\boldsymbol \tau\boldsymbol x}  
\ar@<0.4ex>[r]^-{}& \boldsymbol0 \ar@<0.4ex>[l]^-{}
}
\end{equation*}
and derive a space decomposition from it. 

\begin{lemma}\label{lem:xdefkernal}
Let integer $k\geq0$. If $\boldsymbol q\in \mathbb P_{k+1}(D;\mathbb R^d)$ satisfying $(\defm\boldsymbol q)\boldsymbol x=\boldsymbol0$, then $\boldsymbol q\in\boldsymbol{RM}$.
\end{lemma}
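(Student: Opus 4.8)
The plan is to exploit that $\defm$ lowers the polynomial degree by one while right-multiplication by $\boldsymbol x$ raises it by one, so the composite map $\boldsymbol q\mapsto (\defm\boldsymbol q)\boldsymbol x$ preserves the degree of homogeneity and is block diagonal with respect to the homogeneous grading. Writing $\boldsymbol q=\sum_{m\geq 0}\boldsymbol q_m$ with $\boldsymbol q_m\in\mathbb H_m(D;\mathbb R^d)$, the components $(\defm\boldsymbol q_m)\boldsymbol x$ have pairwise distinct degrees, so each must separately vanish. It therefore suffices to treat a single homogeneous degree $m$. The degree $m=0$ piece is a constant vector and the $m=1$ piece has the form $\boldsymbol A\boldsymbol x$; these are the candidates to lie in $\boldsymbol{RM}$, and the real work is to show that for $m\geq 2$ the only homogeneous solution is $\boldsymbol q_m=\boldsymbol0$.

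The key is a pointwise identity obtained from Euler's formula \eqref{eq:homogeneouspolyprop}. For $\boldsymbol q\in\mathbb H_m(D;\mathbb R^d)$ I would compute the $i$th component of $(\defm\boldsymbol q)\boldsymbol x$ directly from $(\defm\boldsymbol q)_{ij}=\tfrac12(\partial_i q_j+\partial_j q_i)$. The term $\sum_j x_j\partial_j q_i$ equals $\boldsymbol x\cdot\nabla q_i=m q_i$ by \eqref{eq:homogeneouspolyprop}, while the product rule gives $\sum_j x_j\partial_i q_j=\partial_i(\boldsymbol q\cdot\boldsymbol x)-q_i$. Assembling these two sums yields the clean identity
\begin{equation*}
2(\defm\boldsymbol q)\boldsymbol x=(m-1)\boldsymbol q+\nabla(\boldsymbol q\cdot\boldsymbol x).
\end{equation*}

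With this identity in hand, set $p:=\boldsymbol q\cdot\boldsymbol x\in\mathbb H_{m+1}(D)$. The hypothesis $(\defm\boldsymbol q)\boldsymbol x=\boldsymbol0$ reads $(m-1)\boldsymbol q+\nabla p=\boldsymbol0$. Taking the dot product with $\boldsymbol x$ and invoking Euler's formula once more, namely $\boldsymbol x\cdot\nabla p=(m+1)p$ together with $\boldsymbol x\cdot\boldsymbol q=p$, collapses this to $2m\,p=0$, so $p=0$ for every $m\geq1$. Feeding $\nabla p=\boldsymbol0$ back into the identity leaves $(m-1)\boldsymbol q=\boldsymbol0$, which forces $\boldsymbol q=\boldsymbol0$ as soon as $m\geq 2$. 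Hence only the $m=0$ and $m=1$ components of $\boldsymbol q$ can survive: the constant part lies in $\boldsymbol{RM}$ trivially, and for $m=1$ the vanishing of $p=\boldsymbol x^{\intercal}\boldsymbol A\boldsymbol x$ forces $\sym\boldsymbol A=\boldsymbol0$, i.e. $\boldsymbol A\in\mathbb K$, so $\boldsymbol q\in\boldsymbol{RM}$.

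I expect no deep obstacle: once the componentwise identity is established, the conclusion is a two-line application of Euler's formula. The only step demanding care is the derivation of that identity --- correctly splitting $(\defm\boldsymbol q)\boldsymbol x$ into the two sums and recognizing $\sum_j x_j\partial_i q_j$ as $\partial_i(\boldsymbol q\cdot\boldsymbol x)-q_i$ --- where index and sign bookkeeping is the main source of error.
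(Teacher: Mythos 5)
Your proof is correct and is essentially the paper's own argument: your identity $2(\defm\boldsymbol q)\boldsymbol x=(m-1)\boldsymbol q+\nabla(\boldsymbol q\cdot\boldsymbol x)$ is exactly the paper's relation $2(\defm\boldsymbol q)\boldsymbol x=(\boldsymbol x\cdot\nabla)\boldsymbol q-\boldsymbol q+\nabla(\boldsymbol x^{\intercal}\boldsymbol q)$ specialized to homogeneous components via Euler's formula, and both proofs finish by contracting with $\boldsymbol x$ and forcing $\sym\boldsymbol N=\boldsymbol 0$ on the linear part. The only difference is organizational: you make the homogeneous grading explicit at the outset, whereas the paper keeps $\boldsymbol q$ intact and extracts the same grading information from the eigenvalue structure of the Euler operator (concluding $\boldsymbol x^{\intercal}\boldsymbol q\in\mathbb P_1(D)$ and then $\boldsymbol q\in\mathbb P_1(D;\mathbb R^d)$).
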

\begin{proof}
Since
$
\boldsymbol x^{\intercal}(\boldsymbol x\cdot\nabla)\boldsymbol q=\boldsymbol x^{\intercal}(\nabla\boldsymbol q)\boldsymbol x=\boldsymbol x^{\intercal}(\defm\boldsymbol q)\boldsymbol x=0
$,
we get
$$(\boldsymbol x\cdot\nabla)(\boldsymbol x^{\intercal}\boldsymbol q)=\boldsymbol x^{\intercal}(\boldsymbol x\cdot\nabla)\boldsymbol q+\boldsymbol x^{\intercal}\boldsymbol q=\boldsymbol x^{\intercal}\boldsymbol q.$$
By \eqref{eq:homogeneouspolyprop}, this indicates $\boldsymbol x^{\intercal}\boldsymbol q\in\mathbb P_1(D)$.
Noting that $(\nabla\boldsymbol q)\boldsymbol x = \nabla(\boldsymbol x^{\intercal}\boldsymbol q)-\boldsymbol q$, we obtain
$$(\boldsymbol x\cdot\nabla)\boldsymbol q + (\nabla(\boldsymbol x^{\intercal}\boldsymbol q)-\boldsymbol q)=(\nabla\boldsymbol q)^{\intercal}\boldsymbol x + (\nabla\boldsymbol q)\boldsymbol x=2(\defm\boldsymbol q)\boldsymbol x=\boldsymbol0,$$
which implies $(\boldsymbol x\cdot\nabla)\boldsymbol q - \boldsymbol q = -\nabla(\boldsymbol x^{\intercal}\boldsymbol q)\in \mathbb P_0(D;\mathbb R^d)$.
Hence $\boldsymbol q\in\mathbb P_1(D;\mathbb R^d)$. Assume $\boldsymbol q=\boldsymbol N\boldsymbol x+\boldsymbol C$ with $\boldsymbol N\in\mathbb M$ and $\boldsymbol C\in\mathbb R^d$. Then
$$
\boldsymbol x^{\intercal}(\sym\boldsymbol N)\boldsymbol x+\boldsymbol x^{\intercal}\boldsymbol C=\boldsymbol x^{\intercal}\boldsymbol N\boldsymbol x+\boldsymbol x^{\intercal}\boldsymbol C=\boldsymbol x^{\intercal}\boldsymbol q\in\mathbb P_1(D),
$$
which implies $\sym\boldsymbol N=\boldsymbol0$. Therefore $\boldsymbol N\in\mathbb K$ and $\boldsymbol q\in\boldsymbol{RM}$.
\end{proof}   

\begin{lemma}
Let integer $k\geq0$. We have
\begin{equation}\label{eq:xdefimage}
\left(\defm\mathbb P_{k+1}(D;\mathbb R^d)\right)\boldsymbol x=\mathbb P_{k}(D;\mathbb S)\boldsymbol x=\mathbb P_{k+1}(D;\mathbb R^d)\cap\ker(\boldsymbol \pi_{RM}).
\end{equation}
\end{lemma}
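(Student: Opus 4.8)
The plan is to introduce the abbreviations $A:=\left(\defm\mathbb P_{k+1}(D;\mathbb R^d)\right)\boldsymbol x$, $B:=\mathbb P_{k}(D;\mathbb S)\boldsymbol x$, and $C:=\mathbb P_{k+1}(D;\mathbb R^d)\cap\ker(\boldsymbol \pi_{RM})$, and then to prove the two easy inclusions $A\subseteq B\subseteq C$ together with the dimension identity $\dim A=\dim C$. Since $A\subseteq B\subseteq C$ and the outer two spaces have the same dimension, the squeeze forces $A=B=C$, which is exactly \eqref{eq:xdefimage}. This strategy avoids ever having to exhibit, for a given $\boldsymbol\sigma\in\mathbb P_k(D;\mathbb S)$, an explicit preimage $\boldsymbol q$ with $(\defm\boldsymbol q)\boldsymbol x=\boldsymbol\sigma\boldsymbol x$.

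First I would dispatch the inclusions. The inclusion $A\subseteq B$ is immediate because $\defm\mathbb P_{k+1}(D;\mathbb R^d)\subseteq\mathbb P_{k}(D;\mathbb S)$. For $B\subseteq C$, one notes $B\subseteq\mathbb P_{k+1}(D;\mathbb R^d)$ and then checks $\boldsymbol\sigma\boldsymbol x\in\ker(\boldsymbol\pi_{RM})$ for every $\boldsymbol\sigma\in\mathbb P_{k}(D;\mathbb S)$ by a one-line evaluation: $(\boldsymbol\sigma\boldsymbol x)(\boldsymbol 0)=\boldsymbol 0$, while differentiating componentwise gives $(\nabla(\boldsymbol\sigma\boldsymbol x))(\boldsymbol 0)=\boldsymbol\sigma(\boldsymbol 0)$, which is symmetric, so $\skw(\nabla(\boldsymbol\sigma\boldsymbol x))(\boldsymbol 0)=\boldsymbol 0$ and hence $\boldsymbol\pi_{RM}(\boldsymbol\sigma\boldsymbol x)=\boldsymbol 0$.

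For the dimension count I would consider the composite $T:\mathbb P_{k+1}(D;\mathbb R^d)\to\mathbb P_{k+1}(D;\mathbb R^d)$, $T\boldsymbol q:=(\defm\boldsymbol q)\boldsymbol x$, whose image is exactly $A$. Since $\defm$ annihilates $\boldsymbol{RM}$ we have $\boldsymbol{RM}\subseteq\ker T$, and the reverse inclusion $\ker T\subseteq\boldsymbol{RM}$ is precisely Lemma~\ref{lem:xdefkernal}; thus $\ker T=\boldsymbol{RM}$ and rank--nullity gives $\dim A=\dim\mathbb P_{k+1}(D;\mathbb R^d)-\dim\boldsymbol{RM}$. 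On the other side, $\boldsymbol\pi_{RM}$ maps $\mathbb P_{k+1}(D;\mathbb R^d)$ into $\boldsymbol{RM}$ and fixes $\boldsymbol{RM}$ pointwise, so it is an idempotent whose restriction is onto $\boldsymbol{RM}$; therefore $\dim C=\dim\ker(\boldsymbol\pi_{RM}|_{\mathbb P_{k+1}})=\dim\mathbb P_{k+1}(D;\mathbb R^d)-\dim\boldsymbol{RM}$. The two dimensions coincide, closing the argument.

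The computations are all routine, so there is no serious analytic obstacle remaining; the real content is the identity $\ker T=\boldsymbol{RM}$, which has already been isolated and settled in Lemma~\ref{lem:xdefkernal}. The one point I would state carefully is that $\boldsymbol\pi_{RM}$ is genuinely a projection onto $\boldsymbol{RM}$, so that its kernel has the expected codimension $\dim\boldsymbol{RM}=\tfrac{1}{2}d(d+1)$; this needs $\boldsymbol{RM}\subseteq\mathbb P_{k+1}(D;\mathbb R^d)$, i.e.\ $k\geq 0$, as assumed.
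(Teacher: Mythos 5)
Your proposal is correct and follows essentially the same route as the paper: the paper likewise verifies $\mathbb P_{k}(D;\mathbb S)\boldsymbol x\subseteq\mathbb P_{k+1}(D;\mathbb R^d)\cap\ker(\boldsymbol \pi_{RM})$ by the pointwise computation $\boldsymbol\pi_{RM}(\boldsymbol\tau\boldsymbol x)=\skw(\boldsymbol\tau(\boldsymbol 0))\boldsymbol x=\boldsymbol 0$, and then closes the squeeze with the dimension count $\dim\bigl((\defm\mathbb P_{k+1}(D;\mathbb R^d))\boldsymbol x\bigr)=\dim\mathbb P_{k+1}(D;\mathbb R^d)-\dim\boldsymbol{RM}=\dim\bigl(\mathbb P_{k+1}(D;\mathbb R^d)\cap\ker(\boldsymbol\pi_{RM})\bigr)$ via Lemma~\ref{lem:xdefkernal}. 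Your write-up merely makes explicit two points the paper leaves implicit, namely the trivial inclusion $(\defm\mathbb P_{k+1}(D;\mathbb R^d))\boldsymbol x\subseteq\mathbb P_{k}(D;\mathbb S)\boldsymbol x$ and the fact that $\boldsymbol\pi_{RM}$ is an idempotent projection onto $\boldsymbol{RM}$, which justifies the codimension of its kernel.
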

\begin{proof}
For any $\boldsymbol\tau\in \mathbb P_{k}(D;\mathbb S)$, it follows
$$
\boldsymbol \pi_{RM}(\boldsymbol\tau\boldsymbol x)= (\skw(\nabla(\boldsymbol\tau\boldsymbol x)))(\boldsymbol 0)\boldsymbol x= \skw(\boldsymbol\tau(\boldsymbol 0))\boldsymbol x=\boldsymbol0.
$$
Thus $\mathbb P_{k}(D;\mathbb S)\boldsymbol x\subseteq\mathbb P_{k+1}(D;\mathbb R^d)\cap\ker(\boldsymbol \pi_{RM})$.
On the other hand, we obtain from Lemma~\ref{lem:xdefkernal} that
$$
\dim\left(\left(\defm\mathbb P_{k+1}(D;\mathbb R^d)\right)\boldsymbol x\right)=\dim\mathbb P_{k+1}(D;\mathbb R^d)-\dim\boldsymbol{RM},
$$
which equals to the dimension of $\mathbb P_{k+1}(D;\mathbb R^d)\cap\ker(\boldsymbol \pi_{RM})$. 
Thus \eqref{eq:xdefimage} follows.
\end{proof}

We denote by $\cdot\boldsymbol x: \mathbb P_{k}(D;\mathbb S) \to \revision{\mathbb P_{k+1}(D;\mathbb R^d)}$ the mapping $\boldsymbol \tau \to \boldsymbol \tau \boldsymbol x$ as the matrix-vector product  $\boldsymbol \tau \boldsymbol x$ is applying row-wise inner product with vector $\boldsymbol x$.
\begin{corollary}
Let integer $k\geq0$. We have the space decomposition 
\begin{equation}\label{eq:symTensorPolySpacedecomp}
\mathbb P_{k}(D;\mathbb S)=\defm \mathbb P_{k+1}(D;\mathbb R^d) \oplus (\ker (\cdot\boldsymbol x)\cap \mathbb P_{k}(D;\mathbb S)).
\end{equation}
\end{corollary}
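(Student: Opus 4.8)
The plan is to verify the decomposition \eqref{eq:symTensorPolySpacedecomp} by establishing that the two summands meet only at the origin and that their dimensions add up to $\dim \mathbb P_k(D;\mathbb S)$. The inclusions $\defm \mathbb P_{k+1}(D;\mathbb R^d)\subseteq \mathbb P_k(D;\mathbb S)$ and $\ker(\cdot\boldsymbol x)\cap\mathbb P_k(D;\mathbb S)\subseteq\mathbb P_k(D;\mathbb S)$ are immediate, the first because $\defm$ maps degree $k+1$ vector polynomials into symmetric degree $k$ tensors. So the heart of the matter is directness of the sum together with a dimension count, and essentially all the real content has already been packaged into Lemma~\ref{lem:xdefkernal} and the image identity \eqref{eq:xdefimage}.

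First I would prove directness. Suppose $\boldsymbol\tau$ lies in both summands. Then $\boldsymbol\tau=\defm\boldsymbol q$ for some $\boldsymbol q\in\mathbb P_{k+1}(D;\mathbb R^d)$ and $\boldsymbol\tau\boldsymbol x=\boldsymbol0$, that is $(\defm\boldsymbol q)\boldsymbol x=\boldsymbol0$. Lemma~\ref{lem:xdefkernal} then forces $\boldsymbol q\in\boldsymbol{RM}$, whence $\defm\boldsymbol q=\boldsymbol0$ and $\boldsymbol\tau=\boldsymbol0$. This yields $\defm\mathbb P_{k+1}(D;\mathbb R^d)\cap(\ker(\cdot\boldsymbol x)\cap\mathbb P_k(D;\mathbb S))=\{\boldsymbol0\}$, so the sum is direct.

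Next comes the dimension count, which is where I expect the argument to carry its weight. Applying rank--nullity to the map $\cdot\boldsymbol x:\mathbb P_k(D;\mathbb S)\to\mathbb P_{k+1}(D;\mathbb R^d)$ gives $\dim\mathbb P_k(D;\mathbb S)=\dim(\ker(\cdot\boldsymbol x)\cap\mathbb P_k(D;\mathbb S))+\dim(\mathbb P_k(D;\mathbb S)\boldsymbol x)$. By the identity \eqref{eq:xdefimage}, $\mathbb P_k(D;\mathbb S)\boldsymbol x=(\defm\mathbb P_{k+1}(D;\mathbb R^d))\boldsymbol x$, and the restriction of $\cdot\boldsymbol x$ to $\defm\mathbb P_{k+1}(D;\mathbb R^d)$ is injective (again by Lemma~\ref{lem:xdefkernal}, since $(\defm\boldsymbol q)\boldsymbol x=\boldsymbol0$ forces $\defm\boldsymbol q=\boldsymbol0$). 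Hence $\dim(\mathbb P_k(D;\mathbb S)\boldsymbol x)=\dim\defm\mathbb P_{k+1}(D;\mathbb R^d)$, and substituting back produces exactly the dimension identity $\dim\mathbb P_k(D;\mathbb S)=\dim\defm\mathbb P_{k+1}(D;\mathbb R^d)+\dim(\ker(\cdot\boldsymbol x)\cap\mathbb P_k(D;\mathbb S))$. Combined with directness, this upgrades the direct sum of the two subspaces to all of $\mathbb P_k(D;\mathbb S)$, giving \eqref{eq:symTensorPolySpacedecomp}. The only conceptual obstacle is the injectivity of $\cdot\boldsymbol x$ on $\defm\mathbb P_{k+1}(D;\mathbb R^d)$, namely that the sole deformations annihilated by multiplication with $\boldsymbol x$ arise from rigid motions; this is precisely the preceding lemma, so the corollary reduces to assembling these pieces.
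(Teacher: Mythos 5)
Your proof is correct and follows essentially the same route as the paper: directness of the sum via Lemma~\ref{lem:xdefkernal}, then a dimension count combining rank--nullity for $\cdot\boldsymbol x$ on $\mathbb P_k(D;\mathbb S)$ with the image identity \eqref{eq:xdefimage}. The only (cosmetic) difference is in the bookkeeping: you get $\dim(\mathbb P_{k}(D;\mathbb S)\boldsymbol x)=\dim\defm \mathbb P_{k+1}(D;\mathbb R^d)$ from the injectivity of $\cdot\boldsymbol x$ on $\defm \mathbb P_{k+1}(D;\mathbb R^d)$, whereas the paper evaluates both sides explicitly as $\dim\mathbb P_{k+1}(D;\mathbb R^d)-\dim\boldsymbol{RM}$ and cancels.
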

\begin{proof}
It follows from Lemma~\ref{lem:xdefkernal} that $\defm \mathbb P_{k+1}(D;\mathbb R^d) \cap (\ker (\cdot\boldsymbol x)\cap \mathbb P_{k}(D;\mathbb S))=\{\boldsymbol0\}$.
Due to \eqref{eq:xdefimage},
\begin{align*}
&\quad\dim\defm \mathbb P_{k+1}(D;\mathbb R^d) +\dim(\ker (\cdot\boldsymbol x)\cap \mathbb P_{k}(D;\mathbb S)) \\
&=\dim\defm \mathbb P_{k+1}(D;\mathbb R^d)+\dim\mathbb P_{k}(D;\mathbb S)-\dim(\mathbb P_{k}(D;\mathbb S)\boldsymbol x) \\
&=\dim\mathbb P_{k+1}(D;\mathbb R^d)-\dim\boldsymbol{RM}+\dim\mathbb P_{k}(D;\mathbb S)-\dim(\mathbb P_{k}(D;\mathbb S)\boldsymbol x) \\
&=\dim\mathbb P_{k}(D;\mathbb S),
\end{align*}
which means \eqref{eq:symTensorPolySpacedecomp}.
\end{proof}

\begin{remark}\rm 
In two and three dimensions, we have (cf. \cite{ChenHuang2020,Chen;Huang:2021Finite})
\begin{equation*}
\ker (\cdot\boldsymbol x)\cap \mathbb P_{k}(D;\mathbb S)=\begin{cases}
\boldsymbol x^{\perp}(\boldsymbol x^{\perp})^{\intercal}\mathbb P_{k-2}(D), & \textrm{ for }  d=2,\\
\boldsymbol x\times\mathbb P_{k-2}(D;\mathbb S)\times\boldsymbol x, & \textrm{ for } d=3,
\end{cases}
\end{equation*}
where $\boldsymbol x^{\perp}:=\begin{pmatrix}
x_2\\ -x_1
\end{pmatrix}$, but generalization to arbitrary dimension is not easy and not necessary.
A computation approach to find an explicit basis of $\ker (\cdot\boldsymbol x)\cap \mathbb P_{k-1}(K;\mathbb S)$ is as follows. Find a basis for $\mathbb P_{k-1}(K;\mathbb S)$ and one for $\mathbb P_{k}(K;\mathbb R^d)$. Then form the matrix representation $X$ of the operator $\cdot\boldsymbol x$. Afterwards the null space $\ker(X)$ can be found algebraically. 
\end{remark}



\begin{lemma}\label{lem:E0S}
Let integer $k\geq2$. We have
\begin{equation}\label{eq:E0Sdualcharac} 
E_{0,k}'(\mathbb S) = \mathcal N(\ker (\cdot\boldsymbol x)\cap \mathbb P_{k-2}(K;\mathbb S)).
\end{equation}
That is a function $\boldsymbol\tau\in E_{0,k}(\mathbb S)$ is uniquely determined by 
\begin{equation*}
(\boldsymbol\tau, \boldsymbol q)_K\quad\forall~\boldsymbol q\in \ker (\cdot\boldsymbol x)\cap \mathbb P_{k-2}(K;\mathbb S).
\end{equation*}  
And
$$
\dim E_{0,k}(\mathbb S)= \frac{d(d+1)}{2} {k-2+d \choose d} - d {d+k-1 \choose d} + \frac{d(d+1)}{2}.
$$
\end{lemma}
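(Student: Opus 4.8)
The plan is to invoke the abstract Lemma~\ref{lem:abstract} with $V = \mathbb P_k(K;\mathbb S)$, $\dd = \div$, candidate subspace $U = \mathbb P_{k-2}(K;\mathbb S)$, and Koszul-type operator $\kappa = \cdot\,\boldsymbol x$, in direct analogy with the vector-valued Corollary~\ref{cor:E0star}. Assumption (B1), namely $\mathbb B_k'(\div, K;\mathbb S) = \mathcal N(\mathbb P_{k-2}(K;\mathbb S))$, is exactly Lemma~\ref{lem:HuZhanginteriordof}, so $U = \mathbb P_{k-2}(K;\mathbb S)$ is the right choice. For assumption (B2) I would apply the polynomial decomposition \eqref{eq:symTensorPolySpacedecomp} with $k$ replaced by $k-2$,
\[
\mathbb P_{k-2}(K;\mathbb S) = \defm\,\mathbb P_{k-1}(K;\mathbb R^d) \oplus \big(\ker(\cdot\boldsymbol x)\cap\mathbb P_{k-2}(K;\mathbb S)\big),
\]
and note that the adjoint $\div^* = -\defm$ extends continuously to $H(\div^*) = H^1(K;\mathbb R^d)$, whence $\defm\,\mathbb P_{k-1}(K;\mathbb R^d) \subseteq \div^*(H(\div^*))$. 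This identifies the first summand with a subspace of $\div^*(H(\div^*))$ and the second with $\ker(\kappa)\cap U$, which is precisely the inclusion \eqref{eq:Vkappa}.

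Granting (B1) and (B2), Lemma~\ref{lem:abstract} gives $E_{0,k}'(\mathbb S) = \mathcal I'\mathcal N(\ker(\cdot\boldsymbol x)\cap\mathbb P_{k-2}(K;\mathbb S))$. To promote $\mathcal I'$ to a bijection (and hence drop it from the notation), I would match dimensions. Using Lemma~\ref{lm:bubbledof} and Lemma~\ref{lem:E0Sbot},
\[
\dim E_{0,k}(\mathbb S) = \dim\mathbb B_k(\div, K;\mathbb S) - \dim E_{0,k}^{\bot}(\mathbb S) = \frac{d(d+1)}{2}\binom{k-2+d}{d} - d\binom{d+k-1}{d} + \frac{d(d+1)}{2},
\]
which is the asserted dimension formula. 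For the other side, applying \eqref{eq:symTensorPolySpacedecomp} (again at level $k-2$) together with $\dim\ker(\defm) = \dim\boldsymbol{RM} = \tfrac{d(d+1)}{2}$ yields
\[
\dim\big(\ker(\cdot\boldsymbol x)\cap\mathbb P_{k-2}(K;\mathbb S)\big) = \dim\mathbb P_{k-2}(K;\mathbb S) - \dim\mathbb P_{k-1}(K;\mathbb R^d) + \frac{d(d+1)}{2},
\]
and a direct binomial computation confirms this equals $\dim E_{0,k}(\mathbb S)$. Therefore $\mathcal I'$ is a bijection and the characterization \eqref{eq:E0Sdualcharac} follows.

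The conceptual work is almost entirely front-loaded into the verification of (B2): one must be careful that $\div^*$ is the adjoint of $\div$ restricted to the bubble space $\mathbb B_k(\div, K;\mathbb S)$ and that its continuous extension to $H(\div^*) = H^1$ is what captures all of $\defm\,\mathbb P_{k-1}(K;\mathbb R^d)$, rather than only $\div^*(\mathbb B_k(\div, K;\mathbb S))$. Once the decomposition \eqref{eq:symTensorPolySpacedecomp} is recognized as the realization of \eqref{eq:Vkappa}, the remainder is the dimension bookkeeping above, which is routine. I expect no substantive obstacle beyond this identification, since the argument is structurally a transcription of the vector-valued proof of Corollary~\ref{cor:E0star}.
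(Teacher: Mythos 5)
Your proposal is correct and follows essentially the same route as the paper's proof: it invokes Lemma~\ref{lem:abstract} with the identical choices $V = \mathbb P_k(K;\mathbb S)$, $U = \mathbb P_{k-2}(K;\mathbb S)$, $\kappa = \cdot\,\boldsymbol x$, verifies (B1) by Lemma~\ref{lem:HuZhanginteriordof} and (B2) by the decomposition \eqref{eq:symTensorPolySpacedecomp}, and closes with the same dimension count via Lemmas~\ref{lm:bubbledof} and~\ref{lem:E0Sbot}. Your extra remark on why $\div^* = -\defm$ extended to $H^1(K;\mathbb R^d)$ captures all of $\defm\,\mathbb P_{k-1}(K;\mathbb R^d)$ is a worthwhile clarification of a point the paper leaves implicit, but it does not change the argument.
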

\begin{proof}

\revision{We apply Lemma \ref{lem:abstract} with $V = \mathbb P_{k}(K;\mathbb S), U= \mathbb P_{k-2}(K;\mathbb S)$, and $\kappa = \cdot\bs x$. Lemma \ref{lem:HuZhanginteriordof} and \eqref{eq:xdefimage}-\eqref{eq:symTensorPolySpacedecomp} verify the assumptions (B1)-(B2), and we only need to count the dimension.}

By the space decomposition \eqref{eq:symTensorPolySpacedecomp}, Lemma~\ref{lem:HuZhanginteriordof} and Lemma~\ref{lem:E0Sbot},
\begin{align*}
\dim(\ker (\cdot\boldsymbol x)\cap \mathbb P_{k-2}(K;\mathbb S))&=\dim\mathbb P_{k-2}(K;\mathbb S)-\dim\defm \mathbb P_{k-1}(K;\mathbb R^d) \\
&=\dim\mathbb B_k(\div, K; \mathbb S)-\dim E_{0,k}^{\bot}(\mathbb S)=\dim E_{0,k}(\mathbb S),
\end{align*}
as required.
\end{proof}


\subsection{$H(\div; \mathbb S)$-conforming elements}

Combining Lemmas \ref{lem:divSboundarydofs}, \ref{lem:E0Sbot}, \ref{lem:E0S} and space decomposition~\eqref{eq:symTensorPolySpacedecomp} yields the degrees of freedom of $H(\div; \mathbb S)$-conforming elements.
\begin{theorem}[$\mathbb P_k(K;\mathbb S)$-type $H(\div; \mathbb S)$-conforming elements]\label{thm:SBDMunisolvence}
Take the shape function space $V(\mathbb S) = \mathbb P_k(K;\mathbb S)$ with $k\geq d+1$. 
The degrees of freedom
\begin{align}
\boldsymbol \tau (\delta) & \quad\forall~\delta\in \mathcal V(K), \label{HdivSBDMfemdof1}\\
(\boldsymbol  n_i^{\intercal}\boldsymbol \tau\boldsymbol n_j, q)_F & \quad\forall~q\in\mathbb P_{k+r-d-1}(F),  F\in\mathcal F^r(K),\;  \label{HdivSBDMfemdof2}\\
&\quad\quad i,j=1,\cdots, r, \textrm{ and } r=1,\cdots, d-1, \notag\\
(\Pi_F\boldsymbol \tau\boldsymbol n, \boldsymbol q)_F & \quad\forall~\boldsymbol q\in {\rm ND}_{k-2}(F),  F\in\mathcal F^1(K),\label{HdivSBDMfemdof3} \\
(\boldsymbol \tau, \boldsymbol q)_K &\quad \forall~\boldsymbol q\in \mathbb P_{k-2}(K;\mathbb S)
\label{HdivSBDMfemdof4}
\end{align}
are uni-solvent for $\mathbb P_k(K;\mathbb S)$.
 The last degree of freedom \eqref{HdivSBDMfemdof4} can be replaced by
 \begin{align}
 (\div\boldsymbol \tau, \boldsymbol q)_K &\quad~\forall~\boldsymbol q\in   \mathbb P_{k-1}(K;\mathbb R^d)/\boldsymbol{RM}, \label{HdivSdef}\\
 (\boldsymbol \tau, \boldsymbol q)_K &\quad~\forall~\boldsymbol q\in \ker (\cdot\boldsymbol x)\cap \mathbb P_{k-2}(K;\mathbb S) \label{HdivSker}.
 \end{align} 
\end{theorem}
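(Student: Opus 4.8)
The plan is to exploit the trace--bubble splitting of $\mathbb P_k(K;\mathbb S)$ already assembled in the preceding lemmas, so that the boundary DoFs \eqref{HdivSBDMfemdof1}--\eqref{HdivSBDMfemdof3} account for the trace and the interior DoF \eqref{HdivSBDMfemdof4} accounts for the bubble. First I would record the dimension identity
$$
\dim\mathbb P_k(K;\mathbb S)=\dim\big(\tr^{\div}(\mathbb P_k(K;\mathbb S))\big)+\dim\mathbb B_k(\div,K;\mathbb S),
$$
match the number of \eqref{HdivSBDMfemdof1}--\eqref{HdivSBDMfemdof3} against the first term via Lemma~\ref{lem:divSboundarydofs} and the number of \eqref{HdivSBDMfemdof4} against the second via Lemma~\ref{lem:HuZhanginteriordof}. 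With the counts balanced it then suffices to prove the vanishing implication.

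Assume therefore that all DoFs vanish for some $\boldsymbol\tau\in\mathbb P_k(K;\mathbb S)$. By Lemma~\ref{lem:divSboundarydofs}, the functionals \eqref{HdivSBDMfemdof1}--\eqref{HdivSBDMfemdof3} form a basis of $(\tr^{\div}(\mathbb P_k(K;\mathbb S)))'$, so their vanishing forces $\tr^{\div}\boldsymbol\tau=\boldsymbol\tau\boldsymbol n|_{\partial K}=\boldsymbol0$, i.e.\ $\boldsymbol\tau\in\mathbb B_k(\div,K;\mathbb S)$. Now Lemma~\ref{lem:HuZhanginteriordof} identifies $\mathbb B_k'(\div,K;\mathbb S)=\mathcal N(\mathbb P_{k-2}(K;\mathbb S))$, so the vanishing of \eqref{HdivSBDMfemdof4} yields $\boldsymbol\tau=\boldsymbol0$. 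This establishes the uni-solvence of \eqref{HdivSBDMfemdof1}--\eqref{HdivSBDMfemdof4}.

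For the asserted replacement of \eqref{HdivSBDMfemdof4} by \eqref{HdivSdef}--\eqref{HdivSker}, the point is that once \eqref{HdivSBDMfemdof1}--\eqref{HdivSBDMfemdof3} vanish the tensor $\boldsymbol\tau$ is already a bubble, so integration by parts is boundary-free:
$$
(\div\boldsymbol\tau,\boldsymbol v)_K=-(\boldsymbol\tau,\defm\boldsymbol v)_K\qquad\forall~\boldsymbol v\in\mathbb P_{k-1}(K;\mathbb R^d).
$$
Applying the decomposition \eqref{eq:symTensorPolySpacedecomp} with degree parameter $k-2$, namely $\mathbb P_{k-2}(K;\mathbb S)=\defm\mathbb P_{k-1}(K;\mathbb R^d)\oplus(\ker(\cdot\boldsymbol x)\cap\mathbb P_{k-2}(K;\mathbb S))$, I would split the family $\mathcal N(\mathbb P_{k-2}(K;\mathbb S))$ of \eqref{HdivSBDMfemdof4} into $\mathcal N(\defm\mathbb P_{k-1}(K;\mathbb R^d))$ and $\mathcal N(\ker(\cdot\boldsymbol x)\cap\mathbb P_{k-2}(K;\mathbb S))$. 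By the boundary-free identity the first family equals, up to sign, testing $\div\boldsymbol\tau$ against $\mathbb P_{k-1}(K;\mathbb R^d)$; since $\defm$ annihilates exactly $\boldsymbol{RM}$, these reduce to \eqref{HdivSdef}, i.e.\ moments of $\div\boldsymbol\tau$ on $\mathbb P_{k-1}(K;\mathbb R^d)/\boldsymbol{RM}$ (Lemma~\ref{lem:E0Sbot}), while the second family is verbatim \eqref{HdivSker} (Lemma~\ref{lem:E0S}). The counts match since $\dim(\mathbb P_{k-1}(K;\mathbb R^d)/\boldsymbol{RM})=\dim E_{0,k}^{\bot}(\mathbb S)$ and $\dim(\ker(\cdot\boldsymbol x)\cap\mathbb P_{k-2}(K;\mathbb S))=\dim E_{0,k}(\mathbb S)$ sum to $\dim\mathbb B_k(\div,K;\mathbb S)$, so \eqref{HdivSdef}--\eqref{HdivSker} span the same annihilator on the bubble space as \eqref{HdivSBDMfemdof4}; their vanishing again forces $(\boldsymbol\tau,\boldsymbol q)_K=0$ for all $\boldsymbol q\in\mathbb P_{k-2}(K;\mathbb S)$ and hence $\boldsymbol\tau=\boldsymbol0$.

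The main obstacle I anticipate is bookkeeping rather than conceptual: verifying that the hypothesis $k\ge d+1$ keeps every face polynomial degree in \eqref{HdivSBDMfemdof2}--\eqref{HdivSBDMfemdof3} nonnegative and that the dimensions in Lemmas~\ref{lem:E0Sbot} and~\ref{lem:E0S} genuinely add up to $\dim\mathbb B_k(\div,K;\mathbb S)$, and, for the equivalence of the two DoF sets, arguing cleanly that the boundary-free integration by parts together with the direct sum \eqref{eq:symTensorPolySpacedecomp} amounts to an honest change of basis of the interior functionals restricted to $\mathbb B_k(\div,K;\mathbb S)$.
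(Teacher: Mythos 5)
Your proposal is correct and follows essentially the same route as the paper: Lemma~\ref{lem:divSboundarydofs} reduces the problem to the bubble space $\mathbb B_k(\div,K;\mathbb S)$, Lemma~\ref{lem:HuZhanginteriordof} handles \eqref{HdivSBDMfemdof4}, and the replacement set \eqref{HdivSdef}--\eqref{HdivSker} is justified exactly as the paper does, via the splitting into $E_{0,k}(\mathbb S)$ and $E_{0,k}^{\bot}(\mathbb S)$ (Lemmas~\ref{lem:E0Sbot} and~\ref{lem:E0S}), the decomposition \eqref{eq:symTensorPolySpacedecomp}, and boundary-free integration by parts with $\ker(\defm)=\boldsymbol{RM}$. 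Your write-up merely makes explicit the details the paper compresses into the sentence preceding the theorem.
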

The global finite element space $\boldsymbol V_h(\div;\mathbb S)\subset H(\div, \Omega; \mathbb S)$, where
\begin{align*}
\boldsymbol V_h(\div;\mathbb S):=\{\boldsymbol \tau\in \boldsymbol L^2(\Omega;\mathbb S): &\,\boldsymbol \tau|_K\in\mathbb P_k(K;\mathbb S) \textrm{ for each } K\in\mathcal T_h, \\
&\textrm{the degrees of freedom \eqref{HdivSBDMfemdof1}-\eqref{HdivSBDMfemdof3} are single-valued} \}.    
\end{align*}
Clearly $\boldsymbol V_h(\div;\mathbb S)\subset H(\div, \Omega; \mathbb S)$ follows from the proof of Lemma~\ref{lem:divSboundarydofs}.

For the most important three dimensional case, the degrees of freedom \eqref{HdivSBDMfemdof1}-\eqref{HdivSBDMfemdof4} become
\begin{align*}
\boldsymbol \tau (\delta) & \quad\forall~\delta\in \mathcal V(K), \\
(\boldsymbol  n_i^{\intercal}\boldsymbol \tau\boldsymbol n_j, q)_e & \quad\forall~q\in\mathbb P_{k-2}(e),  e\in\mathcal F^2(K), i,j=1,2,\\
(\boldsymbol  n^{\intercal}\boldsymbol \tau\boldsymbol n, q)_F & \quad\forall~q\in\mathbb P_{k-3}(F),  F\in\mathcal F^1(K),\\
(\Pi_F\boldsymbol \tau\boldsymbol n, \boldsymbol q)_F & \quad\forall~\boldsymbol q\in {\rm ND}_{k-2}(F),  F\in\mathcal F^1(K), \\
(\boldsymbol \tau, \boldsymbol q)_K &\quad \forall~\boldsymbol q\in \mathbb P_{k-2}(K;\mathbb S),
\end{align*}
which are slightly different from Hu-Zhang element in three dimensions~\cite{HuZhang2015}.

\revision
{Uni-solvence holds for $k\geq 1$. The requirement $k\geq d+1$ contains the degrees of freedom $(\boldsymbol \tau\boldsymbol n, \boldsymbol q)_F$ for all $\boldsymbol q\in \mathbb P_{1}(F; \mathbb R^{d-1})$ on each face $F\in\mathcal F^1(K)$, by which the divergence of the global $H(\div; \mathbb S)$-conforming element space will include the piecewise $\boldsymbol{RM}$ space and combining with $\div\mathbb B_k(\div, K; \mathbb S)=\mathbb P_{k-1,{\rm RM}}^{\bot}$ will imply the following discrete inf-sup condition.
\begin{lemma}\label{lm:divSinfsup}
Let $k\geq d+1$. It holds the inf-sup condition
$$
\|\boldsymbol p_h\|_0\lesssim\sup_{\boldsymbol\tau_h\in \boldsymbol V_h(\div;\mathbb S)}\frac{(\div\boldsymbol\tau_h, \boldsymbol p_h)}{\|\boldsymbol\tau_h\|_{H(\div)}}\qquad\forall~\boldsymbol p_h\in\mathbb P_{k-1}(\mathcal T_h; \mathbb{R}^d),
$$
where $\mathbb P_{k-1}(\mathcal T_h; \mathbb{R}^d):=\{\boldsymbol p_h\in\boldsymbol L^2(\Omega;\mathbb{R}^d): \boldsymbol p_h|_K\in\mathbb P_{k-1}(K; \mathbb{R}^d) \textrm{ for each } K\in\mathcal T_h\}$.
\end{lemma}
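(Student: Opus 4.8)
The plan is to verify the inf-sup condition by constructing a Fortin operator and invoking the continuous surjectivity of the divergence. Write $Q_h$ for the $L^2$-orthogonal projection onto $\mathbb P_{k-1}(\mathcal T_h;\mathbb R^d)$. By the Fortin criterion it suffices to build a linear operator $\boldsymbol\Pi_h: H^1(\Omega;\mathbb S)\to\boldsymbol V_h(\div;\mathbb S)$ satisfying the commuting relation $\div\boldsymbol\Pi_h\boldsymbol\tau=Q_h\div\boldsymbol\tau$ together with the bound $\|\boldsymbol\Pi_h\boldsymbol\tau\|_{H(\div)}\lesssim\|\boldsymbol\tau\|_1$. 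Granting such an operator, for a given $\boldsymbol p_h\in\mathbb P_{k-1}(\mathcal T_h;\mathbb R^d)$ I would use the classical $H^1$-bounded right inverse of $\div: H^1(\Omega;\mathbb S)\to L^2(\Omega;\mathbb R^d)$ to pick $\boldsymbol\sigma\in H^1(\Omega;\mathbb S)$ with $\div\boldsymbol\sigma=\boldsymbol p_h$ and $\|\boldsymbol\sigma\|_1\lesssim\|\boldsymbol p_h\|_0$; then $\boldsymbol\tau_h:=\boldsymbol\Pi_h\boldsymbol\sigma$ gives $(\div\boldsymbol\tau_h,\boldsymbol p_h)=(Q_h\div\boldsymbol\sigma,\boldsymbol p_h)=(\boldsymbol p_h,\boldsymbol p_h)=\|\boldsymbol p_h\|_0^2$ while $\|\boldsymbol\tau_h\|_{H(\div)}\lesssim\|\boldsymbol p_h\|_0$, which is exactly the claimed estimate.

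The operator $\boldsymbol\Pi_h$ I would build in two steps. First, a quasi-interpolation $\boldsymbol\Pi_h^0$ into $\boldsymbol V_h(\div;\mathbb S)$ of Cl\'ement/Scott--Zhang type that is $H(\div)$-stable, $\|\boldsymbol\Pi_h^0\boldsymbol\tau\|_{H(\div)}\lesssim\|\boldsymbol\tau\|_1$, and that reproduces the face moments \eqref{HdivSBDMfemdof2}--\eqref{HdivSBDMfemdof3} against traces of rigid motions, i.e. $\int_F(\boldsymbol\Pi_h^0\boldsymbol\tau)\boldsymbol n\cdot\boldsymbol v=\int_F(\boldsymbol\tau\boldsymbol n)\cdot\boldsymbol v$ for every $F\in\mathcal F^1(K)$ and $\boldsymbol v\in\boldsymbol{RM}$. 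The point of this reproduction is the integration-by-parts identity: since $\defm\boldsymbol v=\boldsymbol 0$ for $\boldsymbol v\in\boldsymbol{RM}$, on each $K$ one has $\int_K\div\boldsymbol\tau\cdot\boldsymbol v=\int_{\partial K}(\boldsymbol\tau\boldsymbol n)\cdot\boldsymbol v$, so the rigid-motion part of the element divergence is determined by the trace alone. Splitting $\boldsymbol v|_F$ into its normal and tangential components shows this boundary term is controlled by the normal-normal moments of $\boldsymbol\tau\boldsymbol n$ against $\mathbb P_1(F)$ and the tangential-normal moments against $\mathbb P_1(F;\mathbb R^{d-1})\subseteq{\rm ND}_{k-2}(F)$; both families are present in the DoFs precisely because $k\geq d+1$ (this is where the degree restriction enters). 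Consequently the $L^2(K)$-projection onto $\boldsymbol{RM}$ of $\div(\boldsymbol\tau-\boldsymbol\Pi_h^0\boldsymbol\tau)$ vanishes on every element.

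The second step is a local bubble correction. Set $\boldsymbol g:=Q_h\div\boldsymbol\tau-\div\boldsymbol\Pi_h^0\boldsymbol\tau$; by the previous step $\boldsymbol g|_K\in\mathbb P_{k-1,{\rm RM}}^{\bot}$ on each $K$, so the bijection $\div:E_{0,k}^{\bot}(\mathbb S)\to\mathbb P_{k-1,{\rm RM}}^{\bot}$ of Lemma~\ref{lem:E0Sbot} provides a unique $\boldsymbol b_K\in\mathbb B_k(\div,K;\mathbb S)$ with $\div\boldsymbol b_K=\boldsymbol g|_K$. Because $\boldsymbol b_K$ is a bubble, its assembly $\boldsymbol b_h$ stays in $\boldsymbol V_h(\div;\mathbb S)$, and $\boldsymbol\Pi_h\boldsymbol\tau:=\boldsymbol\Pi_h^0\boldsymbol\tau+\boldsymbol b_h$ then satisfies $\div\boldsymbol\Pi_h\boldsymbol\tau=Q_h\div\boldsymbol\tau$. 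A scaling argument applied to that bijection gives $\|\boldsymbol b_K\|_{H(\div),K}\lesssim\|\boldsymbol g\|_{0,K}\lesssim\|\boldsymbol\tau\|_{1,K}+\|\boldsymbol\Pi_h^0\boldsymbol\tau\|_{H(\div),K}$, whence $\|\boldsymbol\Pi_h\boldsymbol\tau\|_{H(\div)}\lesssim\|\boldsymbol\tau\|_1$, completing the verification of the Fortin properties.

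I expect the main obstacle to be the first step: producing a quasi-interpolation that simultaneously maps into the globally conforming space $\boldsymbol V_h(\div;\mathbb S)$, is bounded in $H(\div)$ by the $H^1$ norm, and reproduces the requisite degree-$\leq 1$ face moments, while circumventing the pointwise vertex and lower-dimensional-face DoFs \eqref{HdivSBDMfemdof1}--\eqref{HdivSBDMfemdof2}, which are not defined on a generic $H^1$ tensor field. This calls for a careful DoF-respecting averaging over vertex, edge, and face patches that keeps the single-valued boundary DoFs consistent across adjacent elements; by contrast, the continuous $H^1$-regular right inverse of $\div$ on symmetric tensors is standard and I would simply cite it.
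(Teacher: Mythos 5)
Your overall architecture --- a regular right inverse of $\div$ on symmetric tensors composed with a Fortin-type discrete interpolation --- is exactly the strategy of the paper's proof, and the pieces you do work out are sound: the observation that $k\geq d+1$ places the $\mathbb P_1(F;\mathbb R^d)$ moments of $\boldsymbol\tau\boldsymbol n$ inside the span of the DoFs \eqref{HdivSBDMfemdof2}--\eqref{HdivSBDMfemdof3}, the integration-by-parts identity showing that these face moments determine the rigid-motion part of the element-wise divergence, and the bubble correction through the bijection $\div\colon E_{0,k}^{\bot}(\mathbb S)\to\mathbb P_{k-1,{\rm RM}}^{\bot}$ of Lemma~\ref{lem:E0Sbot}. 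However, the proof has a hole exactly where you flag ``the main obstacle'': the operator $\boldsymbol\Pi_h^0$ carrying all of these properties is never constructed, and the route you propose for it --- Cl\'ement/Scott--Zhang-type patch averaging that ``circumvents'' the vertex and lower-dimensional DoFs --- misidentifies the difficulty. Since that operator is the entire substance of the argument (everything else is routine), the proposal as written is incomplete.

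The missing idea is that no averaging is needed at all: an interpolant can be defined element-by-element purely through the DoFs of Theorem~\ref{thm:SBDMunisolvence}, prescribing only those functionals that are well defined on $H^1(\Omega;\mathbb S)$ and setting \emph{every other DoF to zero}. Concretely, the paper takes $\boldsymbol\tau\in \boldsymbol H^1(\Omega;\mathbb S)$ with $\div\boldsymbol\tau=\boldsymbol p_h$, $\|\boldsymbol\tau\|_1\lesssim\|\boldsymbol p_h\|_0$, and defines $\boldsymbol\tau_h\in\boldsymbol V_h(\div;\mathbb S)$ by $(\boldsymbol\tau_h\boldsymbol n,\boldsymbol q)_F=(\boldsymbol\tau\boldsymbol n,\boldsymbol q)_F$ for $\boldsymbol q\in\mathbb P_1(F;\mathbb R^d)$, by $(\div\boldsymbol\tau_h,\boldsymbol q)_K=(\boldsymbol p_h,\boldsymbol q)_K$ for $\boldsymbol q\in\mathbb P_{k-1}(K;\mathbb R^d)/\boldsymbol{RM}$ (the DoF \eqref{HdivSdef}), and by setting all remaining DoFs --- including the vertex values \eqref{HdivSBDMfemdof1} and the lower-dimensional face moments in \eqref{HdivSBDMfemdof2}, which are indeed undefined on $H^1$ --- equal to zero. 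Global conformity is then automatic, because the nonzero prescribed values come from the single-valued trace of $\boldsymbol\tau\in\boldsymbol H^1(\Omega;\mathbb S)$ while zero is trivially single-valued across elements, and stability $\|\boldsymbol\tau_h\|_0\lesssim\|\boldsymbol\tau\|_1\lesssim\|\boldsymbol p_h\|_0$ follows from element-wise scaling and trace inequalities. This single construction also absorbs your second step: prescribing \eqref{HdivSdef} directly, combined with your own rigid-motion identity $(\div\boldsymbol\tau_h,\boldsymbol q)_K=\sum_{F}(\boldsymbol\tau_h\boldsymbol n,\boldsymbol q)_F=(\boldsymbol p_h,\boldsymbol q)_K$ for $\boldsymbol q\in\boldsymbol{RM}$, yields $\div\boldsymbol\tau_h=\boldsymbol p_h$ exactly --- not merely $Q_h\div\boldsymbol\tau$ --- so no separate bubble lift is required. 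With your first step replaced by this DoF-based interpolant, your argument closes and coincides with the paper's proof.
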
}
\revision{
\begin{proof}
For $\boldsymbol p_h\in\mathbb P_{k-1}(\mathcal T_h; \mathbb{R}^d)$, there exists $\boldsymbol\tau\in\boldsymbol H^1(\Omega;\mathbb S)$ such that \cite{CostabelMcIntosh2010}
$$
\div\boldsymbol\tau=\boldsymbol p_h,\quad \|\boldsymbol\tau\|_1\lesssim \|\boldsymbol p_h\|_0.
$$
Let $\boldsymbol\tau_h\in \boldsymbol V_h(\div;\mathbb S)$ such that all the DoFs \eqref{HdivSBDMfemdof1}-\eqref{HdivSBDMfemdof3} and \eqref{HdivSdef}-\eqref{HdivSker} vanish except
\begin{align*}
(\boldsymbol\tau_h\boldsymbol n, \boldsymbol q)_F&=(\boldsymbol\tau\boldsymbol n, \boldsymbol q)_F\qquad\qquad\qquad\;\;\;\,\forall~\boldsymbol q\in\mathbb P_1(F;\mathbb R^d), F\in\mathcal F^1(K), \\
(\div\boldsymbol\tau_h, \boldsymbol q)_K&=(\div\boldsymbol\tau, \boldsymbol q)_K=(\boldsymbol p_h, \boldsymbol q)_K\quad\forall~\boldsymbol q\in\mathbb P_{k-1}(K;\mathbb R^d)/\boldsymbol{RM}
\end{align*}
for all $K\in\mathcal T_h$. By the scaling argument, we have
\begin{equation}\label{eq:20220120}  
\|\boldsymbol\tau_h\|_{0}\lesssim \|\boldsymbol\tau\|_{1}\lesssim \|\boldsymbol p_h\|_0.
\end{equation}
Applying the integration by parts,
$$
(\div\boldsymbol\tau_h, \boldsymbol q)_K=(\div\boldsymbol\tau, \boldsymbol q)_K=(\boldsymbol p_h, \boldsymbol q)_K\quad\forall~\boldsymbol q\in\boldsymbol{RM}.
$$
Hence
$$
(\div\boldsymbol\tau_h, \boldsymbol q)_K=(\div\boldsymbol\tau, \boldsymbol q)_K=(\boldsymbol p_h, \boldsymbol q)_K\quad\forall~\boldsymbol q\in\mathbb P_{k-1}(K;\mathbb R^d),
$$
which implies $\div\boldsymbol\tau_h=\boldsymbol p_h$. Therefore we derive the inf-sup condition from \eqref{eq:20220120}.
\end{proof}
}

\subsection{\revision{$\mathbb P_{k+1}^{-}(K;\mathbb S)$-type elements}}
\revision{
Let $k\geq d+1$. The space of shape functions is taken as 
\begin{equation*}
\mathbb P_{k+1}^{-}(K;\mathbb S):=\mathbb P_k(K;\mathbb S)+E_{0,k+1}^{\bot}(\mathbb S).
\end{equation*}
Since $E_{0,k+1}^{\bot}(\mathbb S)\subseteq \mathbb B_{k+1}(\div, K; \mathbb S)$ and $\div E_{0,k+1}^{\bot}(\mathbb S)=\mathbb P_{k,{\rm RM}}^{\bot}$, we have
$$
{\rm tr}^{\div}\mathbb P_{k+1}^{-}(K;\mathbb S)={\rm tr}^{\div}( \mathbb P_k(K; \mathbb S)),\quad \div \mathbb P_{k+1}^{-}(K;\mathbb S)=\mathbb P_{k}(K;\mathbb R^d),
$$
By applying Lemma~\ref{lm:VPfem} with $\dd=\div$, $V=\mathbb P_k(K;\mathbb S)$, $\mathbb H= E_{0,k+1}^{\bot}(\mathbb S) \backslash E_{0,k}^{\bot}(\mathbb S)$, $\mathbb P=\mathbb P_{k-1}(K,\mathbb R^d)$ and $\mathbb Q=\ker (\cdot\boldsymbol x)\cap \mathbb P_{k-2}(K;\mathbb S)$,
we get the uni-solvent DoFs
\begin{align}
\boldsymbol \tau (\delta) & \quad\forall~\delta\in \mathcal V(K), \label{HdivSRTfemnewdof1}\\
(\boldsymbol  n_i^{\intercal}\boldsymbol \tau\boldsymbol n_j, q)_F & \quad\forall~q\in\mathbb P_{k+r-d-1}(F),  F\in\mathcal F^r(K),\;  \label{HdivSRTfemnewdof2}\\
&\quad\quad i,j=1,\cdots, r, \textrm{ and } r=1,\cdots, d-1, \notag\\
(\Pi_F\boldsymbol \tau\boldsymbol n, \boldsymbol q)_F & \quad\forall~\boldsymbol q\in {\rm ND}_{k-2}(F),  F\in\mathcal F^1(K),\label{HdivSRTfemnewdof3} \\
(\boldsymbol \tau, \boldsymbol q)_K &\quad \forall~\boldsymbol q\in \ker (\cdot\boldsymbol x)\cap \mathbb P_{k-2}(K;\mathbb S), \label{HdivSRTfemnewdof4} \\
(\div\boldsymbol \tau, \boldsymbol q)_K &\quad \forall~\boldsymbol q\in \mathbb P_{k}(K;\mathbb R^d)/\boldsymbol{RM}. \label{HdivSRTfemnewdof5}
\end{align}
}
Since $\div \mathbb P_{k+1}^{-}(K;\mathbb S)=\mathbb P_{k}(K;\mathbb R^d)$ and $\div \mathbb P_k(K;\mathbb S)=\mathbb P_{k-1}(K;\mathbb R^d)$, it is expected that using $\mathbb P_{k+1}^{-}(K;\mathbb S)$ to discretize the mixed elasticity problem will possess one-order higher convergence rate of \revision{the divergence of the discrete stress} than that of $\mathbb P_k(K;\mathbb S)$ symmetric element.

\revision{
\begin{remark}\rm
By the DoFs \eqref{HdivSBDMfemdof1}-\eqref{HdivSBDMfemdof4}, we can find a basis $\{\phi_i\}_{i=1}^{N_1}$ of the bubble function space $\mathbb B_k(\div, K;\mathbb S)$. Let $\{\psi_i\}_{i=1}^{N_2}$ be a basis of $\mathbb P_{k-1}(K;\mathbb R^d)\backslash \boldsymbol{RM}$. Then form the matrix $\big((\div \phi_i,\psi_j)_K\big)_{N_1\times N_2}$, whose kernel space combined with $\{\phi_i\}_{i=1}^{N_1}$ yields the basis of $E_{0,k}(\mathbb S)$. Finally, a basis of $E_{0,k}^{\bot}(\mathbb S)$ is achieved by finding the orthogonal complement of the basis of $E_{0,k}(\mathbb S)$ under the inner product $(\cdot, \cdot)_K$. 
\end{remark}
}

The global finite element space $\boldsymbol V_h^-(\div;\mathbb S)\subset H(\div, \Omega; \mathbb S)$, where
\begin{align*}
\boldsymbol V_h^-(\div;\mathbb S):=\{\boldsymbol \tau\in \boldsymbol L^2(\Omega;\mathbb S): &\,\boldsymbol \tau|_K\in\mathbb P_{k+1}^{-}(K;\mathbb S) \textrm{ for each } K\in\mathcal T_h, \\
&\textrm{the degrees of freedom \eqref{HdivSRTfemnewdof1}-\eqref{HdivSRTfemnewdof3} are single-valued} \}.    
\end{align*}

\revision
{Similarly as Lemma~\ref{lm:divSinfsup}, we have the following inf-sup condition.
\begin{lemma}
Let $k\geq d+1$. It holds the inf-sup condition
$$
\|\boldsymbol p_h\|_0\lesssim\sup_{\boldsymbol\tau_h\in \boldsymbol V_h^-(\div;\mathbb S)}\frac{(\div\boldsymbol\tau_h, \boldsymbol p_h)}{\|\boldsymbol\tau_h\|_{H(\div)}}\qquad\forall~\boldsymbol p_h\in\mathbb P_{k}(\mathcal T_h; \mathbb{R}^d).
$$
\end{lemma}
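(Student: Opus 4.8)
The plan is to mimic the proof of Lemma~\ref{lm:divSinfsup} verbatim, the only change being that the enriched interior degree of freedom \eqref{HdivSRTfemnewdof5} now controls $\div\boldsymbol\tau_h$ on all of $\mathbb P_{k}(K;\mathbb R^d)/\boldsymbol{RM}$ rather than on $\mathbb P_{k-1}(K;\mathbb R^d)/\boldsymbol{RM}$, which matches the enlarged range $\div\mathbb P_{k+1}^{-}(K;\mathbb S)=\mathbb P_{k}(K;\mathbb R^d)$. First, given $\boldsymbol p_h\in\mathbb P_{k}(\mathcal T_h; \mathbb{R}^d)$, I would invoke the regularity result of \cite{CostabelMcIntosh2010} to obtain a (non-polynomial) symmetric tensor $\boldsymbol\tau\in\boldsymbol H^1(\Omega;\mathbb S)$ with $\div\boldsymbol\tau=\boldsymbol p_h$ and $\|\boldsymbol\tau\|_1\lesssim\|\boldsymbol p_h\|_0$. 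This step is identical to the previous lemma and requires no new input.

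Next, I would use the uni-solvence of the $\mathbb P_{k+1}^{-}(K;\mathbb S)$ element, i.e.\ DoFs \eqref{HdivSRTfemnewdof1}--\eqref{HdivSRTfemnewdof5}, to define $\boldsymbol\tau_h\in \boldsymbol V_h^-(\div;\mathbb S)$ by setting every degree of freedom to zero except
$$
(\boldsymbol\tau_h\boldsymbol n, \boldsymbol q)_F=(\boldsymbol\tau\boldsymbol n, \boldsymbol q)_F\quad\forall~\boldsymbol q\in\mathbb P_1(F;\mathbb R^d),\ F\in\mathcal F^1(K),
$$
together with the interior divergence moments \eqref{HdivSRTfemnewdof5}
$$
(\div\boldsymbol\tau_h, \boldsymbol q)_K=(\boldsymbol p_h, \boldsymbol q)_K\quad\forall~\boldsymbol q\in\mathbb P_{k}(K;\mathbb R^d)/\boldsymbol{RM}.
$$
Here the hypothesis $k\geq d+1$ is precisely what guarantees that the face traces against $\mathbb P_1(F;\mathbb R^d)$ are genuinely among the trace DoFs \eqref{HdivSRTfemnewdof1}--\eqref{HdivSRTfemnewdof3}. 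A scaling argument on each $K$, exactly as in the estimate \eqref{eq:20220120}, then yields $\|\boldsymbol\tau_h\|_0\lesssim\|\boldsymbol\tau\|_1\lesssim\|\boldsymbol p_h\|_0$.

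It then remains to upgrade the identity $\div\boldsymbol\tau_h=\boldsymbol p_h$ modulo $\boldsymbol{RM}$ to a genuine elementwise equality. For $\boldsymbol q\in\boldsymbol{RM}$ one has $\defm\boldsymbol q=\boldsymbol0$, so Green's formula gives $(\div\boldsymbol\tau_h, \boldsymbol q)_K=\langle\boldsymbol\tau_h\boldsymbol n, \boldsymbol q\rangle_{\partial K}$; since every element of $\boldsymbol{RM}$ restricts to a function in $\mathbb P_1$ on each face, this boundary pairing is already fixed by the matched face DoFs, whence $(\div\boldsymbol\tau_h, \boldsymbol q)_K=(\div\boldsymbol\tau, \boldsymbol q)_K=(\boldsymbol p_h, \boldsymbol q)_K$ for $\boldsymbol q\in\boldsymbol{RM}$ as well. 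Combining with the interior moments shows $(\div\boldsymbol\tau_h, \boldsymbol q)_K=(\boldsymbol p_h, \boldsymbol q)_K$ for all $\boldsymbol q\in\mathbb P_{k}(K;\mathbb R^d)$, i.e.\ $\div\boldsymbol\tau_h=\boldsymbol p_h$ on each $K$. Consequently $\|\boldsymbol\tau_h\|_{H(\div)}^2=\|\boldsymbol\tau_h\|_0^2+\|\boldsymbol p_h\|_0^2\lesssim\|\boldsymbol p_h\|_0^2$ and $(\div\boldsymbol\tau_h, \boldsymbol p_h)=\|\boldsymbol p_h\|_0^2$, which together give the asserted inf-sup bound.

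I expect the only genuinely delicate point to be the scaling argument bounding $\|\boldsymbol\tau_h\|_0$ by $\|\boldsymbol\tau\|_1$ with a mesh-independent constant: one must verify that the chosen DoFs are scale-robust, which is handled by passing to the reference element and using equivalence of norms on the finite-dimensional shape function space $\mathbb P_{k+1}^{-}(\hat K;\mathbb S)$. This is the same subtlety already present in Lemma~\ref{lm:divSinfsup}, and every other step is a routine consequence of uni-solvence and integration by parts.
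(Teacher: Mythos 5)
Your proposal is correct and takes essentially the same approach as the paper: the paper proves this lemma simply by invoking the proof of Lemma~\ref{lm:divSinfsup} (``Similarly as Lemma~\ref{lm:divSinfsup}\ldots''), and your argument is precisely that adaptation, with the interior divergence moments enlarged to $\mathbb P_{k}(K;\mathbb R^d)/\boldsymbol{RM}$ to match $\div\mathbb P_{k+1}^{-}(K;\mathbb S)=\mathbb P_{k}(K;\mathbb R^d)$. Your treatment of the rigid-motion modes via integration by parts and the matched $\mathbb P_1(F;\mathbb R^d)$ face moments (available since $k\geq d+1$), followed by the scaling estimate, mirrors the paper's proof of Lemma~\ref{lm:divSinfsup} step for step.
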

As RT element, it is natural to enrich $\mathbb P_k(K;\mathbb S)$ to $\mathbb P_k(K;\mathbb S)\oplus\sym(\mathbb H_{k}(K;\mathbb R^d)\boldsymbol x^{\intercal})$. Unfortunately, ${\rm tr}^{\div}(\sym(\mathbb H_{k}(K;\mathbb R^d)\boldsymbol x^{\intercal}))\not\subseteq {\rm tr}^{\div}( \mathbb P_k(K; \mathbb S))$, i.e. assumption (H2) in Lemma~\ref{lm:VPfem} does not hold, which ruins the discrete inf-sup condition. 
}

\section{Symmetric DivDiv-Conforming Finite Elements}\label{sec:divdivS}

We use the previous building blocks to construct divdiv-conforming finite elements in arbitrary dimension. Motivated by the recent construction \cite{Hu;Ma;Zhang:2020family} in two and three dimensions, we first construct $H(\div\div)\cap H(\div)$-conforming finite elements for symmetric tensors and then apply a simple modification to construct $H(\div\div)$-conforming finite elements. We also extend the construction to obtain a new family of $\mathbb P_{k+1}^{-}(\mathbb S)$-type elements.

\subsection{Divdiv operator and Green's identity}
\begin{lemma}\label{lem:divdivonto}
For integer $k\geq 1$, the operator
\begin{equation*}
\div\div:  \boldsymbol x\boldsymbol x^{\intercal}\mathbb H_{k-1}(D) \to \mathbb H_{k-1}(D)
\end{equation*}
is bijective. Consequently $\div\div: \mathbb P_{k+1}(D;\mathbb S)\to \mathbb P_{k-1}(D)$ is surjective. 
\end{lemma}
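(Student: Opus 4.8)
The plan is to reduce the bijectivity to a one-line computation, exploiting that source and target have equal finite dimension. First I would observe that the map $q\mapsto \boldsymbol x\boldsymbol x^{\intercal} q$ is injective (if $\boldsymbol x\boldsymbol x^{\intercal} q\equiv\boldsymbol 0$ as a polynomial matrix field then $q\equiv 0$, since $\boldsymbol x\boldsymbol x^{\intercal}\neq\boldsymbol 0$), so $\dim(\boldsymbol x\boldsymbol x^{\intercal}\mathbb H_{k-1}(D))=\dim\mathbb H_{k-1}(D)$; moreover $\boldsymbol x\boldsymbol x^{\intercal} q$ is homogeneous of degree $k+1$, hence $\div\div(\boldsymbol x\boldsymbol x^{\intercal} q)\in\mathbb H_{k-1}(D)$. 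As the two spaces share the same dimension, it suffices to prove injectivity; in fact I will show the operator is a nonzero scalar multiple of the identity.

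The key step is the explicit evaluation of $\div\div(\boldsymbol x\boldsymbol x^{\intercal} q)$ for $q\in\mathbb H_{k-1}(D)$. Writing $\boldsymbol x\boldsymbol x^{\intercal} q=\boldsymbol x(\boldsymbol x q)^{\intercal}$ and using the product rule for the row-wise divergence of a rank-one matrix field, namely $\div(\boldsymbol v\boldsymbol w^{\intercal})=(\boldsymbol w\cdot\nabla)\boldsymbol v+(\div\boldsymbol w)\boldsymbol v$, with $\boldsymbol v=\boldsymbol x$ and $\boldsymbol w=\boldsymbol x q$, I obtain
\[
\div(\boldsymbol x\boldsymbol x^{\intercal} q) = q\,(\boldsymbol x\cdot\nabla)\boldsymbol x + \big(\div(\boldsymbol x q)\big)\,\boldsymbol x = q\boldsymbol x + (k-1+d)\,q\boldsymbol x = (k+d)\,\boldsymbol x q,
\]
where $(\boldsymbol x\cdot\nabla)\boldsymbol x=\boldsymbol x$ and $\div(\boldsymbol x q)=(k-1+d)q$ follow from Euler's formulae \eqref{eq:homogeneouspolyprop} and \eqref{eq:Hkdiv} (the latter applied in degree $k-1$). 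Applying $\div$ once more and invoking \eqref{eq:Hkdiv} again gives $\div\div(\boldsymbol x\boldsymbol x^{\intercal} q)=(k+d)(k+d-1)\,q$.

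Since $k\geq 1$ and $d\geq 1$, the constant $(k+d)(k+d-1)$ is strictly positive, so the operator is a nonzero multiple of the identity on $\mathbb H_{k-1}(D)$, hence injective and therefore bijective. For the stated consequence I would decompose any $p\in\mathbb P_{k-1}(D)$ into homogeneous parts $p=\sum_{m=0}^{k-1}p_m$ with $p_m\in\mathbb H_m(D)$; the same computation yields $\div\div(\boldsymbol x\boldsymbol x^{\intercal} p_m)=(m+d+1)(m+d)\,p_m$ with $\boldsymbol x\boldsymbol x^{\intercal} p_m\in\mathbb P_{m+2}(D;\mathbb S)\subseteq\mathbb P_{k+1}(D;\mathbb S)$, so summing the suitably scaled preimages exhibits $p$ in the range of $\div\div$ on $\mathbb P_{k+1}(D;\mathbb S)$.

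There is no serious obstacle here; the only points requiring care are the bookkeeping in the divergence of the rank-one tensor field $\boldsymbol x\boldsymbol x^{\intercal} q$ — in particular the extra $q\boldsymbol x$ term arising from differentiating the leading $\boldsymbol x$ factor — and tracking the homogeneity degree so that Euler's formulae \eqref{eq:homogeneouspolyprop}--\eqref{eq:Hkdiv} are applied with the correct exponent at each step.
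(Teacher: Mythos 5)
Your proof is correct and takes essentially the same approach as the paper: both reduce everything to the explicit identity $\div\div(\boldsymbol x\boldsymbol x^{\intercal}q)=(k+d)(k+d-1)q$ for $q\in\mathbb H_{k-1}(D)$, derived from Euler's formulae \eqref{eq:homogeneouspolyprop}--\eqref{eq:Hkdiv}, so that the operator is a nonzero multiple of the identity and hence bijective. The only cosmetic differences are that you compute the first divergence via a rank-one product rule instead of applying \eqref{eq:Hkdiv} row-wise, and you write out the homogeneous decomposition for the surjectivity consequence, which the paper leaves implicit.
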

\begin{proof}
By \eqref{eq:Hkdiv}, it follows
$$
\div\div(\boldsymbol x\boldsymbol x^{\intercal}q)=\div((k+d)\boldsymbol x q)=(k+d)(k+d-1)q\quad\forall~q\in\mathbb H_{k-1}(D),
$$
which ends the proof.
\end{proof}

Next recall the Green's identity for operator divdiv in~\cite{Chen;Huang:2020Finite}. 
\begin{lemma}
We have for any $\boldsymbol \tau\in \mathcal C^2(K; \mathbb S)$ and $v\in H^2(K)$ that
\begin{align}
(\div\div\boldsymbol \tau, v)_K&=(\boldsymbol \tau, \nabla^2v)_K -\sum_{F\in\mathcal F^{1}(K)}\sum_{e\in\mathcal F^1(F)}(\boldsymbol n_{F,e}^{\intercal}\boldsymbol \tau \boldsymbol n, v)_e \notag\\
&\quad - \sum_{F\in\mathcal F^{1}(K)}\left[(\boldsymbol  n^{\intercal}\boldsymbol \tau\boldsymbol  n, \partial_n v)_{F} -  ( \boldsymbol n^{\intercal}\div \boldsymbol \tau +  \div_F(\boldsymbol\tau \boldsymbol n), v)_F\right]. \label{eq:greenidentitydivdiv}
\end{align}
\end{lemma}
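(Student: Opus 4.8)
The plan is to integrate by parts twice over $K$ and then process the resulting boundary integrals by splitting the gradient of $v$ into its normal and tangential components on each face. First I would regard $\div\boldsymbol\tau$ as a vector field and apply the classical divergence theorem, giving
$$(\div\div\boldsymbol\tau, v)_K = -(\div\boldsymbol\tau, \nabla v)_K + \sum_{F\in\mathcal F^1(K)}(\boldsymbol n^{\intercal}\div\boldsymbol\tau, v)_F.$$
Then I would integrate by parts once more, now pairing the symmetric tensor $\boldsymbol\tau$ against the vector $\nabla v$, to obtain
$$(\div\boldsymbol\tau, \nabla v)_K = -(\boldsymbol\tau, \nabla^2 v)_K + \sum_{F\in\mathcal F^1(K)}(\boldsymbol\tau\boldsymbol n, \nabla v)_F,$$
since $\nabla(\nabla v)$ is exactly the Hessian $\nabla^2 v$. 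Combining the two identities already produces the volume term $(\boldsymbol\tau, \nabla^2 v)_K$ and the face term $\sum_F(\boldsymbol n^{\intercal}\div\boldsymbol\tau, v)_F$; what remains is to rewrite $\sum_F(\boldsymbol\tau\boldsymbol n, \nabla v)_F$ in the stated form.

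For each face $F\in\mathcal F^1(K)$ I would decompose $\nabla v = (\partial_n v)\boldsymbol n + \nabla_F v$ and correspondingly split $(\boldsymbol\tau\boldsymbol n, \nabla v)_F = (\boldsymbol n^{\intercal}\boldsymbol\tau\boldsymbol n, \partial_n v)_F + (\Pi_F\boldsymbol\tau\boldsymbol n, \nabla_F v)_F$, where the second identity uses that $\nabla_F v$ is tangential so only the tangential part $\Pi_F\boldsymbol\tau\boldsymbol n$ survives. The normal–normal piece $(\boldsymbol n^{\intercal}\boldsymbol\tau\boldsymbol n, \partial_n v)_F$ is already one of the target boundary terms. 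For the tangential piece I would apply integration by parts intrinsic to the $(d-1)$-dimensional face $F$ (the surface divergence theorem), which yields
$$(\Pi_F\boldsymbol\tau\boldsymbol n, \nabla_F v)_F = -(\div_F(\Pi_F\boldsymbol\tau\boldsymbol n), v)_F + \sum_{e\in\mathcal F^1(F)}(\boldsymbol n_{F,e}^{\intercal}\Pi_F\boldsymbol\tau\boldsymbol n, v)_e,$$
the boundary $\partial F$ contributing the edge integrals over $e\in\mathcal F^1(F)$ with the in-face outward normal $\boldsymbol n_{F,e}$.

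Two bookkeeping points then finish the reduction, and these are where I expect to need the most care. Because each face $F$ of a simplex is flat, its unit normal $\boldsymbol n$ is constant along $F$, so $\div_F((\boldsymbol n^{\intercal}\boldsymbol\tau\boldsymbol n)\boldsymbol n)=0$ and hence $\div_F(\Pi_F\boldsymbol\tau\boldsymbol n)=\div_F(\boldsymbol\tau\boldsymbol n)$; this lets me replace the projection by the full vector $\boldsymbol\tau\boldsymbol n$ in the surface-divergence term. Likewise $\boldsymbol n_{F,e}$ is parallel to $F$, hence orthogonal to $\boldsymbol n$, so $\boldsymbol n_{F,e}^{\intercal}\Pi_F\boldsymbol\tau\boldsymbol n = \boldsymbol n_{F,e}^{\intercal}\boldsymbol\tau\boldsymbol n$ and the edge integrand simplifies. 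Assembling everything, the term $\sum_F(\div_F(\boldsymbol\tau\boldsymbol n), v)_F$ combines with $\sum_F(\boldsymbol n^{\intercal}\div\boldsymbol\tau, v)_F$ into $\sum_F(\boldsymbol n^{\intercal}\div\boldsymbol\tau + \div_F(\boldsymbol\tau\boldsymbol n), v)_F$, the edge contributions become $-\sum_F\sum_{e\in\mathcal F^1(F)}(\boldsymbol n_{F,e}^{\intercal}\boldsymbol\tau\boldsymbol n, v)_e$, and the normal–normal term carries its sign, which reproduces \eqref{eq:greenidentitydivdiv}. The main obstacle is precisely this surface integration by parts on each face together with the normal/tangential bookkeeping, as the two volume integrations by parts are entirely routine.
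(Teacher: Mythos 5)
Your proposal is correct and follows essentially the same route as the paper's proof: two volume integrations by parts, the decomposition $\nabla v = (\partial_n v)\boldsymbol n + \nabla_F v$ on each face, and the surface (Stokes) integration by parts producing the edge terms. Your explicit $\Pi_F$ bookkeeping and the flatness argument showing $\div_F(\Pi_F\boldsymbol\tau\boldsymbol n)=\div_F(\boldsymbol\tau\boldsymbol n)$ and $\boldsymbol n_{F,e}^{\intercal}\Pi_F\boldsymbol\tau\boldsymbol n=\boldsymbol n_{F,e}^{\intercal}\boldsymbol\tau\boldsymbol n$ are exactly what the paper absorbs into its notation when it applies the Stokes theorem directly to $(\boldsymbol\tau\boldsymbol n,\nabla_F v)_F$.
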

\begin{proof}
We start from the standard integration by parts
\begin{align*}
\begin{aligned}
(\operatorname{div}\div \boldsymbol  \tau, v)_{K} &=-(\div\boldsymbol \tau, \nabla v)_{K}+\sum_{F \in \mathcal F^1(K)}(\boldsymbol  n^{\intercal}\div  \boldsymbol \tau, v)_F \\
&=\left(\boldsymbol  \tau, \nabla^{2} v\right)_{K}-\sum_{F \in \mathcal F^1(K)} (\boldsymbol  \tau \boldsymbol  n, \nabla v)_F+\sum_{F \in \mathcal F^1(K)}(\boldsymbol  n^{\intercal} \div  \boldsymbol   \tau, v)_F.
\end{aligned}
\end{align*}
We then decompose $\nabla v = \partial_nv\boldsymbol n+ \nabla_F v$ and apply the Stokes theorem to get
\begin{align*}
(\boldsymbol  \tau \boldsymbol  n, \nabla v)_F&=(\boldsymbol \tau \boldsymbol n, \partial_nv\boldsymbol n+ \nabla_F v)_F \\
&=(\boldsymbol n^{\intercal}\boldsymbol \tau \boldsymbol n, \partial_nv)_F -( \div_F(\boldsymbol \tau\boldsymbol n),  v)_F + \sum_{e\in\mathcal F^1(F)} (\boldsymbol n_{F,e}^{\intercal}\boldsymbol \tau \boldsymbol n, v)_e.
\end{align*}
Thus the Green's identity~\eqref{eq:greenidentitydivdiv} follows from the last two identities.
\end{proof}

\subsection{$H(\div\div; \mathbb S)\cap H(\div; \mathbb S)$-conforming elements}

Based on \eqref{eq:greenidentitydivdiv}, it suffices to enforce the continuity of both $\boldsymbol \tau \boldsymbol n$ and $\boldsymbol n^{\intercal}\div \boldsymbol \tau$ so that the constructed finite element space is $H(\operatorname{div}, \mathbb{S})\cap H(\operatorname{div}\div, \mathbb{S})$-conforming. Such an approach is recently proposed in \cite{Hu;Ma;Zhang:2020family} to construct two and three dimensional finite elements. The readers are referred to Fig. \ref{fig:divdivcomp} for an illustration of the space decomposition. 

\begin{figure}[htbp]
\begin{center}
\includegraphics[width=10.5cm]{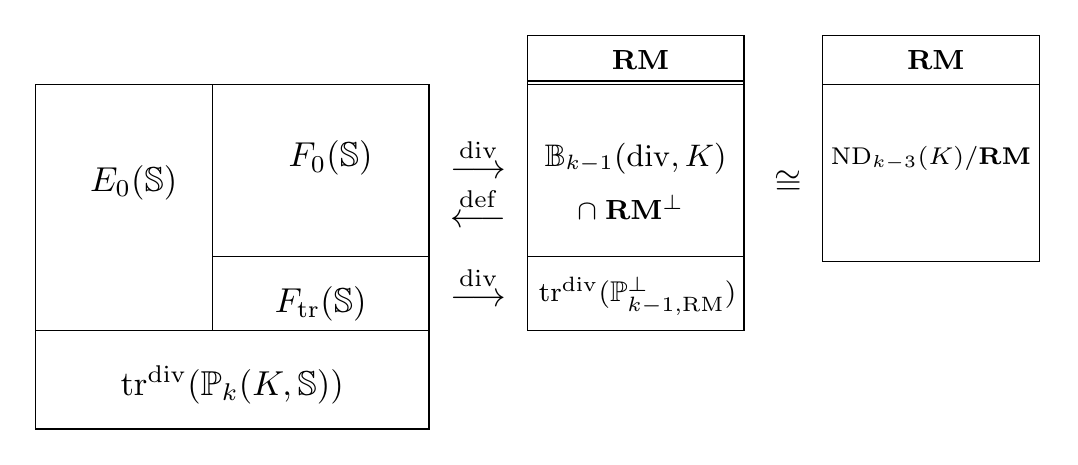}
\caption{Decomposition of $\mathbb P_k(K,\mathbb S)$ for an $H(\div\div)\cap H(\div)$ conforming finite element.}
\label{fig:divdivcomp}
\end{center}
\end{figure}

The subspaces $\tr^{\div}(\mathbb P_k(K,\mathbb S))$ and $E_0(\mathbb S)$ are unchanged. The space $\div E_0^{\bot}(\mathbb S) = \mathbb P_{k-1,{\rm RM}}^{\bot}$ will be further split by the trace operator. Define 
$$
F_0(\mathbb S)\subseteq E_0^{\bot}(\mathbb S), \text{ satisfying } \div F_0(\mathbb S) = \mathbb B_{k-1}(\div, K)\cap \boldsymbol{RM}^{\bot},
$$ 
and
\revision{$F_{\tr}(\mathbb S)\subseteq E_0^{\bot}(\mathbb S)$ with ${\rm tr}^{\div}(\div F_{\tr}(\mathbb S)) = {\rm tr}^{\div}(\div E_0^{\bot}) = {\rm tr}^{\div}(\mathbb P_{k-1,{\rm RM}}^{\bot})$},
which is well defined as $\div$ restricted to $E_0^{\bot}(\mathbb S)$ is a bijection. Here
$$
\mathbb B_{k-1}(\div, K)\cap \boldsymbol{RM}^{\bot}:=\{\boldsymbol v\in\mathbb B_{k-1}(\div, K): (\boldsymbol v, \boldsymbol q)_K=0\;\textrm{ for all } \boldsymbol q\in\boldsymbol{RM}\}.
$$

 \begin{lemma}\label{lm:Ftr}
For integer $k\geq 3$, it holds
 $$
\revision{ {\rm tr}^{\div}(\div F_{\tr}(\mathbb S)) }
   = {\rm tr}^{\div}(\mathbb P_{k-1}(K;\mathbb R^d)).
 $$ 
 \revision{ Consequently $({\rm tr}^{\div}(\div F_{\tr}(\mathbb S)))'=\mathcal N( \mathbb P_{k-1}(\mathcal F^1(K)))$.} 
\end{lemma}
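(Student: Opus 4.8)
The plan is to peel off the definitions until the statement becomes a single nondegeneracy assertion, and then dispatch that assertion with the explicit bubble basis. By the construction of $F_{\tr}(\mathbb S)$ we already have ${\rm tr}^{\div}(\div F_{\tr}(\mathbb S)) = {\rm tr}^{\div}(\mathbb P_{k-1,{\rm RM}}^{\bot})$, and by Lemma~\ref{lem:E0Sbot} the space $\mathbb P_{k-1,{\rm RM}}^{\bot} = \div \mathbb B_k(\div, K;\mathbb S)$ is exactly the $L^2(K)$-orthogonal complement of $\boldsymbol{RM} = \ker(\defm)$ inside $\mathbb P_{k-1}(K;\mathbb R^d)$. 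Hence the asserted identity is equivalent to
\[
{\rm tr}^{\div}(\mathbb P_{k-1,{\rm RM}}^{\bot}) = {\rm tr}^{\div}(\mathbb P_{k-1}(K;\mathbb R^d)) = \mathbb P_{k-1}(\mathcal F^1(K)),
\]
where the last equality is Lemma~\ref{lem:trdivonto} (applicable since $k-1\geq 1$). The inclusion $\subseteq$ is immediate from $\mathbb P_{k-1,{\rm RM}}^{\bot}\subseteq\mathbb P_{k-1}(K;\mathbb R^d)$.

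For the reverse inclusion I would use the $L^2(K)$-orthogonal splitting $\mathbb P_{k-1}(K;\mathbb R^d) = \boldsymbol{RM}\oplus\mathbb P_{k-1,{\rm RM}}^{\bot}$ together with $\ker({\rm tr}^{\div})\cap\mathbb P_{k-1}(K;\mathbb R^d) = \mathbb B_{k-1}(\div, K)$. A standard fact ${\rm tr}^{\div}(W) = {\rm tr}^{\div}(\mathbb P_{k-1}(K;\mathbb R^d))$ iff $\mathbb P_{k-1}(K;\mathbb R^d) = W + \ker({\rm tr}^{\div})$ reduces the claim, with $W=\mathbb P_{k-1,{\rm RM}}^{\bot}$, to showing $\boldsymbol{RM}\subseteq\mathbb P_{k-1,{\rm RM}}^{\bot} + \mathbb B_{k-1}(\div, K)$. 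Passing to orthogonal complements inside $\mathbb P_{k-1}(K;\mathbb R^d)$, this is in turn equivalent to the nondegeneracy statement: the only $\boldsymbol r\in\boldsymbol{RM}$ with $(\boldsymbol r, \boldsymbol b)_K = 0$ for all $\boldsymbol b\in\mathbb B_{k-1}(\div, K)$ is $\boldsymbol r=\boldsymbol 0$; equivalently, the $L^2(K)$-projection $\mathbb B_{k-1}(\div, K)\to\boldsymbol{RM}$ is onto.

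This pairing nondegeneracy is the crux, and I would attack it with the vector bubble identity of Section~\ref{sec:divvec}, namely $\mathbb B_{k-1}(\div, K) = \sum_{0\leq i<j\leq d}\lambda_i\lambda_j\mathbb P_{k-3}(K)\boldsymbol t_{i,j}$ (valid for $k\geq 3$). Testing against $\boldsymbol b = \lambda_i\lambda_j q\,\boldsymbol t_{i,j}$ turns the hypothesis into $\int_K (\boldsymbol r\cdot\boldsymbol t_{i,j})\lambda_i\lambda_j q\,\dx = 0$ for every $q\in\mathbb P_{k-3}(K)$ and every $0\leq i<j\leq d$. Since $\boldsymbol r\cdot\boldsymbol t_{i,j}$ is affine, the aim is to conclude $\boldsymbol r\cdot\boldsymbol t_{i,j}\equiv 0$ on $K$ for each edge; as the edge vectors $\boldsymbol t_{0,1},\ldots,\boldsymbol t_{0,d}$ emanating from one vertex span $\mathbb R^d$, this forces $\boldsymbol r\equiv\boldsymbol 0$.

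The hard part will be the low-order case. For $k$ large the self-testing choice $q = (\boldsymbol r\cdot\boldsymbol t_{i,j})\lambda_i\lambda_j$ is admissible and $\boldsymbol r\cdot\boldsymbol t_{i,j}\equiv 0$ is immediate, but at $k=3$ only $q\equiv 1$ is available and, since $\dim\mathbb B_{2}(\div, K) = \binom{d+1}{2} = \dim\boldsymbol{RM}$, the projection must be a bijection and the estimate is tight. I expect the real work to sit here: one must show that $\sum_{i<j}c_{ij}\lambda_i\lambda_j\boldsymbol t_{i,j}\perp\boldsymbol{RM}$ forces all $c_{ij}=0$, either by computing the Gram-type matrix $\big((\lambda_i\lambda_j\boldsymbol t_{i,j},\boldsymbol r_n)_K\big)$ against a basis $\{\boldsymbol r_n\}$ of $\boldsymbol{RM}$ and verifying it is nonsingular for any nondegenerate simplex (the translation moments already pin down the edge combination, and the rotation moments supply the remaining independence), or by a geometric argument of the same type. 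Once the trace identity is in hand, the \emph{consequently} clause follows at once: identifying the dual through the $L^2(\partial K)$ pairing exactly as in the paragraph after Lemma~\ref{lem:trdivonto} gives $({\rm tr}^{\div}(\div F_{\tr}(\mathbb S)))' = \mathcal N(\mathbb P_{k-1}(\mathcal F^1(K)))$.
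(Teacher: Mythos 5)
Your chain of reductions is correct as far as it goes: unwinding the definition of $F_{\tr}(\mathbb S)$ and Lemma~\ref{lem:E0Sbot} to get ${\rm tr}^{\div}(\div F_{\tr}(\mathbb S)) = {\rm tr}^{\div}(\mathbb P_{k-1,{\rm RM}}^{\bot})$, the easy inclusion, the recasting of the reverse inclusion as $\mathbb P_{k-1}(K;\mathbb R^d) = \mathbb P_{k-1,{\rm RM}}^{\bot} + \mathbb B_{k-1}(\div,K)$, and its equivalence with the nondegeneracy claim $\boldsymbol{RM}\cap \mathbb B_{k-1}(\div,K)^{\bot} = \{\boldsymbol 0\}$ are all sound, and your self-testing argument does settle the large-$k$ cases (with $q=\boldsymbol r\cdot\boldsymbol t_{i,j}$ it covers $k\geq4$; your cubic choice needs $k\geq 6$). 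But the lemma is asserted for all $k\geq 3$, and at $k=3$ your proof stops at a plan rather than an argument: everything hinges on the nonsingularity of the square, $\binom{d+1}{2}\times\binom{d+1}{2}$ Gram-type matrix $\big((\lambda_i\lambda_j\boldsymbol t_{i,j},\boldsymbol r_n)_K\big)$ for an \emph{arbitrary} nondegenerate simplex in arbitrary dimension, which you conjecture but do not prove. Note that $\boldsymbol{RM}$ is not preserved by general affine maps, so this cannot be reduced to a single reference element; it is a genuine missing step, and it sits at precisely the lowest-order (most relevant) case.

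The paper's proof never meets this difficulty, and it also shows how to close your gap. Given $p\in{\rm tr}^{\div}(\mathbb P_{k-1}(K;\mathbb R^d))$, the paper takes the unique $\boldsymbol v\in\mathbb P_{k-1}(K;\mathbb R^d)$ whose BDM degrees of freedom (Theorem~\ref{thm:BDMunisolvence}, with $k$ replaced by $k-1$) satisfy $(\boldsymbol v\cdot\boldsymbol n,q)_F=(p,q)_F$ for all $q\in\mathbb P_{k-1}(F)$, $F\in\mathcal F^1(K)$, and $(\boldsymbol v,\boldsymbol q)_K=0$ for all $\boldsymbol q\in{\rm ND}_{k-3}(K)$; since $\boldsymbol{RM}={\rm ND}_0(K)\subseteq{\rm ND}_{k-3}(K)$ exactly when $k\geq3$, this $\boldsymbol v$ lies in $\mathbb P_{k-1,{\rm RM}}^{\bot}$ and has trace $p$, so no pairing matrix ever appears. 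If you prefer to keep your formulation, the same ingredient finishes your $k=3$ case without computation: by Corollary~\ref{cor:bdmbubbledual} (again with $k-1$ in place of $k$), the map ${\rm ND}_{k-3}(K)\to\mathbb B_{k-1}'(\div,K)$, $\boldsymbol q\mapsto(\cdot,\boldsymbol q)_K$, is a bijection, in particular injective; hence a rigid motion $\boldsymbol r\in\boldsymbol{RM}\subseteq{\rm ND}_{k-3}(K)$ that is $L^2(K)$-orthogonal to every element of $\mathbb B_{k-1}(\div,K)$ must vanish. Either route handles every $k\geq3$ uniformly, with no case split and no use of the explicit bubble representation.
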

 \begin{proof}
By definition, $ {\rm tr}^{\div}(\div F_{\tr}(\mathbb S)) = {\rm tr}^{\div}(\mathbb P_{k-1,{\rm RM}}^{\bot})\subseteq{\rm tr}^{\div}(\mathbb P_{k-1}(K;\mathbb R^d))$.
On the other hand, given a trace $p\in {\rm tr}^{\div}(\mathbb P_{k-1}(K;\mathbb R^d))$, by the uni-solvence of BDM element, cf. Theorem~\ref{thm:BDMunisolvence},
we can find a $\boldsymbol v\in \mathbb P_{k-1}(K;\mathbb R^d)$ such that $\boldsymbol v\cdot\boldsymbol n = p$ on $\partial K$ and $\boldsymbol v \perp \boldsymbol{RM}$ as $\boldsymbol{RM}={\rm ND}_{0}(K)\subseteq {\rm ND}_{k-3}(K)$ when $k\geq 3$.
\end{proof}

\begin{lemma}\label{lm:F0star}
For integer $k\geq 3$, we have
$$
F_0'(\mathbb S) = \mathcal N(\defm ({\rm ND}_{k-3}(K))).
$$
\end{lemma}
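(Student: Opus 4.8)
The plan is to transport the known description of the dual of the scalar BDM bubble space to $F_0(\mathbb S)$ through the divergence operator. By construction $\div$ restricts to a bijection $F_0(\mathbb S)\to \mathbb B_{k-1}(\div,K)\cap\boldsymbol{RM}^{\bot}$ (it is a bijection on $E_0^{\bot}(\mathbb S)$ by Lemma~\ref{lem:E0Sbot}), and Corollary~\ref{cor:bdmbubbledual}, applied with $k$ replaced by $k-1$, tells us that $\mathbb B_{k-1}'(\div,K)=\mathcal N({\rm ND}_{k-3}(K))$. The strategy is to pull this back along $\div$ using integration by parts, so that the interior moments against ${\rm ND}_{k-3}(K)$ become moments of $\boldsymbol \tau$ against $\defm({\rm ND}_{k-3}(K))$.

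First I would record the Green's identity. Since $\boldsymbol \tau\in F_0(\mathbb S)\subseteq\mathbb B_k(\div,K;\mathbb S)$ has vanishing normal trace on $\partial K$, for every $\boldsymbol q\in {\rm ND}_{k-3}(K)$ the symmetry of $\boldsymbol \tau$ gives
\[
(\boldsymbol \tau,\defm\boldsymbol q)_K=(\boldsymbol \tau,\grad\boldsymbol q)_K=-(\div\boldsymbol \tau,\boldsymbol q)_K,
\]
so the functional $\mathcal N(\defm\boldsymbol q)$ acts on $F_0(\mathbb S)$ precisely as $-(\div\cdot,\boldsymbol q)_K$. Uni-solvence then follows immediately: if $\boldsymbol \tau\in F_0(\mathbb S)$ and $(\boldsymbol \tau,\defm\boldsymbol q)_K=0$ for all $\boldsymbol q\in {\rm ND}_{k-3}(K)$, then $(\div\boldsymbol \tau,\boldsymbol q)_K=0$ for all such $\boldsymbol q$; as $\div\boldsymbol \tau\in\mathbb B_{k-1}(\div,K)$, Corollary~\ref{cor:bdmbubbledual} forces $\div\boldsymbol \tau=0$, and the injectivity of $\div$ on $F_0(\mathbb S)$ yields $\boldsymbol \tau=\boldsymbol 0$.

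The remaining, and I expect only genuinely delicate, step is the dimension count $\dim F_0(\mathbb S)=\dim\defm({\rm ND}_{k-3}(K))$. Because $\ker(\defm)=\boldsymbol{RM}={\rm ND}_0(K)\subseteq {\rm ND}_{k-3}(K)$ for $k\geq 3$, the right-hand side equals $\dim{\rm ND}_{k-3}(K)-\dim\boldsymbol{RM}$. On the left, the bijection $\div$ gives $\dim F_0(\mathbb S)=\dim\big(\mathbb B_{k-1}(\div,K)\cap\boldsymbol{RM}^{\bot}\big)$, and $\dim\mathbb B_{k-1}(\div,K)=\dim{\rm ND}_{k-3}(K)$ again by Corollary~\ref{cor:bdmbubbledual}. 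The subtle point is that imposing $\boldsymbol{RM}$-orthogonality drops the dimension by exactly $\dim\boldsymbol{RM}$, i.e.\ that the $L^2$ pairing between $\mathbb B_{k-1}(\div,K)$ and $\boldsymbol{RM}$ is non-degenerate in the $\boldsymbol{RM}$ slot. This, however, is nothing but the injectivity of the degrees of freedom in Corollary~\ref{cor:bdmbubbledual}: since $\boldsymbol{RM}\subseteq {\rm ND}_{k-3}(K)$, any $\boldsymbol r\in\boldsymbol{RM}$ with $(\boldsymbol v,\boldsymbol r)_K=0$ for all $\boldsymbol v\in\mathbb B_{k-1}(\div,K)$ must vanish. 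Hence the $\boldsymbol{RM}^{\bot}$ constraints are of full rank $\dim\boldsymbol{RM}$ on $\mathbb B_{k-1}(\div,K)$, the two dimensions agree, and combining this with the uni-solvence above establishes $F_0'(\mathbb S)=\mathcal N(\defm({\rm ND}_{k-3}(K)))$.
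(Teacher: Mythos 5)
Your proof is correct and takes essentially the same route as the paper: integration by parts (using the symmetry of $\boldsymbol \tau$ and $\boldsymbol \tau\boldsymbol n|_{\partial K}=\boldsymbol 0$) turns the vanishing moments against $\defm({\rm ND}_{k-3}(K))$ into vanishing ${\rm ND}_{k-3}(K)$-moments of $\div\boldsymbol \tau\in\mathbb B_{k-1}(\div,K)$, whence $\div\boldsymbol \tau=\boldsymbol 0$ by the BDM uni-solvence and $\boldsymbol \tau=\boldsymbol 0$ by injectivity of $\div$ on $E_{0}^{\bot}(\mathbb S)$, followed by the dimension count $\dim F_0(\mathbb S)=\dim\mathbb B_{k-1}(\div,K)-\dim\boldsymbol{RM}=\dim{\rm ND}_{k-3}(K)-\dim\ker(\defm)$. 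The only difference is that you explicitly verify the full rank of the $\boldsymbol{RM}$-orthogonality constraints on $\mathbb B_{k-1}(\div,K)$ (via the injectivity in Corollary~\ref{cor:bdmbubbledual}), a point the paper's dimension count leaves implicit.
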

\begin{proof}
We pick a $\boldsymbol \tau \in F_0(\mathbb S)$, i.e., $\boldsymbol \tau$ satisfies
$$
(\boldsymbol \tau\boldsymbol n)|_{\partial K}= 0, \quad \boldsymbol n^{\intercal} \div \boldsymbol \tau |_{\partial K} = 0, \quad \boldsymbol \tau \perp  E_0(\mathbb S).
$$ 
Assume 
$$
(\boldsymbol \tau, \defm \boldsymbol q)_K = 0, \quad \forall~\boldsymbol q\in {\rm ND}_{k-3}(K).
$$
Note that $\boldsymbol v =\div \boldsymbol \tau \in \mathbb B_{k-1}(\div,K)$, and $(\boldsymbol v, \boldsymbol q)_K = 0$ for all $\boldsymbol q\in {\rm ND}_{k-3}(K)$, then $\boldsymbol v = \boldsymbol 0$ by Theorem \ref{thm:BDMunisolvence}. Therefore $\div \boldsymbol \tau = \boldsymbol 0$, i.e., $\boldsymbol \tau\in E_0(\mathbb S)$. As $\boldsymbol \tau \perp  E_0(\mathbb S)$, the only possibility is $\boldsymbol \tau = \boldsymbol 0$. 

\revision{Then the dimension count 
$$
\dim F_0(\mathbb S) = \dim \mathbb B_{k-1}(\div, K) - \dim \boldsymbol{RM} = \dim {\rm ND}_{k-3}(K) - \dim \ker(\defm)
$$
will finish the proof.}
\end{proof}

We summarize the construction in the following theorem.
\begin{theorem}\label{th:divdivcapdiv}
 Let $V(\div\div^+;\mathbb S) := \mathbb P_k(K,\mathbb S)$ with $k\geq \max\{d,3\}$. Then the following set of degrees of freedom determines an $H(\div\div; \mathbb S)\cap H(\div; \mathbb S)$-conforming finite element
\begin{align}
\boldsymbol \tau (\delta) & \quad\forall~\delta\in \mathcal V(K), \label{HdivdivSfemdof1}\\
(\boldsymbol  n_i^{\intercal}\boldsymbol \tau\boldsymbol n_j, q)_F & \quad\forall~q\in\mathbb P_{k+r-d-1}(F),  F\in\mathcal F^r(K),\;  \label{HdivdivSfemdof2}\\
&\quad\quad i,j=1,\cdots, r, \textrm{ and } r=1,\cdots, d-1, \notag\\
(\Pi_F\boldsymbol \tau\boldsymbol n, \boldsymbol q)_F & \quad\forall~\boldsymbol q\in {\rm ND}_{k-2}(F),  F\in\mathcal F^{1}(K),\label{HdivdivSfemdof3}\\
(\boldsymbol n^{\intercal}\div\boldsymbol\tau, q)_F &\quad\forall~q\in\mathbb P_{k-1}(F), F\in \mathcal F^{1}(K),\label{HdivdivSfemdof4}\\
(\div\div\boldsymbol \tau, q)_K &\quad \forall~q\in \mathbb P_{k-2}(K)/\mathbb P_{1}(K),\label{HdivdivSfemdof51}\\
(\div\boldsymbol \tau, \boldsymbol q)_K &\quad \forall~\boldsymbol q\in \big(\mathbb P_{k-3}(K; \mathbb K)/\mathbb P_{0}(K; \mathbb K)\big)\boldsymbol x,\label{HdivdivSfemdof52}\\
(\boldsymbol \tau, \boldsymbol q)_K &\quad \forall~\boldsymbol q\in  \ker (\cdot\boldsymbol x)\cap \mathbb P_{k-2}(K;\mathbb S) \label{HdivdivSfemdof6}.
\end{align}
\end{theorem}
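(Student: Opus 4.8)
The plan is to establish two facts: first, that prescribing single-valued values for the degrees of freedom \eqref{HdivdivSfemdof1}--\eqref{HdivdivSfemdof4} enforces exactly the inter-element continuity required for $H(\div\div;\mathbb S)\cap H(\div;\mathbb S)$-conformity; and second, that the whole set \eqref{HdivdivSfemdof1}--\eqref{HdivdivSfemdof6} is uni-solvent on $\mathbb P_k(K;\mathbb S)$. The uni-solvence will be proved in the standard two moves: vanishing of all DoFs implies $\boldsymbol\tau=\boldsymbol 0$, and the number of DoFs equals $\dim\mathbb P_k(K;\mathbb S)$. Throughout I would lean on the block decomposition $\mathbb P_k(K;\mathbb S)\cong \tr^{\div}(\mathbb P_k(K;\mathbb S))\oplus \mathbb B_k(\div, K;\mathbb S)$ together with $\mathbb B_k(\div, K;\mathbb S)=E_0(\mathbb S)\oplus F_{\tr}(\mathbb S)\oplus F_0(\mathbb S)$ already set up above.

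For conformity I would argue from the Green's identity \eqref{eq:greenidentitydivdiv}. By Lemma~\ref{lem:divSboundarydofs} the DoFs \eqref{HdivdivSfemdof1}--\eqref{HdivdivSfemdof3} form a basis of $(\tr^{\div}(\mathbb P_k(K;\mathbb S)))'$, so making them single-valued renders the full trace $\boldsymbol\tau\boldsymbol n$ continuous across faces, which already gives $H(\div;\mathbb S)$-conformity. Reading off \eqref{eq:greenidentitydivdiv}, the trace of the $\div\div$ operator is carried by the edge terms $\boldsymbol n_{F,e}^{\intercal}\boldsymbol\tau\boldsymbol n$, the face term $\boldsymbol n^{\intercal}\boldsymbol\tau\boldsymbol n$, and the face term $\boldsymbol n^{\intercal}\div\boldsymbol\tau+\div_F(\boldsymbol\tau\boldsymbol n)$. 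The first two are components of the already-continuous field $\boldsymbol\tau\boldsymbol n$, and $\div_F(\boldsymbol\tau\boldsymbol n)$ is an intrinsic surface quantity of the continuous $\boldsymbol\tau\boldsymbol n|_F$; hence only the single extra scalar $\boldsymbol n^{\intercal}\div\boldsymbol\tau$ must be matched, which is precisely what single-valued \eqref{HdivdivSfemdof4} provides. Consequently all boundary contributions cancel when \eqref{eq:greenidentitydivdiv} is summed over the mesh, yielding $H(\div\div;\mathbb S)$-conformity.

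For the injectivity half of uni-solvence, assume every DoF vanishes. Stage 1: \eqref{HdivdivSfemdof1}--\eqref{HdivdivSfemdof3} with Lemma~\ref{lem:divSboundarydofs} give $\boldsymbol\tau\boldsymbol n|_{\partial K}=\boldsymbol 0$, so $\boldsymbol\tau\in\mathbb B_k(\div, K;\mathbb S)$ and therefore $\div\boldsymbol\tau\in\mathbb P_{k-1,{\rm RM}}^{\bot}$ by Lemma~\ref{lem:E0Sbot}. Stage 2: \eqref{HdivdivSfemdof4} yields $\boldsymbol n^{\intercal}\div\boldsymbol\tau|_{\partial K}=0$, i.e. $\div\boldsymbol\tau\in\mathbb B_{k-1}(\div, K)$. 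Stage 3 (discussed below) uses \eqref{HdivdivSfemdof51}--\eqref{HdivdivSfemdof52} to conclude $\div\boldsymbol\tau=\boldsymbol 0$, whence $\boldsymbol\tau\in E_0(\mathbb S)$. Stage 4: \eqref{HdivdivSfemdof6} and Lemma~\ref{lem:E0S} then force $\boldsymbol\tau=\boldsymbol 0$. For the dimension count I would match \eqref{HdivdivSfemdof1}--\eqref{HdivdivSfemdof3} to $\dim\tr^{\div}(\mathbb P_k(K;\mathbb S))$, \eqref{HdivdivSfemdof4} to $\dim F_{\tr}(\mathbb S)$ via Lemma~\ref{lm:Ftr}, the pair \eqref{HdivdivSfemdof51}--\eqref{HdivdivSfemdof52} to $\dim F_0(\mathbb S)$ via Lemma~\ref{lm:F0star}, and \eqref{HdivdivSfemdof6} to $\dim E_0(\mathbb S)$ via Lemma~\ref{lem:E0S}; these sum to $\dim\mathbb P_k(K;\mathbb S)$ by the block decomposition.

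The hard part will be Stage 3: showing $\div\boldsymbol\tau=\boldsymbol 0$ by verifying that \eqref{HdivdivSfemdof51}--\eqref{HdivdivSfemdof52} reproduce the moments against $F_0'(\mathbb S)=\mathcal N(\defm({\rm ND}_{k-3}(K)))$ of Lemma~\ref{lm:F0star}. Using ${\rm ND}_{k-3}(K)=\grad\mathbb P_{k-2}(K)\oplus\mathbb P_{k-3}(K;\mathbb K)\boldsymbol x$ (from Lemma~\ref{lem:kerx}), I would integrate by parts: for $q\in\mathbb P_{k-2}(K)$ the boundary terms vanish by Stages 1--2 and $(\boldsymbol\tau,\nabla^2 q)_K=(\div\div\boldsymbol\tau, q)_K$, while for $\boldsymbol N\in\mathbb P_{k-3}(K;\mathbb K)$ one has $(\boldsymbol\tau,\defm(\boldsymbol N\boldsymbol x))_K=-(\div\boldsymbol\tau,\boldsymbol N\boldsymbol x)_K$. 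The quotients by $\mathbb P_1(K)$ and $\mathbb P_0(K;\mathbb K)$ appearing in \eqref{HdivdivSfemdof51}--\eqref{HdivdivSfemdof52} are exactly $\ker(\nabla^2)$ and the part annihilated by $\defm$, and the correspondingly omitted test directions all lie in $\boldsymbol{RM}$, against which $\div\boldsymbol\tau$ is automatically orthogonal since $\div\boldsymbol\tau\in\mathbb P_{k-1,{\rm RM}}^{\bot}$ (Stage 1). Hence vanishing of \eqref{HdivdivSfemdof51}--\eqref{HdivdivSfemdof52} is equivalent to $(\div\boldsymbol\tau,\boldsymbol q)_K=0$ for all $\boldsymbol q\in{\rm ND}_{k-3}(K)$; combined with $\div\boldsymbol\tau\in\mathbb B_{k-1}(\div, K)$ and the characterization $\mathbb B_{k-1}'(\div, K)=\mathcal N({\rm ND}_{k-3}(K))$ of Corollary~\ref{cor:bdmbubbledual}, this gives $\div\boldsymbol\tau=\boldsymbol 0$. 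The delicate bookkeeping is to align the quotient spaces with the $\boldsymbol{RM}$-orthogonality so that precisely the moments of $F_0'(\mathbb S)$ are recovered, with none lost or double-counted.
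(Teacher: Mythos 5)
Your proposal is correct and follows essentially the same route as the paper: vanishing of \eqref{HdivdivSfemdof1}--\eqref{HdivdivSfemdof3} kills $\boldsymbol\tau\boldsymbol n$ via Lemma~\ref{lem:divSboundarydofs}; your Stages 2--3 are precisely the content of Lemmas~\ref{lm:Ftr}--\ref{lm:F0star} (integration by parts with the vanished boundary terms, the automatic $\boldsymbol{RM}$-orthogonality of $\div\boldsymbol\tau$ absorbing the quotients in \eqref{HdivdivSfemdof51}--\eqref{HdivdivSfemdof52}, then the characterization $\mathbb B_{k-1}'(\div,K)=\mathcal N({\rm ND}_{k-3}(K))$ of Corollary~\ref{cor:bdmbubbledual} applied to $\div\boldsymbol\tau\in\mathbb B_{k-1}(\div,K)$); and \eqref{HdivdivSfemdof6} with Lemma~\ref{lem:E0S} finishes the injectivity, exactly as in the paper. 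The only cosmetic difference is the dimension count, which the paper carries out by comparing the DoFs with those of the $H(\div;\mathbb S)$ element of Theorem~\ref{thm:SBDMunisolvence} and invoking the BDM uni-solvence identity $\dim\mathbb P_{k-1}(K;\mathbb R^d)=\dim{\rm ND}_{k-3}(K)+\sum_{F\in\mathcal F^1(K)}\dim\mathbb P_{k-1}(F)$, whereas you sum the blocks of the decomposition; the two counts are equivalent.
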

\begin{proof}
By Lemma~\ref{lem:divSboundarydofs}, the vanishing degrees of freedom \eqref{HdivdivSfemdof1}-\eqref{HdivdivSfemdof3} imply $\boldsymbol \tau \boldsymbol n |_{\partial K}= \boldsymbol0$. Then applying Lemmas~\ref{lm:Ftr}-\ref{lm:F0star},
we get from the vanishing degrees of freedom \eqref{HdivdivSfemdof4}-\eqref{HdivdivSfemdof52} that $\boldsymbol \tau \in E_0(\mathbb S)$.
Finally combining \eqref{eq:E0Sdualcharac} and \eqref{HdivdivSfemdof6} implies $\boldsymbol \tau = \boldsymbol0$. 

We then count the dimensions. Compared to the degrees of freedom of BDM-type $H(\div,\mathbb S)$ element, cf. Theorem \ref{thm:SBDMunisolvence}, the difference is \eqref{HdivSdef} v.s. \eqref{HdivdivSfemdof4}-\eqref{HdivdivSfemdof52}. Then from the uni-solvence of BDM div-conforming element, cf. Theorem \ref{thm:BDMunisolvence}, we have
$$
\dim \mathbb P_{k-1}(K; \mathbb R^d) = \dim {\rm ND}_{k-3}(K) + \sum_{F\in \mathcal F^{1}(K)}\dim\mathbb P_{k-1}(F), 
$$
and consequently the number of degrees of freedom \eqref{HdivdivSfemdof1}-\eqref{HdivdivSfemdof6} is $\dim\mathbb P_k(K;\mathbb S)$. 
\end{proof}
The global finite element space $\boldsymbol V_h(\div\div^+;\mathbb S)\subset H(\div\div, \Omega; \mathbb S)\cap H(\div, \Omega; \mathbb S)$ is defined as follows
\begin{align*}
\boldsymbol V_h(\div\div^+;\mathbb S):=\{&\boldsymbol \tau\in \boldsymbol L^2(\Omega;\mathbb S): \boldsymbol \tau|_K\in \mathbb P_k(K,\mathbb S) \textrm{ for each } K\in\mathcal T_h, \\
&\textrm{ the degrees of freedom \eqref{HdivdivSfemdof1}-\eqref{HdivdivSfemdof4} are single-valued} \}.    
\end{align*}

The requirement $k\geq d$ ensures the degrees of freedom $(\boldsymbol n^{\intercal}\boldsymbol \tau\boldsymbol n, q)_F$ for all $q\in \mathbb P_{0}(F)$
 on each face $F\in\mathcal F^1(K)$, by which
space $\div\div\boldsymbol V_h(\div\div^+;\mathbb S)$ will include all the piecewise linear functions.

\revision{\begin{lemma}\label{lm:divdivdivSinfsup}
Let $k\geq \max\{d,3\}$.
It holds the inf-sup condition
$$
\|p_h\|_0\lesssim\sup_{\boldsymbol\tau_h\in \boldsymbol V_h(\div\div^+;\mathbb S)}
\frac{(\div\div\boldsymbol{\tau}_h, p_h)}{\|\boldsymbol\tau_h\|_{H(\div)}+
\|\div\div\boldsymbol{\tau}_h\|_{0}}\qquad\forall~p_h\in\mathbb P_{k-2}(\mathcal T_h),
$$
where $\mathbb P_{k-2}(\mathcal T_h):=\{p_h\in L^2(\Omega): p_h|_K\in\mathbb P_{k-2}(K) \textrm{ for each } K\in\mathcal T_h\}$.
\end{lemma}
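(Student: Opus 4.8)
The plan is to establish the inf-sup condition by a Fortin-type construction that mirrors the proof of Lemma~\ref{lm:divSinfsup}: given $p_h\in\mathbb P_{k-2}(\mathcal T_h)$ I will build a single $\boldsymbol\tau_h\in\boldsymbol V_h(\div\div^+;\mathbb S)$ with $\div\div\boldsymbol\tau_h=p_h$ and $\|\boldsymbol\tau_h\|_{H(\div)}+\|\div\div\boldsymbol\tau_h\|_0\lesssim\|p_h\|_0$. The first step is a regular right inverse of $\div\div$ at the continuous level. Extending $p_h$ by zero to a ball containing $\Omega$ and taking $w$ to be its Newtonian potential gives $w$ with $\Delta w=p_h$ on $\Omega$ and $\|w\|_{2}\lesssim\|p_h\|_0$; setting $\boldsymbol\tau=w\boldsymbol I\in\boldsymbol H^2(\Omega;\mathbb S)$ yields $\div\boldsymbol\tau=\nabla w$, $\div\div\boldsymbol\tau=\Delta w=p_h$, and $\|\boldsymbol\tau\|_2\lesssim\|p_h\|_0$. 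In particular $\div\boldsymbol\tau\in\boldsymbol H^1(\Omega;\mathbb R^d)$, so all the moment functionals appearing in \eqref{HdivdivSfemdof2}--\eqref{HdivdivSfemdof6} are well defined on $\boldsymbol\tau$ and bounded by $\|\boldsymbol\tau\|_2$.

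Next I would define $\boldsymbol\tau_h$ by prescribing its degrees of freedom. The key observation is that \eqref{HdivdivSfemdof4}, \eqref{HdivdivSfemdof51}, \eqref{HdivdivSfemdof52}, together with the constant and linear trace moments of $\boldsymbol\tau\boldsymbol n$ on the faces, constitute, after integration by parts, a complete set of BDM degrees of freedom of degree $k-1$ for the vector field $\div\boldsymbol\tau_h\in\mathbb P_{k-1}(K;\mathbb R^d)$; this is precisely the decomposition $\dim\mathbb P_{k-1}(K;\mathbb R^d)=\dim{\rm ND}_{k-3}(K)+\sum_{F}\dim\mathbb P_{k-1}(F)$ used in the proof of Theorem~\ref{th:divdivcapdiv}. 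Accordingly I set \eqref{HdivdivSfemdof4} equal to $(\boldsymbol n^\intercal\div\boldsymbol\tau,\cdot)_F$, set \eqref{HdivdivSfemdof51} equal to $(p_h,\cdot)_K$, set \eqref{HdivdivSfemdof52} equal to $(\div\boldsymbol\tau,\cdot)_K$, match the $\mathbb P_1(F;\mathbb R^d)$ moments of $\boldsymbol\tau_h\boldsymbol n$ to those of $\boldsymbol\tau\boldsymbol n$, and set the vertex values \eqref{HdivdivSfemdof1}, the remaining higher-order normal-normal and tangential-normal trace moments, and the interior $E_0(\mathbb S)$ moments \eqref{HdivdivSfemdof6} all to zero. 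By the unisolvence of Theorem~\ref{th:divdivcapdiv} this defines $\boldsymbol\tau_h\in\boldsymbol V_h(\div\div^+;\mathbb S)$, and the matched low-order face data guarantee $\boldsymbol\tau_h\boldsymbol n$ and $\boldsymbol n^\intercal\div\boldsymbol\tau_h$ are single-valued, so $\boldsymbol\tau_h$ is admissible.

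I would then verify $\div\div\boldsymbol\tau_h=p_h$. By construction the choice above forces $\div\boldsymbol\tau_h$ to coincide with the BDM interpolant $\boldsymbol I^{k-1}_{\rm BDM}(\div\boldsymbol\tau)$ on each element: the interior ${\rm ND}_{k-3}(K)$ moments are matched through \eqref{HdivdivSfemdof51}--\eqref{HdivdivSfemdof52} (the constant and $\grad\mathbb P_1$ modes being supplied by the matched $\mathbb P_0$ trace moment of $\boldsymbol\tau\boldsymbol n$ via the divergence theorem), and the normal moments on faces are matched through \eqref{HdivdivSfemdof4}. The commuting property $\div\,\boldsymbol I^{k-1}_{\rm BDM}=Q_{k-2}\div$, with $Q_{k-2}$ the $L^2$-projection onto $\mathbb P_{k-2}(\mathcal T_h)$, then gives $\div\div\boldsymbol\tau_h=Q_{k-2}\div\div\boldsymbol\tau=Q_{k-2}p_h=p_h$ since $p_h$ is already piecewise $\mathbb P_{k-2}$; equivalently one reads this off the Green's identity \eqref{eq:greenidentitydivdiv}, where for $v\in\mathbb P_1(K)$ the edge and $\div_F$ terms regroup into $(\boldsymbol\tau\boldsymbol n,\nabla v)_F$ so that only the matched $\mathbb P_0$ trace moment and the matched $(\boldsymbol n^\intercal\div\boldsymbol\tau,v)_F$ enter. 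For the norms, $\|\div\div\boldsymbol\tau_h\|_0=\|p_h\|_0$, stability of the BDM interpolation gives $\|\div\boldsymbol\tau_h\|_0=\|\boldsymbol I^{k-1}_{\rm BDM}(\div\boldsymbol\tau)\|_0\lesssim\|\div\boldsymbol\tau\|_1\lesssim\|\boldsymbol\tau\|_2\lesssim\|p_h\|_0$, and a scaling argument on the reference simplex bounds $\|\boldsymbol\tau_h\|_0$ by the prescribed degrees of freedom, hence by $\|\boldsymbol\tau\|_2+\|p_h\|_0\lesssim\|p_h\|_0$. Testing the supremum against this $\boldsymbol\tau_h$ then yields $(\div\div\boldsymbol\tau_h,p_h)=\|p_h\|_0^2$ over a denominator $\lesssim\|p_h\|_0$, which is the claim.

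The hard part will be the bookkeeping in the middle step: one must check carefully that the chosen moment degrees of freedom genuinely pin down $\div\boldsymbol\tau_h$ as a full BDM interpolant, paying particular attention to the low-degree ($\mathbb P_1$) modes of $\div\div\boldsymbol\tau_h$, which are not interior degrees of freedom but are forced by the constant and linear face moments of $\boldsymbol\tau\boldsymbol n$ through the divergence theorem --- this is exactly where the hypothesis $k\ge d$ enters, ensuring the requisite low-order normal-normal trace moments are present. The remaining subtlety is the scaling estimate for $\|\boldsymbol\tau_h\|_0$: since the point evaluations \eqref{HdivdivSfemdof1} are not controlled by $\|\boldsymbol\tau\|_2$ in dimension $d\ge 4$, it is essential that the vertex and high-order trace degrees of freedom be set to zero rather than matched to $\boldsymbol\tau$, and that only $L^2$-bounded moment functionals be used, so that finite-dimensional norm equivalence on the reference simplex applies.
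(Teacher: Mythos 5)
Your overall strategy --- build an $H^2$ right inverse of $\div\div$, prescribe the degrees of freedom of $\boldsymbol\tau_h$ from it so that $\div\div\boldsymbol\tau_h=p_h$ exactly, and finish by a scaling argument --- is the same as the paper's, and your Newtonian-potential construction $\boldsymbol\tau=w\boldsymbol I$ with $\Delta w=p_h$ is a nice elementary substitute for the paper's citation of Costabel--McIntosh. The gap is in the middle step, exactly where you flagged "the hard part," and it is not repaired by $k\ge d$. To force $\div\boldsymbol\tau_h$ to be the \emph{full} BDM interpolant of $\div\boldsymbol\tau$ you must match all interior BDM moments, including those against the rigid motions $\mathbb P_0(K;\mathbb K)\boldsymbol x\subset{\rm ND}_{k-3}(K)$. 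Since $\defm(\boldsymbol N\boldsymbol x)=\boldsymbol 0$ for skew constant $\boldsymbol N$, integration by parts gives $(\div\boldsymbol\tau_h,\boldsymbol N\boldsymbol x)_K=\sum_{F}(\boldsymbol\tau_h\boldsymbol n,\boldsymbol N\boldsymbol x)_F$, which is precisely why you need the $\mathbb P_1(F;\mathbb R^d)$ moments of $\boldsymbol\tau_h\boldsymbol n$. But the normal--normal part of those moments, $(\boldsymbol n^{\intercal}\boldsymbol\tau_h\boldsymbol n,\boldsymbol n^{\intercal}\boldsymbol N\boldsymbol x)_F$ with $\boldsymbol n^{\intercal}\boldsymbol N\boldsymbol x$ a genuinely affine function on $F$, is available among the DoFs \eqref{HdivdivSfemdof1}--\eqref{HdivdivSfemdof3} only if $\mathbb P_1(F)\subseteq\mathbb P_{k-d}(F)$, i.e.\ $k\ge d+1$. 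The lemma allows $k=d$ for $d\ge 3$, and there your prescription is over-determined: take $k=d=3$; with the vertex values and the edge moments set to zero, $\boldsymbol n^{\intercal}\boldsymbol\tau_h\boldsymbol n|_F\in\mathbb P_3(F)$ has a single remaining free parameter (the moment against $\mathbb P_0(F)$), yet you impose $\dim\mathbb P_1(F)=3$ conditions on it, which is inconsistent for generic $\boldsymbol\tau$. For the same reason your appeal to the unisolvence of Theorem~\ref{th:divdivcapdiv} is not legitimate: the functional set you prescribe is not the DoF set of that theorem, nor a proven replacement for it.

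The repair is to abandon the full-interpolant requirement, which is what the paper does. Set \eqref{HdivdivSfemdof52} and all trace DoFs beyond the constant ones to zero, and match only $(\boldsymbol\tau_h\boldsymbol n,\boldsymbol q)_F$ for $\boldsymbol q\in\mathbb P_0(F;\mathbb R^d)$, $(\boldsymbol n^{\intercal}\div\boldsymbol\tau_h,q)_F$ for $q\in\mathbb P_1(F)$, and $(\div\div\boldsymbol\tau_h,q)_K=(p_h,q)_K$ for $q\in\mathbb P_{k-2}(K)/\mathbb P_1(K)$; all of these are honest (combinations of) DoFs once $k\ge\max\{d,3\}$. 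The missing $\mathbb P_1(K)$ moments of $\div\div\boldsymbol\tau_h$ are then recovered by integration by parts, i.e.\ the Green identity \eqref{eq:greenidentitydivdiv} with $\nabla^2 v=0$: they involve only $(\boldsymbol\tau_h\boldsymbol n,\nabla v)_F$ with $\nabla v$ constant and $(\boldsymbol n^{\intercal}\div\boldsymbol\tau_h,v)_F$ with $v$ affine, all matched. Hence $(\div\div\boldsymbol\tau_h,q)_K=(p_h,q)_K$ for every $q\in\mathbb P_{k-2}(K)$, so $\div\div\boldsymbol\tau_h=p_h$ --- the commuting property you were after, but obtained using only the $\grad\mathbb P_{k-2}(K)$-type moments and constant trace moments, never the rigid-motion moments. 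With that change the rest of your argument (admissibility by the global $H^2$-regularity of $\boldsymbol\tau$, and the scaling bound avoiding the vertex evaluations) goes through as written.
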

}\begin{proof}
For $p_h\in\mathbb P_{k-2}(\mathcal T_h)$, there exists $\boldsymbol\tau\in\boldsymbol H^2(\Omega;\mathbb S)$ such that \cite{CostabelMcIntosh2010}
$$
\div\boldsymbol\tau=p_h,\quad \|\boldsymbol\tau\|_2\lesssim \|p_h\|_0.
$$
Let $\boldsymbol\tau_h\in \boldsymbol V_h(\div\div^+;\mathbb S)$ such that all the DoFs \eqref{HdivdivSfemdof1}-\eqref{HdivdivSfemdof6} vanish except
\begin{align*}
(\boldsymbol\tau_h\boldsymbol n, \boldsymbol q)_F&=(\boldsymbol\tau\boldsymbol n, \boldsymbol q)_F\qquad\qquad\qquad\quad\;\;\;\;\forall~\boldsymbol q\in\mathbb P_0(F;\mathbb R^d), F\in\mathcal F^1(K), \\
(\boldsymbol n^{\intercal}\div\boldsymbol\tau_h, q)_F&=(\boldsymbol n^{\intercal}\div\boldsymbol\tau, q)_F\qquad\qquad\quad\;\;\;\,\forall~q\in\mathbb P_1(F;\mathbb R^d), F\in\mathcal F^1(K), \\
(\div\div\boldsymbol\tau_h, q)_K&=(\div\div\boldsymbol\tau, q)_K=(p_h, q)_K\quad\forall~q\in \mathbb P_{k-2}(K)/\mathbb P_{1}(K)
\end{align*}
for all $K\in\mathcal T_h$. By the scaling argument, we have
\begin{equation}\label{eq:202201201}  
\|\boldsymbol\tau_h\|_{H(\div)}\lesssim \|\boldsymbol\tau\|_{2}\lesssim \|p_h\|_0.
\end{equation}
Applying the integration by parts,
$$
(\div\div\boldsymbol\tau_h, q)_K=(\div\div\boldsymbol\tau, q)_K=(p_h, q)_K\quad\forall~q\in\mathbb P_1(K).
$$
Hence
$$
(\div\div\boldsymbol\tau_h, q)_K=(\div\div\boldsymbol\tau, q)_K=(p_h, q)_K\quad\forall~q\in\mathbb P_{k-2}(K),
$$
which implies $\div\div\boldsymbol\tau_h=p_h$. Therefore we derive the inf-sup condition from \eqref{eq:202201201}.
\end{proof}



\subsection{$\mathbb P_{k+1}^{-}(\mathbb S)$-type $H(\div\div; \mathbb S)\cap H(\div; \mathbb S)$-conforming elements}

The space of shape functions is taken as 
\begin{equation*}
V^{-}(\div\div^+;\mathbb S):=\mathbb P_k(K;\mathbb S) \oplus \boldsymbol x\boldsymbol x^{\intercal}\mathbb H_{k-1}(K)
\end{equation*}
with $k\geq \max\{d,3\}$.
The degrees of freedom are
\begin{align}
\boldsymbol \tau (\delta) & \quad\forall~\delta\in \mathcal V(K), \label{HdivdivSRTfemdof1}\\
(\boldsymbol  n_i^{\intercal}\boldsymbol \tau\boldsymbol n_j, q)_F & \quad\forall~q\in\mathbb P_{k+r-d-1}(F),  F\in\mathcal F^r(K),\;  \label{HdivdivSRTfemdof2}\\
&\quad\quad i,j=1,\cdots, r, \textrm{ and } r=1,\cdots, d-1, \notag\\
(\Pi_F\boldsymbol \tau\boldsymbol n, \boldsymbol q)_F & \quad\forall~\boldsymbol q\in {\rm ND}_{k-2}(F),  F\in\mathcal F^{1}(K),\label{HdivdivSRTfemdof3}\\
(\boldsymbol  n^{\intercal}\div \boldsymbol \tau, p)_F &\quad\forall~p\in\mathbb P_{k-1}(F), F\in \mathcal F^{1}(K),\label{HdivdivSRTfemdof4}\\
(\div\div\boldsymbol \tau, q)_K &\quad \forall~q\in \mathbb P_{k-1}(K)/\mathbb P_{1}(K),\label{HdivdivSRTfemdof51}\\
(\div\boldsymbol \tau, \boldsymbol q)_K &\quad \forall~\boldsymbol q\in \big(\mathbb P_{k-3}(K; \mathbb K)/\mathbb P_{0}(K; \mathbb K)\big)\boldsymbol x,\label{HdivdivSRTfemdof52}\\
(\boldsymbol \tau, \boldsymbol q)_K &\quad \forall~\boldsymbol q\in  \ker (\cdot\boldsymbol x)\cap \mathbb P_{k-2}(K;\mathbb S) \label{HdivdivSRTfemdof6}.
\end{align}

\revision{We can see that $\mathbb P_{k+1}^{-}(\mathbb S)$-type $H(\div\div; \mathbb S)\cap H(\div; \mathbb S)$-conforming elements follows from Lemma~\ref{lm:VPfem} with $\dd=\div\div$, $V=\mathbb P_k(K;\mathbb S)$, $\mathbb H=\boldsymbol x\boldsymbol x^{\intercal}\mathbb H_{k-1}(K)$, $\mathbb P=\mathbb P_{k-1}(K)/\mathbb P_{1}(K)$ and $\mathbb Q=\ker (\cdot\boldsymbol x)\cap \mathbb P_{k-2}(K;\mathbb S)$. The assumption (H5) holds from the fact $\div\div\mathbb B^+=\mathbb P_{k-1}(K)/\mathbb P_{1}(K)$ and $\nabla^2(\mathbb P+\dd\mathbb H)=\nabla^2\mathbb P_{k-1}(K)$.}

Due to the added component $\boldsymbol x\boldsymbol x^{\intercal}\mathbb H_{k-1}(K)$, the range of $\div\div$ operator is increased to $\mathbb P_{k-1}(K)$ instead of $\mathbb P_{k-2}(K)$. The degree of freedom $(\div\boldsymbol \tau, \boldsymbol q)_K $ is increased from $\boldsymbol q \in  {\rm ND}_{k-3}(K)=\grad \mathbb P_{k-2}(K)\oplus\mathbb P_{k-3}(K;\mathbb K)\boldsymbol x$ to $\mathbb P_{k-2}(K;\mathbb R^d)=\grad \mathbb P_{k-1}(K)\oplus\mathbb P_{k-3}(K;\mathbb K)\boldsymbol x$. 
Hence the number of degrees of freedom \eqref{HdivdivSRTfemdof1}-\eqref{HdivdivSRTfemdof6} equals to $\dim V^{-}(\div\div^+;\mathbb S)$.
The boundary DoFs, however, remains the same as $(\boldsymbol x\boldsymbol x^{\intercal}\mathbb H_{k-1}(K))\boldsymbol n|_F\in \mathbb P_{k}(F;\mathbb R^d)$. 
It is expected that using the $\mathbb P_{k+1}^{-}(\mathbb S)$-type symmetric element to discretize the biharmonic problem will possess one-order higher convergence rate of \revision{the $\div\div$ of the discrete bending moment} than that of the $\mathbb P_k(\mathbb S)$-type symmetric element while the computational cost is not increased significantly; \revision{see~\cite[Section 4]{ChenHuang2020}}. When solving the linear algebraic equation, all interior degrees of freedom can be eliminated element-wisely.  




\begin{lemma}\label{lem:E0SRT}
Let $\boldsymbol\tau\in V^{-}(\div\div^+;\mathbb S)$.
If the degrees of freedom \eqref{HdivdivSRTfemdof1}-\eqref{HdivdivSRTfemdof52} vanish, then $\boldsymbol \tau \in E_0(\mathbb S)$.
\end{lemma}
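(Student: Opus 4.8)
The plan is to show that the vanishing of \eqref{HdivdivSRTfemdof1}--\eqref{HdivdivSRTfemdof52} forces both $\boldsymbol\tau\boldsymbol n|_{\partial K}=\boldsymbol 0$ and $\div\boldsymbol\tau=\boldsymbol 0$; then, since $\div(\boldsymbol x\boldsymbol x^{\intercal}q)=(k+d)q\boldsymbol x$ is the only degree-$k$ contribution to $\div\boldsymbol\tau$, the identity $\div\boldsymbol\tau=\boldsymbol0$ first forces the enrichment coefficient to vanish, so $\boldsymbol\tau\in\mathbb P_k(K;\mathbb S)$, whence $\boldsymbol\tau\in\mathbb B_k(\div,K;\mathbb S)\cap\ker(\div)=E_0(\mathbb S)$. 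First I would handle the trace: the DoFs \eqref{HdivdivSRTfemdof1}--\eqref{HdivdivSRTfemdof3} coincide with \eqref{HdivSfemdof1}--\eqref{HdivSfemdof3}, which by Lemma~\ref{lem:divSboundarydofs} form a basis of $(\tr^{\div}(\mathbb P_k(K;\mathbb S)))'$. Because the enrichment adds no new normal traces, so that $\tr^{\div}(V^{-}(\div\div^+;\mathbb S))=\tr^{\div}(\mathbb P_k(K;\mathbb S))$, the argument of Lemma~\ref{lem:divSboundarydofs} applies verbatim and yields $\boldsymbol\tau\boldsymbol n|_{\partial K}=\boldsymbol 0$.

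The crucial observation is that, although $V^{-}$ has shape functions of degree $k+1$, the range of $\div$ sits in a Raviart--Thomas space: since $\div\mathbb P_k(K;\mathbb S)\subseteq\mathbb P_{k-1}(K;\mathbb R^d)$ and $\div(\boldsymbol x\boldsymbol x^{\intercal}\mathbb H_{k-1}(K))=\mathbb H_{k-1}(K)\boldsymbol x$, we get $\boldsymbol v:=\div\boldsymbol\tau\in\mathbb P_{k-1}(K;\mathbb R^d)\oplus\mathbb H_{k-1}(K)\boldsymbol x=V^{\rm RT}$ with RT index $k-1$. I would therefore prove $\boldsymbol v=\boldsymbol 0$ by verifying the RT degrees of freedom \eqref{eq:RTdof1}--\eqref{eq:RTdof2} of Theorem~\ref{thm:RTunisolvence}, rather than the BDM ones; this is exactly what makes the degree-$(k-1)$ normal moment \eqref{HdivdivSRTfemdof4} enough. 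Indeed, since $\boldsymbol n\cdot\boldsymbol x|_F$ is constant on each flat face $F$, one has $\boldsymbol v\cdot\boldsymbol n|_F\in\mathbb P_{k-1}(F)$ (cf.\ \eqref{eq:RTtrace}), so the vanishing of \eqref{HdivdivSRTfemdof4} gives the full RT trace condition $\boldsymbol v\cdot\boldsymbol n|_{\partial K}=0$.

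For the interior RT moments against $\mathbb P_{k-2}(K;\mathbb R^d)=\grad\mathbb P_{k-1}(K)\oplus\mathbb P_{k-3}(K;\mathbb K)\boldsymbol x$ (decomposition \eqref{eq:Pkm1vecdecomp}), I would first record $\boldsymbol v\perp\boldsymbol{RM}$: for $\boldsymbol r\in\boldsymbol{RM}=\ker(\defm)$, integration by parts gives $(\boldsymbol v,\boldsymbol r)_K=-(\boldsymbol\tau,\defm\boldsymbol r)_K+(\boldsymbol\tau\boldsymbol n,\boldsymbol r)_{\partial K}=0$. Writing $\boldsymbol{RM}=\grad\mathbb P_1(K)\oplus\mathbb P_0(K;\mathbb K)\boldsymbol x$, this already kills the $\mathbb P_1$-gradient part and the $\mathbb P_0(K;\mathbb K)\boldsymbol x$ part. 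For the remaining gradient part, $(\boldsymbol v,\grad p)_K=-(\div\div\boldsymbol\tau,p)_K+(\boldsymbol v\cdot\boldsymbol n,p)_{\partial K}$ vanishes for $p\in\mathbb P_{k-1}(K)/\mathbb P_1(K)$ by the zero trace and \eqref{HdivdivSRTfemdof51}, while the remaining skew part, against $(\mathbb P_{k-3}(K;\mathbb K)/\mathbb P_0(K;\mathbb K))\boldsymbol x$, is exactly \eqref{HdivdivSRTfemdof52}. Hence all RT DoFs vanish and Theorem~\ref{thm:RTunisolvence} gives $\boldsymbol v=\div\boldsymbol\tau=\boldsymbol 0$; the degree split $\div\boldsymbol\tau=\div\boldsymbol\tau_0+(k+d)q\boldsymbol x=\boldsymbol0$ (with $\boldsymbol\tau=\boldsymbol\tau_0+\boldsymbol x\boldsymbol x^{\intercal}q$) then forces $q=0$ and $\div\boldsymbol\tau_0=\boldsymbol0$, so $\boldsymbol\tau\in\mathbb P_k(K;\mathbb S)$ and $\boldsymbol\tau\in E_0(\mathbb S)$.

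The main obstacle, and the point needing care, is precisely the degree mismatch: a priori $\div\boldsymbol\tau$ could have degree $k$, so one might fear that the face moment \eqref{HdivdivSRTfemdof4}, which only tests $\mathbb P_{k-1}(F)$, and the skew interior moment \eqref{HdivdivSRTfemdof52}, which omits $\mathbb H_{k-2}(K;\mathbb K)\boldsymbol x$, are too few to pin down $\div\boldsymbol\tau$. The resolution is the identity $\div(\boldsymbol x\boldsymbol x^{\intercal}q)=(k+d)q\boldsymbol x$, which confines $\div\boldsymbol\tau$ to the RT space, together with the constancy of $\boldsymbol n\cdot\boldsymbol x$ on each face; these two facts bring the top-degree information under control and reduce the uni-solvence to the already established RT element. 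I would also double-check that the index bookkeeping (RT index $k-1\geq 2$, and \eqref{eq:Pkm1vecdecomp} applied to $\mathbb P_{k-2}$) is consistent with the standing hypothesis $k\geq\max\{d,3\}$.
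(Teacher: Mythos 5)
Your proof is correct, but it takes a genuinely different route from the paper's. The paper proceeds in the order ``$\div\div$ first, then enrichment, then $\div$'': after the same trace step, it applies the Green's identity \eqref{eq:greenidentitydivdiv} with $v\in\mathbb P_1(K)$ together with \eqref{HdivdivSRTfemdof51} to conclude $\div\div\boldsymbol\tau=0$, then invokes Lemma~\ref{lem:divdivonto} (injectivity of $\div\div$ on $\boldsymbol x\boldsymbol x^{\intercal}\mathbb H_{k-1}(K)$) to drop the enrichment, so $\boldsymbol\tau\in\mathbb P_k(K;\mathbb S)$, and only then uses Lemma~\ref{lm:F0star} with \eqref{HdivdivSRTfemdof52} to get $\div\boldsymbol\tau=\boldsymbol0$. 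You instead kill $\div\boldsymbol\tau$ in one shot: your key structural observation is $\div(\boldsymbol x\boldsymbol x^{\intercal}q)=(k+d)q\boldsymbol x$, so that $\div\boldsymbol\tau$ lies in the Raviart--Thomas space $\mathbb P_{k-1}(K;\mathbb R^d)\oplus\mathbb H_{k-1}(K)\boldsymbol x$; you then verify all RT degrees of freedom of Theorem~\ref{thm:RTunisolvence} (the trace via \eqref{HdivdivSRTfemdof4} and the constancy of $\boldsymbol x\cdot\boldsymbol n$ on faces, the gradient moments via $\boldsymbol{RM}$-orthogonality plus \eqref{HdivdivSRTfemdof51}, the skew moments via $\boldsymbol{RM}$-orthogonality plus \eqref{HdivdivSRTfemdof52}), so RT uni-solvence gives $\div\boldsymbol\tau=\boldsymbol0$, and the homogeneous-degree splitting of $\div\boldsymbol\tau=\boldsymbol0$ then removes the enrichment. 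What your route buys: it explains structurally why the degree-$(k-1)$ face moments \eqref{HdivdivSRTfemdof4} and the truncated skew moments \eqref{HdivdivSRTfemdof52} suffice despite the degree-$(k+1)$ shape functions---precisely because the divergence of the enriched space is an RT space---and it needs only first-order integration by parts, not the divdiv Green's identity. What the paper's route buys: it reuses already-established blocks (Theorem~\ref{th:divdivcapdiv}, Lemmas~\ref{lem:divdivonto} and~\ref{lm:F0star}) with essentially no new computation. One cosmetic caveat: your claim $\tr^{\div}(V^{-}(\div\div^+;\mathbb S))=\tr^{\div}(\mathbb P_k(K;\mathbb S))$ is stronger than what you verify or need; what your argument actually uses (and what is true, since $\boldsymbol x\cdot\boldsymbol n_{F,i}$ is constant on each face of every codimension) is that the trace components of enriched functions are still face-wise polynomials of degree at most $k$, so the uni-solvence argument of Lemma~\ref{lem:divSboundarydofs} applies---which is also how the paper phrases this step.
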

\begin{proof}
Since $\boldsymbol x\cdot \boldsymbol n$ is constant on each $(d-1)$-dimensional face, 
 the trace $\boldsymbol\tau\boldsymbol n|_F\in\mathbb P_k(F;\mathbb R^{d})$ and $(\boldsymbol  n^{\intercal}\div \boldsymbol \tau)|_F\in\mathbb P_{k-1}(F)$ remain unchanged.
Then we conclude  ${\rm tr}^{\rm div}\boldsymbol\tau=\boldsymbol0$ and ${\rm tr}^{\rm div}(\div \boldsymbol \tau)=0$ from Theorem~\ref{th:divdivcapdiv}.

 Applying the Green's identity \eqref{eq:greenidentitydivdiv}, we get 
$$
(\div\div\boldsymbol\tau , v)_K=(\boldsymbol\tau, \nabla^2 v)_K=0 \quad \forall~v \in \mathbb P_{1}(K).
$$
Hence it follows from the vanishing degrees of freedom~\eqref{HdivdivSRTfemdof51} that $\div\div\boldsymbol\tau=0$, which combined with Lemma~\ref{lem:divdivonto} implies $\boldsymbol\tau\in\mathbb P_k(K;\mathbb S)$.
Finally we achieve from Lemma~\ref{lm:F0star} and the vanishing degrees of freedom~\eqref{HdivdivSRTfemdof52} that $\boldsymbol \tau \in E_0(\mathbb S)$.
\end{proof}



Combining Lemma~\ref{lem:E0SRT}, \eqref{eq:E0Sdualcharac} and the degree of freedom \eqref{HdivdivSRTfemdof6} shows the uni-solvence of the $\mathbb P_{k+1}^{-}(\mathbb S)$-type $H(\div\div; \mathbb S)\cap H(\div; \mathbb S)$-conforming elements.
\begin{theorem}\label{th:P-S}
The degrees of freedom \eqref{HdivdivSRTfemdof1}-\eqref{HdivdivSRTfemdof6} are 
uni-solvent for the space $V^{-}(\div\div^+;\mathbb S) = \mathbb P_k(K;\mathbb S) \oplus \boldsymbol x\boldsymbol x^{\intercal}\mathbb H_{k-1}(K)$.
\end{theorem}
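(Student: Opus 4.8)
The plan is to establish uni-solvence in the standard two steps: first confirm that the number of degrees of freedom equals $\dim V^{-}(\div\div^+;\mathbb S)$, and then show that the vanishing of all the degrees of freedom \eqref{HdivdivSRTfemdof1}-\eqref{HdivdivSRTfemdof6} forces $\boldsymbol\tau=\boldsymbol0$. Since the structural work is already carried out in Lemma~\ref{lem:E0SRT}, the dual characterization \eqref{eq:E0Sdualcharac}, and Theorem~\ref{th:divdivcapdiv}, the argument reduces to assembling these ingredients together with one dimension count.

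For the dimension count I would first observe that the sum defining $V^{-}(\div\div^+;\mathbb S)=\mathbb P_k(K;\mathbb S)\oplus\boldsymbol x\boldsymbol x^{\intercal}\mathbb H_{k-1}(K)$ is direct, because a nonzero element of $\boldsymbol x\boldsymbol x^{\intercal}\mathbb H_{k-1}(K)$ has homogeneous entries of degree $k+1$ and hence cannot lie in $\mathbb P_k(K;\mathbb S)$; thus $\dim V^{-}(\div\div^+;\mathbb S)=\dim\mathbb P_k(K;\mathbb S)+\dim\mathbb H_{k-1}(K)$. I would then compare the list \eqref{HdivdivSRTfemdof1}-\eqref{HdivdivSRTfemdof6} with the degrees of freedom \eqref{HdivdivSfemdof1}-\eqref{HdivdivSfemdof6} of Theorem~\ref{th:divdivcapdiv}. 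The two lists coincide except that \eqref{HdivdivSRTfemdof51} tests $\div\div\boldsymbol\tau$ against $\mathbb P_{k-1}(K)/\mathbb P_1(K)$ rather than $\mathbb P_{k-2}(K)/\mathbb P_1(K)$, an increase of exactly $\dim\mathbb P_{k-1}(K)-\dim\mathbb P_{k-2}(K)=\dim\mathbb H_{k-1}(K)$ functionals. As Theorem~\ref{th:divdivcapdiv} already supplies $\dim\mathbb P_k(K;\mathbb S)$ degrees of freedom, the new total is $\dim\mathbb P_k(K;\mathbb S)+\dim\mathbb H_{k-1}(K)=\dim V^{-}(\div\div^+;\mathbb S)$, matching the enrichment of the shape function space.

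For injectivity I would assume all degrees of freedom vanish. Lemma~\ref{lem:E0SRT} applies verbatim to the subfamily \eqref{HdivdivSRTfemdof1}-\eqref{HdivdivSRTfemdof52} and yields $\boldsymbol\tau\in E_0(\mathbb S)$; internally this uses that $\div\div\boldsymbol\tau=0$ removes the enriching component through Lemma~\ref{lem:divdivonto}, so the problem collapses back to the base space $\mathbb P_k(K;\mathbb S)$. The one remaining functional \eqref{HdivdivSRTfemdof6}, that is $(\boldsymbol\tau,\boldsymbol q)_K=0$ for all $\boldsymbol q\in\ker(\cdot\boldsymbol x)\cap\mathbb P_{k-2}(K;\mathbb S)$, is precisely the dual characterization \eqref{eq:E0Sdualcharac} of $E_0(\mathbb S)$, and therefore forces $\boldsymbol\tau=\boldsymbol0$. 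Combining the matching dimension count with this injectivity gives uni-solvence.

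The genuinely new content lives in Lemma~\ref{lem:E0SRT}, whose proof invokes the Green identity \eqref{eq:greenidentitydivdiv} to strip off the boundary traces and the $\div\div$/\,$\div$ interior moments; the present theorem is essentially its corollary. The only place demanding care is the dimension bookkeeping against Theorem~\ref{th:divdivcapdiv}: one must be certain that the enrichment enters in exactly one degree-of-freedom block, namely \eqref{HdivdivSRTfemdof51}, and that its size is exactly $\dim\mathbb H_{k-1}(K)$, so that it balances the dimension added to the shape function space. I expect this to be the main (mild) obstacle, and it is exactly what links the enlarged range $\div\div V^{-}(\div\div^+;\mathbb S)=\mathbb P_{k-1}(K)$ to the single modified functional \eqref{HdivdivSRTfemdof51}.
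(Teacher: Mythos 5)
Your proposal is correct and follows essentially the same route as the paper: the unisolvence is obtained by combining Lemma~\ref{lem:E0SRT} (vanishing of \eqref{HdivdivSRTfemdof1}--\eqref{HdivdivSRTfemdof52} forces $\boldsymbol\tau\in E_0(\mathbb S)$) with the dual characterization \eqref{eq:E0Sdualcharac} applied to \eqref{HdivdivSRTfemdof6}, together with the dimension count showing that the enrichment of \eqref{HdivdivSRTfemdof51} by $\dim\mathbb H_{k-1}(K)$ exactly balances the added component $\boldsymbol x\boldsymbol x^{\intercal}\mathbb H_{k-1}(K)$. Your bookkeeping of the single modified degree-of-freedom block matches the paper's count, just phrased more directly.
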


The finite element space $\boldsymbol V_h^-(\div\div^+;\mathbb S)\subset H(\div\div, \Omega; \mathbb S)\cap H(\div, \Omega; \mathbb S)$ is then defined as follows
\begin{align*}
\boldsymbol V_h^-(\div\div^+;\mathbb S)&:=\{\boldsymbol \tau\in \boldsymbol L^2(\Omega;\mathbb S): \boldsymbol \tau|_K\in \mathbb P_k(K,\mathbb S)\oplus \boldsymbol x\boldsymbol x^{\intercal}\mathbb H_{k-1}(K) \textrm{ for each } \\
&\; K\in\mathcal T_h, \textrm{ the degrees of freedom \eqref{HdivdivSRTfemdof1}-\eqref{HdivdivSRTfemdof4} are single-valued} \}.    
\end{align*}



\revision{Similarly as Lemma~\ref{lm:divdivdivSinfsup}, we have the following inf-sup condition.
\begin{lemma}\label{lm:divdivdivSminfsup}
Let $k\geq \max\{d,3\}$.
It holds 
$$
\|p_h\|_0\lesssim\sup_{\boldsymbol\tau_h\in \boldsymbol V_h^-(\div\div^+;\mathbb S)}
\frac{(\div\div\boldsymbol{\tau}_h, p_h)}{\|\boldsymbol\tau_h\|_{H(\div)}+
\|\div\div\boldsymbol{\tau}_h\|_{0}}\qquad\forall~p_h\in\mathbb P_{k-1}(\mathcal T_h).
$$
\end{lemma}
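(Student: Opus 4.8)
The plan is to follow the proof of Lemma~\ref{lm:divdivdivSinfsup} almost verbatim; the only structural change is that the enriched shape function space now satisfies $\div\div V^{-}(\div\div^{+};\mathbb S)=\mathbb P_{k-1}(K)$ rather than $\mathbb P_{k-2}(K)$, so the interior $\div\div$ moments must range over $\mathbb P_{k-1}(K)/\mathbb P_1(K)$. First I would invoke the stability result of \cite{CostabelMcIntosh2010}: for a given $p_h\in\mathbb P_{k-1}(\mathcal T_h)$ there exists a $\boldsymbol\tau\in\boldsymbol H^2(\Omega;\mathbb S)$ with $\div\div\boldsymbol\tau=p_h$ and $\|\boldsymbol\tau\|_2\lesssim\|p_h\|_0$.

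Next I would define $\boldsymbol\tau_h\in\boldsymbol V_h^{-}(\div\div^{+};\mathbb S)$ by prescribing the DoFs \eqref{HdivdivSRTfemdof1}--\eqref{HdivdivSRTfemdof6}: set them all to zero except $(\boldsymbol\tau_h\boldsymbol n,\boldsymbol q)_F=(\boldsymbol\tau\boldsymbol n,\boldsymbol q)_F$ for $\boldsymbol q\in\mathbb P_0(F;\mathbb R^d)$, $(\boldsymbol n^{\intercal}\div\boldsymbol\tau_h,q)_F=(\boldsymbol n^{\intercal}\div\boldsymbol\tau,q)_F$ for $q\in\mathbb P_1(F)$, and $(\div\div\boldsymbol\tau_h,q)_K=(p_h,q)_K$ for $q\in\mathbb P_{k-1}(K)/\mathbb P_1(K)$, on each $K\in\mathcal T_h$. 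A standard scaling argument then yields $\|\boldsymbol\tau_h\|_{H(\div)}+\|\div\div\boldsymbol\tau_h\|_0\lesssim\|\boldsymbol\tau\|_2\lesssim\|p_h\|_0$, which controls the denominator.

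The key step is to promote the interior matching from $\mathbb P_{k-1}(K)/\mathbb P_1(K)$ to all of $\mathbb P_{k-1}(K)$, i.e.\ to recover the linear moments of $\div\div\boldsymbol\tau_h$ that the interior DoFs deliberately omit. For $q\in\mathbb P_1(K)$ I would apply the Green's identity \eqref{eq:greenidentitydivdiv} to both $\boldsymbol\tau_h$ and $\boldsymbol\tau$. Since $\nabla^2 q=0$, and after using the surface Stokes theorem to combine the edge terms with $(\div_F(\cdot\,\boldsymbol n),q)_F$ into $-(\cdot\,\boldsymbol n,\nabla_F q)_F$ with $\nabla_F q$ and $\partial_n q$ constant, every surviving boundary functional depends only on the constant moments of the trace $\boldsymbol\tau\boldsymbol n$ on each face and the $\mathbb P_1(F)$ moment of $\boldsymbol n^{\intercal}\div\boldsymbol\tau$, exactly the DoFs matched above. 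Hence $(\div\div\boldsymbol\tau_h,q)_K=(\div\div\boldsymbol\tau,q)_K=(p_h,q)_K$ for $q\in\mathbb P_1(K)$ as well. Combined with the interior DoFs this gives $(\div\div\boldsymbol\tau_h,q)_K=(p_h,q)_K$ for all $q\in\mathbb P_{k-1}(K)$, and since $\div\div\boldsymbol\tau_h\in\mathbb P_{k-1}(K)$ we conclude $\div\div\boldsymbol\tau_h=p_h$; the inf-sup inequality then follows by inserting this $\boldsymbol\tau_h$ into the supremum.

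I expect the only real obstacle to be this Green's-identity bookkeeping for linear $q$: one must verify that the boundary terms left over after killing $\nabla^2 q$ are controlled precisely by the low-order face DoFs prescribed, so that the linear $\div\div$ moments of $\boldsymbol\tau_h$ and $\boldsymbol\tau$ agree. The condition $k\geq\max\{d,3\}$ guarantees that the relevant face moments $(\boldsymbol n^{\intercal}\boldsymbol\tau\boldsymbol n,q)_F$ for $q\in\mathbb P_0(F)$ and $(\boldsymbol n^{\intercal}\div\boldsymbol\tau,q)_F$ for $q\in\mathbb P_1(F)$ are genuinely present among the DoFs. Everything else is a routine transcription of the proof of Lemma~\ref{lm:divdivdivSinfsup} with the polynomial degree raised by one.
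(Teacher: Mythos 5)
Your proposal is correct and matches the paper's intent exactly: the paper proves this lemma simply by the remark ``Similarly as Lemma~\ref{lm:divdivdivSinfsup},'' and your argument is precisely that adaptation written out, with the Costabel--McIntosh lift, the same matched low-order face DoFs, interior $\div\div$ moments raised to $\mathbb P_{k-1}(K)/\mathbb P_1(K)$, and the integration-by-parts recovery of the linear moments. No gaps.
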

}
\subsection{divdiv conforming elements}\label{subsec:newdivdivSfem}
The requirement both $\boldsymbol \tau \boldsymbol n$ and $\boldsymbol n^{\intercal}\div \boldsymbol \tau $ are continuous is sufficient but not necessary for a function to be in $H(\div\div)$.
In addition to $\boldsymbol n^{\intercal}\boldsymbol\tau\boldsymbol n$, the combination $\boldsymbol n^{\intercal}\div \boldsymbol \tau +  \div_F(\boldsymbol\tau \boldsymbol n)$ to be continuous is enough
due to the Green's identity \eqref{eq:greenidentitydivdiv}. 



\begin{theorem}\label{thm:newHdivdivSfem}
Take $V(\div\div;\mathbb S):=\mathbb P_k(K;\mathbb S)$ with $k\geq\max\{d,3\}$, as the space of shape functions.
The degrees of freedom are given by
\begin{align}
\boldsymbol \tau (\delta) & \quad\forall~\delta\in \mathcal V(K), \label{newHdivdivSfemdof1}\\
(\boldsymbol  n_i^{\intercal}\boldsymbol \tau\boldsymbol n_j, q)_F & \quad\forall~q\in\mathbb P_{k+r-d-1}(F),  F\in\mathcal F^r(K),\;  \label{newHdivdivSfemdof2}\\
&\quad\quad i,j=1,\cdots, r, \textrm{ and } r=1,\cdots, d-1, \notag\\
(\Pi_F\boldsymbol \tau\boldsymbol n, \boldsymbol q)_F & \quad\forall~\boldsymbol q\in {\rm ND}_{k-2}(F),  F\in\mathcal F^{1}(K),\label{newHdivdivSfemdof4}\\
(\boldsymbol n^{\intercal}\div \boldsymbol \tau +  \div_F(\boldsymbol\tau \boldsymbol n), p)_F &\quad\forall~p\in\mathbb P_{k-1}(F), F\in \mathcal F^{1}(K),\label{newHdivdivSfemdof3}\\
(\boldsymbol \tau, \defm\boldsymbol q)_K &\quad \forall~\boldsymbol q\in {\rm ND}_{k-3}(K),\label{newHdivdivSfemdof5}\\
(\boldsymbol \tau, \boldsymbol q)_K &\quad \forall~\boldsymbol q\in  \ker (\cdot\boldsymbol x)\cap \mathbb P_{k-2}(K;\mathbb S) \label{newHdivdivSfemdof6}.
\end{align}
The degree of freedom \eqref{newHdivdivSfemdof4} is considered as interior to $K$, i.e., it is not single-valued across elements. 
\end{theorem}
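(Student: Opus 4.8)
The plan is to establish uni-solvence of the degrees of freedom \eqref{newHdivdivSfemdof1}--\eqref{newHdivdivSfemdof6} for $\mathbb P_k(K;\mathbb S)$, reusing the four-layer peeling that proved Theorem~\ref{th:divdivcapdiv}, but exploiting the key observation that the modified trace functional \eqref{newHdivdivSfemdof3} collapses onto the normal-normal divergence moment \eqref{HdivdivSfemdof4} once the normal trace is known to vanish. I would separate the argument into a vanishing (injectivity) part and a dimension count. For injectivity, assume all DoFs vanish. First, the boundary DoFs \eqref{newHdivdivSfemdof1}, \eqref{newHdivdivSfemdof2}, \eqref{newHdivdivSfemdof4} are precisely the basis of $(\tr^{\div}\mathbb P_k(K;\mathbb S))'$ from Lemma~\ref{lem:divSboundarydofs}, so their vanishing forces $\boldsymbol\tau\boldsymbol n|_{\partial K}=\boldsymbol0$, i.e., $\boldsymbol\tau\in\mathbb B_k(\div,K;\mathbb S)$. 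Second, since $\boldsymbol\tau\boldsymbol n|_F=\boldsymbol0$ entails $\div_F(\boldsymbol\tau\boldsymbol n)|_F=0$ on every $F\in\mathcal F^1(K)$, the DoF \eqref{newHdivdivSfemdof3} reduces to $(\boldsymbol n^{\intercal}\div\boldsymbol\tau,p)_F=0$ for all $p\in\mathbb P_{k-1}(F)$; as $\boldsymbol n^{\intercal}\div\boldsymbol\tau|_F\in\mathbb P_{k-1}(F)$, this gives $\boldsymbol n^{\intercal}\div\boldsymbol\tau|_F=0$ on each face.

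Third, set $\boldsymbol v:=\div\boldsymbol\tau\in\mathbb P_{k-1}(K;\mathbb R^d)$. The second step shows $\tr^{\div}\boldsymbol v=0$, so $\boldsymbol v\in\mathbb B_{k-1}(\div,K)$. Using the symmetry of $\boldsymbol\tau$ (so $(\boldsymbol\tau,\defm\boldsymbol q)_K=(\boldsymbol\tau,\grad\boldsymbol q)_K$) together with $\boldsymbol\tau\boldsymbol n|_{\partial K}=\boldsymbol0$, integration by parts turns \eqref{newHdivdivSfemdof5} into $(\boldsymbol\tau,\grad\boldsymbol q)_K=-(\boldsymbol v,\boldsymbol q)_K=0$ for all $\boldsymbol q\in{\rm ND}_{k-3}(K)$, the boundary contribution being annihilated exactly because the normal trace has already been killed. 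By the characterization $\mathbb B_{k-1}'(\div,K)=\mathcal N({\rm ND}_{k-3}(K))$ (Corollary~\ref{cor:bdmbubbledual}, equivalently the BDM uni-solvence in Theorem~\ref{thm:BDMunisolvence}), this forces $\boldsymbol v=\boldsymbol0$, hence $\div\boldsymbol\tau=\boldsymbol0$ and $\boldsymbol\tau\in E_0(\mathbb S)$. Fourth, combining $\boldsymbol\tau\in E_0(\mathbb S)$ with the vanishing \eqref{newHdivdivSfemdof6} and the dual characterization $E_0'(\mathbb S)=\mathcal N(\ker(\cdot\boldsymbol x)\cap\mathbb P_{k-2}(K;\mathbb S))$ of Lemma~\ref{lem:E0S} (namely \eqref{eq:E0Sdualcharac}) yields $\boldsymbol\tau=\boldsymbol0$.

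For the dimension count I would show the total number of DoFs equals $\dim\mathbb P_k(K;\mathbb S)=\dim\tr^{\div}(\mathbb P_k(K;\mathbb S))+\dim\mathbb B_k(\div,K;\mathbb S)$. The boundary DoFs contribute $\dim\tr^{\div}(\mathbb P_k(K;\mathbb S))$ by Lemma~\ref{lem:divSboundarydofs}, while \eqref{newHdivdivSfemdof3}, \eqref{newHdivdivSfemdof5}, \eqref{newHdivdivSfemdof6} contribute $(d+1)\dim\mathbb P_{k-1}(F)+(\dim{\rm ND}_{k-3}(K)-\dim\boldsymbol{RM})+\dim E_0(\mathbb S)$, where the middle term reflects the redundancy $\ker(\defm)\cap{\rm ND}_{k-3}(K)=\boldsymbol{RM}$. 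Using $\dim\mathbb B_k(\div,K;\mathbb S)=\dim E_0(\mathbb S)+\dim E_0^{\bot}(\mathbb S)$ and $\dim E_0^{\bot}(\mathbb S)=\dim\mathbb P_{k-1}(K;\mathbb R^d)-\dim\boldsymbol{RM}$ from Lemma~\ref{lem:E0Sbot}, the required equality collapses to the BDM relation $\dim\mathbb P_{k-1}(K;\mathbb R^d)=\dim{\rm ND}_{k-3}(K)+(d+1)\dim\mathbb P_{k-1}(F)$ already invoked in the proof of Theorem~\ref{th:divdivcapdiv}.

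I expect the main obstacle to be conceptual rather than computational: confirming that replacing \eqref{HdivdivSfemdof4} by the combined functional \eqref{newHdivdivSfemdof3} preserves uni-solvence. The crux is the identity $\div_F(\boldsymbol\tau\boldsymbol n)|_F=0$ whenever $\boldsymbol\tau\boldsymbol n|_F=\boldsymbol0$, which makes the two functionals agree on the bubble space $\mathbb B_k(\div,K;\mathbb S)$, so that the entire injectivity chain of Theorem~\ref{th:divdivcapdiv} is inherited; a secondary care point is respecting the ordering of steps, so that the boundary integration by parts in the third step leaves no residual term. The $H(\div\div)$-conformity of the assembled global space, as opposed to elementwise uni-solvence, follows separately from the Green's identity \eqref{eq:greenidentitydivdiv}: single-valuedness of $\boldsymbol n^{\intercal}\boldsymbol\tau\boldsymbol n$ (secured by \eqref{newHdivdivSfemdof1}--\eqref{newHdivdivSfemdof2}) and of $\boldsymbol n^{\intercal}\div\boldsymbol\tau+\div_F(\boldsymbol\tau\boldsymbol n)$ (secured by \eqref{newHdivdivSfemdof3}) is exactly what \eqref{eq:greenidentitydivdiv} requires, while \eqref{newHdivdivSfemdof4} is treated as interior and need not be single-valued.
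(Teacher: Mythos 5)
Your proof is correct and takes essentially the same route as the paper: the paper's own proof makes exactly your key observation---that \eqref{newHdivdivSfemdof1}, \eqref{newHdivdivSfemdof2}, \eqref{newHdivdivSfemdof4} determine $\boldsymbol\tau\boldsymbol n$ and hence $\div_F(\boldsymbol\tau\boldsymbol n)$, so that \eqref{newHdivdivSfemdof3} recovers the functionals $(\boldsymbol n^{\intercal}\div\boldsymbol\tau,p)_F$---and then simply cites Theorem~\ref{th:divdivcapdiv}. Your version inlines that citation, rerunning the injectivity chain through Lemma~\ref{lem:divSboundarydofs}, the integration-by-parts step of Lemma~\ref{lm:F0star} with Corollary~\ref{cor:bdmbubbledual}, and Lemma~\ref{lem:E0S}, plus the dimension count; this is slightly more self-contained, since it works with \eqref{newHdivdivSfemdof5} directly rather than passing through the interior moments \eqref{HdivdivSfemdof51}--\eqref{HdivdivSfemdof52} of Theorem~\ref{th:divdivcapdiv}, but the mathematics is the same.
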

\begin{proof}
By Lemma \ref{lem:divSboundarydofs}, the $\div_F(\boldsymbol \tau\boldsymbol n)$ can be determined by \revision{\eqref{newHdivdivSfemdof1}, \eqref{newHdivdivSfemdof2}}, and \eqref{newHdivdivSfemdof4}. A linear combination with \eqref{newHdivdivSfemdof3}, the trace $\boldsymbol n^{\intercal}\div \boldsymbol \tau$ can be determined. Then the uni-solvence is obtained from Theorem \ref{th:divdivcapdiv}. 
%
\end{proof}



The finite element space $\boldsymbol V_h(\div\div)$ is defined as follows
\begin{align*}
\boldsymbol V_h(\div\div,\Omega;\mathbb S):=\{&\boldsymbol \tau\in \boldsymbol L^2(\Omega;\mathbb S): \boldsymbol \tau|_K\in\mathbb P_k(K;\mathbb S) \textrm{ for each } K\in\mathcal T_h, \textrm{ the} \\
&\textrm{ degrees of freedom \eqref{newHdivdivSfemdof1}-\eqref{newHdivdivSfemdof2} and \eqref{newHdivdivSfemdof3} are single-valued} \}.    
\end{align*}
As $\boldsymbol n^{\intercal}\boldsymbol\tau\boldsymbol n$ and $\boldsymbol n^{\intercal}\div \boldsymbol \tau +  \div_F(\boldsymbol\tau \boldsymbol n)$ are continuous, $\boldsymbol V_h(\div\div)\subset H(\div\div, \Omega; \mathbb S)$; see~\cite[Lemma 4.4]{Chen;Huang:2020Finite}.

Finally we present a $\mathbb P_{k+1}^{-}(\mathbb S)$-type $H(\div\div; \mathbb S)$-conforming element. 

\begin{theorem}
Let integer $k\geq \max\{d,3\}$.
Take the space of shape functions as $$V^{-}(\div\div;\mathbb S):=\mathbb P_k(K;\mathbb S) \oplus \boldsymbol x\boldsymbol x^{\intercal}\mathbb H_{k-1}(K).$$
The degrees of freedom are
\begin{align}
\boldsymbol \tau (\delta) & \quad\forall~\delta\in \mathcal V(K), \label{newHdivdivSRTfemdof1}\\
(\boldsymbol  n_i^{\intercal}\boldsymbol \tau\boldsymbol n_j, q)_F & \quad\forall~q\in\mathbb P_{k+r-d-1}(F),  F\in\mathcal F^r(K),\;  \label{newHdivdivSRTfemdof2}\\
&\quad\quad i,j=1,\cdots, r, \textrm{ and } r=1,\cdots, d-1, \notag\\
(\Pi_F\boldsymbol \tau\boldsymbol n, \boldsymbol q)_F & \quad\forall~\boldsymbol q\in {\rm ND}_{k-2}(F),  F\in\mathcal F^{1}(K),\label{newHdivdivSRTfemdof4}\\
(\boldsymbol n^{\intercal}\div \boldsymbol \tau +  \div_F(\boldsymbol\tau \boldsymbol n), p)_F &\quad\forall~p\in\mathbb P_{k-1}(F), F\in \mathcal F^{1}(K),\label{newHdivdivSRTfemdof3}\\
(\boldsymbol \tau, \defm\boldsymbol q)_K &\quad \forall~\boldsymbol q\in \mathbb P_{k-2}(K; \mathbb R^d),\label{newHdivdivSRTfemdof5}\\
(\boldsymbol \tau, \boldsymbol q)_K &\quad \forall~\boldsymbol q\in  \ker (\cdot\boldsymbol x)\cap \mathbb P_{k-2}(K;\mathbb S) \label{newHdivdivSRTfemdof6}.
\end{align}
Again the degree of freedom \eqref{newHdivdivSRTfemdof4} is considered as interior to $K$, i.e., it is not single-valued across elements.
\end{theorem}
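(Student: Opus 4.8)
The plan is to reduce uni-solvence to Theorem~\ref{th:P-S}, the $\mathbb P_{k+1}^{-}(\mathbb S)$-type $H(\div\div;\mathbb S)\cap H(\div;\mathbb S)$-conforming element, exactly as Theorem~\ref{thm:newHdivdivSfem} is obtained from Theorem~\ref{th:divdivcapdiv}. The shape function space $V^{-}(\div\div;\mathbb S)=\mathbb P_k(K;\mathbb S)\oplus\boldsymbol x\boldsymbol x^{\intercal}\mathbb H_{k-1}(K)$ is the same, so it suffices to show that vanishing of \eqref{newHdivdivSRTfemdof1}--\eqref{newHdivdivSRTfemdof6} forces $\boldsymbol\tau=\boldsymbol0$, and that the number of functionals still equals $\dim V^{-}(\div\div;\mathbb S)$; the $H(\div\div)$-conformity of the resulting global space follows from the continuity of $\boldsymbol n^{\intercal}\boldsymbol\tau\boldsymbol n$ and $\boldsymbol n^{\intercal}\div\boldsymbol\tau+\div_F(\boldsymbol\tau\boldsymbol n)$ together with the Green's identity \eqref{eq:greenidentitydivdiv}, as in the $\mathbb P_k(\mathbb S)$ case.

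First I would recover the trace. Since $\boldsymbol x\cdot\boldsymbol n$ is constant on each $F\in\mathcal F^1(K)$, the enrichment satisfies $(\boldsymbol x\boldsymbol x^{\intercal}\mathbb H_{k-1}(K))\boldsymbol n|_F\in\mathbb P_k(F;\mathbb R^d)$, so exactly as in Lemma~\ref{lem:E0SRT} the trace of $V^{-}(\div\div;\mathbb S)$ is no larger than that of $\mathbb P_k(K;\mathbb S)$ and Lemma~\ref{lem:divSboundarydofs} applies. Hence the vanishing of \eqref{newHdivdivSRTfemdof1}, \eqref{newHdivdivSRTfemdof2}, \eqref{newHdivdivSRTfemdof4} yields $\boldsymbol\tau\boldsymbol n|_{\partial K}=\boldsymbol0$, and in particular $\div_F(\boldsymbol\tau\boldsymbol n)=0$ on every $F$. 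Then the combined moment \eqref{newHdivdivSRTfemdof3} collapses to $(\boldsymbol n^{\intercal}\div\boldsymbol\tau,p)_F=0$ for all $p\in\mathbb P_{k-1}(F)$, which is precisely DoF \eqref{HdivdivSRTfemdof4} of Theorem~\ref{th:P-S}.

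Next I would convert the single interior moment \eqref{newHdivdivSRTfemdof5} into the pair \eqref{HdivdivSRTfemdof51}--\eqref{HdivdivSRTfemdof52}, using the Koszul-type splitting $\mathbb P_{k-2}(K;\mathbb R^d)=\grad\mathbb P_{k-1}(K)\oplus\mathbb P_{k-3}(K;\mathbb K)\boldsymbol x$ from \eqref{eq:Pkm1vecdecomp}. For $\boldsymbol q=\grad p$ with $p\in\mathbb P_{k-1}(K)$ one has $\defm\grad p=\nabla^2 p$, and in the Green's identity \eqref{eq:greenidentitydivdiv} all boundary terms vanish: the $\boldsymbol n_{F,e}^{\intercal}\boldsymbol\tau\boldsymbol n$ and $\boldsymbol n^{\intercal}\boldsymbol\tau\boldsymbol n$ contributions because $\boldsymbol\tau\boldsymbol n|_{\partial K}=\boldsymbol0$, and the $(\boldsymbol n^{\intercal}\div\boldsymbol\tau+\div_F(\boldsymbol\tau\boldsymbol n),p)_F$ contribution because $p|_F\in\mathbb P_{k-1}(F)$ and \eqref{newHdivdivSRTfemdof3} already vanishes. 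Thus $(\boldsymbol\tau,\nabla^2 p)_K=(\div\div\boldsymbol\tau,p)_K$, which recovers \eqref{HdivdivSRTfemdof51} after quotienting by the kernel $\mathbb P_1(K)$ of $\nabla^2$. For $\boldsymbol q=\boldsymbol q_0\boldsymbol x$ with $\boldsymbol q_0\in\mathbb P_{k-3}(K;\mathbb K)$, the symmetry of $\boldsymbol\tau$ and integration by parts give $(\boldsymbol\tau,\defm(\boldsymbol q_0\boldsymbol x))_K=(\boldsymbol\tau,\grad(\boldsymbol q_0\boldsymbol x))_K=-(\div\boldsymbol\tau,\boldsymbol q_0\boldsymbol x)_K$, the boundary term $(\boldsymbol\tau\boldsymbol n,\boldsymbol q_0\boldsymbol x)_{\partial K}$ vanishing; since $\defm$ annihilates $\mathbb P_0(K;\mathbb K)\boldsymbol x\subseteq\boldsymbol{RM}$, this recovers \eqref{HdivdivSRTfemdof52}. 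Consequently, under the vanishing boundary moments, \eqref{newHdivdivSRTfemdof5} has the same null space as \eqref{HdivdivSRTfemdof51}--\eqref{HdivdivSRTfemdof52}; as \eqref{newHdivdivSRTfemdof6} is identical to \eqref{HdivdivSRTfemdof6} and gives \eqref{eq:E0Sdualcharac}, all DoFs of Theorem~\ref{th:P-S} vanish and therefore $\boldsymbol\tau=\boldsymbol0$.

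Finally I would check the count: the boundary functionals agree with Theorem~\ref{th:P-S}, while the effective dimension of \eqref{newHdivdivSRTfemdof5} is $\dim\defm\mathbb P_{k-2}(K;\mathbb R^d)=\dim\mathbb P_{k-2}(K;\mathbb R^d)-\dim\boldsymbol{RM}$, equal to the sum of the dimensions of \eqref{HdivdivSRTfemdof51}--\eqref{HdivdivSRTfemdof52}, so the total is again $\dim V^{-}(\div\div;\mathbb S)$. I expect the main obstacle to be this interior bookkeeping: one must verify that restricting $\defm$ to each Koszul summand produces exactly the quotients $\mathbb P_{k-1}(K)/\mathbb P_1(K)$ and $(\mathbb P_{k-3}(K;\mathbb K)/\mathbb P_0(K;\mathbb K))\boldsymbol x$, with no spurious or missing functionals, and that the boundary term in the Green's identity is killed by the correct moment space $\mathbb P_{k-1}(F)$ rather than the full $\mathbb P_k(F)$ in which $\boldsymbol n^{\intercal}\div\boldsymbol\tau$ lives; everything else is the verbatim reduction already used for Theorem~\ref{thm:newHdivdivSfem}.
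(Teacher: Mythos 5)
Your proposal is correct and follows essentially the same route as the paper: the paper's proof likewise recovers $\boldsymbol\tau\boldsymbol n|_{\partial K}$ from \eqref{newHdivdivSRTfemdof1}, \eqref{newHdivdivSRTfemdof2}, \eqref{newHdivdivSRTfemdof4} via Lemma~\ref{lem:divSboundarydofs}, recovers $\boldsymbol n^{\intercal}\div\boldsymbol\tau$ by the linear combination with \eqref{newHdivdivSRTfemdof3}, and then invokes the uni-solvence of Theorem~\ref{th:P-S}. Your explicit integration-by-parts conversion of \eqref{newHdivdivSRTfemdof5} into the moments \eqref{HdivdivSRTfemdof51}--\eqref{HdivdivSRTfemdof52}, together with the dimension count, is exactly the step the paper leaves implicit, and you carry it out correctly.
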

\begin{proof}
 By Lemma \ref{lem:divSboundarydofs}, the $\div_F(\boldsymbol \tau\boldsymbol n)$ can be determined by \eqref{newHdivdivSRTfemdof1}, \eqref{newHdivdivSRTfemdof2}, and \eqref{newHdivdivSRTfemdof4}. A linear combination with \eqref{newHdivdivSRTfemdof3}, the trace $\boldsymbol n^{\intercal}\div \boldsymbol \tau$ can be determined. Then the uni-solvence is obtained from Theorem \ref{th:P-S}. 
\end{proof}

 The global finite element space $\boldsymbol V_h^{-}(\div\div)\subset H(\div\div, \Omega; \mathbb S)$, where
\begin{align*}
\boldsymbol V_h^{-}(\div\div,\Omega;\mathbb S):=\{&\boldsymbol \tau\in \boldsymbol L^2(\Omega;\mathbb S): \boldsymbol \tau|_K\in V^{-}(\div\div;\mathbb S) \textrm{ for each } K\in\mathcal T_h, \textrm{ the} \\
&\textrm{ degrees of freedom \eqref{newHdivdivSRTfemdof1}-\eqref{newHdivdivSRTfemdof2} and \eqref{newHdivdivSRTfemdof3} are single-valued} \}.    
\end{align*}

\revision{Finally we list inf-sup conditions for divdiv conforming elements.
\begin{lemma}
Let $k\geq \max\{d,3\}$.
We have 
\begin{equation}\label{eq:divdivinfsup1}
\|p_h\|_0\lesssim\sup_{\boldsymbol\tau_h\in \boldsymbol V_h(\div\div,\Omega;\mathbb S)}
\frac{(\div\div\boldsymbol{\tau}_h, p_h)}{\|\boldsymbol\tau_h\|_{0}+
\|\div\div\boldsymbol{\tau}_h\|_{0}}\qquad\forall~p_h\in\mathbb P_{k-2}(\mathcal T_h),
\end{equation}
\begin{equation}\label{eq:divdivinfsup2}
\|p_h\|_0\lesssim\sup_{\boldsymbol\tau_h\in \boldsymbol V_h^{-}(\div\div,\Omega;\mathbb S)}
\frac{(\div\div\boldsymbol{\tau}_h, p_h)}{\|\boldsymbol\tau_h\|_{0}+
\|\div\div\boldsymbol{\tau}_h\|_{0}}\qquad\forall~p_h\in\mathbb P_{k-1}(\mathcal T_h).
\end{equation}
\end{lemma}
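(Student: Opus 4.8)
The plan is to obtain both inequalities as direct corollaries of the inf-sup conditions already proved for the $H(\div\div;\mathbb S)\cap H(\div;\mathbb S)$-conforming spaces, namely Lemmas~\ref{lm:divdivdivSinfsup} and~\ref{lm:divdivdivSminfsup}, instead of rebuilding the construction. The two facts I would combine are the space inclusions
\[
\boldsymbol V_h(\div\div^+;\mathbb S)\subseteq \boldsymbol V_h(\div\div,\Omega;\mathbb S), \qquad \boldsymbol V_h^-(\div\div^+;\mathbb S)\subseteq \boldsymbol V_h^-(\div\div,\Omega;\mathbb S),
\]
together with the trivial norm domination $\|\boldsymbol\tau_h\|_0\leq \|\boldsymbol\tau_h\|_{H(\div)}$. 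Since enlarging the space over which the supremum is taken, and shrinking the denominator, both increase the quotient, the conclusion is immediate once these two facts are in place.

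First I would verify the inclusions. On each element the paired spaces share the same shape function space ($\mathbb P_k(K;\mathbb S)$ for the first pair, $\mathbb P_k(K;\mathbb S)\oplus\boldsymbol x\boldsymbol x^{\intercal}\mathbb H_{k-1}(K)$ for the second), so only the single-valuedness requirements differ. For a function $\boldsymbol\tau\in \boldsymbol V_h(\div\div^+;\mathbb S)$, the single-valued DoFs \eqref{HdivdivSfemdof1}--\eqref{HdivdivSfemdof3} determine the full trace $\boldsymbol\tau\boldsymbol n$ on each $F\in\mathcal F^1(K)$, hence make $\boldsymbol n^{\intercal}\boldsymbol\tau\boldsymbol n$ and the surface divergence $\div_F(\boldsymbol\tau\boldsymbol n)$ single-valued; combined with the single-valued $\boldsymbol n^{\intercal}\div\boldsymbol\tau$ from \eqref{HdivdivSfemdof4}, the combination $\boldsymbol n^{\intercal}\div\boldsymbol\tau+\div_F(\boldsymbol\tau\boldsymbol n)$ in \eqref{newHdivdivSfemdof3} is single-valued. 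Thus the weaker conditions \eqref{newHdivdivSfemdof1}--\eqref{newHdivdivSfemdof2} and \eqref{newHdivdivSfemdof3} defining $\boldsymbol V_h(\div\div,\Omega;\mathbb S)$ all hold, establishing the first inclusion; the argument for the enriched spaces is identical word for word.

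With the inclusions at hand I would conclude by a one-line chain of inequalities. The polynomial ranges match exactly ($p_h\in\mathbb P_{k-2}(\mathcal T_h)$ in both Lemma~\ref{lm:divdivdivSinfsup} and \eqref{eq:divdivinfsup1}, and $p_h\in\mathbb P_{k-1}(\mathcal T_h)$ in both Lemma~\ref{lm:divdivdivSminfsup} and \eqref{eq:divdivinfsup2}), so for a fixed $p_h\neq \boldsymbol 0$ I take the maximizer $\boldsymbol\tau_h\in \boldsymbol V_h(\div\div^+;\mathbb S)$ furnished by Lemma~\ref{lm:divdivdivSinfsup} (attained since the ratio is homogeneous on a finite-dimensional space). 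This $\boldsymbol\tau_h$ lies in the larger space, has positive numerator, and satisfies $\|\boldsymbol\tau_h\|_0+\|\div\div\boldsymbol\tau_h\|_0\leq \|\boldsymbol\tau_h\|_{H(\div)}+\|\div\div\boldsymbol\tau_h\|_0$, whence
\[
\sup_{\boldsymbol\tau_h\in \boldsymbol V_h(\div\div,\Omega;\mathbb S)}\frac{(\div\div\boldsymbol\tau_h, p_h)}{\|\boldsymbol\tau_h\|_0+\|\div\div\boldsymbol\tau_h\|_0}\geq \frac{(\div\div\boldsymbol\tau_h, p_h)}{\|\boldsymbol\tau_h\|_{H(\div)}+\|\div\div\boldsymbol\tau_h\|_0}\gtrsim \|p_h\|_0,
\]
which is \eqref{eq:divdivinfsup1}; replacing the spaces by their enriched counterparts and invoking Lemma~\ref{lm:divdivdivSminfsup} yields \eqref{eq:divdivinfsup2}.

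I expect essentially no analytic obstacle along this route: the only things to check are the elementary inclusions and the matching of the polynomial ranges, both immediate. If instead a self-contained argument were preferred, one would repeat the proof of Lemma~\ref{lm:divdivdivSinfsup}, and the genuinely delicate step would be there rather than here: one must use the stable $H^2(\Omega;\mathbb S)$ right inverse of $\div\div$ from~\cite{CostabelMcIntosh2010}, prescribe an interpolant $\boldsymbol\tau_h$ through only the integral DoFs entering the Green's identity~\eqref{eq:greenidentitydivdiv} so as to avoid pointwise vertex evaluations, control $\|\boldsymbol\tau_h\|_0+\|\div\div\boldsymbol\tau_h\|_0$ by a scaling argument, and then extend the identity $\div\div\boldsymbol\tau_h=p_h$ from the quotient by $\mathbb P_1(K)$ to the full polynomial space by integration by parts, using that $\nabla^2 q=0$ for $q\in\mathbb P_1(K)$ so that the face and interior DoFs already pin down the low modes.
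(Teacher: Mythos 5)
Your proposal is correct and takes essentially the same route as the paper: the paper's proof likewise deduces \eqref{eq:divdivinfsup1} and \eqref{eq:divdivinfsup2} directly from Lemmas~\ref{lm:divdivdivSinfsup} and~\ref{lm:divdivdivSminfsup}, using the inclusions $\boldsymbol V_h(\div\div^+;\mathbb S)\subseteq \boldsymbol V_h(\div\div,\Omega;\mathbb S)$ and $\boldsymbol V_h^-(\div\div^+;\mathbb S)\subseteq \boldsymbol V_h^-(\div\div,\Omega;\mathbb S)$ together with $\|\boldsymbol\tau_h\|_{0}\leq \|\boldsymbol\tau_h\|_{H(\div)}$. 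Your explicit verification of the inclusions via single-valuedness of the DoFs is a detail the paper leaves implicit, but it is consistent with the intended argument.
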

\begin{proof}
Since $\|\boldsymbol\tau_h\|_{0}\leq \|\boldsymbol\tau_h\|_{H(\div)}$ and
$\boldsymbol V_h(\div\div^+;\mathbb S)\subseteq \boldsymbol V_h(\div\div,\Omega;\mathbb S)$, the inf-sup condition \eqref{eq:divdivinfsup1} follows from Lemma~\ref{lm:divdivdivSinfsup}. Similarly, the inf-sup condition \eqref{eq:divdivinfsup2} follows from Lemma~\ref{lm:divdivdivSminfsup} and $\boldsymbol V_h^-(\div\div^+;\mathbb S)\subseteq \boldsymbol V_h^-(\div\div,\Omega;\mathbb S)$.
\end{proof}
}



\bibliographystyle{siamplain}
\bibliography{./paper}

\begin{thebibliography}{10}

\bibitem{Adams;Cockburn:2005Finite}
{\sc S.~Adams and B.~Cockburn}, {\em A mixed finite element method for
  elasticity in three dimensions}, Journal of Scientific Computing, 3 (2005),
  pp.~515--521.

\bibitem{Arnold2018}
{\sc D.~N. Arnold}, {\em Finite element exterior calculus}, vol.~93 of CBMS-NSF
  Regional Conference Series in Applied Mathematics, Society for Industrial and
  Applied Mathematics (SIAM), Philadelphia, PA, 2018.

\bibitem{ArnoldAwanouWinther2008}
{\sc D.~N. Arnold, G.~Awanou, and R.~Winther}, {\em Finite elements for
  symmetric tensors in three dimensions}, Math. Comp., 77 (2008),
  pp.~1229--1251.

\bibitem{ArnoldFalkWinther2006}
{\sc D.~N. Arnold, R.~S. Falk, and R.~Winther}, {\em Finite element exterior
  calculus, homological techniques, and applications}, Acta Numer., 15 (2006),
  pp.~1--155.

\bibitem{arnold2009geometric}
{\sc D.~N. Arnold, R.~S. Falk, and R.~Winther}, {\em Geometric decompositions
  and local bases for spaces of finite element differential forms}, Computer
  Methods in Applied Mechanics and Engineering, 198 (2009), pp.~1660--1672.

\bibitem{ArnoldWinther2002}
{\sc D.~N. Arnold and R.~Winther}, {\em Mixed finite elements for elasticity},
  Numer. Math., 92 (2002), pp.~401--419.

\bibitem{BrezziDouglasDuranFortin1987}
{\sc F.~Brezzi, J.~Douglas, Jr., R.~Dur{\'a}n, and M.~Fortin}, {\em Mixed
  finite elements for second order elliptic problems in three variables},
  Numer. Math., 51 (1987), pp.~237--250.

\bibitem{BrezziDouglasMarini1986}
{\sc F.~Brezzi, J.~Douglas, Jr., and L.~D. Marini}, {\em Recent results on
  mixed finite element methods for second order elliptic problems}, in Vistas
  in applied mathematics, Transl. Ser. Math. Engrg., Optimization Software, New
  York, 1986, pp.~25--43.

\bibitem{ChenHuHuang2018}
{\sc L.~Chen, J.~Hu, and X.~Huang}, {\em Fast auxiliary space preconditioners
  for linear elasticity in mixed form}, Math. Comp., 87 (2018), pp.~1601--1633.

\bibitem{ChenHuang2018}
{\sc L.~Chen and X.~Huang}, {\em Decoupling of mixed methods based on
  generalized {H}elmholtz decompositions}, SIAM J. Numer. Anal., 56 (2018),
  pp.~2796--2825.

\bibitem{ChenHuang2020}
{\sc L.~Chen and X.~Huang}, {\em Finite elements for divdiv-conforming
  symmetric tensors}, arXiv preprint arXiv:2005.01271,  (2020).

\bibitem{Chen;Huang:2021Finite}
{\sc L.~Chen and X.~Huang}, {\em A finite element elasticity complex in three
  dimensions}, arXiv preprint arXiv:2106.12786,  (2021).

\bibitem{Chen;Huang:2020Finite}
{\sc L.~Chen and X.~Huang}, {\em Finite elements for divdiv-conforming
  symmetric tensors in three dimensions}, Math. Comp.,  (2021).

\bibitem{Chen;Huang:2021Geometric}
{\sc L.~Chen and X.~Huang}, {\em Geometric decompositions of div-conforming
  finite element tensors}, arXiv preprint arXiv:2112.14351,  (2021).

\bibitem{Christiansen2011}
{\sc S.~H. Christiansen}, {\em On the linearization of {R}egge calculus},
  Numer. Math., 119 (2011), pp.~613--640.

\bibitem{Chui;Lai:1990Multivariate}
{\sc C.~K. Chui and M.-J. Lai}, {\em Multivariate vertex splines and finite
  elements}, Journal of approximation theory, 60 (1990), pp.~245--343.

\bibitem{Ciarlet1978}
{\sc P.~G. Ciarlet}, {\em The finite element method for elliptic problems},
  North-Holland Publishing Co., Amsterdam, 1978.

\bibitem{CostabelMcIntosh2010}
{\sc M.~Costabel and A.~McIntosh}, {\em On {B}ogovski\u\i\ and regularized
  {P}oincar\'e integral operators for de {R}ham complexes on {L}ipschitz
  domains}, Math. Z., 265 (2010), pp.~297--320.

\bibitem{Hatcher2002}
{\sc A.~Hatcher}, {\em Algebraic topology}, Cambridge University Press,
  Cambridge, 2002.

\bibitem{Hu2015a}
{\sc J.~Hu}, {\em Finite element approximations of symmetric tensors on
  simplicial grids in {$\Bbb R^n$}: the higher order case}, J. Comput. Math.,
  33 (2015), pp.~283--296.

\bibitem{Hu;Ma;Zhang:2020family}
{\sc J.~Hu, R.~Ma, and M.~Zhang}, {\em A family of mixed finite elements for
  the biharmonic equations on triangular and tetrahedral grids}, Sci. China
  Math.,  (2021).

\bibitem{HuZhang2015}
{\sc J.~Hu and S.~Zhang}, {\em A family of symmetric mixed finite elements for
  linear elasticity on tetrahedral grids}, Sci. China Math., 58 (2015),
  pp.~297--307.

\bibitem{HuZhang2016}
{\sc J.~Hu and S.~Zhang}, {\em Finite element approximations of symmetric
  tensors on simplicial grids in {$\Bbb{R}^n$}: the lower order case}, Math.
  Models Methods Appl. Sci., 26 (2016), pp.~1649--1669.

\bibitem{Lai;Schumaker:2007Spline}
{\sc M.-J. Lai and L.~L. Schumaker}, {\em Spline functions on triangulations},
  vol.~110, Cambridge University Press, 2007.

\bibitem{Nedelec1980}
{\sc J.-C. N{\'e}d{\'e}lec}, {\em Mixed finite elements in {${\bf R}^{3}$}},
  Numer. Math., 35 (1980), pp.~315--341.

\bibitem{Nedelec:1986family}
{\sc J.-C. N{\'e}d{\'e}lec}, {\em A new family of mixed finite elements in
  {${\bf R}^3$}}, Numer. Math., 50 (1986), pp.~57--81.

\bibitem{RaviartThomas1977}
{\sc P.-A. Raviart and J.~M. Thomas}, {\em A mixed finite element method for
  2nd order elliptic problems}, in Mathematical aspects of finite element
  methods ({P}roc. {C}onf., {C}onsiglio {N}az. delle {R}icerche ({C}.{N}.{R}.),
  {R}ome, 1975), Springer, Berlin, 1977, pp.~292--315. Lecture Notes in Math.,
  Vol. 606.

\bibitem{Sorokina:2010Intrinsic}
{\sc T.~Sorokina}, {\em Intrinsic supersmoothness of multivariate splines},
  Numerische Mathematik, 116 (2010), pp.~421--434.

\end{thebibliography}
\end{document}